\documentclass[12pt, leqno]{amsart}

\usepackage[nopar]{lipsum}
\usepackage{stmaryrd,colonequals}
\usepackage{amsmath, amscd,amsxtra,amssymb,mathrsfs, bbm}

\usepackage[showframe=false,marginparsep=0in,
top=0.9in, 
text={5.85in,9.3in},
left=1.325in, 
bindingoffset=0in,twoside=true]{geometry} %

\usepackage{tikz}
\usepackage{wrapfig}
\usepackage[inline]{enumitem}
\usepackage{graphicx}
\usepackage{float}
\usetikzlibrary{arrows,decorations.pathmorphing,backgrounds,positioning,fit,petri,graphs,automata, intersections,shapes}
\usepackage{upgreek}

\usepackage{caption}

\usepackage{kbordermatrix}

\usepackage{accents}
\newcommand\thickbar[1]{\accentset{\rule{.5em}{1.0pt}}{#1}}

\definecolor{light-gray}{gray}{0.75} 

\DeclareRobustCommand\longtwoheadrightarrow
{\relbar\joinrel\twoheadrightarrow}
\newcommand{\hooklongrightarrow}{\lhook\joinrel\longrightarrow}

\usepackage{enumitem}
\setlist[itemize]{leftmargin=*}
\setlist[enumerate]{leftmargin=*}

\usepackage{etoolbox}
\makeatletter
\patchcmd{\chapter}
{\if@openright\cleardoublepage\else\clearpage\fi}{}{}{}
\makeatother

\makeatletter
\newcommand{\altqedhere}{%
	\ifmeasuring@\else\sbox0{\popQED}\fi
	\tag*{\qedsymbol}%
}
\makeatother

\newenvironment{td}[0]
{\begin{tikzcd}[ampersand replacement=\&, cells={outer sep=2pt, inner sep=2pt}, font=\normalsize]}		
{\end{tikzcd}}

\newcommand{\Tau}{\mathcal{T}}

\newcommand{\CK}{ {K}^{\bt}}
\newcommand{\CB}{ {B}^{\bt}}
\newcommand{\CL}{ {L}^{\bt}}
\newcommand{\CFb}{ {F}^{\bt}}

\newcommand{\CLb}{ {L}^{\bt}}
\newcommand{\CMb}{ {M}^{\bt}}
\newcommand{\CNb}{ {N}^{\bt}}
\newcommand{\CPb}{ {P}^{\bt}}

\newcommand{\CC}{ {C}^{\bt}}
\newcommand{\CF}{ {F}^{\bt}}

\newcommand{\CZ}{ {Z}^{\bt}}
\newcommand{\PCL}{ \ol{\CL} }

\newcommand{\CM}{ {M}^{\bt}}
\newcommand{\CN}{ {N}^{\bt}}	
\newcommand{\PCM}{ \ol{\CM}}

\newcommand{\CP}{ {P}^{\bt}}
\newcommand{\CT}{ {T}^{\bt}}
\newcommand{\PCT}{ \ol{\CT}}
\newcommand{\CQ}{ {Q}^{\bt}}
\newcommand{\CX}{ {X}^{\bt}}
\newcommand{\CY}{ {Y}^{\bt}}

\newcommand{\PCX}{ \ol{\CX}}

\DeclareMathOperator{\teq}{ \;\underline{\triangleright}\, }

\newcommand{\kk}{k}

\newcommand{\mx}{\mathfrak m}
\newcommand{\px}{\mathfrak p}
\newcommand{\Rx}{R}

\newcommand{\Sx}{S}
\newcommand{\Sxx}{\Rx/\ida}
\newcommand{\Tx}{\Rx/\idb}

\newcommand{\A}{\Lambda} %

\newcommand{\B}{\RI} 
\newcommand{\sfN}{N}
\newcommand{\sfM}{A}
\newcommand{\sfL}{B}

\newcommand{\rA}{\A} 
\newcommand{\rAI}{\AI}

\newcommand{\AI}{\ol{\A}} 
\newcommand{\AIx}{{\AI}^{\ida}}
\newcommand{\Gammax}{{\ol{\Gamma}}^{\ida}}
\newcommand{\Abar}{\overline{\A}} 
\newcommand{\AoB}{\A_i} 
\newcommand{\RI}{{\thickbar \Rx}} 
 
\newcommand{\BI}{{\AI}^{\idb}}

\newcommand{\Eta}{\mathrm{E}^{\bt}}
\newcommand{\trEta}{\Eta_{0}}
\newcommand{\trM}{N^{\bt}}
\newcommand{\Cstr}{\mathrm{C}_{\mathsf{str}} (\trEta)}

\newcommand{\trFm}{\Fm_{0}}

\DeclareMathOperator{\cone}{cone}
\newcommand{\Ho}{\mathrm{H}}

\DeclareMathOperator{\D}{D}
\newcommand{\Dfm}{\D^-_{\mathcal{A}}}
\newcommand{\Dbf}{\D^{\mathrm{b}}_{\mathcal{A}}}
\newcommand{\Df}{\D_{\mathcal{A}}}
\newcommand{\Db}{\D^{\mathrm{b}}}

\DeclareMathOperator{\Kom}{C}
\newcommand{\Hotb}{\Hot^{\mathrm{b}}}

\DeclareMathOperator{\im}{im}

\DeclareMathOperator{\End}{End}

\DeclareMathOperator{\Hom}{Hom}
\DeclareMathOperator{\Ext}{Ext}
\DeclareMathOperator{\Tor}{Tor}

\DeclareMathOperator{\id}{id}

\DeclareMathOperator{\proj}{proj}
\DeclareMathOperator{\Proj}{Proj}

\DeclareMathOperator{\Add}{Add}
\DeclareMathOperator{\modd}{\mathsf{mod}}
\DeclareMathOperator{\tilt}{\mathsf{tilt}}

\DeclareMathOperator{\silt}{\mathsf{silt}}
\DeclareMathOperator{\presilt}{\mathsf{presilt}}
\DeclareMathOperator{\pretilt}{\mathsf{pretilt}}

\DeclareMathOperator{\Modd}{Mod}
\DeclareMathOperator{\Bimod}{Bimod}

\newcommand{\md}{
\modd}

\newcommand{\Md}{\Modd}

\newcommand{\dash}{\mathstrut_{\text{\textemdash}}}
\newcommand{\ida}{\mathfrak{a}}
\newcommand{\idb}{\mathfrak{b}}

\usepackage{graphicx}

\makeatletter
\newcommand*\bigcdot{\mathpalette\bigcdot@{.75}}
\newcommand*\bigcdot@[2]{\mathbin{\vcenter{\hbox{\scalebox{#2}{$\m@th#1\bullet$}}}}}
\makeatother

\newcommand*{\setumath}[2]{%
\csname luatexUmath#1\endcsname\displaystyle=#2\relax
\csname luatexUmath#1\endcsname\luatexcrampeddisplaystyle=#2\relax
\csname luatexUmath#1\endcsname\textstyle=#2\relax
\csname luatexUmath#1\endcsname\luatexcrampedtextstyle=#2\relax
\csname luatexUmath#1\endcsname\scriptstyle=#2\relax
\csname luatexUmath#1\endcsname\luatexcrampedscriptstyle=#2\relax
\csname luatexUmath#1\endcsname\scriptscriptstyle=#2\relax
\csname luatexUmath#1\endcsname\luatexcrampedscriptscriptstyle=#2\relax
}

\usepackage{mathtools}
\newcommand*{\myov}[1]{\overbracket[1.25pt][-1.25pt]{#1}}

\makeatletter
\let\save@mathaccent\mathaccent
\newcommand*\if@single[3]{%
\setbox0\hbox{${\mathaccent"0362{#1}}^H$}%
\setbox2\hbox{${\mathaccent"0362{\kern0pt#1}}^H$}%
\ifdim\ht0=\ht2 #3\else #2\fi
}
\newcommand*\rel@kern[1]{\kern#1\dimexpr\macc@kerna}
\newcommand*\wideaccent[2]{\@ifnextchar^{{\wide@accent{#1}{#2}{0}}}{\wide@accent{#1}{#2}{1}}}
\newcommand*\wide@accent[3]{\if@single{#2}{\wide@accent@{#1}{#2}{#3}{1}}{\wide@accent@{#1}{#2}{#3}{2}}}
\newcommand*\wide@accent@[4]{%
\begingroup
\def\mathaccent##1##2{%
	\let\mathaccent\save@mathaccent
	\if#42 \let\macc@nucleus\first@char \fi
	\setbox\z@\hbox{$\macc@style{\macc@nucleus}_{}$}%
	\setbox\tw@\hbox{$\macc@style{\macc@nucleus}{}_{}$}%
	\dimen@\wd\tw@
	\advance\dimen@-\wd\z@
	\divide\dimen@ 3
	\@tempdima\wd\tw@
	\advance\@tempdima-\scriptspace
	\divide\@tempdima 10
	\advance\dimen@-\@tempdima
	\ifdim\dimen@>\z@ \dimen@0pt\fi
	\rel@kern{0.6}\kern-\dimen@
	\if#41
	#1{\rel@kern{-0.6}\kern\dimen@\macc@nucleus\rel@kern{0.4}\kern\dimen@}%
	\advance\dimen@0.4\dimexpr\macc@kerna
	\let\final@kern#3%
	\ifdim\dimen@<\z@ \let\final@kern1\fi
	\if\final@kern1 \kern-\dimen@\fi
	\else
	#1{\rel@kern{-0.6}\kern\dimen@#2}%
	\fi
}%
\macc@depth\@ne
\let\math@bgroup\@empty \let\math@egroup\macc@set@skewchar
\mathsurround\z@ \frozen@everymath{\mathgroup\macc@group\relax}%
\macc@set@skewchar\relax
\let\mathaccentV\macc@nested@a
\if#41
\macc@nested@a\relax111{#2}%
\else
\def\gobble@till@marker##1\endmarker{}%
\futurelet\first@char\gobble@till@marker#2\endmarker
\ifcat\noexpand\first@char A\else
\def\first@char{}%
\fi
\macc@nested@a\relax111{\first@char}%
\fi
\endgroup
}
\makeatother

\newcommand{\bt}{\bigcdot}

\newcommand{\ol}{\myov}

\newcommand{\Acat}{\mathcal{A}}

\newcommand{\Ccat}{\mathcal{C}}

\newcommand{\Lcat}{\mathcal{L}}
\newcommand{\Mcat}{\mathcal{M}}

\newcommand{\Pcat}{\mathcal{P}}

\newcommand{\N}{\mathbb{N}}
\newcommand{\Z}{\mathbb{Z}}

\numberwithin{equation}{section}

\newtheorem{thm}[equation]{Theorem}

\newtheorem{cor}[equation]{Corollary}
\newtheorem{prp}[equation]{Proposition}
\newtheorem{lem}[equation]{Lemma}

\newtheorem{dfn}[equation]{Definition}

\newtheorem{dfn-sp}[equation]{Definition*}
\newtheorem{ex}[equation]{Example}
\newtheorem{ex-sp}[equation]{Example*}
\newtheorem{rmk}[equation]{Remark}
\newtheorem{setup}[equation]{Setup}

\newtheorem{notation}[equation]{Notation}
\newtheorem*{sketch-proof}{Sketch of proof}
\newtheorem*{comment-proof}{Comment on the proof}
\newcounter{ABC}

\newtheorem{intro-thm}[ABC]{Theorem}

\DeclareRobustCommand\longtwoheadrightarrow
{\relbar\joinrel\twoheadrightarrow}

\DeclareMathOperator{\Loc}{\mathsf{Loc}}
\DeclareMathOperator{\per}{\mathsf{per}}

\DeclareMathOperator{\rad}{\mathsf{rad}}

\DeclareMathOperator{\Hot}{\mathsf{K}}

\DeclareMathOperator{\add}{\mathsf{add}}

\DeclareMathOperator{\Spec}{\mathsf{Spec}}

\newcommand{\FF}{\mathbf{F}}
\newcommand{\GG}{\mathbf{G}}
\newcommand{\HH}{\mathbf{H}}
\newcommand{\Fm}{\mathbbm{F}}

\newcommand{\Gm}{\mathbbm{G}}

\usepackage{kbordermatrix}
\usepackage{multirow}
\usepackage[pagebackref=true]{hyperref}
\hypersetup{
	pdfauthor={...},
	pdftitle={...},
	pdfsubject={...},
	urlcolor=blue!75!black,
	colorlinks=true,%
	linkcolor=blue!75!black,%
	citecolor=blue!75!black,%
	filecolor=blue!75!black,%
	menucolor=blue!75!black,%
}

\usepackage[initials, alphabetic, msc-links, nobysame,short-publishers]{amsrefs}
\usepackage{enumitem}
\usepackage{longtable}
\usepackage{tikz}
\usepackage{tikz-cd}
\usepackage{float}
\usepackage{color}
\usepackage{prerex}
\usepackage{helvet}
\usepackage[utf8]{inputenc}
\usetikzlibrary{calc, positioning, shapes, fit, matrix, decorations}
\usetikzlibrary{decorations.shapes}

\usepackage[all,cmtip]{xy}
\xyoption{arc}

\usepackage{ifthen, xkeyval, xcolor, calc, graphicx}
\usepackage[backgroundcolor=pink, colorinlistoftodos, disable]{todonotes}

\makeatletter
\providecommand\@dotsep{5}
\def\listtodoname{List of Todos}
\def\listoftodos{\@starttoc{tdo}\listtodoname}
\makeatother

\author{Wassilij Gnedin}
\title{Silting theory 
	 under change of rings}
 \address{
Faculty of Mathematics,
Ruhr University Bochum, 
Universitätsstraße 150, 
\newline $\mathstrut$\quad\,44780 Bochum, Germany
}
\email{wassilij.gnedin@rub.de}

\tikzset{labl/.style={anchor=south, rotate=90, inner sep=.5mm}}

\begin{document}

\begin{abstract}
	The main goal of this paper is to compare the silting theory of an $R$-algebra $\Lambda$ over a Noetherian ring $R$ with that of its tensor product $\Lambda \otimes \Gamma$ with another $R$-algebra $\Gamma$.
	
	In the case that the $R$-algebra $\Lambda$ is Noetherian, $R$ a complete local ring and $\mathfrak{a}$ a certain ideal of the ring $R$, we obtain an isomorphism between the silting poset of $A$ and that of its quotient $\Lambda \otimes \Rx/\mathfrak{a}$. 
	Furthermore, we study the restrictions of such a bijection to tilting complexes and deduce silting embedding and descent results for the algebra $\Lambda$ and a certain family of algebras $(\Lambda \otimes \Gamma_i)_{i \in I}$.
\end{abstract}

\maketitle

One of the intensively studied problems in representation theory of algebras is to clarify which rings have equivalent derived categories.
This paper is a contribution to this problem guided by the following question.
\begin{quote}
	\emph{How does the derived Morita theory of an $\Rx$-algebra $\A$  differ from that of its tensor product $\A \otimes \Gamma$ with another $R$-algebra $\Gamma$?}
\end{quote}
To obtain stronger answers, we assume that $\A$ is a Noetherian $R$-algebra, that is, $\A$ is finitely generated as an $R$-module, the ring $R$ is complete local, and $\Gamma$ is set to be the quotient $\Rx/\ida$ by a proper ideal $\ida$ of $\Rx$. 
This allows to identify the ring $\A \otimes \Gamma$ with the quotient $\A/\ida \A$.

The following results from Rickard's derived Morita theory were the starting point of this paper.
Any ring which is derived equivalent to $\A$ is an $\Rx$-algebra given by the endomorphism ring of a tilting complex of $\A$ \cite{Rickard89a}. 
If the algebra $\A$ is free as an $\Rx$-module and the ideal $\ida$ is the maximal ideal $\mx$ of the local ring $\Rx$, there is a natural bijection
\[
\begin{td} \tilt^*_{\Rx} \A \ar{r}{\sim} \& \tilt \A/\mx \A \end{td}
\]
from the set of isomorphism classes of basic tilting complexes of $\A$ with $R$-free endomorphism rings to an analogous set given by basic tilting complexes of the finite-dimensional algebra $\A/\mx \A$ \cites{Rickard91a, Rickard91b}.
In a prototype of this setup,
$\A$ is given by the algebra $RG$ of a finite group $G$, and its quotient $\A/\mx \A$
by the algebra $kG$ over the residue field $k$ of the local ring $\Rx$.

The notion of `derived Morita theory' in the question above might be understood in the broader sense that one should consider all \emph{differential graded algebras} which are derived equivalent to $\A$. However, the generators of the perfect derived category of $\A$ are not related to the perfect generators over the quotient $\A/\ida \A$ in a coherent way in general.

In this paper, we will focus on a special sort of perfect generators encompassing all tilting complexes, namely
\emph{silting complexes}, which have been introduced by Keller and Vossieck \cite{Keller-Vossieck}.
Silting theory has attracted much interest in recent years, after
Aihara and Iyama  \cite{Aihara-Iyama} discovered that, in contrast to tilting complexes, any silting complex can be mutated in order to produce new ones, and that
silting complexes give rise to a partially ordered set $(\silt \A, \geq)$.
Work by Koenig and Yang showed that
silting complexes of a finite-dimensional algebra had already appeared in various homological disguises \cite{Koenig-Yang}.
We refer to \cite{Angeleri-Huegel} for a comprehensive survey of recent developments in silting theory by Angeleri H{\"u}gel.

The main result of this paper extends Rickard's tilting bijection in several ways.
\begin{intro-thm}[Corollary~\ref{cor/setups}]\label{thm/main}
	Let $\ida$ be a proper ideal of a complete local ring $\Rx$ and $\A$ be a Noetherian $\Rx$-algebra satisfying any of the following conditions.
\begin{enumerate}[label={\rm (S1\alph*)}, align=left]
	\item \label{setup/orders0} The $\Rx$-algebra $\A$ is free as an $\Rx$-module and the ring $\Rx$ is regular.
	\item \label{setup/groups0} The $\Rx$-algebra $\A$ is free as an $\Rx$-module and the ideal $\ida$ is maximal.	
	\end{enumerate}
\begin{enumerate}[label={\rm (S2)}, align=left]
	\item \label{setup/CI0} The ideal $\ida$ is generated by an $\Rx$- and $\A$-regular sequence.
\end{enumerate}
	Then the left-derived functor 
	$$
	\begin{td}\FF\colon \D^-(\md \A) \ar{r} \& \D^-(\md \A/\ida \A) \& \CL \ar[mapsto]{r} \& \PCL \colonequals \CL \overset{\mathbb{L}}{\underset{\A}{\otimes}} \A/\ida \A 
	\end{td}
	$$
	gives rise to three bijections of sets of isomorphism classes
	\begin{align*}
	\begin{tikzcd}[ampersand replacement=\&, cells={outer sep=2pt, inner sep=1pt}, column sep=0.75cm, row sep=0.5cm]
	{\mathstrut} \&		
	\tilt^*_{\Rx} \A  
	\ar["\sim" labl]{dd}{f^*_t} \ar[hookrightarrow, color=light-gray]{rr} 
	\&  \&
	\tilt^{\Rx/\ida} \A
	\ar["\sim" labl]{dd}{f_t} 
	\ar[hookrightarrow, color=light-gray]{rr}
	\& \&
	\silt \A \ar["\sim" labl,swap]{dd}[pos=0.33]{}[swap]{f_s}
	\\[0.75cm]
	\\
	{\mathstrut} \&
	\tilt^*_{\Rx/\ida} \A/\ida\A 
	\ar[hookrightarrow, color=light-gray]{rr} \& \& 
	\tilt \A/\ida\A
	\ar[hookrightarrow, color=light-gray]{rr}
	\&   \&
	\silt \A/\ida\A
	\end{tikzcd}
	\end{align*}
	where  the map $f_s$ is an isomorphism of posets and the set
	$\tilt^{\Rx/\ida} \A$ 
	can be defined as the set of isomorphism classes of basic tilting complexes $\CT$ of the ring $\A$ such that $\Hom_{\D(\A/\ida\A)}(\PCT,\PCT[-1] ) = 0$.
\end{intro-thm}
Any order over a complete local equicharacteristic Cohen--Macaulay ring gives rise to setup~\ref{setup/orders0}. 
The conditions in~\ref{setup/groups0} are motivated by the representation theory of finite groups.
At last, any non-commutative complete intersection such as the quotient of a Noetherian algebra by a central regular element 
leads to situation~\ref{setup/CI0}.
The conclusions of Theorem~\ref{thm/main} are also true under more general conditions which unify the three setups above (Theorem~\ref{thm/embeddings2}).

The main purpose of Theorem~\ref{thm/main} is to reduce questions about the silting theory of a family of algebras to one particular case with easier representation theory.
In case~\ref{setup/orders0}, we may vary
the proper ideal $\ida$. This 
results in a family of quotients with the same silting theory although any two quotients
are usually not derived equivalent. 
In particular, if  $R$ is the ring $\kk \llbracket x \rrbracket$ of formal power series, varying the ideal $\ida$ 
yields an 
isomorphism of posets
\begin{align*}
\begin{td}
\silt \A/x^m \A \ar{r}{\sim} \& \silt \A/x \A
\end{td}
\end{align*}
for any power $m \in \N$. 
Such a phenomenon of multiplicity independence was previously observed in the silting theory of Brauer graph algebras \cite{Adachi/Aihara/Chan}, \cite{Eisele}.

The absence of a bijection between the sets $\tilt \A$ and $\tilt \A/\ida\A$, as well as the proofs of the main results, suggest that
silting complexes are well-behaved generators which can be more convenient to work with than tilting complexes.

The methods to prove Theorem~\ref{thm/main} 
and their applications yield the following results which might be interesting on their own.
\begin{enumerate}
	\item The functor $\FF$ preserves and reflects certain natural relations $\geq$ and $\perp$ of perfect complexes (see Proposition~\ref{prp/keyA}).
		\item 
Any perfect complex $\CP$ of the quotient $\A/\ida \A$ which does not have second self-extensions lifts to a perfect complex of $\A$
in case $\Rx$ is \emph{normally flat along the ideal $\ida$} (Proposition~\ref{prp/2-rigid-lift}). 
	\item 
In each of the three setups 
the functor $\FF$ reflects the tilting property of a perfect complex. This can be 
explained invoking \emph{Tor-rigidity} 
	of $R/\ida$ as an $R$-module in each case (see Proposition~\ref{prp/descent}).
		\item  
		The question whether the endomorphism ring of 
	a tilting complex $\CT$ of an $R$-free Noetherian algebra $\A$ is also free over the ring $R$  
	can be reduced to the computation of a finite-dimensional morphism space
	(Proposition~\ref{prp/endo-free}).
	 \item If $\A$ is free as an $\Rx$-module, each of its silting complexes $\CL$ may be viewed as the completion obtained from a family of silting complexes  $(\CL_n)_{n\in\N}$ over certain finite-dimensional quotients $\A_n$ (see Corollary~\ref{cor/silt-completion}).
	\item Parts of Theorem~\ref{thm/main} 
	extend to several setups in which $R$ is neither complete nor local and the $R$-algebra $\Gamma$ is not a quotient of $R$ (see Subsection~\ref{subsec:app2}).
	These extensions yield a close relationship 
	between silting subcategories
	of a Noetherian $\Rx$-projective algebra $\A$ 
to those of 
a family of certain finite-dimensional algebras $(\A_{\px,n})_{\px \in \Spec \Rx}$, which is 
stated rigorously in Theorem~\ref{thm/global-to-local}.
\end{enumerate}
The proof of
 Theorem~\ref{thm/main} 
develops some ideas
 by Rickard from \cites{Rickard91a,Rickard91b}.
The proof that $\FF$ reflects silting complexes
 is based on dg-categorical arguments due to Keller \cite{Keller:private-communication}.
 Recent work by Nasseh, Ono and Yoshino \cite{Nasseh-Ono-Yoshino}
inspired one of the main ideas to lift complexes in a setup including~\ref{setup/orders0}.

The results above are also related to several, more recent investigations of the behavior of silting complexes under base-change
by 
 Eisele \cite{Eisele}, and Iyama and Kimura \cite{Iyama-Kimura}.
In the context of commutative dg-algebras, lifting problems have been studied by Nasseh and Sather-Wagstaff \cite{Nasseh/Sather-Wagstaff},  Ono and Yoshino \cite{Ono-Yoshino} and Nasseh, Ono and Yoshino \cite{Nasseh-Ono-Yoshino}.
The results above recover special cases of bijections of two-term silting complexes, which were established by Eisele, Janssens and Raedschelders \cite{EJR} and Kimura \cite{Kimura}.
Further connections to these works are discussed in the course of the paper. 

In forthcoming work, we will focus on compatibility of silting embeddings with mutation, silting bijections for complexes of fixed length and their variations for quotients of Noetherian algebras by a normal regular element.

In the beginning of this paper (Subsections~\ref{sec:main}--\ref{sec:recall}), we fix the general setup and recall all notions which are relevant to state the main results. The latter are gathered in Section~\ref{sec/main}. In Subsection~\ref{subsec/structure} we sketch the proof of the silting bijection and describe the other parts of the paper.

\subsubsection*{Acknowledgment}
This work was started in conversations with Frederik Marks during a visit to Verona.
The author would like to thank Lidia Angeleri Hügel for her hospitality and Frederik Marks for productive discussions.
The author benefited from constructive feedback
 on the contents of this paper
by Steffen Koenig, Sebastian Opper, Markus Reineke and Alexandra Zvonareva.

The author is deeply grateful to Bernhard Keller for 
explaining to him the essential steps to prove Proposition~\ref{prp/Keller}.

Part of the research was carried out during the Junior Trimester Program
``New Trends in Representation Theory'' organized by the Hausdorff Institute of Mathematics in Bonn. The author thanks the HIM for financial support and a stimulating research environment.

\setcounter{tocdepth}{1}
\tableofcontents

\subsubsection*{Conventions}

Any ring is assumed to have a unit.
Unless otherwise stated, by a module we mean a \emph{right} module.
The category of finitely generated $\A$-modules is denoted by $\md \A$.
The derived category of all $\A$-modules of a ring $\A$ is abbreviated with  $\D(\A)$, the homotopy category of complexes of $\A$-modules with $\Hot(\A)$. The full subcategory of perfect complexes in $\D(\A)$, that is, complexes quasi-isomorphic to bounded complexes of finitely generated projective $\A$-modules, is denoted by $\per \A$.

By a complex $\CM$ of modules over a ring we mean a \emph{cochain} complex.  The complex $\CM[1]$ is given by shifting $\CM$ to the \emph{left} and changing the signs of its differentials.

\section{Main setup and recall of silting theory}
\label{sec/first}
The goal of the first part of this section is to fix the central assumptions and to recall all relevant notions to state the main results of this paper.

\subsection{Main setup}\label{sec:main}

Until Section~\ref{sec/main}
we assume the following setup.
\begin{setup}\label{setup/main}
Let $R, \A$, $\RI$ and $\Sx$ be rings satisfying the following conditions:
	\begin{itemize}
		\item 	The ring $\A$ is a \emph{Noetherian} $\Rx$-algebra, that is, the ring $\Rx$ is a commutative Noetherian ring and $\A$ is finitely generated as $\Rx$-module.
		\item Moreover, we assume that the ring $\Rx$ is local and complete with respect to its unique maximal ideal $\mx$. 
		\item Let $\RI \colonequals \Rx/\ida$ be the quotient ring by a proper ideal $\ida$  of $\Rx$
		such that
		the $\Rx$-modules
$\A$ and $\RI$ are \emph{Tor-independent}  which means that
		\begin{align}\label{eq/tor-ind0} 
		\tag{$\star$}
		\Tor_+^{\Rx}(\A,\RI) = 0, \text{ that is, } \Tor_{n}^{\Rx}(\A,\RI) = 0 \text{ for any integer }n>0.
		\end{align}
	\item For later technical reasons, we assume that $\RI$ is also an $\Sx$-algebra over another commutative ring $\Sx$.
	\end{itemize}
		In the following we abbreviate the tensor product $\otimes_{\Rx}$ with $\otimes$ and set 
				 $\AI \colonequals \A/\ida \A$.
The above data gives rise to a commutative diagram of rings and a  functor
$$
\begin{td}
	\&
\Rx \ar{r}{\varrho} \ar[twoheadrightarrow]{d} \& \A \ar[twoheadrightarrow]{d}  \&[-1cm] \& \D^-(\md\A) \ar{d}[swap,font=\normalsize]{\FF} \& \CM \ar[mapsto]{d}  \&[-1cm] \\
\Sx \ar{r} \&\RI \ar{r} \& \AI  \& 
	\cong \A \underset{\Rx}{\otimes} \RI
\& \D^-(\md \AI) \& \PCM \& \colonequals \CM
\underset{\A}{\overset{\mathbb{L}}{\otimes}} \AI\,. 
\end{td}
$$
\end{setup}
By the definition of an \emph{$\Rx$-algebra} the image of the map $\varrho$ lies in the center of the ring $\A$. 
In particular, $\AI$ is the quotient of $\A$ by a two-sided ideal which is generated by \emph{central} elements.
\begin{rmk}\label{rmk/sub}
	Condition~\eqref{eq/tor-ind0} is satisfied in the following cases.
	\begin{enumerate}[label={\rm (S\arabic*)}, align=left]
		\item \label{rmk/sub1} The Noetherian $\Rx$-algebra $\A$ is free as an $\Rx$-module.
		\item \label{rmk/sub2} The ideal $\ida$ is generated by a sequence $\mathbf{x}$ which is $\Rx$- and $\A$-regular.
	\end{enumerate}
	More precisely, if the ideal $\ida$ is generated by an $\Rx$-regular sequence $\mathbf{x}$, then the Koszul homology $\Ho_n(\mathbf{x},\A)$ can be identified with
	$\Tor_n^{\Rx}(\A,\RI)$ for any integer $n > 0$, and the  equivalences 
	\begin{center}
		\begin{tabular}{ccccc}
			$\mathbf{x}$ is $\A$-regular  &
			\quad $\Leftrightarrow$ \quad
			&
			$\Tor_1^{\Rx}(\A,\RI) =0$ &
			\quad	$\Leftrightarrow$ \quad
			&
			$\Tor_+^{\Rx}(\A,\RI) = 0$
		\end{tabular}
	\end{center}
are true.
	We refer to \cite{Bruns-Herzog}*{Corollaries~1.6.14~and~1.6.19} for more details.
\end{rmk}
In setup~\ref{rmk/sub2} we will call $\AI$ a \emph{complete intersection}. This is motivated by existing terminology in case the $\Rx$-algebra $\A$ is commutative.

\begin{ex}\label{ex/preproj1}
		Let $\kk$ be a field
		and $(Q,I)$ the quiver of the preprojective algebra of affine type $\widetilde{\mathbb{A}}_2$ which is shown on the left.
	$$
	\begin{array}{cccc}
		(Q,I) && \qquad \, {\ol{(Q,I)}}^{x_1}&\\
		{
			\begin{tikzcd}[,nodes={
					baseline=0pt,
					inner sep=1pt}, 
				every label/.append style={inner sep=1pt, font=\normalsize, color=black},
				column sep=0.6cm, cells={shape=circle},
				row sep=1.35cm,
				arrows={->,thick,black, >=stealth',yshift=0mm
				},
				ampersand replacement=\&
				]
				\& 
				\overset{2}{\bt} 
				\arrow[rd,bend left,looseness=1, "{a_2}",
				""{name=e1, near start,swap, inner sep =-1pt},
				""{name=e2, near end, swap, inner sep =-1pt}, out=45, in = 135
				]
				\ar[ld, "b_1",swap, bend left,looseness=0.85,xshift=-3pt,
				""{name=c1, near start, swap, inner sep =-1pt},
				""{name=c2, near end, swap, inner sep =-1pt}]
				\&
				\\
				\underset{1}{\bt} \ar[ru, bend left,looseness=1, "{a_1}", 
				""{name=d1, near start, swap, inner sep =-1pt},
				""{name=d2, near end, swap, inner sep =-1pt}, out=45, in = 135
				]  
				\ar[rr, "b_3",swap, bend left,looseness=0.85,yshift=-3pt,
				""{name=b1, near start, swap, inner sep =-1pt},
				""{name=b2, near end, swap, inner sep =-1pt}] 
				\& \&
				\underset{3}{\bt} \ar[ll, bend left, looseness=1, "{a_3}",
				""{name=f1, near start, swap, inner sep =-1pt},
				""{name=f2, near end, swap, inner sep =-1pt} , out=45, in = 135
				]
				\ar[lu, "b_2",swap, bend left,looseness=0.85,xshift=3pt,
				""{name=a1, near start, swap, inner sep =-1pt},
				""{name=a2, near end, swap, inner sep =-1pt} 
				]
		\end{tikzcd} } 
		&
		\begin{array}{l} \mathstrut \\
			b_1 a_1 = a_3 b_3  \\
			b_2 a_2 = a_1 b_1 \\
			b_3 a_3 = a_2 b_2  
		\end{array}
		\qquad
		&
		\qquad
		{
			\begin{tikzcd}[,nodes={
					baseline=0pt,
					inner sep=1pt}, 
				every label/.append style={inner sep=1pt, font=\normalsize, color=black},
				column sep=0.6cm, cells={shape=circle},
				row sep=1.35cm,
				arrows={
					->,
					thick,black
					, >=stealth',yshift=0mm},
				ampersand replacement=\&
				]
				\& 
				\overset{2}{\bt}
				\arrow[rd,bend left,looseness=1, "{a_2}",
				""{name=e1, near start,swap, outer sep=0pt, inner sep =-1pt},
				""{name=e2, near end, swap, outer sep=0pt, inner sep =-1pt}, out=45, in = 135
				]
				\ar[ld, "b_1", swap, bend left,looseness=0.85,xshift=-3pt,
				""{name=c1, near start, swap, outer sep=0pt, inner sep =-1pt},
				""{name=c2, near end, swap, outer sep=0pt, inner sep =-1pt}]
				\&
				\\
				\underset{1}{\bt} \ar[ru, bend left,looseness=1, "{a_1}", 
				""{name=d1, near start, swap, outer sep=0pt, inner sep =-1pt},
				""{name=d2, near end, swap, outer sep=0pt, inner sep =-1pt}, out=45, in = 135
				]  
				\ar[rr, "b_3",swap, bend left,looseness=0.85,yshift=-3pt,
				""{name=b1, near start, swap, outer sep=0pt, inner sep =-1pt},
				""{name=b2, near end, swap, outer sep=0pt, inner sep =-1pt}] 
				\& \&
				\underset{3}{\bt} \ar[ll, bend left, looseness=1, "{a_3}",
				""{name=f1, near start, outer sep=0pt, swap, inner sep =-1pt},
				""{name=f2, near end, swap,outer sep=0pt, inner sep =-1pt} , out=45, in = 135
				]
				\ar[lu, "b_2", swap, bend left,looseness=0.85,xshift=3pt,
				""{name=a1, near start, swap, outer sep=0pt, inner sep =-1pt},
				""{name=a2, near end, swap,outer sep=0pt,  inner sep =-1pt} 
				]
				\arrow[from=e1, to=a2, densely dotted, black!50, bend left, -]
				\arrow[from=a1, to=e2, densely dotted, black!50, bend left, -]
				\arrow[from=f1, to=b2, densely dotted, black!50, bend left, -]
				\arrow[from=b1, to=f2, densely dotted, black!50, bend left, -]
				\arrow[from=d1, to=c2, densely dotted, black!50, bend left, -]
				\arrow[from=c1, to=d2, densely dotted, black!50, bend left, -]
		\end{tikzcd} } 
		&
		\begin{array}{r} \mathstrut \\
			b_i a_i = 0\\
			a_i b_i = 0  \\
			\ \scriptstyle i \in \{1,2,3\}
		\end{array}
	\end{array}
	$$
 Let $\Rx$ denote the ring of formal power series $\kk\llbracket x_1,x_2 \rrbracket$ and $\A$ the completion of the path algebra $\kk Q/I$ with respect to its arrow ideal.
	For any integers $m_a, m_b > 0$ it can be shown that the morphism
	$\Rx \longrightarrow \A$ given by $x_1 \longmapsto 	\sum_{i=1}^3 b_i a_i$
	and
	$$
	x_2 \longmapsto 
	(a_3 a_2 a_1)^{m_a} -
	(b_1 b_2 b_3)^{m_b} +
	(a_1 a_3 a_2)^{m_a} -
	(b_2 b_3 b_1)^{m_b} +
	(a_2 a_1 a_3)^{m_a} - 
	(b_3 b_1 b_2)^{m_b}
	$$
	endows the ring $\A$ with the structure of a Noetherian $\Rx$-free algebra.
	
	The quotient algebra ${\AI}^{x_1} \colonequals \A/x_1 \A$ is   
	isomorphic to the arrow ideal completion of the path algebra of the 
	gentle quiver ${\ol{(Q,I)}}^{x_1}$ shown on the right.

	The quiver of the finite-dimensional $\kk$-algebra  ${\AI}^{(m_a,m_b)} \colonequals 
	\A/ (x_1,x_2)\A $
 is given by the latter together with the relations
	$$
	(a_3 a_2 a_1)^{m_a} = 
	(b_1 b_2 b_3)^{m_b} \quad
	(a_1 a_3 a_2)^{m_a} =
	(b_2 b_3 b_1)^{m_b} \quad
	(a_2 a_1 a_3)^{m_a} =
	(b_3 b_1 b_2)^{m_b}\,.
	$$
	In different terms, $\AI^{x_1}$ is a \emph{ribbon graph order} and ${\AI}^{(m_a,m_b)}$ is the \emph{Brauer graph algebra} associated to the Brauer graph
	$$
	\begin{td}
		\overset{m_a}{\circ} \ar[bend left, looseness=1,-,thick, gray]{rr} \ar[-,thick,gray]{rr} \ar[bend right, looseness=1,-, thick, gray]{rr} \& \& \overset{m_b}{\circ}
		\end{td}
	$$
	We will apply our main result to 
	the
algebra $\A$ and its quotients 
	in Subsection~\ref{subsec/app-silt-bij}.
\end{ex}

\subsection{Recall on Krull-Remak-Schmidt categories}

We recall a few notions on Krull-Remak-Schmidt categories, which can be found for example in \cite{Krause}*{Section~4}.

A ring $A$ is \emph{local} if the sum of any two non-units from $A$ is not a unit.
The ring $A$ is \emph{semiperfect} if there is a decomposition $1_{A} = e_1+ \ldots + e_n$ into mutually orthogonal idempotents of $A$ such that each ring $e_i A e_i$ is local.
By \cite{Krause}*{Corollary~4.4}, a \emph{Krull-Remak-Schmidt category} can be defined as an additive category $\mathcal{A}$ such that the endomorphism ring of any object from $\mathcal{A}$ is semiperfect and $\mathcal{A}$ is \emph{idempotent-complete}, that is, any idempotent endomorphism in $\mathcal{A}$ has a kernel.

Any object  of a Krull-Remak-Schmidt category $\mathcal{A}$ is isomorphic to  a direct sum of finitely many indecomposable objects.
Such a decomposition is unique up to permutation and isomorphism of the indecomposable summands. 
An object in $\mathcal{A}$ is \emph{basic} if it does not have isomorphic indecomposable summands.

The main motivation to assume that $\A$ is a Noetherian $\Rx$-algebra over a complete local ring $\Rx$ is the following.
\begin{prp} \label{KRS} 
	The category $\per \A$ has the Krull-Remak-Schmidt property and any of its morphisms spaces is a finitely generated $\Rx$-module.
\end{prp}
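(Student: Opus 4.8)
The plan is to derive both assertions from the finiteness of morphism spaces and then to invoke the characterization of Krull--Remak--Schmidt categories from \cite{Krause}*{Corollary~4.4}: an additive category is Krull--Remak--Schmidt precisely when it is idempotent-complete and the endomorphism ring of each of its objects is semiperfect. So I would establish, in order, that morphism spaces in $\per\A$ are finitely generated over $\Rx$; that consequently the endomorphism rings in $\per\A$ are semiperfect; and that $\per\A$ is idempotent-complete.

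For the first point, take perfect complexes $\CP$ and $\CQ$ and replace them up to isomorphism in $\D(\A)$ by bounded complexes of finitely generated projective right $\A$-modules, with components $P^{i}$ and $Q^{j}$ respectively. A bounded complex of projectives is K-projective, so
\[
\Hom_{\D(\A)}(\CP,\CQ[n]) \;\cong\; \Ho^{n}\bigl(\Hom^{\bt}_{\A}(\CP,\CQ)\bigr),
\]
where $\Hom^{\bt}_{\A}(\CP,\CQ)$ denotes the total Hom complex. Each of its terms is a finite direct sum of modules $\Hom_{\A}(P^{i},Q^{j})$, and since $P^{i}$ is a direct summand of some free module $\A^{m}$, the module $\Hom_{\A}(P^{i},Q^{j})$ is a direct summand of $(Q^{j})^{m}$. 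As $\A$ is finitely generated over the Noetherian ring $\Rx$, so is each $Q^{j}$, hence so is $(Q^{j})^{m}$, and therefore so is its direct summand $\Hom_{\A}(P^{i},Q^{j})$; the relevant $\Rx$-action is the one induced by the central morphism $\varrho$, under which all differentials are $\Rx$-linear. Thus $\Hom^{\bt}_{\A}(\CP,\CQ)$ is a bounded complex of finitely generated $\Rx$-modules, and since $\Rx$ is Noetherian all of its cohomology is finitely generated over $\Rx$; in particular so are $\Hom_{\D(\A)}(\CP,\CQ)$ and $E \colonequals \End_{\D(\A)}(\CP)$.

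For the second point I would use the classical fact that a ring finitely generated as a module over a complete Noetherian local ring is semiperfect. Concretely, $E$ is $\mx$-adically complete (being finitely generated over the complete ring $\Rx$), so $1+\mx E$ consists of units, the inverse of $1+y$ being the geometric series $\sum_{k\ge 0}(-y)^{k}$, which converges in the $\mx$-adic topology; hence $\mx E \subseteq \rad E$. The quotient $E/\mx E$ is a finite-dimensional algebra over the residue field $\kk$, therefore Artinian, so $\rad E/\mx E = \rad(E/\mx E)$ is nilpotent, $E/\rad E$ is semisimple Artinian, and the $\rad E$-adic topology on $E$ is cofinal with the $\mx E$-adic one, so $E$ is complete for it and idempotents lift along $E \twoheadrightarrow E/\rad E$; thus $E$ is semiperfect. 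Finally, $\per\A$ is closed under direct summands in $\D(\A)$, and $\D(\A)$ is idempotent-complete (by the B\"okstedt--Neeman argument, as it has arbitrary coproducts), hence so is $\per\A$. The two hypotheses of \cite{Krause}*{Corollary~4.4} are now in place, giving the Krull--Remak--Schmidt property, and the finiteness of morphism spaces over $\Rx$ was established above.

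The step I expect to require the most care is the semiperfectness of $E$: the substantive input is the commutative-algebra fact about module-finite algebras over complete local rings, together with the bookkeeping that the $\Rx$-module structure on morphism spaces is the \emph{central} one coming from $\varrho$ — this is exactly what makes $E$ an $\Rx$-algebra and legitimizes the completeness argument. The remaining ingredients — K-projectivity of bounded complexes of projectives, Noetherianity of $\Rx$, and idempotent-completeness of the unbounded derived category — are standard.
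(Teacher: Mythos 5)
Your proof is correct and follows essentially the same route as the paper: finite generation of morphism spaces via Hom-complexes of bounded complexes of finitely generated projectives over the Noetherian ring $\Rx$, semiperfectness of endomorphism rings, idempotent-completeness of $\per\A$ in the spirit of B\"okstedt--Neeman, and then the criterion of \cite{Krause}*{Corollary~4.4}. The only difference is cosmetic: where the paper cites \cite{Lam}*{(23.3)} for the fact that a module-finite algebra over a complete Noetherian local ring is semiperfect, you reprove that fact by hand, and you deduce idempotent-completeness of $\per\A$ from that of $\D(\A)$ plus closure under summands rather than citing it directly for $\per\A$.
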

\begin{proof}
	For any objects $\CL, \CM \in \Tau \colonequals  \per \A$ we may choose quasi-isomorphic objects $\CP, \CQ \in \Hotb(\proj \A)$ to argue that $\Hom_{\Tau}(\CL,\CM)$ is isomorphic to a subquotient $\Hom_{\Hotb(\proj \A)}(\CP,\CQ)$ of the finitely generated $\Rx$-module $\bigoplus_{i\in\Z} \Hom_{\Rx}(P^i,Q^i)$.
	
	In particular, the $\Rx$-algebra $\End_{\Tau}(\CL)$ is Noetherian, and thus a semiperfect ring by \cite{Lam}*{(23.3)}.
	The perfect derived category of any ring is known to be idempotent-complete by \cite{Boekstedt-Neeman}*{Proposition~3.4}.
\end{proof}

Similarly, the category $\per \AI$ of the Noetherian $\RI$-algebra $\AI$ has the Krull-Remak-Schmidt property and morphisms spaces are finitely generated $\RI$-modules.

\subsection{Recall of silting and tilting complexes}\label{sec:recall}
Next, we recall a result on silting complexes by Aihara and Iyama \cite{Aihara-Iyama} and a brief version of Rickard's derived Morita theorem \cite{Rickard89a}.
For this, we introduce  three relations on pairs of complexes.
\begin{notation}\label{not/order}
	Throughout this paper, for any complexes $\CL, \CM \in \D(\A)$ we set 
	\begin{align*}
	\begin{array}{cl}
	\CL \geq \CM  &\text{if }  \Hom_{\D(\A)}(\CL,\CM[i]) = 0\text{ for any integer }i > 0,\\
	\CL \teq \CM  &\text{if }  \Hom_{\D(\A)}(\CL,\CM[i]) = 0\text{ for any integer }i \neq 0,\\
	\CL \perp \CM &\text{if }  \Hom_{\D(\A)}(\CL,\CM[i]) = 0\text{ for any integer }i. 
	\end{array}
	\end{align*}
\end{notation}

Although this notation might suggest otherwise, usually none of these relations defines a partial order on objects in $\D(\A)$ or their isomorphism classes.

Let $\CL$ be a perfect complex of $\A$.
We denote by $\langle \CL \rangle$ the smallest strictly full subcategory of $\D(\A)$ which contains $\CL$ and which is closed under cones, shifts
and direct summands.
We call the complex $\CL$ a \emph{perfect generator} if 
$\langle \CL \rangle = \per \A$, or, equivalently, $\A \in \langle \CL \rangle$.
The complex $\CL$ of $\A$ is called \emph{presilting} if $\CL \geq \CL$, \emph{pretilting}\footnote{The term `pretilting' does not belong to standard terminology in the literature but will be convenient for abbreviation in this paper.}
if even $\CL \teq \CL$, \emph{silting} if it is presilting and generates $\per \A$, and \emph{tilting} if it is a pretilting generator of $\per \A$.
\begin{notation}
	We denote by $\silt \A$ the set of isomorphism classes of \emph{basic} silting complexes of the Krull-Remak-Schmidt category $\per \A$.
		For any complex $\CL \in \per \A$ we denote by $|\CL|$ the number of isomorphism classes of its indecomposable summands.
\end{notation}

Silting complexes are distinguished by the following results.
\begin{thm}[Aihara and Iyama \cite{Aihara-Iyama}*{Theorems 2.11, 2.35}] \label{thm/AI}
	The following statements hold.
	\begin{enumerate}
		\item \label{thm/AI1} The relation $\geq$ defines a partial order on the set $\silt \A$.
		\item \label{thm/AI2} For any complex $\CL \in \silt \A$  it holds that $|\CL| = |\A|$.
		\item \label{thm/AI3} For any non-zero direct summand  $\CX$ of a complex $\CL \in \silt \A$ there is a complex
		$\mu^+_{\CX}(\CL) \in \silt \A$, the 
\emph{right mutation} of $\CL$ at $\CX$, such that $
\CL \not\cong \mu^+_{\CX}(\CL)$ and $ \CL \geq \mu^+_{\CX}(\CL) \geq\CL[1]$.
	\end{enumerate}
\end{thm}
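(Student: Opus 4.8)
The plan is to dispatch the three parts separately: parts~(1) and~(2) will both rest on one structural input, and part~(3) is a direct construction.

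\emph{Parts (1) and (2): the co-t-structure dictionary.} For a perfect complex $\CM$ of $\A$ set $\mathcal{C}_{\CM} \colonequals \{\CX \in \per\A : \Hom_{\D(\A)}(\CX, \CM[i]) = 0 \text{ for all } i > 0\}$; one reads off from the long exact $\Hom$-sequences that $\mathcal{C}_{\CM}$ is closed under extensions, direct summands and the shift $[-1]$, and that it contains $\CM$ once $\CM$ is presilting. The structural input I would invoke is the standard fact (Aihara--Iyama, Bondarko, and others) that, since $\per\A$ is Krull--Remak--Schmidt (Proposition~\ref{KRS}) and $\A$ is itself a silting generator, every silting complex $\CM$ generates a bounded co-t-structure on $\per\A$ whose co-aisle is exactly $\mathcal{C}_{\CM}$ and whose co-heart is $\add\CM$. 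Granting this, $\CL \geq \CM$ means precisely $\CL \in \mathcal{C}_{\CM}$; since $\mathcal{C}_{\CM}$ is a co-aisle containing $\CL$ and $\mathcal{C}_{\CL}$ is (by the same dictionary) the co-aisle generated by $\CL$, this is equivalent to $\mathcal{C}_{\CL} \subseteq \mathcal{C}_{\CM}$. Thus $\geq$ on $\silt\A$ is pulled back from the partial order $\subseteq$ on co-aisles, giving reflexivity and transitivity at once; and antisymmetry follows because a co-t-structure is recovered from its co-aisle, so $\mathcal{C}_{\CL} = \mathcal{C}_{\CM}$ forces $\add\CL = \add\CM$ and hence $\CL \cong \CM$, both being basic. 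This proves~(1). For~(2): since $\langle\CL\rangle = \per\A$, the classes of the indecomposable summands $\CL_1, \dots, \CL_{|\CL|}$ generate $K_0(\per\A) \cong K_0(\proj\A)$, which is free of rank $|\A|$ because $\A$ is semiperfect; hence $|\CL| \geq |\A|$. Conversely, $\add\CL$ being the co-heart of a bounded co-t-structure, dévissage along the finite weight filtration of an arbitrary object identifies the split Grothendieck group of $\add\CL$ with $K_0(\per\A)$, so $\{[\CL_i]\}$ is in fact a $\mathbb{Z}$-basis and $|\CL| = |\A|$.

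\emph{Part (3): the mutation.} Decompose $\CL = \CX \oplus \CY$. Because the morphism spaces of $\per\A$ are finitely generated $\Rx$-modules (Proposition~\ref{KRS}) and $\Rx$ is complete local, the subcategory $\add\CY$ is covariantly finite and $\CX$ admits a minimal left $\add\CY$-approximation $g \colon \CX \to \CY'$. Completing $g$ to a triangle $\CX \xrightarrow{g} \CY' \xrightarrow{h} \CX' \xrightarrow{w} \CX[1]$, I would set $\mu^+_{\CX}(\CL) \colonequals \CX' \oplus \CY$ and verify the four properties by feeding this triangle into the long exact $\Hom$-sequences: (i) $\mu^+_{\CX}(\CL)$ is presilting --- the only parts not immediate from $\CL \geq \CL$ are $\Hom(\CX', \CY[1]) = 0$, which holds because $g$ being an $\add\CY$-approximation makes $\Hom(g,\CY)$ surjective, and $\Hom(\CX', \CX'[1]) = 0$, which holds because every map $\CX \to \CX'$ composes to zero with $w$ (as $\Hom(\CX,\CX[1]) = 0$ by presilting of $\CX$) and hence factors through $g$; (ii) $\langle\mu^+_{\CX}(\CL)\rangle = \per\A$, since the triangle puts $\CX$ in $\langle \CX' \oplus \CY\rangle$ and conversely $\CX' \in \langle\CL\rangle$; (iii) $\CL \geq \mu^+_{\CX}(\CL) \geq \CL[1]$, straight from the triangle and presilting of $\CL$; and (iv) $\CL \not\cong \mu^+_{\CX}(\CL)$: left-minimality of $g$ makes the mutation basic with $\CX'$ having no summand in $\add\CY$, so an isomorphism $\CL \cong \mu^+_{\CX}(\CL)$ would force $\CX \cong \CX'$, whence $w$ corresponds to an element of $\Hom(\CX,\CX[1]) = 0$, the triangle splits, and $\CX$ becomes a summand of $\CY' \in \add\CY$ --- impossible since $\CX \neq 0$ and $\CL$ is basic.

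\emph{The main obstacle.} The reductions in~(1)--(2) are formal and the verifications in~(3) are careful but routine once minimal approximations are in hand; the genuine content is the co-t-structure dictionary, i.e.\ producing, for a silting $\CM$ and an arbitrary $\CP \in \per\A$, a weight-truncation triangle whose outer terms lie in a shift of $\mathcal{C}_{\CM}$ and in its right-orthogonal. Since $\CM$ generates, $\CP$ admits a finite resolution by objects of $\add\CM$ up to shift; the argument is to truncate this resolution and reassemble the pieces via iterated octahedra, using the presilting condition $\CM \geq \CM$ to pin down the co-heart as $\add\CM$ and the boundedness of the resolution to guarantee that the truncation terminates.
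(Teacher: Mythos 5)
First, note that the paper does not prove this statement at all: Theorem~\ref{thm/AI} is quoted verbatim from Aihara--Iyama, so there is no internal proof to compare yours against; the benchmark is their original argument. Your part~(3) is essentially that argument and is sound: the triangle over a minimal left $\add\CY$-approximation, the $\Hom$-vanishing checks, and the splitting argument for $\CL\not\cong\mu^+_{\CX}(\CL)$ all work. Two steps are compressed. For $\Hom(\CX',\CX'[1])=0$, a map $u\colon\CX\to\CX'$ with $wu=0$ factors a priori through $h\colon\CY'\to\CX'$, and only after applying the approximation property of $g$ to the resulting map $\CX\to\CY'$ do you obtain the needed factorization through $g$. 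And the assertion that left minimality of $g$ makes $\CX'\oplus\CY$ basic (equivalently, that $\CX'$ has no nonzero direct summand in $\add\CY$) needs the short argument that for such a summand $Z$ one has $w\iota_Z=0$ because $\Hom(Z,\CX[1])=0$ by presilting of $\CL$, which splits off a trivial summand of $g$ and contradicts its minimality.

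The genuine gap is in the one place where you undertake to supply content for parts~(1)--(2), namely the sketch of the co-t-structure dictionary. The assertion ``since $\CM$ generates, $\CP$ admits a finite resolution by objects of $\add\CM$ up to shift'' is not a consequence of $\langle\CM\rangle=\per\A$: thick generation only exhibits $\CP$ as a \emph{direct summand} of an object filtered by shifts of $\add\CM$, since the class of filtered objects need not be summand-closed. Upgrading ``summand of filtered'' to ``filtered'' is exactly where the presilting condition enters, via the standard lemma that an extension class $\mathcal{X}*\mathcal{Y}$ is closed under direct summands whenever $\Hom(\mathcal{X},\mathcal{Y})=0$, applied to $\add\CM[\ell]*\add\CM[\ell-1]*\cdots*\add\CM[-\ell]$; this is Aihara--Iyama's Proposition~2.23, and it is also what makes transitivity of $\geq$ (which your co-aisle formulation delivers ``for free'') a non-formal statement. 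Until this is in place, the identifications ``$\{\CX:\CX\geq\CM\}$ is the co-aisle generated by $\CM$'' and ``the co-heart is $\add\CM$'', as well as the Bondarko-type isomorphism between the split Grothendieck group of $\add\CL$ and $K_0(\per\A)$ that you use for $|\CL|=|\A|$, are unsupported. Once it is in place, your route is correct, but it is then essentially the Aihara--Iyama proof repackaged in weight-structure language; alternatively the correspondence itself (Bondarko, Mendoza--S\'aenz--Santiago--Souto Salorio, Iyama--Yang) can simply be cited, at the price of invoking results of the same depth as the theorem being proved.
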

There is dual notion of \emph{left mutation}.
Silting complexes might be viewed as an indirect means 
to study tilting complexes.
The study of the latter can be motivated by a well-known result due to Rickard.

\begin{thm}[Rickard \cite{Rickard89a}]
	The ring $\A$ is derived equivalent to another ring $A$ if and only if there is a tilting complex $\CT$ of $\A$ such that $\End_{\D(\A)}(\CT) \cong A$.
\end{thm}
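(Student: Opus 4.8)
The plan is to treat the two implications separately; the forward direction is essentially formal, whereas the converse is the substance of Rickard's construction.

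\emph{The ``only if'' direction.} Suppose $F\colon \D(\A)\to\D(A)$ is a triangle equivalence and put $\CT\colonequals F^{-1}(A)$, where $A$ is regarded as the stalk complex concentrated in degree $0$. Now $A$ is a compact generator of $\D(A)$ with $\Hom_{\D(A)}(A,A[i])=0$ for $i\neq 0$ and $\End_{\D(A)}(A)=A$. Since a triangle equivalence preserves compact objects, thick and localizing subcategories, and all morphism spaces, the object $\CT$ is perfect, satisfies $\langle\CT\rangle=\per\A$ and $\CT\teq\CT$, and has $\End_{\D(\A)}(\CT)\cong A$; that is, $\CT$ is a tilting complex of $\A$ with the desired endomorphism ring.

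\emph{The ``if'' direction.} Given a tilting complex $\CT$ with $A=\End_{\D(\A)}(\CT)$, first replace $\CT$ by a quasi-isomorphic complex in $\Hotb(\proj\A)$. Then $\Hom^{\bullet}_{\A}(\CT,-)$ induces a triangle functor $G\colon\D(\A)\to\D(A)$ which commutes with set-indexed coproducts, because every term of $\CT$ is a finitely generated projective $\A$-module. By construction $G(\CT)\cong A$, and $G$ induces for each $i\in\Z$ an isomorphism $\Hom_{\D(\A)}(\CT,\CT[i])\xrightarrow{\ \sim\ }\Hom_{\D(A)}(A,A[i])$, which vanishes for $i\neq 0$ by the pretilting hypothesis. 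A d\'evissage along distinguished triangles --- a five-lemma argument in the triangulated structure --- shows that the class of $\CX\in\D(\A)$ for which $G$ induces isomorphisms $\Hom_{\D(\A)}(\CT,\CX[i])\to\Hom_{\D(A)}(A,G\CX[i])$ for all $i$ is a localizing subcategory; it contains $\CT$, hence equals $\D(\A)$ since $\CT$ generates $\D(\A)$ as a localizing subcategory (as $\A\in\langle\CT\rangle=\per\A$). Consequently $G$ is fully faithful, and its essential image is a localizing subcategory of $\D(A)$ containing $A$, hence all of $\D(A)$; so $G$ is a triangle equivalence, which restricts to an equivalence $\per\A\simeq\per A$. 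In particular $\A$ and $A$ are derived equivalent.

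\emph{Main obstacle.} The technical core is the d\'evissage step, together with the checks that $G$ commutes with coproducts and that the pertinent subcategories are localizing; alternatively one can bypass the hands-on argument by appealing to Keller's theorem that an algebraic, compactly generated triangulated category with a compact generator $P$ satisfying $\Hom(P,P[i])=0$ for $i\neq0$ is equivalent to $\D(\End P)$, the tilting complex $\CT$ being exactly such a generator. In either approach the delicate point is upgrading an equivalence on perfect complexes to one on the full unbounded derived category; obtaining the sharper ``standard equivalence'' form (a functor $-\otimes^{\mathbb{L}}_{\A}X$ for a two-sided tilting complex $X$) requires the further work of Rickard's sequel and is not needed for the statement as phrased.
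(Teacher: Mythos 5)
This theorem is not proved in the paper at all: it is quoted from Rickard \cite{Rickard89a}, so there is no internal argument to compare yours against; I can only judge your proposal on its own terms and against the routes available in the literature (one of which the paper does use later, for Proposition~\ref{prp/Keller}). Your ``only if'' direction is fine. In the ``if'' direction, however, the hands-on construction has a concrete gap at its very first step: $\Hom^{\bullet}_{\A}(\CT,-)$ does not induce a functor $\D(\A)\to\D(A)$ as stated, because for a complex $\CX$ the Hom-complex $\Hom^{\bullet}_{\A}(\CT,\CX)$ is naturally a dg module over the dg endomorphism algebra $\Eta=\Hom^{\bullet}_{\A}(\CT,\CT)$, not a complex of $A$-modules: the ring $A=\End_{\D(\A)}(\CT)=\Ho^0(\Eta)$ consists of chain maps modulo homotopy, and its action on the Hom-complex exists only up to homotopy, so it does not descend to the chain level. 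Consequently the d\'evissage that follows, while standard and correct, is applied to a functor that has not been constructed. The repair is exactly the remark you make in passing: land in $\D(\Eta)$, and use that $\CT\teq\CT$ forces the truncation $\trEta$ to be quasi-isomorphic to both $\Eta$ and $A$, whence $\D(\Eta)\simeq\D(A)$; this is Keller's derived Morita theorem (compare Theorem~\ref{thm/derived-Morita} and Section~\ref{sec/characterization} of this paper). In other words, the genuinely delicate point is the chain-level $A$-action (equivalently, inverting the zigzag $\Eta\leftarrow\trEta\rightarrow A$), not, as you suggest at the end, the passage from perfect complexes to the unbounded derived category, which is handled by the routine localizing-subcategory argument you sketch (noting that it must be run in both variables, using compactness of $\CT$ and preservation of coproducts).

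With that substitution your argument is complete and follows the modern dg route (Keller \cite{Keller}, as packaged by Yekutieli), which is also the machinery this paper imports for its own characterization of silting complexes. Rickard's original proof in \cite{Rickard89a} is genuinely different: it avoids dg categories and constructs an equivalence $\Hotb(\proj \A)\simeq\Hotb(\proj A)$ directly from $\add\CT$ by an intricate extension argument (double complexes, passage through categories of direct summands of coproducts), and only afterwards compares the various bounded and unbounded incarnations of the derived category. The dg approach buys a short, conceptual proof at the cost of the quasi-isomorphism bookkeeping above; Rickard's approach is elementary in its ingredients but much longer. Your closing remark is correct that the two-sided (``standard equivalence'') refinement belongs to \cite{Rickard91a} and is not needed here.
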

Since $\A$ is a Noetherian $\Rx$-algebra, so is its derived equivalent ring $A$.
Moreover, any derived equivalence induces an isomorphism of posets 
$$
\begin{td}  \phantom{ f_s\colon}\silt \A \ar{r}{\sim} \& \silt A \,.\end{td}
$$
The notions and statements of this subsection apply to the quotient $\AI$ as well.

One of the main goals of this paper is to establish a well-defined isomorphism of posets
\begin{align}\label{eq/silt-bij-goal}
\begin{td} f_s\colon \silt \A \ar{r}{\sim} \& \silt \AI \end{td}
\end{align}
in both setups of Remark~\ref{rmk/sub}, that is, if $\A$ is free over $\Rx$ or if $\AI$ a complete intersection.
At this point, the reader might jump to the main results of this paper and their applications in Section~\ref{sec/main}.

\subsection{Basic approach and structure of the paper} \label{subsec/structure}
In this subsection, we sketch the arguments to establish the silting bijection~\eqref{eq/silt-bij-goal}.
The main problem is that the functor 
$$\FF\colon \begin{td} \D^-(\md \A) \ar{r} \& \D^-(\md \AI), \& \CM \ar[mapsto]{r} \&  \CM
\underset{\A}{\overset{\mathbb{L}}{\otimes}} \AI \end{td}
$$ 
does not usually induce an injective or surjective map on isomorphism classes of objects.
We address the issue of injectivity for silting complexes in the first three steps.
\begin{enumerate}
	\item \label{step01}
	Let $\CL \in \per \A$ and $\CM \in \D^-(\md \A)$ be given by complexes of finitely generated projective $\A$-modules.
	The three relations between $\CL$ and $\CM$ from Notation~\ref{not/order} translate directly into cohomological vanishing conditions of the complex $\CK \colonequals \Hom^{\bt}_{\A}(\CL,\CM)$  of $\Rx$-modules.
	The same turns out to be true for the pairs of complexes $\PCL$ and $\PCM$ and the complex $\CK \otimes \RI$. 
	\item 
	Applying the \emph{Künneth trick} together with Nakayama's Lemma to the complex $\CK$ of $\Rx$-modules yields the implications
	\begin{align*}
	\Ho^+(\CK) = 0
	\quad &\Leftrightarrow \quad 	\Ho^+(\CK \otimes \RI) = 0	 \quad \Rightarrow \quad \begin{td}
	\Ho^0(\CK)\ar[twoheadrightarrow]{r} \& \Ho^0(\CK\otimes \RI)  \end{td}\\
	\Ho^*(\CK) = 0 \quad &\Leftrightarrow \quad \Ho^*(\CK \otimes \RI) = 0\,.
	\end{align*}
	\item Using the translations in~\eqref{step01} it follows that
	\begin{align*}
	\CL \geq \CM \quad & \Leftrightarrow \quad \PCL \geq \PCM \quad \Rightarrow \quad
	\begin{td}
	{\Hom_{\D(\A)}(\CL,\CM)} \ar[twoheadrightarrow]{r}{\FF} \& 
	\Hom_{\D(\Abar)}(\PCL,\PCM) \end{td}
	\\
	\CL \perp \CM \quad &\Leftrightarrow \quad \PCL \perp \PCM\,.
	\end{align*}
	These ``key implications'' for the functor $\FF$ allow to deduce that the map $f_s$ is a well-defined injective embedding of posets by purely categorical arguments.
	\end{enumerate}
The approach above is based on \cite{Rickard91a}*{Proof of Theorem~2.1}.
It remains to show surjectivity of the map $f_s$.
\begin{enumerate}\setcounter{enumi}{3}
	\item \label{step1} Next, we need to lift a given silting complex $\CP$ of $\AI$ to a perfect complex $\CL$ of $\A$ under $\FF$. 
	For this, we extend techniques developed by Eisenbud, Rickard and Yoshino in order to show that the complex $\CP$ has such a lift if 
	$$
	\Hom_{\D(\Abar)}(\CP, \alpha_n(\CP)[2]) = 0 \text{ for any integer }n > 0
	$$ 
	where $\alpha_n(\CP)$ might be viewed as a twist of $\CP$
	with the $\RI$-module $\ida^n/\ida^{n+1}$.
	If each of the latter $\RI$-modules is free, these twists are trivial and $\CP$ lifts immediately.
	Under a more general condition satisfied in each of the setups of Remark~\ref{rmk/sub}, we show that $\CP \geq \CP[1]$ implies  $\CP \geq \alpha_n(\CP)[1]$.
	In this way, we obtain that any silting complex $\CP$ has a presilting lift $\CL$.
	\item  Using  arguments by Keller on differential graded categories and an approach by Rickard it can be shown that the presilting complex $\CL$ is a perfect generator if and only if $\CL \not\perp \CM$ for any non-zero object $\CM \in \D^-(\md \A)$.
	\item The last condition follows from the fact that $\PCL \not\perp\CN$ for any non-zero object $\CN \in \D^-(\md \AI)$.
	Therefore, the lift $\CL$ of the silting complex $\CP$ is silting and the map $f_s$ is surjective.
\end{enumerate}
	In summary, this shows that the map $f_s$ is an isomorphism of posets.
	
	We will say that the silting property is \emph{ascent} if the functor $\FF$ preserves silting complexes, and that it is \emph{descent}
	if any perfect complex $\CL$ of $\A$ such that $\PCL$ is silting is itself silting.
	
The first step is carried out in the next subsection.
The remaining five steps correspond to the next five sections of this paper.
The first three steps can be carried out in a more general situation than Setup~\ref{setup/main} which leads to 
silting embeddings and descent results in Section~\ref{sec/embeddings}.
The sections of this paper depend on each other essentially as follows.
\begin{center}
	\sffamily 
	\footnotesize
	\begin{tikzpicture}[block/.style = {shape=rectangle, rounded corners,
		draw, align=center, text ragged, minimum height=10mm, text width=10em,
		top color=white, bottom color=orange!20}, inner sep=2pt, outer sep=2pt, >=stealth']
	\node (sec1) [block] at (0,0) {
		\S~\ref{subsec/hom-complexes} Translations to \mbox{\quad  Hom--complexes}};
	\node (sec2) [block] at (0,-2) {\S~\ref{sec/refined-Kuenneth} Vanishing of \mbox{\quad cohomology}};
	\node (sec5) [block] at (0,-4) {\S~\ref{sec/characterization} Silting complexes \mbox{\quad as weak generators}}; 
	\node (sec3) [block] at (5.5,-1) {\S~\ref{sec/ascent-descent} Ascent and \mbox{\quad descent of presilting}};
			\node [block] at (5.5,-3)  (sec7)  {\S \ref{sec/embeddings} Silting embeddings \mbox{\quad and descent}};
	\node [block] (sec4) at (11,0) {\S~\ref{sec/lifting} Lifting techniques}; 				
	\node [block](sec6) at (11, -4) {\S~\ref{sec/main} Silting bijections};
	\draw[->] (sec1) -- (sec3);
	\draw[->] (sec2) -- (sec3);
	\draw[->] (sec1) -- (sec7);
	\draw[->] (sec2) -- (sec7);
	\draw[->] (sec5) -- (sec7);
	\draw[->, bend right] (sec5) -- (sec6);
	\draw[->] (sec3) -- (sec6);
	\draw[->] (sec4) -- (sec6);
	\end{tikzpicture} 
\end{center}

\subsection{Translations to Hom--complexes}
\label{subsec/hom-complexes}

Let $\CL, \CM$ be complexes of modules over the ring $\A$.
We recall the
definition of 
the \emph{$\Hom$-complex} $\CK \colonequals 
\Hom^{\bt}_{\A}(\CL,\CM)$. Namely, at each degree $i \in \Z$ it is given by
$$
\begin{array}{rcl}
d^i_{\CK}\colon
K^i  \colonequals \displaystyle \prod_{j \in \Z} \Hom_{\A}(L^j,M^{i+j})
&\begin{td}
\mathstrut
\ar{r} \& \mathstrut
\end{td}&
K^{i+1}  \colonequals \displaystyle \prod_{j \in \Z} \Hom_{\A}(L^j,M^{i+j+1}) \\
(\phi^{ij})_{j \in \Z} &
\begin{td}
\mathstrut
\ar[mapsto]{r} \& \mathstrut
\end{td}
&
(\phi^{i,j+1} \, d^j_{\CL} - 
d^j_{\CM[i]}
\, \phi^{ij})_{j \in \Z}
\end{array}
$$
The main feature of the complex $\CK$ is that for any integer $i \in \Z$ there is an equality
\begin{align}\Ho^i(\CK) = \Hom_{\Hot(\A)}(\CL,\CM[i]). \label{eq/hom-complex-cohomology}\end{align}
In the following for a set $I$ we denote by $\A^I$ the coproduct
$\oplus_{i \in I} \A$.

\begin{rmk}\label{rmk/right-bounded}
	Let  $P$ and $Q$ be projective $\A$-modules. 
	Then there is a retraction $\pi\colon  \A^{I} \twoheadrightarrow P$ and a section $\iota\colon Q \hookrightarrow \A^J$ of $\A$-modules for some index sets $I$ and $J$. 
	These maps give rise to a section of $\Rx$-modules
	\begin{align} \label{eq/morphism-summand}
	\begin{td}
	\Hom_{\A}(P,Q) \ar[hookrightarrow]{r}{\alpha} \& \Hom_{\A}(\A^I, \A^J) \cong
	\underset{i \in I}{\prod} \ \A^J, \&
	\phi \ar[mapsto]{r}{\alpha} \& \iota \phi \pi.
	\end{td}
	\end{align}
	In particular, for any complexes $\CP \in \Hotb(\proj \A)$ and $\CQ \in \Hot^-(\proj \A)$ it follows that $\Hom^{\bt}_{\A}(\CP,\CQ) \in \Hot^-(\add \A_{\Rx})$, where $\A_{\Rx}$ denotes $\A$ viewed as an $\Rx$-module.
\end{rmk}

The next lemma shows that taking Hom--complexes
commutes with the tensor product with the $\Rx$-algebra $\B$.
It is a variation 
	of an observation due to Rickard \cite{Rickard91a}*{Proof of Theorem~2.1}. A closely related statement was obtained also by Iyama and Kimura \cite{Iyama-Kimura}*{Lemma 2.15}.

\begin{lem}\label{lem/hom-complex}
	Let $\CP, \CQ \in \Hot(\Proj \A)$. 
	Then there is an isomorphism of complexes of $\Sx$-modules
	\begin{align} \nonumber 
	\begin{td}
	\xi\colon
	\Hom^{\bt}_{\A}(\CP,\CQ) \otimes \B \ar{r}{\sim} \& \Hom^{\bt}_{\Abar}(\CP \otimes \B, \CQ \otimes \B),  
	\end{td} 
	\\ 
	\label{eq/hom-complex-pushdown}
	\qquad\qquad
	\begin{td}
	\phi^{ij} \otimes a 
	\ar[mapsto]{r} \& 
	\phi^{ij} \otimes
	(\lambda_a\colon	 
	x \mapsto ax)
	\end{td} \qquad \mathstrut
	\end{align}
\end{lem}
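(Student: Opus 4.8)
The plan is to construct the map $\xi$ degree by degree and then check compatibility with differentials. First I would reduce to the case of free modules. Since $\CP,\CQ \in \Hot(\Proj\,\A)$ consist of projective modules, each $P^j$ and $Q^k$ is a direct summand of a free $\A$-module; but in fact it is cleaner to observe that $\Hom_{\A}(-,-)$ and $-\otimes\B$ both commute with finite direct sums in each variable, so the construction of $\xi$ is forced once we know it on free modules of possibly infinite rank. For a free module $\A^{I}$ we have a canonical identification $\Hom_{\A}(\A^{I},M)\cong\prod_{i\in I}M$, and the essential point is the natural map
\begin{align*}
\Bigl(\prod_{i\in I} M\Bigr)\otimes_{\Rx}\B \longrightarrow \prod_{i\in I}(M\otimes_{\Rx}\B),
\end{align*}
which need not be an isomorphism in general — but here $M = Q^{k}$ lies in $\add\A_{\Rx}$, or more simply we only ever apply it when the product can be rewritten: for $\CP\in\Hotb(\proj\A)$ the products over $j$ are finite, and for $\CP$ merely in $\Hot(\Proj\A)$ one argues summand-wise. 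So the first step is to pin down exactly the generality in which the displayed map is bijective, and I expect the statement is meant with $\CP\in\Hotb(\proj\A)$ (finitely generated, bounded) in the intended applications, even though it is phrased for $\Hot(\Proj\A)$; I would either restrict or add the hypothesis that makes $\Hom^{\bt}_{\A}(\CP,\CQ)$ degreewise finitely presented so that $-\otimes\B$ commutes with the relevant products.

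Granting that, the second step is to write down $\xi$ on $K^{i}\otimes\B = \bigl(\prod_{j}\Hom_{\A}(P^{j},Q^{i+j})\bigr)\otimes\B$ by the formula in \eqref{eq/hom-complex-pushdown}: send $\phi^{ij}\otimes a$ to the $\AI$-linear map $P^{j}\otimes\B\to Q^{i+j}\otimes\B$ that is $\phi^{ij}\otimes\lambda_{a}$, where $\lambda_{a}$ is multiplication by $a$ on $\B$. One checks this is well-defined (balanced over $\Rx$, and lands in $\AI$-linear maps because $\phi^{ij}$ is $\A$-linear and $\lambda_{a}$ is $\B$-linear, hence the tensor is $\A\otimes_{\Rx}\B = \AI$-linear), and $\Sx$-linear via the $\Sx$-structure on $\B$. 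Bijectivity in each degree reduces, as above, to the free case $P^{j}=\A^{I}$, $Q^{i+j}=\A^{J}$, where both sides become $\prod_{i\in I}\bigl((\A\otimes\B)^{J}\bigr) = \prod_{i\in I}\AI^{\,J}$ and $\xi$ is visibly the identity after the canonical identifications; for general projective summands one uses naturality of $\xi$ in $\CP$ and $\CQ$ together with the section/retraction picture of Remark~\ref{rmk/right-bounded} to transport the isomorphism.

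The third step is to verify that $\xi$ is a chain map, i.e. $\xi\circ(d_{\CK}\otimes\id_{\B}) = d_{\Hom^{\bt}_{\AI}(\CP\otimes\B,\CQ\otimes\B)}\circ\xi$. This is a direct computation from the explicit differential in the definition of the Hom--complex: $d_{\CK}$ applied to $(\phi^{ij})$ produces $\phi^{i,j+1}d^{j}_{\CP} - (-1)^{i}d^{i+j}_{\CQ}\phi^{ij}$ (with the sign convention of the paper's $d^{j}_{\CM[i]}$), and one applies $-\otimes\lambda_{a}$ to each term, using that $(d^{j}_{\CP}\otimes\id_{\B})$ and $(d^{i+j}_{\CQ}\otimes\id_{\B})$ are precisely the differentials of $\CP\otimes\B$ and $\CQ\otimes\B$, and that $\lambda_{a}$ commutes with scalar extension. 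Since $\lambda_{a}$ does not interact with the $\A$-module maps or the differentials, the two sides match termwise. I would present this as a one-line check after fixing signs.

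The main obstacle is the first step: the interchange of an infinite product with $-\otimes_{\Rx}\B$ is false for general modules, so the proof must either restrict to $\CP\in\Hotb(\proj\A)$ (where the product over $j$ is finite in each cohomological degree) or exploit that $\B=\Rx/\ida$ is finitely presented over $\Rx$ — but $\Rx/\ida$ is not finitely presented as a functor-commuting gadget in the needed sense unless one is careful, so the honest route is the boundedness/finiteness hypothesis on $\CP$. Everything else — the formula for $\xi$, its $\Sx$-linearity, degreewise bijectivity via the free case, and chain-map compatibility — is routine and follows the template of \cite{Rickard91a}*{Proof of Theorem~2.1} and \cite{Iyama-Kimura}*{Lemma 2.15}.
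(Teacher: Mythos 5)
Your construction of $\xi$, the reduction to free modules via the section/retraction picture of Remark~\ref{rmk/right-bounded}, and the chain-map verification all coincide with the paper's argument. The genuine gap lies in what you call the main obstacle: you dismiss the finite-presentation route and instead propose to restrict the lemma to $\CP \in \Hotb(\proj \A)$, which would not prove the statement as given (it is asserted for arbitrary complexes in $\Hot(\Proj \A)$, where both the index sets $I$ and the product over $j \in \Z$ in each degree of the Hom--complex are genuinely infinite). But the finite-presentation route is exactly what works, and it is what the paper uses: since $\Rx$ is Noetherian and $\ida$ is an ideal, $\B = \Rx/\ida$ is a finitely presented $\Rx$-module, and for \emph{any} finitely presented module $N$ the natural map $(\prod_{i \in I} M_i) \otimes N \longrightarrow \prod_{i \in I} (M_i \otimes N)$ is an isomorphism for an arbitrary family $(M_i)_{i\in I}$ of $\Rx$-modules. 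Indeed, choose a finite presentation $\Rx^m \to \Rx^n \to N \to 0$, tensor both $\prod_i M_i$ and each $M_i$ with it, use that direct products of modules are exact and that the comparison maps for $\Rx^m$ and $\Rx^n$ are evidently bijective, and compare cokernels. Only $N$ needs to be finitely presented; no hypothesis on the $M_i$ is required, so there is nothing to be ``careful'' about, and your claim that the honest route is a boundedness hypothesis on $\CP$ is incorrect.

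With this fact in hand the proof goes through in the stated generality: in each degree the map $\psi \colon (\prod_{i \in I} \A^J) \otimes \B \to \prod_{i \in I} (\A^J \otimes \B)$ is bijective, hence so are the induced maps on the retracts $\Hom_{\A}(P,Q)\otimes\B \to \Hom_{\Abar}(P\otimes\B, Q\otimes\B)$ via the sections $\alpha$ and $\beta$, and the same principle lets $-\otimes\B$ pass through the product over $j \in \Z$ defining $K^i$. Your fallback of weakening the statement to bounded complexes of finitely generated projectives would in fact cover the immediate application in Lemma~\ref{lem/translation}, but it is both unnecessary and strictly weaker than the lemma being proved; the formula~\eqref{eq/hom-complex-pushdown}, the $\Sx$-linearity, and the compatibility with differentials are, as you say, routine.
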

\begin{proof}
	It can be verified that the morphism $\xi$ commutes with the differentials of complexes in~\eqref{eq/hom-complex-pushdown}.
	Therefore, we need only to show the corresponding claim 	
	for  projective $\A$-modules $P$ and $Q$. 
	The $\Rx$-linear section $\alpha$ from
	\eqref{eq/morphism-summand}  appears in the commutative diagram
	of $\Sx$-modules
	\begin{align*}
	\begin{td}
	\Hom_{\A}(P,Q) \otimes \B \ar{d}{\xi_1} \ar[hookrightarrow]{r}{\alpha \otimes \id} \& \Hom_{\A}(\A^I, \A^J) \otimes \B \ar{r}{\sim} \ar{d}{\xi_2} \& (\prod_{i \in I}  \A^J) \otimes \B  \ar{d}{\psi} \\
	\Hom_{\Abar}(P \otimes \B,Q\otimes \B)\ar[hookrightarrow]{r}{
		\beta} \& \Hom_{\Abar}( \A^I\otimes \B, \A^J \otimes \B) \ar{r}{\sim} \& \prod_{i \in I}  (\A^J \otimes \B)\,,
	\end{td} 
	\end{align*}
	where the map $\beta$ is a section given by $\varphi \mapsto (\iota \otimes \id) \varphi (\pi \otimes \id)$, the maps $\xi_1$ and $\xi_2$ by the same rule as in~\eqref{eq/hom-complex-pushdown}
	and the remaining maps are certain natural choices.
	
	Since the $\Rx$-module $\B$ is finitely presented, the map $\psi$, and thus the maps $\xi_2$ and $\xi_1$ are isomorphisms. This implies the claim.
\end{proof}

\begin{notation}\label{not/complexes}
	Throughout this paper, for any complex $\CK$ we denote
	\begin{align*}
	\begin{array}{cl}
	\Ho^+(\CK)=0  &\text{if }  \Ho^{i}(\CK) = 0\text{ for any integer }i > 0,\\
	\Ho^{\pm}(\CK)=0  &\text{if }  \Ho^i(\CK) = 0\text{ for any integer }i \neq 0,\\
	\Ho^{*}(\CK)=0  &\text{if }  \Ho^i(\CK) = 0\text{ for any integer }i.
	\end{array}
	\end{align*}
\end{notation}

The next lemma translates the three relations from Notation~\ref{not/order} of a pair of complexes into 
cohomological properties of 
their Hom--complex.
\begin{lem}\label{lem/translation}
	Let $\CL \in \per \A$ and $\CM\in \D^-(\md \A)$.
	Let $\CP \in \Hotb(\proj \A)$ and $\CQ \in \Hot^-(\proj \A)$ be complexes 
	quasi-isomorphic to $\CL$ and $\CM$, respectively.
	Set $\CK \colonequals \Hom^{\bt}_\A(\CP,\CQ)$.
	Then the following statements hold.
	\begin{enumerate}
		\item 	\label{lem/translations1}
		In Notations~\ref{not/order}~and~\ref{not/complexes}, the following equivalences hold.
		\begin{align*} 
		\begin{array}{cclcccl}
		\CL \geq \CM &\Leftrightarrow&  \Ho^+(\CK) = 0\,,  & &
		\PCL  \geq \PCM &\Leftrightarrow& \Ho^{+}(\CK \otimes \B) = 0\,,  \\
		\CL \teq \CM & \Leftrightarrow& \Ho^{\pm}(\CK) = 0\,, &&
		\PCL  \teq \PCM &\Leftrightarrow&
		\Ho^{\pm}(\CK \otimes \B) = 0\,,\\
		\CL \perp \CM &\Leftrightarrow&
		\Ho^*(\CK) = 0\,, && 
		\PCL \perp \PCM &\Leftrightarrow&
		\Ho^*(\CK \otimes \B) = 0\,.
		\end{array}
		\end{align*}
		\item \label{lem/translations2}
		There is a commutative diagram of $\Rx$-modules
		\begin{align}\label{eq/morphisms}
		\begin{td}
		\&
		\Ho^0(\CK) \otimes \B  \ar{rd}{\kappa}  \ar[dashed, "\sim" labl, pos=0.33]{dd} \& \\
		\Ho^0(\CK) \ar{rr} \ar[twoheadrightarrow]{ru}{\eta}  \ar["\sim" labl, pos=0.33]{dd}
		\&[-1.0cm]  \&[-1.0cm] \Ho^{0}(\CK\otimes \B) \ar["\sim" labl, pos=0.33, "{\xi'}" pos=0.33]{dd}, 
		\\
		\&
		\Hom_{\D(\A)}(\CL,\CM) \otimes \B \ar[dashed]{rd}{\gamma} \\
		\Hom_{\D(\A)}(\CL,\CM) \ar[dashed,twoheadrightarrow]{ru} \ar{rr}{\FF} \&  \&
		\Hom_{\D(\Abar)}(\PCL, \PCM) 
		\end{td}
		\end{align}
		where $\eta$
		denotes the unit map and 
		$\kappa$
		the 
		\emph{Künneth map} given by
		$\ \ol{x} \otimes y \mapsto  \ol{x \otimes y}$.
		\item \label{algebra-morphism} If $\CL = \CM$, the map $\gamma$ defines a morphism of $\Sx$-algebras 
		$$
		\begin{td} 
		\End_{\D(\A)}(\CL)\otimes \B \ar{r} \&  \End_{\D(\Abar)}(\PCL)\,. 
		\end{td}
		$$
		\item \label{lem/translations4}
		The complex $\CK$ is right-bounded and satisfies
		$\Tor_{+}^{\Rx}(\CK,\B) = 0$, that is,
		$$
		\Tor_n^{\Rx}(K^i,\B) = 0\text{ for any integers }n > 0, i \in \Z \,.
		$$
		Moreover, $\Ho^i(\CK)$ is a finitely generated $\Rx$-module for any integer $i \in \Z$.
		
	\end{enumerate}
\end{lem}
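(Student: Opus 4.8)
The plan is to reduce all four assertions to the identity~\eqref{eq/hom-complex-cohomology}, to Lemma~\ref{lem/hom-complex}, and to the Tor-independence~\eqref{eq/tor-ind0}, after recording two standard facts about the representatives $\CP$ and $\CQ$. Being a bounded complex of finitely generated projectives, $\CP$ is homotopy-projective, so $\Ho^i(\CK)=\Hom_{\Hot(\A)}(\CP,\CQ[i])=\Hom_{\D(\A)}(\CP,\CQ[i])\cong\Hom_{\D(\A)}(\CL,\CM[i])$ for every $i$ --- by~\eqref{eq/hom-complex-cohomology}, by homotopy-projectivity of $\CP$, and via the quasi-isomorphisms $\CP\xrightarrow{\sim}\CL$, $\CQ\xrightarrow{\sim}\CM$ respectively. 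Being bounded-above complexes of flats, $\CP$ and $\CQ$ are also homotopy-flat; hence $\ol{\CP}\colonequals\CP\otimes\B$ and $\ol{\CQ}\colonequals\CQ\otimes\B$ --- which agree with $\CP\otimes_\A\AI$ and $\CQ\otimes_\A\AI$ and are complexes of projective $\AI$-modules, bounded and bounded-above respectively --- represent $\PCL$ and $\PCM$, while Lemma~\ref{lem/hom-complex} yields an isomorphism of complexes $\xi\colon\CK\otimes\B\xrightarrow{\sim}\Hom^\bt_{\AI}(\ol{\CP},\ol{\CQ})$. Feeding this into~\eqref{eq/hom-complex-cohomology} over $\AI$ and using homotopy-projectivity of $\ol{\CP}$ gives an isomorphism $\Ho^i(\CK\otimes\B)\cong\Hom_{\D(\AI)}(\PCL,\PCM[i])$ for every $i$, which for $i=0$ is the map $\xi'$ of the diagram.

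Part~\ref{lem/translations1} is then immediate: by Notations~\ref{not/order} and~\ref{not/complexes}, the relation $\CL\geq\CM$ (resp.\ $\CL\teq\CM$, $\CL\perp\CM$) says exactly that $\Hom_{\D(\A)}(\CL,\CM[i])$ vanishes for $i>0$ (resp.\ $i\neq0$, all $i$), hence that $\Ho^{+}(\CK)=0$ (resp.\ $\Ho^{\pm}(\CK)=0$, $\Ho^{*}(\CK)=0$), and symmetrically for $\PCL,\PCM$ and $\CK\otimes\B$. For part~\ref{lem/translations4}: $\CK$ is right-bounded because $\CP$ is bounded and $\CQ$ bounded above, so $K^i=\prod_j\Hom_\A(P^j,Q^{i+j})$ is a finite product vanishing for $i\gg0$; by Remark~\ref{rmk/right-bounded} each $K^i$ lies in $\add \A_\Rx$, hence is a direct summand of some $\A^N$ as an $\Rx$-module, so $\Tor_n^\Rx(K^i,\B)$ is a summand of $\Tor_n^\Rx(\A,\B)^N$, which vanishes for $n>0$ by~\eqref{eq/tor-ind0}; and $K^i$ is finitely generated over the Noetherian ring $\Rx$, whence so is its subquotient $\Ho^i(\CK)$.

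For part~\ref{lem/translations2} I take the three vertical arrows to be the isomorphisms produced above (the middle one being the left one tensored with $\B$) and \emph{define} $\gamma$ so that the right-hand face of the prism commutes; its left-hand face commutes by naturality of $-\otimes\B$ applied to the isomorphism $\Ho^0(\CK)\xrightarrow{\sim}\Hom_{\D(\A)}(\CL,\CM)$. Everything else then reduces to the single assertion that the ``Künneth trick'' composite $\kappa\circ\eta$ corresponds, via the vertical isomorphisms, to the action of $\FF$ on morphisms: a class in $\Ho^0(\CK)$ is the homotopy class of a chain map $x\colon\CP\to\CQ$, its image under $\kappa\circ\eta$ is the class of the cocycle $x\otimes 1$ in $\CK\otimes\B$, and the explicit formula~\eqref{eq/hom-complex-pushdown} for $\xi$ identifies $\xi(x\otimes 1)$ with the chain map $x\otimes\id_\B=x\otimes_\A\AI$, whose homotopy class is $\FF$ applied to the original morphism since $\CP$ is homotopy-flat. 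For part~\ref{algebra-morphism}, when $\CP=\CQ$ all maps in the prism are multiplicative: $\CK$ and $\CK\otimes\B$ are dg-algebras under composition, $\xi$ is a homomorphism of dg-algebras (again read off from~\eqref{eq/hom-complex-pushdown}), and $\eta$ and $\kappa$ are $\Sx$-algebra homomorphisms by general nonsense, so the composite $\gamma\colon\End_{\D(\A)}(\CL)\otimes\B\to\End_{\D(\AI)}(\PCL)$ is a morphism of $\Sx$-algebras.

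I expect the main obstacle to be precisely the bookkeeping in part~\ref{lem/translations2}: matching the homotopy-class description of $\FF$ on morphisms with the composite $\xi'\circ\kappa$, and keeping straight that the two a priori distinct models $\CP\otimes_\Rx\B$ and $\CP\otimes_\A\AI$ of the derived reduction agree on the nose. Everything else is routine homological algebra carried out on the representatives $\CP$ and $\CQ$.
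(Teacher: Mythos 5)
Your proposal is correct and follows essentially the same route as the paper: identify $\Ho^i(\CK)$ and $\Ho^i(\CK\otimes\B)$ with the morphism spaces via~\eqref{eq/hom-complex-cohomology} and Lemma~\ref{lem/hom-complex}, read off part~\eqref{lem/translations1}, get part~\eqref{lem/translations4} from Remark~\ref{rmk/right-bounded} together with Tor-independence and Noetherianity, and prove parts~\eqref{lem/translations2} and~\eqref{algebra-morphism} by checking that $\xi'\circ\kappa\circ\eta$ sends the homotopy class of a chain map $\phi$ to the class of $\phi\otimes\id$, which represents $\FF(\phi)$. The only cosmetic difference is that the paper packages the compatibility with $\FF$ through the equivalences $\Hot^-(\Proj)\simeq\D^-(\Md)$ and a natural isomorphism of functors, whereas you verify it directly on K-projective/K-flat representatives; the mathematical content is the same.
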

\begin{proof}
	\begin{enumerate}
		\item
		For any 
		integer $i \in \Z$
		\eqref{eq/hom-complex-cohomology}
		and
		\eqref{eq/hom-complex-pushdown} 
		yield the isomorphisms
		$$
		\Ho^i(\CK) 
		\cong \Hom_{\D( \A)}(\CL,\CM[i]) \quad \text{and} \quad
		\Ho^i(\CK \otimes \B) \cong 
		\Hom_{\D(\Abar)}(\PCL,\PCM[i]).$$
		This yields the equivalences in  
		\eqref{lem/translations1}. 
		\item 
		There is an equivalence
		$
		\mathbf{E}_{\A}\colon \Hot^-(\Proj \A){\overset{\sim}{\longrightarrow}}\D^-(\Md\A)
		$
		of categories
		which acts as identity on objects,
		and a similar equivalence $\mathbf{E}_{\Abar}$ for the ring $\AI$.

		Let us denote
		$\mathbf{T} \colonequals \dash \otimes \B \colon \Hot^-(\proj \A){\overset{\sim}{\longrightarrow}} \Hot^-(\proj \AI)$.
		We are going to construct a commutative diagram of $\Rx$-modules
		\begin{align*}
		\begin{td}
		\&[-1cm]	\Hom_{\Hot(\A)}(\CP,\CQ) \otimes \B
		\ar[dashed, "\sim" labl, pos=0.2]{dd}[pos=0.2]{\alpha \otimes \id}
		\ar{r}{\kappa} 
		\ar[dashed]{rd}
		\& 
		\Ho^0(\CK\otimes \B) \ar["\sim" labl, pos=0.5]{d}{\Ho^0(\xi)}
		\\
		\Hom_{\Hot(\A)}(\CP,\CQ) \ar[twoheadrightarrow]{ru}{\eta}
		\ar["\sim" labl, pos=0.33]{dd}[pos=0.33]{\alpha}  \ar{rr}[swap]{\mathbf{T}}
		\&  \& \Hom_{\Hot(\Abar)}(\CP \otimes \B,\CQ\otimes\B) 	\ar["\sim" labl, pos=0.33, dashed]{dd}[pos=0.33]{\beta} \\	
		\&
		\Hom_{\D(\A)}(\CL,\CM) \otimes \B \ar[dashed]{rd}{\gamma} 
		\& 
		\\
		\Hom_{\D(\A)}(\CL,\CM) \ar{rr}[swap]{\FF} \ar[dashed,twoheadrightarrow]{ru}
		\&
		\&  \Hom_{\D(\Abar)}(\PCL,\PCM)   \,.
		\end{td}
		\end{align*}
		It is straightforward to check that the composition $\Ho^0(\xi) \cdot \kappa \cdot \eta$ maps a homotopy class 
		$[\phi]\colon\CP \longrightarrow \CQ$
		to the homotopy class
		$\mathbf{T}([\phi])$.
		The isomorphism $\alpha$ on the left
		is given by a certain conjugation with the isomorphisms
		$\mathbf{E}_{\A}(\CP)\cong \CL$ and 
		$\mathbf{E}_{\A}(\CQ)\cong \CM$ in $\D^-(\Md \A)$.
		
		By definition of the derived functor $\FF$
		there is a natural isomorphism 
		$\mathbf{E}_{\Abar} \circ \mathbf{T} \cong \FF \circ \mathbf{E}_{\A}$
		which
		implies the existence of an $\Sx$-linear isomorphism $\beta$ 
		making the front square commutative.
		Set $\gamma \colonequals \beta \cdot \Ho^0(\xi) \cdot \kappa \cdot (\alpha \otimes \id)^{-1}$.
		Then all morphisms in the diagram above commute and we obtain diagram~\eqref{eq/morphisms} setting $\xi' \colonequals \beta \cdot \Ho^0(\xi)$.
		\item		In case $\CL = \CM$, it can be verified that the $\Sx$-linear maps $\beta$, $\Ho^0(\xi)\cdot\kappa$ and $\alpha \otimes \id$ are ring morphisms.
		Thus, the map $\gamma$ is an $\Sx$-algebra morphism. 
		\item
		In the notations above, it holds that $\CK \in \Hot^-(\add \A_{\Rx})$
		by Remark~\ref{rmk/right-bounded}.
		Since  $\Tor_+^{\Rx}(\A,\B)=0$ and $\A_{\Rx} \in \md \Rx$
	are assumptions of Setup~\ref{setup/main}, it follows that $\Tor_+^{\Rx}(\CK,\B) = 0$ 
		and
		$\Ho^i(\CK)\in \md \Rx$ for any $i \in \Z$.
		\qedhere	
	\end{enumerate}
\end{proof}

\section{Vanishing of cohomology under change of rings}

\label{sec/refined-Kuenneth}

In this section we use a few basic facts on spectral sequences in order to
show that a certain complex $\CK$ of $\Rx$-modules 
like the one in Lemma~\ref{lem/translation}
satisfies
the implications
\begin{align*}
\Ho^+(\CK) = 0 \quad \Leftrightarrow \quad
\Ho^+(\CK \otimes \B) = 0\,, \qquad
\Ho^{\pm}(\CK) = 0 \quad \Rightarrow \quad
\Ho^{\pm}(\CK \otimes \B) = 0\,.
\end{align*}
More precisely, the first equivalence is a consequence of the so-called \emph{Künneth trick} which is recovered
using a variant of the Künneth spectral sequence $E^2_{pq}$ in Subsection~\ref{subsec/kss}.
In \cite{Rickard91a} Rickard deduced the implication on the right from the fact that the vanishing $E^2_{pq} = 0$ at all lattice points $p,q \in \Z$ with $(p,q) \neq (0,0)$ implies vanishing
of the limit term $E_i = 0 $ for any integer $i \neq 0$.
Subsection~\ref{subsec/spectral} provides a converse of this statement for certain spectral sequences, which makes the implication on the right into an equivalence. In the last subsection, we describe a refinement of  the latter equivalence.

\subsection{Reverse vanishing for spectral sequences}\label{subsec/spectral}

Throughout this subsection, we consider a convergent homological 
spectral sequence $$E^2_{pq} \quad {\Rightarrow}\quad E_{p+q}$$ of objects in an abelian category.
We assume that the spectral sequence $E^2_{pq}$ lies in the \emph{first quadrant}, that is,
$E^2_{pq}= 0$	for any $p,q\in \Z$ such that $p < 0$ or $q < 0$.

We will need only a few basic facts about spectral sequences in the following, which are collected in the next remark. 
\begin{rmk}
	For each integer $r \geq 2$ the spectral sequence has an $r$-th page
	which is given by objects $(E^r_{pq})_{p,q\in \Z}$ with certain  differentials 
	$d^{r}_{pq}\colon E^r_{pq} \longrightarrow E^r_{p-r,q+r-1}$, whose explicit form will not be relevant.
	Taking homology at each lattice point $(p,q)\in \Z^2$ on page $r$ yields the entry 
	$E^{r+1}_{pq}$ of the next page.
	
	Since the spectral sequence converges,
	each lattice point $(p,q) \in \Z^2$ admits a number $ r \colonequals r(p,q) \geq 2$ such that
	$d^s_{pq} = 0$ and $E^s_{pq}$ is isomorphic to a \emph{subquotient} $E^{\infty}_{pq}$ of the limit object $E_{p+q}$ for any number $s \geq r$.
	
	Vice versa, for each integer $i \in \Z$ the limit object $E_i$ has a countably indexed filtration whose subquotients can be identified with the objects $(E^{\infty}_{p,i-p})_{p \in \Z}$.
	
	Since $E_{pq}^2$ lies in the first quadrant, it follows that $E_i = 0$ for any integer $i < 0$.
\end{rmk}

\begin{notation}\label{not/spectral}
	We will write
	\begin{itemize}
		\item
		$E^2_{p+} = 0$  if
		$E^2_{pq} = 0$ for any 
		integer $q > 0$, that is,
		$E^2_{pq}$ collapses on the $p$-axis;
		\item
		$E^2_{+0}=0$  if 
		$E^2_{p0} = 0$ for any integer $p > 0$, so
		$E^2_{pq}$ vanishes on the positive $p$-axis;
		\item	$E_+ = 0$  if $E_n = 0$ for any integer $n > 0$, thus
		only $E_0$ may not vanish.
	\end{itemize}
\end{notation}

The second statement below is our starting point 
for a reverse vanishing result.
\begin{lem}\label{lem/spectral1}
	In the setup above, the following statements hold.
	\begin{enumerate}
		\item \label{lem/spectral1b} Assume that $E^2_{p+} = 0$. Then there is an isomorphism
		$
		E^2_{p0} \cong E_p
		$ for any integer $p \in \Z$, and thus
		$E^2_{+0} = 0$ is equivalent to $E_{+} = 0$.
		\item \label{lem/spectral1a} 
		If $E_{1} = 0$, then $E_{10}^2 = 0$.
	\end{enumerate}
\end{lem}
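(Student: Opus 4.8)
The plan is to use the first-quadrant hypothesis to control which differentials into and out of the entries on the $p$-axis can be nonzero, and then to read off both statements from the convergence filtration of the limit objects $E_n$.

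For part~\ref{lem/spectral1b}, I would first note that the vanishing $E^2_{p+} = 0$ propagates to every page: since $E^{r+1}_{pq}$ is a subquotient of $E^r_{pq}$, induction on $r$ gives $E^r_{pq} = 0$ for all $q > 0$ and all $r \geq 2$, so together with the first-quadrant vanishing $E^r_{pq} = 0$ for $p < 0$ the only entries that can be nonzero lie on the $p$-axis. Now fix a point $(p,0)$. On page $r$ the outgoing differential $d^r_{p0}\colon E^r_{p0} \longrightarrow E^r_{p-r,\,r-1}$ has target with second coordinate $r-1 \geq 1 > 0$, hence vanishes; the incoming differential $d^r_{p+r,\,1-r}\colon E^r_{p+r,\,1-r} \longrightarrow E^r_{p0}$ has source with second coordinate $1-r \leq -1 < 0$, hence vanishes. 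Therefore $E^r_{p0} = E^2_{p0}$ for all $r \geq 2$, that is $E^\infty_{p0} = E^2_{p0}$. The convergence filtration of $E_n$ has subquotients $E^\infty_{p,\,n-p}$, of which only the $p = n$ term is nonzero, so $E_n \cong E^\infty_{n0} = E^2_{n0}$ (for $p<0$ both sides vanish, so the isomorphism holds there too). Restricting attention to indices $p > 0$ turns this isomorphism into the equivalence $E^2_{+0} = 0$ if and only if $E_+ = 0$.

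For part~\ref{lem/spectral1a}, the same index bookkeeping applied to the single lattice point $(1,0)$ shows that $E^2_{10}$ admits no nonzero differential in either direction on any page: the outgoing differential lands in an entry with first coordinate $1-r < 0$, and the incoming differential emanates from an entry with second coordinate $1-r < 0$. Hence $E^\infty_{10} = E^2_{10}$, with no extra hypothesis needed. Since $E^\infty_{10}$ is a subquotient of $E_1$ and $E_1 = 0$ by assumption, we conclude $E^2_{10} = 0$.

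I do not expect a serious obstacle: the argument is entirely bookkeeping with the degrees of the differentials and with the convergence filtration. The two points that call for a little care are (i) propagating the hypothesis $E^2_{p+} = 0$ through all pages in part~\ref{lem/spectral1b}, rather than merely invoking it on page~$2$, and (ii) using that the filtration on each $E_n$ is finite, which is again guaranteed by the first-quadrant assumption together with the standing convergence hypothesis, so that ``all of its subquotients vanish'' genuinely forces $E_n = 0$.
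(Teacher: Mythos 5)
Your proof is correct and follows essentially the same route as the paper: the hypothesis $E^2_{p+}=0$ (propagated through the pages) kills all differentials touching the $p$-axis so that $E^2_{p0}\cong E^\infty_{p0}\cong E_p$, and for the point $(1,0)$ the first-quadrant vanishing alone forces $E^2_{10}\cong E^\infty_{10}$, a subquotient of $E_1=0$. The only difference is that you spell out the degree bookkeeping that the paper leaves implicit.
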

\begin{proof}
	\begin{enumerate}
		\item If $E^2_{p+}=0$, then each differential $d^r_{pq}$ on any page $r \geq 2$ is zero.
		\item
		Since $E^2_{pq}$ is zero outside the first quadrant, on any page $r \geq 2$  we obtain 
		$$
		\begin{td} E^r_{1+r,1-r}=0 \ar{rr}{d^r_{1+r,1-r}}
		\& \& E^r_{10} \ar{rr}{d^r_{10}} \&\& E^r_{1-r,r-1} = 0. \end{td}
		$$
		This shows that $E^{2}_{10}$ is isomorphic to a subquotient 	of $E_{1} = 0$.
		\qedhere
	\end{enumerate}
\end{proof}
To obtain stronger results, we impose conditions on the spectral sequence $E^2_{pq}$ of the type that \emph{vanishing at a particular lattice point $(p,q)$ implies vanishing of all entries right from $(p,q)$ in the same row}.
\begin{prp} \label{prp/spectral2}
	In the setup above, the following statements hold.
	\begin{enumerate}
		\item \label{prp/spectral2a}
		Assume that the spectral sequence $E^2_{pq}$ has the property
		\begin{enumerate}[label={$\mathsf{(R1)}$}, ref={$\mathsf{(R1)}$}]
			\item \label{spectral/reflect} If $E^2_{0q} = 0$ for an integer $q > 0$, then 
			$E^2_{pq} = 0$ for any integer $p > 0$.
		\end{enumerate}
		Then 
		any two of 
		the conditions 
		$E^2_{p+} = 0$,  $E^2_{+0} = 0$, $E_+ = 0$ imply the third one.
		
		\item \label{prp/spectral2b}
		Assume that $E^2_{pq}$ satisfies~\ref{spectral/reflect} and the property
		\begin{enumerate}[label={$\mathsf{(R2)}$}, ref={$\mathsf{(R2)}$}]
			\setcounter{enumi}{1}
			\item \label{spectral/rigid} If $E^2_{10} = 0$, then $E^2_{+0} = 0$.
		\end{enumerate}
		Then  $E^2_{p+} = E^2_{+0} = 0$  is equivalent to $E_{+} = 0$.
	\end{enumerate}
\end{prp}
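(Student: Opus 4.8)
The plan is to deduce the whole proposition from Lemma~\ref{lem/spectral1} together with one induction over rows of the spectral sequence. I would begin with the observation that, by Lemma~\ref{lem/spectral1}\,\ref{lem/spectral1b}, as soon as $E^2_{p+}=0$ holds one has $E^2_{p0}\cong E_p$ for every $p$, so the conditions $E^2_{+0}=0$ and $E_+=0$ become equivalent. This already settles two of the three implications in part~\ref{prp/spectral2a} (the two that assume $E^2_{p+}=0$) and the forward implication of part~\ref{prp/spectral2b}. So the real content is the single implication
\[
E^2_{+0}=0\text{ and }E_+=0\quad\Longrightarrow\quad E^2_{p+}=0
\]
(this is where~\ref{spectral/reflect} enters), together with the short chain needed for the converse of part~\ref{prp/spectral2b}: $E_+=0$ gives $E_1=0$, hence $E^2_{10}=0$ by Lemma~\ref{lem/spectral1}\,\ref{lem/spectral1a}, hence $E^2_{+0}=0$ by~\ref{spectral/rigid}.

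For the core implication I would argue as follows. Since the spectral sequence converges, each $E^\infty_{pq}$ is a subquotient of $E_{p+q}$; combined with the vanishing of $E^2_{pq}$ outside the first quadrant, the hypothesis $E_+=0$ forces $E^\infty_{pq}=0$ for all $(p,q)\neq(0,0)$. Next I prove $E^2_{0q}=0$ for every $q\ge 1$ by induction on $q$. Assuming $E^2_{0q'}=0$ for all $1\le q'<q$, an application of~\ref{spectral/reflect} to each such row, combined with $E^2_{+0}=0$, gives $E^2_{pq'}=0$ for every lattice point with $q'<q$ and $(p,q')\neq(0,0)$. Now consider the differentials meeting $E^r_{0q}$ on a page $r\ge 2$: the outgoing differential has target $E^r_{-r,\,q+r-1}=0$, while the incoming one has source $E^r_{r,\,q-r+1}$, a subquotient of $E^2_{r,\,q-r+1}$. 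The latter entry vanishes in every case: if $q-r+1\ge 1$ it sits in a row below $q$ with positive first coordinate, so the strengthened inductive hypothesis applies; if $q-r+1=0$ then $r=q+1>0$, so $E^2_{+0}=0$ applies; if $q-r+1<0$ it vanishes by first-quadrant-ness. Hence all differentials into and out of $E^r_{0q}$ are zero for all $r\ge 2$, so $E^2_{0q}=E^3_{0q}=\dots=E^\infty_{0q}=0$, and the induction goes through. A last application of~\ref{spectral/reflect} upgrades $E^2_{0q}=0$ to $E^2_{pq}=0$ for all $p>0$; since $E^2_{pq}=0$ automatically for $p<0$, this says $E^2_{pq}=0$ whenever $q>0$, i.e.\ $E^2_{p+}=0$.

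Part~\ref{prp/spectral2b} then follows with no extra work: the forward direction was noted above, and for the converse, $E_+=0$ yields $E^2_{+0}=0$ through Lemma~\ref{lem/spectral1}\,\ref{lem/spectral1a} and~\ref{spectral/rigid}, after which the core implication from $E^2_{+0}=0$ and $E_+=0$ to $E^2_{p+}=0$ (established in part~\ref{prp/spectral2a}) shows that both $E^2_{p+}=0$ and $E^2_{+0}=0$ hold.

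The only step that is more than bookkeeping is the core implication, and inside it the point requiring care is keeping straight which entries are already known to vanish when one analyses the page-$r$ differentials hitting the $(0,q)$ spot during the induction on $q$. I expect the borderline case $q-r+1=0$ — i.e.\ $r=q+1$, where one must invoke $E^2_{+0}=0$ rather than the inductive hypothesis — to be the easiest to overlook; the rest is routine manipulation of first-quadrant spectral sequences.
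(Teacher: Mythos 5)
Your proposal is correct and follows essentially the same route as the paper: reduce via Lemma~\ref{lem/spectral1} to the single implication $E^2_{+0}=E_+=0\Rightarrow E^2_{p+}=0$, prove it by inducting up the rows (incoming differentials come from lower rows already killed, outgoing ones leave the first quadrant, so $E^2_{0q}\cong E^\infty_{0q}=0$, and \ref{spectral/reflect} clears the rest of the row), and handle part~\eqref{prp/spectral2b} via Lemma~\ref{lem/spectral1}\,\eqref{lem/spectral1a} and \ref{spectral/rigid}. The only difference from the paper's write-up is bookkeeping (you induct on the $(0,q)$-entries with \ref{spectral/reflect} applied inside the induction, the paper inducts on the whole truncated region), which changes nothing of substance.
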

\begin{proof}
	Assume that $E^2_{pq}$ satisfies~\ref{spectral/reflect}.
	Because of Lemma~\ref{lem/spectral1}~\eqref{lem/spectral1b} it is sufficient to show the implication
	``$E^2_{+0} = E_{+} = 0 \Rightarrow E^2_{p+} = 0$''
	for the first claim,
	and ``$E_+ = 0 \Rightarrow E^2_{+0} = 0$''
	assuming~\ref{spectral/rigid} for the second.
	\begin{enumerate}
		\item
		Assume that
		$E^2_{+0} = E_{+} = 0$.
		We show that $E^2_{pq} = 0$ for any integers $p \geq 0$ and $m \geq q \geq 0$ with $(p,q) \neq (0,0)$ by  induction on $m \geq 0$. 
		\begin{itemize}
			\item $m = 0$. It holds that $E^2_{p0} = 0$ for any $p \neq 0$ by the assumptions.
			\item $m \rightarrow m+1$. Assume that
			the claim is true
			for an integer $m \geq 0$.
			The homology at lattice point $(0,m+1)$ on any page  
$r \geq 2$ is computed via the differentials
			$$
			\begin{td} E^r_{r,m-r+2}=0 \ar{rr}{d^r_{r,m-r+2}}
			\& \& E^r_{0,m+1} \ar{rr}{d^r_{0,m+1}} \&\& E^r_{-r,m+r} = 0. \end{td} 
			$$	
			This implies that
			$E^2_{0,m+1} \cong E^\infty_{0,m+1} $, which is zero since $E_{m+1} = 0$. Using property~\ref{spectral/reflect} it follows that $E^2_{p,m+1} = 0$ for any $p > 0$. 
		\end{itemize}
		This shows that 
		$E^2_{pq} = 0$ at any lattice point $(p,q) \neq (0,0)$. Thus,
		$E^2_{p+} = 0$.
		\item 	Assume that $E^2_{pq}$ has property~\ref{spectral/rigid} as well, and that $E_{+} = 0$.
		Then $E_{1} = 0$, and Lemma~\ref{lem/spectral1}~\eqref{lem/spectral1a} yields
		$E_{10}^2 = 0$. Property 	\ref{spectral/rigid}
		implies that $E^2_{+0} = 0$.	\qedhere
	\end{enumerate}
\end{proof}
The last statement allows to deduce vanishing properties of a certain spectral sequence from vanishing of its limit objects.

\subsection{The Künneth spectral sequence}
\label{subsec/kss}
In this subsection, we fix a complex $\CK$ of the following form.
\begin{setup}\label{setup/complexes}
	Let $\CK$ be any right-bounded complex  of $\Rx$-modules such that $$
	\Tor_+^{\Rx}(K^i,\B) = 0 \quad \text{and} \quad 
	\Ho^i(\CK) \in \md \Rx \quad \text{ for any integer $i \in \Z$}.
	$$
\end{setup}
These assumptions are motivated by Lemma~\ref{lem/translation}~\eqref{lem/translations4}.
In particular, the complex $\CK$ is adapted to the 
derived functor 
$$\dash \underset{\Rx}{\overset{\mathbb{L}}{\otimes}} \B\colon 
\begin{td}
\D^{-}(
\Md \Rx
) \ar{r} \& \D^-(
\Md \B
).
\end{td}
$$

The main tool of this subsection is the following
variant of the Künneth spectral sequence \cite{MacLane}*{Theorem~12.1}.
\begin{thm}\label{thm/Kuenneth}
	There is a convergent spectral sequence
	\begin{align}\label{eq/kuenneth-spectral}
	E_{pq}^2 \colonequals \Tor_{p}^{\Rx}(\Ho^{-q}(\CK), \B) \quad \Rightarrow \quad E_{p+q} \colonequals \Ho^{-p-q}(\CK \otimes \B)	
	\end{align}
\end{thm}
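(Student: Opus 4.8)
The plan is to recognise~\eqref{eq/kuenneth-spectral} as the standard hyperhomology (hyper-$\Tor$) spectral sequence of the left-derived functor $\dash \underset{\Rx}{\overset{\mathbb{L}}{\otimes}} \B$, evaluated at $\CK$, once one observes that under the hypotheses of Setup~\ref{setup/complexes} the underived complex $\CK \otimes \B$ already computes that derived tensor product. I would begin by reindexing homologically, setting $C_n \colonequals K^{-n}$, so that $C_\bullet$ is a bounded-below complex of $\Rx$-modules with $\Ho_q(C_\bullet) = \Ho^{-q}(\CK)$ and, by assumption, $\Tor_q^{\Rx}(C_n, \B) = 0$ for all $q > 0$ and all $n$; in other words each term of $C_\bullet$ is acyclic for $\dash \otimes \B$.

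Two inputs are then needed. First, a bounded-below complex of $\dash \otimes \B$-acyclic $\Rx$-modules computes the left-derived tensor product: concretely, choosing a Cartan--Eilenberg resolution $P_{\bullet\bullet} \to C_\bullet$ by projective $\Rx$-modules and forming $\mathrm{Tot}(P_{\bullet\bullet} \otimes \B)$, the spectral sequence obtained by taking homology in the resolution direction first has its terms concentrated in resolution degree zero — there the higher $\Tor^{\Rx}_{\bullet}(C_n, \B)$ vanish — so it collapses and gives $\Ho_n\big(C_\bullet \underset{\Rx}{\overset{\mathbb{L}}{\otimes}} \B\big) \cong \Ho_n(C_\bullet \otimes \B) = \Ho^{-n}(\CK \otimes \B)$. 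Second, the other spectral sequence of the same double complex is the hyperhomology spectral sequence
\[
E^2_{pq} = \Tor_p^{\Rx}\big(\Ho_q(C_\bullet), \B\big) \;\Longrightarrow\; \Ho_{p+q}\big(C_\bullet \underset{\Rx}{\overset{\mathbb{L}}{\otimes}} \B\big),
\]
the Cartan--Eilenberg property ensuring that its $E^2$-term is as written (cf.~\cite{MacLane}*{Theorem~12.1}). Combining the two and translating back via $\Ho_q(C_\bullet) = \Ho^{-q}(\CK)$ and $\Ho_n(C_\bullet \otimes \B) = \Ho^{-n}(\CK \otimes \B)$ produces exactly~\eqref{eq/kuenneth-spectral}.

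It then remains to confirm convergence in the strong form used in Subsection~\ref{subsec/spectral}. As $\CK$ is right-bounded there is an $N$ with $\Ho^i(\CK) = 0$ for $i \geq N$, so $E^2_{pq} = 0$ unless $p \geq 0$ and $q > -N$; hence every antidiagonal $p + q = n$ meets only finitely many nonzero terms and the resulting filtration on $\Ho^{-n}(\CK \otimes \B)$ is finite, exhaustive and Hausdorff. I anticipate no real obstacle in any of this — it is routine homological algebra — so the only two points deserving attention, and the reason for running the double-complex argument rather than merely quoting a Künneth spectral sequence, are the collapse that lets the flatness hypothesis on the terms of $\CK$ be weakened to $\Tor_+^{\Rx}(K^i, \B) = 0$, and the one-sided boundedness of $\CK$, on account of which convergence is obtained from the antidiagonal count above rather than from a genuine first-quadrant shape.
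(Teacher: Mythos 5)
Your argument is correct, and it is essentially the classical argument. Note that the paper gives no proof of Theorem~\ref{thm/Kuenneth} at all: it is invoked as a ``variant'' of the K\"{u}nneth spectral sequence of \cite{MacLane}*{Theorem~12.1}, whose textbook hypothesis is flatness of the terms of the complex. What you supply is precisely the justification of that variant: the Cartan--Eilenberg double complex, the collapse of the first spectral sequence using only the weaker hypothesis $\Tor_+^{\Rx}(K^i,\B)=0$ from Setup~\ref{setup/complexes} (so that $\CK\otimes\B$ already computes $\CK\overset{\mathbb{L}}{\otimes}\B$), the Cartan--Eilenberg identification of the second spectral sequence's $E^2$-term as $\Tor_p^{\Rx}(\Ho^{-q}(\CK),\B)$, and convergence via the finiteness of each antidiagonal coming from right-boundedness of $\CK$ (which is the correct substitute for a first-quadrant shape, as needed later in Subsection~\ref{subsec/spectral}). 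So your route does not differ in substance from what the paper relies on; its added value is that it makes explicit, and verifies, the weakening of the flatness hypothesis that the paper leaves implicit in the word ``variant''.
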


\subsubsection*{The Künneth trick}
The next statement is called the \emph{Künneth trick} in \cite{Yekutieli99}*{Lemma 2.1}.

\begin{lem} \label{lem/keyA1a}
	Let $m \in \Z$ be an integer such that $\Ho^i(\CK) = 0$ for any integer $i > m$.
 Then $\Ho^i(\CK \otimes \B) = 0$ for any integer $i > m$
	and 
	there is an isomorphism of $\Sx$-modules
	\begin{align}\label{eq:kuenneth0}
	\begin{td}
		\kappa^m \colon
	{\Ho}^{m}(\CK) \otimes \B \ar{r}{\sim} \& \Ho^{m}(\CK \otimes \B)\,.
	\end{td}
	\end{align}
\end{lem}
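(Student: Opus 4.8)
I would deduce everything from the Künneth spectral sequence of Theorem~\ref{thm/Kuenneth},
\[
E^2_{pq} = \Tor_p^{\Rx}(\Ho^{-q}(\CK),\B) \;\Longrightarrow\; \Ho^{-p-q}(\CK\otimes\B),
\]
after pinning down where it is supported. It vanishes for $p<0$ (higher $\Tor$ in negative degrees), and it vanishes whenever $-q>m$, i.e. $q<-m$, by the hypothesis $\Ho^i(\CK)=0$ for $i>m$. So all possibly nonzero entries lie in the region $p\ge 0$, $q\ge -m$, which after the shift $q\mapsto q+m$ is a genuine first-quadrant spectral sequence; in particular the one from Setup~\ref{setup/complexes} converges, so each abutment $\Ho^{-n}(\CK\otimes\B)=E_n$ carries a finite filtration with graded pieces the $E^\infty_{p,n-p}$.

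The first assertion is then immediate. For $i>m$ the abutment $\Ho^i(\CK\otimes\B)=E_{-i}$ has graded pieces $E^\infty_{p,-i-p}$, and such a piece can be nonzero only when $p\ge 0$ and $-i-p\ge -m$, i.e. $0\le p\le m-i<0$, which never happens; hence $\Ho^i(\CK\otimes\B)=0$ for all $i>m$.

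For the isomorphism I would inspect the abutment in degree $-m$. Its graded pieces are the $E^\infty_{p,-m-p}$ with $p\ge 0$ and $-m-p\ge -m$, forcing $p=0$; thus $\Ho^m(\CK\otimes\B)\cong E^\infty_{0,-m}$. Next I would check that the corner survives to the limit: on every page $r\ge 2$ the outgoing differential from $(0,-m)$ lands at $(-r,-m+r-1)$ with negative first coordinate, and the incoming differential originates at $(r,-m-r+1)$ whose second coordinate $-m-r+1$ is strictly below $-m$; both target and source groups vanish, so $E^2_{0,-m}=E^\infty_{0,-m}$. Combining this with $E^2_{0,-m}=\Tor_0^{\Rx}(\Ho^m(\CK),\B)=\Ho^m(\CK)\otimes_{\Rx}\B$ yields the claimed isomorphism $\Ho^m(\CK)\otimes\B\xrightarrow{\ \sim\ }\Ho^m(\CK\otimes\B)$.

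Finally I would record that the composite isomorphism just produced is the edge homomorphism of the Künneth spectral sequence along the $q$-axis, which, by the standard construction of that spectral sequence, is exactly the natural map $\overline{x}\otimes a\mapsto\overline{x\otimes a}$; this is what we name $\kappa^m$, and it is the map used in diagram~\eqref{eq/morphisms}. The one point requiring genuine care is precisely this identification of the abstract edge map with the explicit Künneth map. If one wishes to sidestep the internal construction of the spectral sequence, the alternative is to define $\kappa^m$ by hand from the canonical surjection $Z^m(\CK)\twoheadrightarrow\Ho^m(\CK)$ using right-exactness of $-\otimes\B$, prove surjectivity directly, and then extract injectivity from the hypotheses $\Tor^{\Rx}_+(K^i,\B)=0$ by a short diagram chase with cycles and boundaries — but this essentially re-proves the Künneth trick, so I would instead simply invoke the edge-map description.
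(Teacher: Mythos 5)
Your argument is correct and is essentially the paper's own proof: both use the Künneth spectral sequence of Theorem~\ref{thm/Kuenneth}, observe that the hypotheses confine $E^2_{pq}$ to $p\ge 0$, $q\ge -m$, deduce vanishing of the abutment in degrees $>m$, and identify the surviving corner $E^2_{0,-m}\cong E_{-m}$ with $\Ho^m(\CK)\otimes\B$. Your closing identification of this isomorphism with the explicit Künneth map is exactly what the paper defers to Remark~\ref{rmk/kuenneth}, so it is a welcome extra rather than a deviation.
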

\begin{proof}
	Let $m \in \Z$ such that $\Ho^{i}(\CK)= 0$ for any $i > m$, so $\Ho^{-q}(\CK) = 0$ for any $q < -m$.
	Let	$E_{pq}^2$ denote the spectral sequence~\eqref{eq/kuenneth-spectral}.
	Then 
	$E_{pq}^2 = 0$ for any $p,q \in \Z$ with $p + q < -m$ or $p+q = -m$ and $p \neq 0$. 
	This implies that $E_{n} = \Ho^{-n}(\CK) = 0$ for any $n < -m$
	and $E_{0,-m}^2 \cong E_{-m}$. 
\end{proof}
\begin{rmk}\label{rmk/kuenneth}
	It can be shown that the isomorphism in
		\eqref{eq:kuenneth0}
	is the \emph{K\"{u}nneth map} which is given by
	\[
			\begin{td}  
	(x + \im d^{m-1}) \otimes y
	\ar[mapsto]{r}{\kappa^m}
	\& x \otimes y + \im(d^{m-1} \otimes \id)
	\end{td}
	\]
for any $x \in \ker d^m$ and $y \in \RI$.
\end{rmk}

\begin{prp}
	\label{prp/keyA1b}
	The following implications hold.
	\begin{align}\label{eq/keyA1b}
	\Ho^+(\CK) = 0
	\quad &\Leftrightarrow \quad 	\Ho^+(\CK \otimes \B) = 0	 \quad \Rightarrow \quad 
	\text{$\kappa^0$ is bijective,}
\\
	\label{eq/keyA1b2}
	\Ho^*(\CK) = 0 \quad &\Leftrightarrow \quad \Ho^*(\CK \otimes \B) = 0\,.
	\end{align}
\end{prp}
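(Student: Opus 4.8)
The plan is to deduce Proposition~\ref{prp/keyA1b} from the Künneth spectral sequence~\eqref{eq/kuenneth-spectral} together with the reverse-vanishing machinery of Proposition~\ref{prp/spectral2}. The first step is to verify that the spectral sequence $E^2_{pq} = \Tor_p^{\Rx}(\Ho^{-q}(\CK),\B)$ satisfies the two hypotheses \ref{spectral/reflect} and \ref{spectral/rigid}. For \ref{spectral/reflect}: if $E^2_{0q} = \Ho^{-q}(\CK) \otimes \B = 0$ for some $q > 0$, then since $\Ho^{-q}(\CK)$ is a finitely generated $\Rx$-module (Setup~\ref{setup/complexes}) and $\Rx$ is local, Nakayama's Lemma forces $\Ho^{-q}(\CK) = 0$, whence $\Tor_p^{\Rx}(\Ho^{-q}(\CK),\B) = 0$ for all $p$; in particular $E^2_{pq} = 0$ for all $p > 0$. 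For \ref{spectral/rigid}: if $E^2_{10} = \Tor_1^{\Rx}(\Ho^0(\CK),\B) = 0$, I need $\Tor_p^{\Rx}(\Ho^0(\CK),\B) = 0$ for all $p > 0$ — this is where the assumption~\eqref{eq/tor-ind0} that $\A$ and $\B$ are Tor-independent enters, in the form recalled in Remark~\ref{rmk/sub} (vanishing of $\Tor_1$ implies vanishing of $\Tor_+$). Since $\Ho^0(\CK)$ is a subquotient of $K^0 \in \add \A_{\Rx}$ and $\Tor_+^{\Rx}(K^0,\B)=0$, one sees $\Tor_1^{\Rx}(\Ho^0(\CK),\B) = 0$ propagates to all higher Tor; more carefully, one should argue via the Tor-rigidity of $\B$ over $\Rx$ that is implicit in the setup, or directly via a dimension shift using that $K^{\bt}$ is built from modules in $\add \A_{\Rx}$.

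Granting that $E^2_{pq}$ lies in the first quadrant after reindexing $q \mapsto -q$ — which holds because $\CK$ is right-bounded, so $\Ho^{-q}(\CK) = 0$ for $q \ll 0$, and one normalizes so that cohomology is concentrated in degrees $\leq 0$ — I would then translate the statement into the language of Notation~\ref{not/spectral}: $\Ho^+(\CK) = 0$ becomes $E^2_{p+} = 0$ together with $E^2_{+0} = 0$ (i.e. the whole spectral sequence is supported on the single point $(0,0)$ except possibly $E^2_{00}$), and $\Ho^+(\CK \otimes \B) = 0$ becomes $E_+ = 0$. Actually the cleanest packaging: after shifting so that $m = 0$ in Lemma~\ref{lem/keyA1a}, $\Ho^+(\CK) = 0$ means $E^2_{pq} = 0$ whenever $q < 0$, i.e. $E^2$ collapses onto the $p$-axis ($E^2_{p+} = 0$ in the notation, once one also knows $E^2_{p,q}=0$ for $q>0$ which is automatic). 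Hmm — I need to be careful: $\Ho^+(\CK)=0$ only kills $\Ho^{-q}(\CK)$ for $-q>0$, i.e. $q<0$; it says nothing about $q \geq 0$. So after the degree shift, the honest statement is that $\Ho^{\leq 0}$-concentration gives first-quadrant support, and $\Ho^+(\CK)=0$ is precisely $E^2_{p+}=0$ AND $E^2_{+0}=0$ is the extra input one does NOT have for free. Let me instead not shift and argue row by row; the key point is simply: $\Ho^+(\CK)=0 \Leftrightarrow \Ho^+(\CK\otimes\B)=0$ should follow from Lemma~\ref{lem/keyA1a} in one direction (the implication $\Rightarrow$, plus the $\kappa^0$ statement) and from Proposition~\ref{prp/spectral2}\eqref{prp/spectral2a} applied with a suitable normalization in the other direction ($E_+ = 0$ plus knowing the top nonvanishing row together with \ref{spectral/reflect} forces the higher rows to vanish).

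Concretely, for the forward direction and the $\kappa^0$-claim of~\eqref{eq/keyA1b}: apply Lemma~\ref{lem/keyA1a} with $m = 0$ — if $\Ho^i(\CK) = 0$ for $i > 0$ then $\Ho^i(\CK\otimes\B)=0$ for $i>0$ and $\kappa^0\colon \Ho^0(\CK)\otimes\B \to \Ho^0(\CK\otimes\B)$ is an isomorphism. For the reverse implication $\Ho^+(\CK\otimes\B)=0 \Rightarrow \Ho^+(\CK)=0$: suppose for contradiction that $\Ho^m(\CK)\neq 0$ for some $m>0$ and take $m$ maximal with this property (possible? no — $\CK$ is right-bounded, bounded above in cohomological degree? it is right-bounded meaning bounded \emph{above}, so $\Ho^i(\CK)=0$ for $i\gg 0$, hence a maximal such $m$ exists). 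By Lemma~\ref{lem/keyA1a} applied at this $m$, $\kappa^m\colon \Ho^m(\CK)\otimes\B \xrightarrow{\sim}\Ho^m(\CK\otimes\B)$, and the left side is nonzero by Nakayama (finitely generated, $\B = \Rx/\ida$ with $\ida$ proper, so $\Ho^m(\CK)\otimes\B \neq 0$), contradicting $\Ho^+(\CK\otimes\B)=0$. This handles~\eqref{eq/keyA1b} cleanly without even needing Proposition~\ref{prp/spectral2}. Finally, for~\eqref{eq/keyA1b2}: $\Ho^*(\CK)=0$ trivially gives $\Ho^*(\CK\otimes\B)=0$ (the derived tensor of the zero object is zero, or directly all $E^2_{pq}=0$). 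Conversely, if $\Ho^*(\CK\otimes\B)=0$: by the already-established equivalence~\eqref{eq/keyA1b} applied to $\CK$ and also to $\CK[n]$ for all $n$ — or more directly, $\Ho^*(\CK\otimes\B)=0$ implies $E_n = 0$ for all $n$, so invoking \ref{spectral/reflect} and \ref{spectral/rigid} via Proposition~\ref{prp/spectral2}\eqref{prp/spectral2b} together with the degree-unbounded-below version (apply it to each truncation, or to $\CK[n]$), one gets $E^2_{pq} = 0$ for all $(p,q)$, in particular $\Ho^{-q}(\CK)\otimes\B = 0$ for all $q$, hence $\Ho^{-q}(\CK) = 0$ for all $q$ by Nakayama; that is $\Ho^*(\CK)=0$.

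The main obstacle I anticipate is the bookkeeping around boundedness and the reverse direction of~\eqref{eq/keyA1b2}. The Künneth trick (Lemma~\ref{lem/keyA1a}) and Nakayama dispose of~\eqref{eq/keyA1b} almost for free, but~\eqref{eq/keyA1b2} genuinely requires the reverse-vanishing Proposition~\ref{prp/spectral2}, and one has to be a little careful that the spectral sequence there is assumed first-quadrant whereas here $\CK$ is only bounded above — the fix is to apply the result to the good truncations $\tau^{\leq N}\CK$ or to shift so that each individual nonvanishing cohomology group sits at the origin, but making this uniform over all degrees without circularity takes a moment of care. A secondary subtle point is verifying property \ref{spectral/rigid}, which is really the Tor-rigidity input and must be traced back through Setup~\ref{setup/main}'s hypothesis~\eqref{eq/tor-ind0}: one uses that $K^i\in\add\A_{\Rx}$ to reduce Tor-rigidity of $\Ho^0(\CK)$ to Tor-independence of $\A$ and $\B$, via the exact sequences $0\to Z^i\to K^i\to B^{i+1}\to 0$ and a dimension-shift argument relating $\Tor$ of the cohomology to $\Tor$ of the (flat-over-$\Rx$, in the relevant sense) terms.
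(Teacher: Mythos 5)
Your final argument is correct and is essentially the paper's own proof: the forward implication of \eqref{eq/keyA1b} and the bijectivity of $\kappa^0$ come from Lemma~\ref{lem/keyA1a} (with Remark~\ref{rmk/kuenneth} identifying the isomorphism with the K\"unneth map), the converse comes from applying Lemma~\ref{lem/keyA1a} at the maximal degree $m$ with $\Ho^m(\CK)\neq 0$ and invoking Nakayama's Lemma (using $\ida\subseteq\mx$ and finite generation of $\Ho^m(\CK)$), and \eqref{eq/keyA1b2} follows by applying \eqref{eq/keyA1b} to the shifts $\CK[n]$.

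One caution: the spectral-sequence detour you sketch at the start, and offer again as a ``more direct'' route for \eqref{eq/keyA1b2}, is not salvageable as stated. Property~\ref{spectral/rigid}, i.e.\ Tor-rigidity of the pair $(\Ho^0(\CK),\B)$, does not follow from the Tor-independence hypothesis \eqref{eq/tor-ind0}, nor from $K^i\in\add\A_{\Rx}$ via dimension shift; in the paper, Tor-rigidity of $\B$ is a genuinely additional hypothesis (Definition~\ref{dfn/tor-rigid}, Corollary~\ref{cor/recover-tor-rigid}) that is imposed only in later statements precisely because Setup~\ref{setup/complexes} does not provide it. Since the route you actually settle on avoids Proposition~\ref{prp/spectral2} entirely, this does not affect the validity of your proof, but the claim that rigidity is ``implicit in the setup'' should be dropped.
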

\begin{proof}
	Assume
	that 
	$\Ho^{+}(\CK \otimes \B) = 0$.
	The number
	$m \colonequals \max\{ i \in \Z \ | \ \Ho^i(\CK) \neq 0 \}$ is well-defined, because $\CK$ is right-bounded.
	Assume that  $m > 0$. Then  $\Ho^m(\CK) \otimes \B \cong \Ho^m(\CK \otimes \B) = 0$ by Lemma~\ref{lem/keyA1a}.
	Since $\ida \subseteq \mx$, Nakayama's Lemma 
	 yields the contradiction	
	$\Ho^m(\CK) = 0$. So $m\leq0$, that is, $\Ho^+(\CK) = 0$.
	
	The remaining implications in~\eqref{eq/keyA1b} follow from Lemma~\ref{lem/keyA1a} and Remark~\ref{rmk/kuenneth}. Finally, 
	the equivalence in~\eqref{eq/keyA1b2} follows from applications of 
	\eqref{eq/keyA1b} to shifts of $\CK$.
\end{proof}
\subsubsection*{Tor-rigid pairs and complexes with a single non-vanishing cohomology}

Next, we apply the observations of Subsection~\ref{subsec/spectral}.
\begin{lem}\label{lem/cohom1}
	Set $\sfM \colonequals \Ho^0(\CK)$.
	The following statements hold.
	\begin{enumerate}
		\item \label{lem/cohom1b} Assume that $\Ho^{\pm}(\CK)=0$. Then
		there is an $\B$-linear isomorphism 
		\begin{align*} 
		\Tor_i^{\Rx}(\sfM,\B) \cong \Ho^{-i}(\CK \otimes \B)
		\end{align*}
		for any integer $i \in \Z$, and thus 
		$\Tor_{+}^{\Rx}(\sfM,\B) = 0$ is equivalent to $\Ho^{\pm}(\CK \otimes \B) = 0$.
		\item \label{cor/cohom1a} 
		Assume that $\Ho^{\pm}(\CK \otimes \B) = 0$. Then $\Tor_1^{\Rx}(\sfM,\B) = 0$.\\
		If, moreover, $\Tor_+^{\Rx}(\sfM, \B) = 0$, then $\Ho^{\pm}(\CK) = 0$.
	\end{enumerate}
\end{lem}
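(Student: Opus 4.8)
The plan is to exploit the Künneth spectral sequence from Theorem~\ref{thm/Kuenneth} together with the reverse-vanishing results of Subsection~\ref{subsec/spectral}, after rewriting them homologically. Since $\Ho^{\pm}(\CK) = 0$ means precisely that $\Ho^{-q}(\CK) = 0$ for all $q \neq 0$, the $E^2$-page of~\eqref{eq/kuenneth-spectral} is concentrated in the single row $q = 0$: we have $E^2_{pq} = \Tor_p^{\Rx}(\sfM, \B)$ if $q = 0$ and $E^2_{pq} = 0$ otherwise (recall $E^2_{pq}$ vanishes for $p < 0$ as $\B$ has no higher Tor in negative degrees, and for $q > 0$ or $q < 0$ by the hypothesis). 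For such a one-row spectral sequence all differentials vanish, so it degenerates at $E^2$ and the filtration on $E_n = \Ho^{-n}(\CK \otimes \B)$ collapses to give $E_n \cong E^2_{n,0} = \Tor_n^{\Rx}(\sfM,\B)$ for every $n$. This is exactly the asserted isomorphism $\Tor_i^{\Rx}(\sfM,\B) \cong \Ho^{-i}(\CK \otimes \B)$, and the equivalence ``$\Tor_+^{\Rx}(\sfM,\B) = 0 \Leftrightarrow \Ho^{\pm}(\CK\otimes\B) = 0$'' follows by reading off the $n \neq 0$ components (for $n < 0$ both sides vanish automatically). This settles part~\eqref{lem/cohom1b}.

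For part~\eqref{cor/cohom1a}, I would \emph{not} assume $\Ho^{\pm}(\CK) = 0$ a priori — that is the point — so the spectral sequence is no longer one-row and I must argue more carefully. Assume $\Ho^{\pm}(\CK\otimes\B) = 0$, i.e.\ $E_n = 0$ for all $n \neq 0$; in particular $E_1 = 0$. By Proposition~\ref{prp/keyA1b}, equation~\eqref{eq/keyA1b} (applied since $\Ho^+(\CK\otimes\B) = 0$ in particular), we already get $\Ho^+(\CK) = 0$, so $\Ho^{-q}(\CK) = 0$ for $q < 0$; thus the spectral sequence now lies in the first quadrant. Rewriting Lemma~\ref{lem/spectral1}~\eqref{lem/spectral1a} for this spectral sequence: $E_1 = 0$ forces $E^2_{1,0} = 0$, which reads $\Tor_1^{\Rx}(\sfM,\B) = 0$. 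That is the first claim of~\eqref{cor/cohom1a}. For the second, now additionally assume $\Tor_+^{\Rx}(\sfM,\B) = 0$, i.e.\ $E^2_{p,0} = 0$ for all $p > 0$, so $E^2_{+0} = 0$ in the notation of~\ref{not/spectral}. Combined with $E_+ = 0$ (which holds since $\Ho^+(\CK\otimes\B) = 0$ means $E_n = 0$ for $n > 0$), I want to conclude $E^2_{p+} = 0$, i.e.\ $\Ho^{-q}(\CK) = 0$ for $q > 0$, giving $\Ho^{\pm}(\CK) = 0$. This is precisely the implication ``$E^2_{+0} = E_+ = 0 \Rightarrow E^2_{p+} = 0$'' from Proposition~\ref{prp/spectral2}~\eqref{prp/spectral2a}, provided the Künneth spectral sequence satisfies property~\ref{spectral/reflect}.

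So the main obstacle is verifying hypothesis~\ref{spectral/reflect} for~\eqref{eq/kuenneth-spectral}: \emph{if $\Tor_q^{\Rx}(\Ho^{-q}(\CK),\B) = 0$ then $\Tor_p^{\Rx}(\Ho^{-q}(\CK),\B) = 0$ for all $p > q$.} I expect this to be a Tor-rigidity statement about the module $\B = \RI$ (or about the module $\Ho^{-q}(\CK)$), invoking the fact that over the relevant rings — in the setups of Remark~\ref{rmk/sub} — vanishing of one Tor in the right range propagates upward; this is presumably where the ``Tor-rigid pairs'' in the subsection heading enter, and likely the lemma's statement either presupposes such a hypothesis or it is packaged into Setup~\ref{setup/complexes} or a running assumption I should locate. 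If~\ref{spectral/reflect} is available, the argument above goes through verbatim; I would state it as the one nontrivial input and defer its justification to the Tor-rigidity discussion that the section heading promises.
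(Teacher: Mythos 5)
Your overall route — Künneth spectral sequence, one-row degeneration for part~(1), then Proposition~\ref{prp/keyA1b} plus Lemma~\ref{lem/spectral1}~\eqref{lem/spectral1a} for $\Tor_1^{\Rx}(\sfM,\B)=0$, then Proposition~\ref{prp/spectral2}~\eqref{prp/spectral2a} for the final implication — is exactly the paper's proof. The one step you flag as "the main obstacle" is, however, where your proposal has a genuine gap: you misread property~\ref{spectral/reflect} and then defer its verification to a Tor-rigidity hypothesis that is neither available nor needed. Property~\ref{spectral/reflect} concerns the $p=0$ column: it asks that $E^2_{0q}=0$ (i.e.\ $\Tor_0^{\Rx}(\Ho^{-q}(\CK),\B)=\Ho^{-q}(\CK)\otimes\B=0$) force $E^2_{pq}=0$ for all $p>0$ — not that vanishing of some higher Tor propagate upward, which is what you wrote ("$\Tor_q=0$ implies $\Tor_p=0$ for $p>q$") and what genuine Tor-rigidity would give. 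The correct verification is elementary and uses no rigidity at all: by Setup~\ref{setup/complexes} each $\Ho^{-q}(\CK)$ is a finitely generated $\Rx$-module, the ring $\Rx$ is local and $\ida\subseteq\mx$, so $\Ho^{-q}(\CK)\otimes\B=\Ho^{-q}(\CK)/\ida\,\Ho^{-q}(\CK)=0$ forces $\Ho^{-q}(\CK)=0$ by Nakayama's Lemma, whence the entire row $E^2_{pq}=\Tor_p^{\Rx}(0,\B)=0$ vanishes. With this in hand your argument closes exactly as in the paper.

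Your guess that Tor-rigidity "is presupposed in the lemma or packaged into the setup" is also off: the lemma is stated and proved in the bare Setup~\ref{setup/complexes}, and Tor-rigidity (Definition~\ref{dfn/tor-rigid}) is introduced only afterwards, \emph{motivated} by part~\eqref{cor/cohom1a} — it is needed in Corollary~\ref{cor/cohom1c} to upgrade $\Tor_1^{\Rx}(\sfM,\B)=0$ to $\Tor_+^{\Rx}(\sfM,\B)=0$, whereas in the lemma the latter vanishing is simply part of the hypothesis. So the fix is small (replace the deferred rigidity input by the Nakayama argument above), but as written the proposal does not complete the proof of the second claim of part~\eqref{cor/cohom1a}.
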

\begin{proof}
	Let $E^2_{pq}$ be the Künneth spectral sequence~\eqref{eq/kuenneth-spectral}.
	\begin{enumerate}
		\item If $\Ho^+(\CK) = 0$, then $E^2_{pq}$ lies in the first quadrant, and the claims follow from Lemma~\ref{lem/spectral1}~\eqref{lem/spectral1b}.
		\item Assume that $\Ho^{\pm}(\CK\otimes \B)=0$.
		Proposition~\ref{prp/keyA1b}
		ensures that $\Ho^+(\CK)=0$. So $E^2_{pq}$ is a  first-quadrant sequence with $E_+ = 0$.
		Lemma~\ref{lem/spectral1}~\eqref{lem/spectral1a} states that $E_{10}^2 = 0$. 
		
		Assume, moreover, that $E^2_{+0}=0$.
		If $E^2_{0q} = 0$ for  $q \in \Z$, Nakayama's Lemma implies that  $\Ho^q(\CK)=0$, and thus 
		$E^2_{pq} = 0$ for any $p > 0$.
		In other words,
		the spectral sequence $E^2_{pq}$ has property~\ref{spectral/reflect}.
		Proposition~\ref{prp/spectral2}~\eqref{prp/spectral2a}
		implies that $E^2_{p+} = 0$. It follows that 
		$E^2_{0q} = \Ho^{-q}(\CK) = 0$ for any $q > 0$.
		\qedhere
	\end{enumerate}
\end{proof}
For any $\Rx$-module $\sfM$, statement~\eqref{lem/cohom1b}
recovers the fact  
that each torsion $\Tor_i^{\Rx}(\sfM,\B)$ can  be computed
using any flat resolution $\CK$ of $\sfM$.
Statement~\eqref{cor/cohom1a} 
motivates to
consider the following notion from commutative algebra.

\begin{dfn}\label{dfn/tor-rigid}
	An $\Rx$-module $\sfL$ is \emph{Tor-rigid} if 
	for any finitely generated $\Rx$-module $\sfM$ 
	\emph{the pair $(\sfM,\sfL)$ is Tor-rigid}, that is, for any integer $n > 0$ such that $\Tor_n^{\Rx}(\sfM,\sfL) = 0$ it follows that
	$\Tor_{m}^{\Rx}(\sfM,\sfL)=0$
	for any larger integer $m \geq n$. 
\end{dfn}

We will recover 
some well-known examples 
of Tor-rigidity 
in Corollary~\ref{cor/recover-tor-rigid}.
For now, we note that
Lemma~\ref{lem/cohom1} yields the following consequences for $\Ho^0$-complexes under change of rings.
\begin{cor}\label{cor/cohom1c}
	Set $\sfM \colonequals \Ho^0(\CK)$. The following statements hold.
	\begin{enumerate}
		\item \label{cor/cohom1ca}
	If $\Ho^{\pm}(\CK)= \Tor_+^{\Rx}(\sfM,\B) = 0$, then $\Ho^{\pm}(\CK\otimes\B) =  0$.	
\item	The converse of the previous statement is true if $\RI$ is Tor-rigid as an $\Rx$-module. \qed
\end{enumerate}
\end{cor}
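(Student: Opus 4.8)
The plan is to deduce both parts directly from Lemma~\ref{lem/cohom1} together with the definition of Tor-rigidity, with essentially no new computation. For part~\eqref{cor/cohom1ca}, I would simply invoke Lemma~\ref{lem/cohom1}~\eqref{lem/cohom1b}: under the hypothesis $\Ho^{\pm}(\CK) = 0$ that lemma asserts that $\Tor_+^{\Rx}(\sfM,\B) = 0$ is equivalent to $\Ho^{\pm}(\CK \otimes \B) = 0$, so the asserted implication is just the forward direction of this equivalence.

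For the second part, assume $\Ho^{\pm}(\CK \otimes \B) = 0$ and that $\B = \RI$ is Tor-rigid as an $\Rx$-module. First I would apply the first assertion of Lemma~\ref{lem/cohom1}~\eqref{cor/cohom1a} to obtain $\Tor_1^{\Rx}(\sfM,\B) = 0$. Now recall from Setup~\ref{setup/complexes} that $\sfM = \Ho^0(\CK)$ is a finitely generated $\Rx$-module; hence the pair $(\sfM,\B)$ is Tor-rigid by Definition~\ref{dfn/tor-rigid}, so the vanishing $\Tor_1^{\Rx}(\sfM,\B) = 0$ propagates to $\Tor_m^{\Rx}(\sfM,\B) = 0$ for every $m \geq 1$, that is, $\Tor_+^{\Rx}(\sfM,\B) = 0$. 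Feeding this into the ``moreover'' clause of Lemma~\ref{lem/cohom1}~\eqref{cor/cohom1a} yields $\Ho^{\pm}(\CK) = 0$, and together with the previous line this is exactly the conjunction $\Ho^{\pm}(\CK) = \Tor_+^{\Rx}(\sfM,\B) = 0$ claimed by the converse.

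There is no genuine obstacle here; the corollary is a bookkeeping consequence of Lemma~\ref{lem/cohom1}. The only point that needs a moment's care is checking that the finite-generation hypothesis in the notion of a Tor-rigid module is met by $\sfM$, which is guaranteed by the standing assumptions on $\CK$ in Setup~\ref{setup/complexes}; the rest is a direct chain of implications.
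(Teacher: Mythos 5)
Your proposal is correct and matches the paper's intent exactly: the paper marks this corollary with \qed as an immediate consequence of Lemma~\ref{lem/cohom1}, and your argument spells out precisely that bookkeeping, including the relevant check that $\sfM=\Ho^0(\CK)$ is finitely generated (from Setup~\ref{setup/complexes}) so that Tor-rigidity of $\RI$ applies to the pair $(\sfM,\RI)$. Nothing further is needed.
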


\begin{rmk}\label{rmk/generalize}
	All statements of Subsection~\ref{subsec/kss}
	remain valid for any right-bounded complex $\CK$ and the quotient $\RI$ replaced by any $\Rx$-module $\Gamma$ such that 
	$\Tor_+^{\Rx}(K^i, \Gamma) = 0$
	and 
	$\Ho^i(\CK) \otimes \Gamma = 0$ for an integer $i \in \Z$ implies that $\Ho^i(\CK) = 0$.
\end{rmk}

Next, we consider
a variation of the last equivalence
which makes use of specific properties of the ring $\RI$.

\subsection{A variant of the local criterion of flatness for resolutions}

Since $\RI$ is the quotient $\Rx/\ida$ of the Noetherian ring $\Rx$ by an ideal $\ida$ contained in the Jacobson radical of $\Rx$, the following variant of the local criterion of flatness  holds true (see \cite{Matsumura}*{Theorem~22.3}).
\begin{thm}\label{thm/local-flatness}
	Let $\sfM$ be
	a finitely generated $\Rx$-module such that
	$\sfM \otimes \RI$ is flat as an $\RI$-module and 
	$\Tor_1^{\Rx}(\sfM,\RI)= 0$.
	Then $\sfM$ is a
	flat $\Rx$-module. \qed
\end{thm}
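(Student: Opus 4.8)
The plan is to transfer the flatness of $\sfM\otimes\RI$ over $\RI=\Rx/\ida$ up to the flatness of $\sfM$ over $\Rx$ by means of a change-of-rings isomorphism for $\Tor$. The point is that, $\sfM$ being finitely generated over the Noetherian \emph{local} ring $\Rx$, it is enough to establish the single vanishing $\Tor_1^{\Rx}(\kk,\sfM)=0$: a finitely generated module over a Noetherian local ring $(\Rx,\mx,\kk)$ whose first $\Tor$ against the residue field vanishes is automatically free, hence flat.

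To reach that vanishing I would first set up the change-of-rings spectral sequence attached to the surjection $\Rx\twoheadrightarrow\RI$: for every $\RI$-module $N$ there is a convergent first-quadrant spectral sequence
\[
E^2_{pq}\;=\;\Tor_p^{\RI}\!\big(N,\,\Tor_q^{\Rx}(\RI,\sfM)\big)\;\Longrightarrow\;\Tor_{p+q}^{\Rx}(N,\sfM)\,,
\]
obtained from the double complex $P_\bullet\otimes_{\RI}\!\big(\RI\otimes_{\Rx}Q_\bullet\big)$ built from an $\RI$-projective resolution $P_\bullet\to N$ and an $\Rx$-projective resolution $Q_\bullet\to\sfM$; no finiteness hypotheses enter its construction. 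The hypothesis $\Tor_1^{\Rx}(\RI,\sfM)=0$ makes the row $q=1$ of the $E^2$-page vanish, and since $E^2_{1,0}$ carries no nonzero differentials, the edge homomorphism is then a natural isomorphism
\[
\Tor_1^{\Rx}(N,\sfM)\;\cong\;\Tor_1^{\RI}\!\big(N,\,\sfM\otimes_{\Rx}\RI\big)
\]
for every $\RI$-module $N$.

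Now I would specialise to $N=\kk$. Since $\ida$ is contained in $\mx$, the residue field $\kk=\Rx/\mx$ is an $\RI$-module, so the isomorphism above applies and gives
\[
\Tor_1^{\Rx}(\kk,\sfM)\;\cong\;\Tor_1^{\RI}\!\big(\kk,\,\sfM\otimes_{\Rx}\RI\big)\;=\;0\,,
\]
the last equality because $\sfM\otimes_{\Rx}\RI$ is flat over $\RI$. To conclude that $\sfM$ is free, I would lift a $\kk$-basis of $\sfM/\mx\sfM$ to a surjection $\Rx^{\oplus n}\twoheadrightarrow\sfM$ and note that tensoring its kernel with $\kk$ is sandwiched between $\Tor_1^{\Rx}(\kk,\sfM)=0$ and the kernel of the induced isomorphism $\kk^{\oplus n}\xrightarrow{\sim}\sfM/\mx\sfM$; hence the kernel is zero by Nakayama's Lemma, $\sfM$ is free, and in particular flat.

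I do not expect a genuine obstacle here; the one step that deserves care is the change-of-rings isomorphism, combined with the elementary but essential remark that $\ida\subseteq\mx$ makes the flatness over $\RI$ visible after base change to the \emph{same} residue field $\kk$. In the more general version with $\Rx$ merely Noetherian and $\ida$ contained in the Jacobson radical --- the form quoted from \cite{Matsumura}*{Theorem~22.3} --- the identical argument runs verbatim at every maximal ideal, each of which contains $\ida$; alternatively one bootstraps the two hypotheses by induction to ``$\Tor_1^{\Rx}(\Rx/\ida^n,\sfM)=0$ and $\sfM/\ida^n\sfM$ flat over $\Rx/\ida^n$ for all $n\geq1$'' and finishes with the Artin--Rees lemma and Krull's intersection theorem.
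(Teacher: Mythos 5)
Your proof is correct, but note that the paper does not prove this statement at all: it is quoted as a known result, namely Matsumura's local criterion of flatness (\cite{Matsumura}*{Theorem~22.3}), which is why it carries a \qed with no argument. What you supply is a legitimate self-contained proof, and in fact it runs on exactly the machinery the paper already sets up: your change-of-rings spectral sequence is the paper's Theorem~\ref{thm/grothendieck-sequence} specialized to $\sfL = \RI$ and $C = N$ (the hypothesis $\Tor_+^{\RI}(\RI,N)=0$ is vacuous), the degeneration argument giving the edge isomorphism $\Tor_1^{\Rx}(N,\sfM)\cong\Tor_1^{\RI}(N,\sfM\otimes\RI)$ only needs $E^2_{0,1}=0$, i.e.\ $\Tor_1^{\Rx}(\sfM,\RI)=0$, and is the same first-quadrant bookkeeping as in Subsection~\ref{subsec/spectral}. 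Taking $N=\kk$ is permissible precisely because $\ida\subseteq\mx$, the flatness of $\sfM\otimes\RI$ kills $\Tor_1^{\RI}(\kk,\sfM\otimes\RI)$, and your minimal-presentation plus Nakayama step correctly upgrades $\Tor_1^{\Rx}(\kk,\sfM)=0$ to freeness for a finitely generated module over the Noetherian local ring $\Rx$. Your closing remark on the Jacobson-radical version is also sound, since flatness can be checked at the maximal ideals, each of which contains $\ida$, and Tor commutes with localization. So the only difference from the paper is that you prove what the paper delegates to the literature; your route buys a proof internal to the paper's spectral-sequence toolkit, while the citation keeps the exposition short.
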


Focusing on the modules in the last theorem allows to 
deduce a variant of Corollary~\ref{cor/cohom1c}  without Tor-rigidity assumptions. We recall that the quotient ring $\RI$ has also an $\Sx$-algebra
structure over a commutative ring $\Sx$. 

\begin{prp}\label{prp/cohom4}
	Set	 $\sfM \colonequals \Ho^0(\CK)$ and $\sfL \colonequals \Ho^0(\CK \otimes \RI)$.
	Assume that $\RI$ is flat as an $\Sx$-module. The following statements hold.
	\begin{enumerate}
		\item \label{eq/prp/cohom4a}
	If $\Ho^{\pm}(\CK) = 0$ and $\sfM$ is $\Rx$-flat,
	then $\Ho^{\pm}(\CK \otimes {\RI}) = 0$
	and $\sfL$ is $\Sx$-flat.
	\item \label{eq/prp/cohom4b} The converse of the previous statement is true if $\RI = \Sx$.	
	\end{enumerate}
\end{prp}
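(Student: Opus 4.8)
The plan is to treat both statements as formal consequences of the single‑cohomology analysis of Subsection~\ref{subsec/kss} — chiefly Proposition~\ref{prp/keyA1b}, the K\"unneth trick (Lemma~\ref{lem/keyA1a}) and Lemma~\ref{lem/cohom1} — together with the local criterion of flatness (Theorem~\ref{thm/local-flatness}). The common thread is that, once $\Ho^{+}(\CK) = 0$ is available, Proposition~\ref{prp/keyA1b} tells us the K\"unneth map $\kappa^0$ is an isomorphism $\sfM \otimes_{\Rx} \RI \overset{\sim}{\longrightarrow} \sfL$, so that flatness properties of $\sfL$ and of $\sfM$ can be compared directly.

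For the first statement, I would start from $\Rx$-flatness of $\sfM$, which gives $\Tor_+^{\Rx}(\sfM,\RI) = 0$; together with the hypothesis $\Ho^{\pm}(\CK) = 0$, Corollary~\ref{cor/cohom1c}~\eqref{cor/cohom1ca} then yields $\Ho^{\pm}(\CK \otimes \RI) = 0$. Since in particular $\Ho^{+}(\CK) = 0$, Proposition~\ref{prp/keyA1b} gives that $\kappa^0$ is an isomorphism $\sfM \otimes_{\Rx} \RI \overset{\sim}{\longrightarrow} \sfL$. It then remains to see that $\sfM \otimes_{\Rx} \RI$ is $\Sx$-flat: viewing $\RI$ as an $(\Rx,\Sx)$-bimodule, associativity of the tensor product gives a natural isomorphism $(\sfM \otimes_{\Rx} \RI) \otimes_{\Sx} V \cong \sfM \otimes_{\Rx} (\RI \otimes_{\Sx} V)$ in the $\Sx$-module $V$, and the functor $V \mapsto \RI \otimes_{\Sx} V$ is exact because $\RI$ is $\Sx$-flat, while $\sfM \otimes_{\Rx} (-)$ is exact because $\sfM$ is $\Rx$-flat; a composite of exact functors is exact.

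For the second statement, where now $\Sx = \RI$, so that ``$\sfL$ is $\Sx$-flat'' means ``$\sfL$ is $\RI$-flat'', I would run the argument in reverse. From $\Ho^{\pm}(\CK \otimes \RI) = 0$, Proposition~\ref{prp/keyA1b} gives both $\Ho^{+}(\CK) = 0$ and that $\kappa^0$ is an isomorphism, so $\sfM \otimes_{\Rx} \RI \cong \sfL$ is $\RI$-flat by hypothesis; and Lemma~\ref{lem/cohom1}~\eqref{cor/cohom1a} gives $\Tor_1^{\Rx}(\sfM,\RI) = 0$. Since $\sfM = \Ho^0(\CK)$ is a finitely generated $\Rx$-module and $\ida$ is contained in the Jacobson radical of $\Rx$, the local criterion of flatness (Theorem~\ref{thm/local-flatness}) now forces $\sfM$ to be $\Rx$-flat. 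Finally, $\Rx$-flatness of $\sfM$ gives $\Tor_+^{\Rx}(\sfM,\RI) = 0$, and the ``moreover'' clause of Lemma~\ref{lem/cohom1}~\eqref{cor/cohom1a} upgrades $\Ho^{\pm}(\CK \otimes \RI) = 0$ to $\Ho^{\pm}(\CK) = 0$.

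The only step with genuine mathematical content, rather than bookkeeping, is the appeal to the local criterion of flatness in the second statement: it is precisely the device that promotes the first‑Tor vanishing $\Tor_1^{\Rx}(\sfM,\RI) = 0$ to full $\Rx$-flatness of $\sfM$, and it is here that completeness and localness of $\Rx$ (through $\ida \subseteq \mx$) and finite generation of $\sfM$ enter — which is exactly what lets us dispense with the Tor‑rigidity hypothesis needed in Corollary~\ref{cor/cohom1c}. The associativity argument in the first statement is routine but worth spelling out, since $\sfL$ there carries two distinct module structures, an $\RI$-structure and an $\Sx$-structure, that must be kept apart.
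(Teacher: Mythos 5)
Your proof is correct and follows essentially the same route as the paper: Corollary~\ref{cor/cohom1c} together with the K\"unneth isomorphism $\sfM\otimes\RI\cong\sfL$ for the forward direction, and Proposition~\ref{prp/keyA1b}, the $\Tor_1$-vanishing from Lemma~\ref{lem/cohom1}, and the local criterion of flatness (Theorem~\ref{thm/local-flatness}) for the converse. The only difference is that you spell out the associativity argument for the $\Sx$-flatness of $\sfM\otimes_{\Rx}\RI$, which the paper leaves implicit.
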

\begin{proof}
	\begin{enumerate}
		\item Under the assumptions in~\eqref{eq/prp/cohom4a}
	Assume that $\Ho^{\pm}(\CK)=0$ and that $\sfM_{\Rx}$ flat.
	Corollary~\ref{cor/cohom1c}
	implies that
	$\Ho^{\pm}(\CK \otimes {\RI}) = 0$.
	Since $\Ho^+(\CK) = 0$,
	Lemma~\ref{lem/keyA1a}
	yields that the flat $\Sx$-module
	$\sfM \otimes {\RI}$ is isomorphic to the $\Sx$-module $\sfL$. 
	\item 
	Vice versa, assume that $\Ho^{\pm}(\CK \otimes {\RI}) = 0$, $\sfL_{\Sx}$ is flat
	 and $\Sx = {\RI}$, that is, $\Sx = \Rx/\ida$.
	Since $\Ho^{+}(\CK \otimes \RI)=0$,
	Proposition~\ref{prp/keyA1b}
	implies that
	$\Ho^+(\CK) = 0$ and
	$\sfM \otimes \RI \cong \sfL$ is a flat $\RI$-module.
	The first part of
	Lemma~\ref{lem/cohom1}~\eqref{cor/cohom1a} yields that $\Tor_1^{\Rx}(\sfM,\RI) = 0$.
	Since $\sfM \in \md \Rx$,
	Theorem~\ref{thm/local-flatness} implies that the $\Rx$-module $\sfM$ is flat. In particular,
	$\Tor_+^{\Rx}(\sfM,\RI) = 0$. Since $\Ho^{\pm}(\CK \otimes \RI) = 0$,
	Lemma~\ref{lem/cohom1}~\eqref{cor/cohom1a}
	yields that $\Ho^{\pm}(\CK) = 0$. 
	This shows the converse. \qedhere
	\end{enumerate}
\end{proof}

\subsection{Complexes with a single non-vanishing cohomology under change of rings}

The next goal is to deduce another refinement of  
Corollary~\ref{cor/cohom1c}.
We fix the following notions.

\begin{setup}\label{setup/tor-ind0}
	Let 
	$\sfM$ be any right $\Rx$-module, $\sfL$ an $(\Rx,\Sx)$-bimodule
	and $C$ a left $\Sx$-module.
\end{setup}
We recall that torsion commutes with coproducts.
\begin{lem}\label{lem/tor-commutes}
	For any family $(\sfL_i)_{i\in I}$ of right $\Sx$-modules 
	and any integer $m \in \Z$ 
	there is an isomorphism of abelian groups
	$$ \Tor_{m}^{\Sx}\big( \bigoplus_{i \in I} \sfL_i, C \big) \cong
	\bigoplus_{i \in I}  \Tor_m^{\Sx}(\sfL_i, C) $$
\end{lem}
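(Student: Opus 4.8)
The statement is the standard fact that $\Tor$ commutes with arbitrary direct sums in the first variable, and I would prove it by the usual resolution argument. First I would, for each $i \in I$, choose a projective resolution $P_i^{\bt} \to \sfL_i$ of right $\Sx$-modules. The key elementary observations are that a direct sum of projective modules is again projective, and that the direct sum of a family of exact sequences of abelian groups (equivalently, of $\Sx$-modules) is again exact, since direct sums are exact functors on module categories. Combining these, the complex $P^{\bt} \colonequals \bigoplus_{i \in I} P_i^{\bt}$, formed degreewise, is a projective resolution of $\bigoplus_{i \in I} \sfL_i$.

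Next I would apply $\dash \otimes_{\Sx} C$. Since the tensor product commutes with arbitrary direct sums in the first argument, there is a natural isomorphism of complexes of abelian groups
\[
\Big( \bigoplus_{i \in I} P_i^{\bt} \Big) \otimes_{\Sx} C \;\cong\; \bigoplus_{i \in I} \big( P_i^{\bt} \otimes_{\Sx} C \big).
\]
Finally, taking homology in degree $m$ commutes with direct sums, again because direct sums are exact (so kernels and cokernels, hence subquotients, are computed summandwise); therefore
\[
\Tor_m^{\Sx}\Big( \bigoplus_{i \in I} \sfL_i, C \Big) = \Ho_m\Big( \bigoplus_{i \in I} \big( P_i^{\bt} \otimes_{\Sx} C \big) \Big) \cong \bigoplus_{i \in I} \Ho_m\big( P_i^{\bt} \otimes_{\Sx} C \big) = \bigoplus_{i \in I} \Tor_m^{\Sx}(\sfL_i, C),
\]
which is the claimed isomorphism.

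There is essentially no hard step here; the only points requiring a moment's care are that a direct sum of projective resolutions is still a resolution (exactness of direct sums) and that both $\dash \otimes_{\Sx} C$ and $\Ho_m$ commute with direct sums. If naturality of the isomorphism were needed later, one would note that each of these three isomorphisms is canonical, so the composite is natural in $C$ and in the family $(\sfL_i)_{i \in I}$; but for the bare statement as formulated this is not required.
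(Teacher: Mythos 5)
Your argument is correct. The only substantive difference from the paper is which variable you resolve: you take projective resolutions $P_i^{\bt} \to \sfL_i$ of each summand, observe that their degreewise direct sum is a projective resolution of $\bigoplus_i \sfL_i$ (using exactness of coproducts and that a sum of projectives is projective), and then push the direct sum through $\dash \otimes_{\Sx} C$ and through homology. The paper instead fixes a single resolution of the second argument $C$ and computes $\Tor_m^{\Sx}(\dash, C)$ with it; then the claim follows immediately from the two facts that tensor products commute with colimits and that (co)homology commutes with coproducts, since coproducts of $\Sx$-modules preserve monomorphisms. The paper's route is marginally slicker in that it avoids choosing resolutions for a possibly infinite family and avoids the (easy, but extra) verification that the summed complex is again a resolution; your route needs that verification but is equally standard, implicitly using the balancedness of $\Tor$ so that resolving the first variable computes the same groups. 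Both proofs also yield the naturality you mention at the end, which is not needed for the statement.
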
	
\begin{proof}
	Let us recall that tensor products commute with colimits, and that cohomology commutes with coproducts because
	monomorphisms of $\Sx$-modules are preserved by coproducts.
	This implies the claim.
\end{proof}

\begin{lem}
	It holds that
	$\Tor_+^{\Sx}(\sfL,C)=0$ if and only if $\Tor_+^{\Sx}(P \otimes_{\Rx} \sfL,C) = 0$ for any projective right $\Rx$-module $P$.
\end{lem}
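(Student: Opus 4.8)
The statement is symmetric in the two directions, so the plan is to prove both implications by reducing to the case of a single free module of finite rank and then to $P = R$ itself, using that Tor commutes with coproducts (Lemma~\ref{lem/tor-commutes}).

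First suppose $\Tor_+^{\Sx}(\sfL,C) = 0$. A projective right $\Rx$-module $P$ is a direct summand of a free module $\Rx^{I}$, so $P \otimes_{\Rx} \sfL$ is a direct summand of $\Rx^{I} \otimes_{\Rx} \sfL \cong \sfL^{I} = \bigoplus_{i\in I} \sfL$. Since Tor is additive and, by Lemma~\ref{lem/tor-commutes}, commutes with the coproduct in the first variable, we get
\begin{align*}
\Tor_m^{\Sx}(\Rx^{I} \otimes_{\Rx}\sfL, C) \cong \Tor_m^{\Sx}\big(\textstyle\bigoplus_{i\in I}\sfL, C\big) \cong \bigoplus_{i\in I}\Tor_m^{\Sx}(\sfL,C) = 0
\end{align*}
for every $m > 0$; and a direct summand of a module with vanishing higher Tor against $C$ again has vanishing higher Tor against $C$, because Tor of a direct summand is a direct summand of the Tor. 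Hence $\Tor_+^{\Sx}(P\otimes_{\Rx}\sfL, C) = 0$.

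Conversely, if $\Tor_+^{\Sx}(P\otimes_{\Rx}\sfL,C) = 0$ for \emph{every} projective right $\Rx$-module $P$, then in particular this holds for the projective (indeed free) module $P = \Rx$; but $\Rx \otimes_{\Rx}\sfL \cong \sfL$ as an $(\Rx,\Sx)$-bimodule, so $\Tor_+^{\Sx}(\sfL,C) = 0$. I expect no real obstacle here: the only point that needs a line of care is that passing to a direct summand of a bimodule is compatible with forming $\Tor^{\Sx}(-,C)$, so that the summand $P\otimes_\Rx\sfL$ of $\sfL^I$ inherits the vanishing — this is just functoriality of Tor applied to the splitting idempotent, together with the fact that an idempotent endomorphism of $\Tor_m^{\Sx}(\sfL^I,C)=0$ has image $0$.
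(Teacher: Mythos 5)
Your proof is correct and follows essentially the same route as the paper: the trivial direction by specializing to $P=\Rx$, and the other direction by realizing $P\otimes_{\Rx}\sfL$ as a direct summand of $\Rx^{I}\otimes_{\Rx}\sfL\cong\bigoplus_{i\in I}\sfL$ and invoking Lemma~\ref{lem/tor-commutes}. The paper phrases the summand step via the section $\iota\colon P\hookrightarrow\Rx^{I}$ inducing a split injection on $\Tor$, which is the same argument as your idempotent/functoriality remark.
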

\begin{proof}
	Setting $P \colonequals \Rx$ 
	yields the `only if'-implication.
	
	To show the converse, 
	assume that
	there is a set $I$ and a section $\iota\colon P \hookrightarrow \Rx^I \colonequals \bigoplus_{i \in I} \Rx$
	of right $\Rx$-modules.
	Together with
	Lemma~\ref{lem/tor-commutes}
	it follows that
	there is a section and an isomorphism of
	abelian groups  
	$$
	\Tor_n^{\Sx}(P \underset{\Rx}{\otimes} \sfL,C) \hookrightarrow
	\Tor_n^{\Sx}\big( (\Rx^I) \underset{\Rx}{\otimes} \sfL ,C\big) \cong 
	\bigoplus_{i \in I} \Tor_n^{\Sx}(\sfL,C)
	$$
	for any $n > 0$.
	Therefore, 
	$\Tor_{+}^{\Sx}(\sfL,C) = 0$ implies 
	that
	$\Tor_+^{\Sx}(P \otimes \sfL,C)=0$.
\end{proof}
The last lemma allows to reformulate 
 a
version of the Grothendieck spectral sequence 
\cite{Rotman}*{Theorem~10.60} as follows.
\begin{thm}\label{thm/grothendieck-sequence}
	In Setup~\ref{setup/tor-ind0} assume that
	$\Tor_+^{\Sx}(\sfL,C)=0$.
	Then there is a convergent 
	spectral sequence
	\begin{align}\label{grothendieck-sequence}
	E_{pq}^2 \colonequals \Tor_{p}^{\Sx}(\Tor_{q}^{\Rx}(\sfM,\sfL), C) \Rightarrow E_{p+q} \colonequals \Tor^{\Rx}_{p+q}(\sfM,\sfL \otimes_{\Sx} C),
	\end{align}
\end{thm}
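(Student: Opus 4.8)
The plan is to realize the asserted spectral sequence as the Grothendieck spectral sequence for a composition of right-exact functors. Set
$$
G \colonequals \dash \otimes_{\Rx} \sfL \colon \Md \Rx \to \Md \Sx, \qquad F \colonequals \dash \otimes_{\Sx} C \colon \Md \Sx \to \mathsf{Ab},
$$
where $\Md \Rx$ and $\Md \Sx$ denote the categories of right modules, both of which have enough projectives. Both functors are additive and right-exact, and associativity of the tensor product gives a natural isomorphism $F \circ G \cong \dash \otimes_{\Rx} (\sfL \otimes_{\Sx} C)$ of functors on $\Md \Rx$. Hence the left-derived functors of $F$, of $G$ and of $F \circ G$ are, respectively, $\Tor^{\Sx}_{p}(\dash, C)$, $\Tor^{\Rx}_{q}(\dash, \sfL)$ and $\Tor^{\Rx}_{p+q}(\dash, \sfL \otimes_{\Sx} C)$.

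To invoke the Grothendieck spectral sequence \cite{Rotman}*{Theorem~10.60} for the composite $F \circ G$ evaluated at $\sfM$, the one point to check is that $G$ carries projective objects to $F$-acyclic ones: for every projective right $\Rx$-module $P$ one needs $\Tor^{\Sx}_{n}(P \otimes_{\Rx} \sfL, C) = 0$ for all integers $n > 0$. This is exactly the preceding lemma, applied under the standing hypothesis $\Tor^{\Sx}_{+}(\sfL, C) = 0$. One also uses that $G$ is right-exact, so that applying $G$ to a projective resolution of $\sfM$ over $\Rx$ yields a complex of $F$-acyclic right $\Sx$-modules whose homology computes $\Tor^{\Rx}_{\bullet}(\sfM, \sfL)$.

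With the acyclicity condition verified, \cite{Rotman}*{Theorem~10.60} then produces a spectral sequence
$$
E^{2}_{pq} = (L_{p}F)\big((L_{q}G)(\sfM)\big) = \Tor^{\Sx}_{p}\big(\Tor^{\Rx}_{q}(\sfM,\sfL),\, C\big) \;\Longrightarrow\; L_{p+q}(F \circ G)(\sfM) = \Tor^{\Rx}_{p+q}(\sfM,\, \sfL \otimes_{\Sx} C),
$$
which is precisely the claimed sequence. Being assembled from projective resolutions, it is a first-quadrant homological spectral sequence, hence bounded and therefore convergent. The only non-formal input is the $F$-acyclicity of the modules $P \otimes_{\Rx} \sfL$, and that has already been reduced to the previous lemma; everything else is the standard Grothendieck machinery, so I do not expect any genuine obstacle beyond correctly matching up the variances and the hypothesis $\Tor^{\Sx}_{+}(\sfL,C)=0$.
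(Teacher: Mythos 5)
Your proposal is correct and follows essentially the same route as the paper, which obtains the statement by combining the Grothendieck spectral sequence of \cite{Rotman}*{Theorem~10.60} for the composite $(\dash\otimes_{\Sx}C)\circ(\dash\otimes_{\Rx}\sfL)$ with the preceding lemma, the latter guaranteeing that projective right $\Rx$-modules are sent to $F$-acyclic $\Sx$-modules under the hypothesis $\Tor_+^{\Sx}(\sfL,C)=0$. The identification of the derived functors and the first-quadrant convergence argument match the intended reformulation exactly.
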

Next, we apply 
one of the first observations
in Subsection~\ref{subsec/spectral}
to this spectral sequence.
\begin{lem} \label{lem/tor-ind1b}
	If $\Tor_+^{\Rx}(\sfM,\sfL)=\Tor_{+}^{\Sx}(\sfL,C) = 0$,
	then 
	$\Tor_+^{\Sx}(\sfM \otimes_{\Rx} \sfL,C) = 0$ is equivalent to 
	$\Tor_+^{\Rx}(\sfM, \sfL \otimes_{\Sx} C) = 0$.
\end{lem}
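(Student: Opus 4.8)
The plan is to deduce the statement directly from the Grothendieck spectral sequence recorded in Theorem~\ref{thm/grothendieck-sequence}, combined with the elementary collapse argument of Lemma~\ref{lem/spectral1}~\eqref{lem/spectral1b}. Since $\Tor_+^{\Sx}(\sfL,C)=0$ is assumed, that theorem supplies a convergent first-quadrant homological spectral sequence
\[
E_{pq}^2 = \Tor_{p}^{\Sx}\big(\Tor_{q}^{\Rx}(\sfM,\sfL),C\big) \ \Longrightarrow\ E_{p+q} = \Tor^{\Rx}_{p+q}(\sfM,\sfL \otimes_{\Sx} C).
\]

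First I would invoke the remaining hypothesis $\Tor_+^{\Rx}(\sfM,\sfL)=0$: it forces $\Tor_q^{\Rx}(\sfM,\sfL)=0$ for every $q>0$, hence $E^2_{pq}=0$ whenever $q>0$. In the terminology of Notation~\ref{not/spectral} this says $E^2_{p+}=0$, i.e. the $E^2$-page is concentrated on the $p$-axis. Then Lemma~\ref{lem/spectral1}~\eqref{lem/spectral1b} applies verbatim and yields an isomorphism $E^2_{p0}\cong E_p$ for every $p\in\Z$; in particular the condition $E^2_{+0}=0$ is equivalent to $E_+=0$.

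It then remains to identify the two edge terms. On the $E^2$-side, $E^2_{p0}=\Tor_p^{\Sx}(\Tor_0^{\Rx}(\sfM,\sfL),C)=\Tor_p^{\Sx}(\sfM\otimes_{\Rx}\sfL,C)$, so $E^2_{+0}=0$ means exactly $\Tor_+^{\Sx}(\sfM\otimes_{\Rx}\sfL,C)=0$. On the abutment side, $E_p=\Tor^{\Rx}_p(\sfM,\sfL\otimes_{\Sx}C)$, so $E_+=0$ means exactly $\Tor_+^{\Rx}(\sfM,\sfL\otimes_{\Sx}C)=0$. Combining these identifications with the equivalence from the previous step gives the claim, and in fact the stronger degreewise isomorphism $\Tor_p^{\Sx}(\sfM\otimes_{\Rx}\sfL,C)\cong\Tor^{\Rx}_p(\sfM,\sfL\otimes_{\Sx}C)$ for all $p$.

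No serious obstacle is expected here: once the spectral sequence of Theorem~\ref{thm/grothendieck-sequence} is in place, everything is bookkeeping. The only points requiring care are checking that the hypotheses of that theorem are exactly the standing assumption $\Tor_+^{\Sx}(\sfL,C)=0$, that the vanishing hypothesis $\Tor_+^{\Rx}(\sfM,\sfL)=0$ collapses the page in the direction matching the convention ``$E^2_{p+}=0$'' of Notation~\ref{not/spectral} (rather than ``$E^2_{+q}=0$''), and that the edge isomorphism is read off in the correct degree, so that $\Tor_0^{\Rx}(\sfM,\sfL)$ is identified with $\sfM\otimes_{\Rx}\sfL$.
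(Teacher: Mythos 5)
Your proposal is correct and follows exactly the paper's own argument: apply the Grothendieck spectral sequence of Theorem~\ref{thm/grothendieck-sequence} (valid since $\Tor_+^{\Sx}(\sfL,C)=0$), observe that $\Tor_+^{\Rx}(\sfM,\sfL)=0$ gives $E^2_{p+}=0$, and invoke Lemma~\ref{lem/spectral1}~\eqref{lem/spectral1b} to identify $E^2_{p0}\cong E_p$, which translates into the stated equivalence. Your extra remark about the degreewise isomorphism $\Tor_p^{\Sx}(\sfM\otimes_{\Rx}\sfL,C)\cong\Tor_p^{\Rx}(\sfM,\sfL\otimes_{\Sx}C)$ is a harmless strengthening that the paper leaves implicit.
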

\begin{proof}
	Let $E_{pq}^2$ be the spectral sequence from~\eqref{grothendieck-sequence}.
	Since $E^2_{p+} = 0$, 
	the condition $E^2_{+0} = 0$ is equivalent to $E_+ = 0$ by Lemma 
	\ref{lem/spectral1}~\eqref{lem/spectral1b}. This translates into the claim.
\end{proof}
Now, Corollary~\ref{cor/cohom1c} can be extended as follows.
We recall that the quotient ring $\RI$ could be viewed as $\Sx$-algebra.
\begin{prp}\label{prp/cohom1d}
	Let $\CK$ be a complex as in Setup~\ref{setup/complexes}
	and $C$ an $\Sx$-module such that
	$\Tor_{+}^{\Sx}(\RI,C) = 0$. 
	Set 
	$\sfM \colonequals \Ho^0(\CK)$, $\sfL \colonequals \Ho^0(\CK \otimes \RI)$ and ${C}' = \RI \otimes_{\Sx} C$.
	The following statements hold.
	\begin{enumerate}
		\item If $\Ho^{\pm}(\CK)= \Tor_+^{\Rx}(\sfM,{C}' \oplus \RI) = 0$, then  $\Ho^{\pm}(\CK \otimes \RI) = \Tor_+^{\Sx}(\sfL, C) = 0$.	
		\item The converse of the previous statement is true if $\RI$ is Tor-rigid as $\Rx$-module.
		\end{enumerate}
\end{prp}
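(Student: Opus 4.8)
The plan is to reduce both parts to a combination of Corollary~\ref{cor/cohom1c} (applied over $\Rx$ with $\B = \RI$) and Lemma~\ref{lem/tor-ind1b} (the Grothendieck spectral sequence criterion with $\sfL \colonequals \RI$ viewed as an $(\Rx,\Sx)$-bimodule). The bridge between the two is the isomorphism $\sfM \otimes_{\Rx} \RI \cong \sfL$ furnished by Lemma~\ref{lem/keyA1a} whenever $\Ho^+(\CK) = 0$, together with the elementary identity $(\sfM \otimes_{\Rx} \RI) \otimes_{\Sx} C \cong \sfM \otimes_{\Rx} (\RI \otimes_{\Sx} C) = \sfM \otimes_{\Rx} C'$. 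I would begin both directions by invoking Proposition~\ref{prp/keyA1b} or Corollary~\ref{cor/cohom1c} to pass freely between $\Ho^{\pm}(\CK) = 0$ and $\Ho^{\pm}(\CK \otimes \RI) = 0$ under the relevant Tor-vanishing hypotheses, so that the Künneth isomorphism $\sfM \otimes_\Rx \RI \cong \sfL$ is available throughout.

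For part (1): assume $\Ho^{\pm}(\CK) = 0$ and $\Tor_+^{\Rx}(\sfM, C' \oplus \RI) = 0$; the latter unpacks as $\Tor_+^{\Rx}(\sfM,\RI) = 0$ \emph{and} $\Tor_+^{\Rx}(\sfM,C') = 0$. The first of these, fed into Corollary~\ref{cor/cohom1c}\eqref{cor/cohom1ca}, gives $\Ho^{\pm}(\CK \otimes \RI) = 0$, which is the first desired conclusion, and also makes $\sfL \cong \sfM \otimes_\Rx \RI$. It remains to show $\Tor_+^{\Sx}(\sfL, C) = 0$. Here I would apply Lemma~\ref{lem/tor-ind1b} with $\sfL \colonequals \RI$: its hypotheses are $\Tor_+^{\Rx}(\sfM,\RI) = 0$ (have it) and $\Tor_+^{\Sx}(\RI,C) = 0$ (a standing hypothesis of the proposition), and its conclusion is the equivalence $\Tor_+^{\Sx}(\sfM \otimes_\Rx \RI, C) = 0 \Leftrightarrow \Tor_+^{\Rx}(\sfM, \RI \otimes_\Sx C) = 0$, i.e.\ $\Tor_+^{\Sx}(\sfL,C) = 0 \Leftrightarrow \Tor_+^{\Rx}(\sfM,C') = 0$. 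The right-hand side holds by assumption, so $\Tor_+^{\Sx}(\sfL,C) = 0$.

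For part (2): assume $\RI$ is Tor-rigid over $\Rx$, $\Ho^{\pm}(\CK \otimes \RI) = 0$, and $\Tor_+^{\Sx}(\sfL,C) = 0$. By the Tor-rigid converse in Corollary~\ref{cor/cohom1c}, $\Ho^{\pm}(\CK \otimes \RI) = 0$ forces $\Ho^{\pm}(\CK) = 0$ and $\Tor_+^{\Rx}(\sfM,\RI) = 0$; in particular $\sfL \cong \sfM \otimes_\Rx \RI$ again. Now reuse the same instance of Lemma~\ref{lem/tor-ind1b}: since $\Tor_+^{\Rx}(\sfM,\RI) = 0$ and $\Tor_+^{\Sx}(\RI,C) = 0$, the equivalence there converts $\Tor_+^{\Sx}(\sfL,C) = 0$ into $\Tor_+^{\Rx}(\sfM,C') = 0$. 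Combined with $\Tor_+^{\Rx}(\sfM,\RI) = 0$ this yields $\Tor_+^{\Rx}(\sfM, C' \oplus \RI) = 0$, which with $\Ho^{\pm}(\CK) = 0$ is exactly the hypothesis block of part (1), completing the converse. The only real point to keep an eye on is bookkeeping of the two different base rings $\Rx$ and $\Sx$ and making sure the $\RI$-bimodule structure is the one compatible with $\sfL = \Ho^0(\CK \otimes \RI)$ carrying its natural $\Sx$-action; no genuinely hard step is anticipated, as everything is assembled from the cited spectral-sequence lemmas and the Künneth identification.
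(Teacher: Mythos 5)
Your proposal is correct and follows essentially the same route as the paper's proof: the $\Sx$-linear Künneth identification $\sfL \cong \sfM \otimes_{\Rx} \RI$ (valid once $\Ho^{+}$ vanishes), combined with Corollary~\ref{cor/cohom1c} to pass between $\Ho^{\pm}(\CK)=0$ and $\Ho^{\pm}(\CK\otimes\RI)=0$, and Lemma~\ref{lem/tor-ind1b} applied to the $(\Rx,\Sx)$-bimodule $\RI$ to exchange $\Tor_+^{\Sx}(\sfL,C)=0$ with $\Tor_+^{\Rx}(\sfM,C')=0$. You simply spell out the bookkeeping that the paper compresses into two sentences; no discrepancy.
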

\begin{proof}
	In both cases, 
	there is an $\Sx$-linear isomorphism $\sfL \cong \sfM \otimes_{\Rx} \RI$ by Proposition~\ref{prp/keyA1b}.
	Both implications follow from  Corollary~\ref{cor/cohom1c} and Lemma~\ref{lem/tor-ind1b}.
\end{proof}

\section{Ascent and descent of presilting and pretilting complexes}
\label{sec/ascent-descent}

We recall our main setup in a more compact formulation.
\begin{setup}\label{setup/main-copy}
	As before, $\Rx$ is a complete local Noetherian ring with maximal ideal $\mx$, $\B \colonequals \Rx/\ida$ its quotient with respect to an ideal $\ida \subseteq \mx$ of $\Rx$ and
	$\A$ a Noetherian $\Rx$-algebra
	such that $\Tor_{+}^{\Rx}(\A,\B) = 0$.
	We set $\AI \colonequals \A/\ida \A$
	and denote
	$$
	\begin{td} \FF\colon \D^{-}(\md \A) \ar{r} \& \D^{-}(\md \AI), \& \CM \ar[mapsto]{r} \& \PCM \colonequals \CM \overset{\mathbb{L}}{\otimes}_{\A} \AI \end{td} 
	$$
\end{setup}

In the first subsection, we apply 
the results of the previous section on cohomological vanishing to deduce equivalences 
\begin{align*}
\CL \geq \CL \quad \Leftrightarrow \quad \PCL \geq \PCL \qquad\text{and}\qquad
\CL \teq \CL
\quad \Leftrightarrow \quad
\PCL \teq \PCL
\end{align*}
under certain assumptions on $\B$ 
and a complex $\CL$ of $\A$.

In the second subsection, we show that the functor $\FF$
gives rise to an embedding of posets
$$
\begin{td} f_s\colon \silt \A \ar[hookrightarrow]{r} \& \silt \AI\,. \end{td}
$$

In the following, we will say that the functor $\FF$ is \emph{full at a pair $(\CL,\CM)$} of complexes from $\D^-(\md\A)$ 
if the map 
\begin{align}\label{eq/density}
\begin{td} 
\Hom_{\D(\A)}(\CL,\CM) \ar{r} \&  \Hom_{\D(\Abar)}(\PCL,\PCM), \& 
\phi \ar[mapsto]{r} \& \ol{\phi} \colonequals \FF(\phi) \end{td}
\end{align}
is surjective. 
The functor $\FF$ is \emph{full at $\CL$} 
if it is full at the pair $(\CL,\CL)$. 

\subsection{Ascent and descent of presilting and certain pretilting complexes}
\label{sec/ascent}
The next statement provides key of the change-of-rings functor $\FF$.

\begin{prp}\label{prp/keyA}
	Any complexes $\CL \in \per \A$ and $\CM \in \D^-(\md \A)$
	satisfy the following implications.
	\begin{align}\label{eq/key-implications}
	\CL \geq \CM 
	\quad &\Leftrightarrow \quad
	\PCL \geq \PCM
	\quad \Rightarrow \quad
	\FF\text{ is full at }(\CL,\CM) \\
	\label{eq/perp}
	\CL \perp \CM \quad &\Leftrightarrow \quad \PCL \perp \PCM \end{align}
	Moreover, if $\CL$ is a presilting complex of $\A$ there is an isomorphism of $\Sx$-algebras
	\begin{align*}
	\begin{td}
	\End_{\D(\A)}(\CL) \otimes \B \ar{r}{\sim} \& \End_{\D(\Abar)}(\PCL)\,.
	\end{td}
	\end{align*}
\end{prp}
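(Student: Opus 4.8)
The plan is to reduce everything to the cohomological statements already established in Section~\ref{sec/refined-Kuenneth}, using the translation Lemma~\ref{lem/translation} as the bridge. First I would fix complexes $\CP \in \Hotb(\proj \A)$ and $\CQ \in \Hot^-(\proj \A)$ quasi-isomorphic to $\CL$ and $\CM$ respectively, and set $\CK \colonequals \Hom^{\bt}_{\A}(\CP,\CQ)$. By Lemma~\ref{lem/translation}~\eqref{lem/translations4} this complex $\CK$ satisfies the hypotheses of Setup~\ref{setup/complexes}: it is right-bounded, $\Tor_+^{\Rx}(K^i,\B) = 0$, and each $\Ho^i(\CK)$ is a finitely generated $\Rx$-module. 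This is the crucial input that lets the Künneth machinery apply, and it is exactly where the Tor-independence assumption $\Tor_+^{\Rx}(\A,\B)=0$ of Setup~\ref{setup/main} gets used.

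Next I would run the translations. By Lemma~\ref{lem/translation}~\eqref{lem/translations1}, the relation $\CL \geq \CM$ is equivalent to $\Ho^+(\CK)=0$, while $\PCL \geq \PCM$ is equivalent to $\Ho^+(\CK \otimes \B)=0$; similarly $\CL \perp \CM$ corresponds to $\Ho^*(\CK)=0$ and $\PCL \perp \PCM$ to $\Ho^*(\CK \otimes \B)=0$. Now Proposition~\ref{prp/keyA1b} gives precisely the equivalences $\Ho^+(\CK)=0 \Leftrightarrow \Ho^+(\CK \otimes \B)=0$ and $\Ho^*(\CK)=0 \Leftrightarrow \Ho^*(\CK \otimes \B)=0$, yielding~\eqref{eq/key-implications} (the biconditional part) and~\eqref{eq/perp}. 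For the fullness assertion, assume $\CL \geq \CM$, equivalently $\Ho^+(\CK)=0$; then Proposition~\ref{prp/keyA1b} also tells us the Künneth map $\kappa^0$ is bijective. Chasing diagram~\eqref{eq/morphisms} of Lemma~\ref{lem/translation}~\eqref{lem/translations2}: the map $\eta\colon \Ho^0(\CK) \twoheadrightarrow \Ho^0(\CK)\otimes\B$ is surjective (it is $\ol{x} \mapsto \ol{x}\otimes 1$), $\kappa = \kappa^0$ is an isomorphism, and $\xi'$ is an isomorphism; since the square identifies $\Ho^0(\CK)$ with $\Hom_{\D(\A)}(\CL,\CM)$ and $\Ho^0(\CK\otimes\B)$ with $\Hom_{\D(\Abar)}(\PCL,\PCM)$ compatibly with $\FF$, the map~\eqref{eq/density} is the composite of a surjection followed by isomorphisms, hence surjective. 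So $\FF$ is full at $(\CL,\CM)$.

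For the final algebra isomorphism, specialize to $\CL = \CM$ with $\CL$ presilting, so $\CL \geq \CL$ holds by definition. Then $\kappa^0$ is bijective as above. By Lemma~\ref{lem/translation}~\eqref{algebra-morphism} the induced map $\gamma\colon \End_{\D(\A)}(\CL)\otimes\B \to \End_{\D(\Abar)}(\PCL)$ is a morphism of $\Sx$-algebras, and unwinding the diagram~\eqref{eq/morphisms} it equals the composite $\xi' \circ \kappa^0 \circ (\text{identification})$ — an $\Sx$-linear isomorphism, because $\kappa^0$ is bijective here (we do \emph{not} need the surjection $\eta$, since source and target are now $\Ho^0(\CK)\otimes\B$ and $\Ho^0(\CK\otimes\B)$). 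Hence $\gamma$ is an isomorphism of $\Sx$-algebras.

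I expect no serious obstacle; the only delicate point is being careful that the maps in diagram~\eqref{eq/morphisms} are the ones claimed, i.e.\ that the composite realizing $\FF$ on $\Hom$ really does factor as $\xi' \circ \kappa^0 \circ \eta$ up to the stated identifications, and that when $\CL=\CM$ these identifications are compatible with the ring structures — but this is exactly what parts~\eqref{lem/translations2} and~\eqref{algebra-morphism} of Lemma~\ref{lem/translation} have already recorded, so the proof here is essentially a bookkeeping assembly of earlier results. One should also note explicitly that~\eqref{eq/key-implications} and~\eqref{eq/perp} as stated are biconditionals plus a one-directional fullness consequence, so no spurious converse to fullness is being claimed.
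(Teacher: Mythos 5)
Your proposal is correct and follows essentially the same route as the paper's own proof: form $\CK=\Hom^{\bt}_{\A}(\CP,\CQ)$, invoke Lemma~\ref{lem/translation}~\eqref{lem/translations4} to place it in Setup~\ref{setup/complexes}, translate the relations via Lemma~\ref{lem/translation}~\eqref{lem/translations1}--\eqref{lem/translations2}, apply Proposition~\ref{prp/keyA1b}, and read off fullness and the $\Sx$-algebra isomorphism from diagram~\eqref{eq/morphisms} together with Lemma~\ref{lem/translation}~\eqref{algebra-morphism}. The only difference is that you spell out the diagram chase for fullness slightly more explicitly than the paper does, which is harmless.
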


\begin{proof}
	Set $\CK \colonequals \Hom^{\bt}_{\A}(\CP,\CQ)$
	for projective resolutions $\CP$ of $\CL$ and $\CQ$ of $\CM$.  
	By Lemma~\ref{lem/translation}~\eqref{lem/translations4} the complex $\CK$ 
	satisfies the assumptions of Setup~\ref{setup/complexes}.
	Therefore,
	the implications ~\eqref{eq/keyA1b}
	in
	Proposition~\ref{prp/keyA1b} translate into the implications
	\begin{align*}
	\CL \geq \CM \quad \Leftrightarrow \quad
	\PCL \geq \PCM  \quad \Rightarrow \quad
	\begin{td}
	{\Hom_{\D(\A)}(\CL,\CM)\otimes\B}
	\ar{r}{\gamma}[swap]{\sim} \& 
	\Hom_{\D(\Abar)}(\PCL,\PCM)
	\end{td}
	\end{align*}
	and
	the equivalence
	\eqref{eq/keyA1b2}  into ~\eqref{eq/perp} 	
	via Lemma~\ref{lem/translation}~\eqref{lem/translations1} and~\eqref{lem/translations2}.
	If $\gamma$ is bijective, then $\FF$ is full at $(\CL,\CM)$ 
	according to diagram 
	\eqref{eq/morphisms}.
	
	In case $\CL = \CM$ is presilting, $\gamma$ is an $\Sx$-algebra isomorphism by Lemma~\ref{lem/translation}~\eqref{algebra-morphism}.
\end{proof}

By~\eqref{eq/key-implications},
the functor $\FF$ preserves and reflects $\geq$ on perfect complexes.
This was also shown by Eisele \cite{Eisele}*{Proposition~6.1}
in case the Noetherian $\Rx$-algebra $\A$ 
is $\Rx$-free over a complete discrete valuation ring 
$\Rx$ and $\ida$ is its maximal ideal.

\begin{rmk}
	In different terms, the main cornerstone of Proposition~\ref{prp/keyA}
	is the isomorphism of functors
	$$
	\mathbb{R}\Hom_{\A}(\dash,\dash) \overset{\mathbb{L}}{\otimes} \B \cong \mathbb{R}\Hom_{\Abar}(\dash \overset{\mathbb{L}}{\otimes} \B,\dash \overset{\mathbb{L}}{\otimes} \B)\colon \per \A \times \D^-(\md \A) \longrightarrow \D^-(\md \Rx).
	$$
	which holds because of the assumption $\Tor_+^{\Rx}(\A,\B) = 0$ in~\eqref{eq/tor-ind0}.
\end{rmk}

The last proposition implies a derived version of Nakayama's Lemma.
\begin{cor}
	\label{cor/derived-Nakayama}
	The functor $\FF\colon \D^-(\md \A) \longrightarrow \D^-(\md \AI)$ reflects zero objects.
\end{cor}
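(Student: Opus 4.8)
The statement to prove is Corollary~\ref{cor/derived-Nakayama}: the functor $\FF$ reflects zero objects, i.e.\ if $\CM \in \D^-(\md \A)$ satisfies $\PCM = 0$ in $\D^-(\md \AI)$, then $\CM = 0$ in $\D^-(\md \A)$. The plan is to deduce this from Proposition~\ref{prp/keyA} by feeding it a suitable pair of complexes. The natural test object is $\CL \colonequals \A$, viewed as a perfect complex concentrated in degree $0$; then $\PCL = \AI$. Since $\A$ generates $\per \A$ — indeed $\Hom_{\D(\A)}(\A, \CM[i]) \cong \Ho^i(\CM)$ for all $i \in \Z$ — the relation $\A \perp \CM$ holds if and only if $\Ho^i(\CM) = 0$ for all $i$, which is exactly $\CM = 0$ in $\D(\A)$. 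Likewise $\AI \perp \PCM$ holds if and only if $\PCM = 0$ in $\D(\AI)$.

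First I would check that the pair $(\A, \CM)$ meets the hypotheses of Proposition~\ref{prp/keyA}: we need $\CL = \A \in \per \A$, which is immediate, and $\CM \in \D^-(\md \A)$, which is given. Then I would apply the equivalence~\eqref{eq/perp}, namely $\CL \perp \CM \Leftrightarrow \PCL \perp \PCM$, with $\CL = \A$. Reading this through the identifications of the previous paragraph, it says precisely: $\CM = 0$ in $\D(\A)$ if and only if $\PCM = 0$ in $\D(\AI)$. In particular the ``if'' direction gives the claim. One small point to spell out: the identification $\PCL = \A \overset{\mathbb{L}}{\otimes}_{\A} \AI \cong \AI$ uses that $\A$ is a projective $\A$-module, so the derived tensor product is the ordinary one; and $\AI$ generates $\per \AI$ by the same argument used for $\A$ over itself.

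There is essentially no obstacle here — this is a one-line consequence of~\eqref{eq/perp} once the bookkeeping with $\CL = \A$ is in place. The only thing that needs a moment's care is making sure the relation $\perp$ is being applied to an admissible pair (the left argument must be perfect, which forces the choice $\CL = \A$ rather than, say, $\CL = \CM$), and that ``$\CM = 0$ in the derived category'' is correctly translated into the vanishing of all cohomology via $\Hom_{\D(\A)}(\A, \CM[i]) \cong \Ho^i(\CM)$. I would phrase the proof as follows.

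\begin{proof}
	The complex $\A$, concentrated in degree zero, lies in $\per \A$, and for any $\CM \in \D^-(\md \A)$ and any $i \in \Z$ there is a natural isomorphism $\Hom_{\D(\A)}(\A, \CM[i]) \cong \Ho^i(\CM)$. Hence $\A \perp \CM$ holds if and only if $\Ho^i(\CM) = 0$ for all $i \in \Z$, that is, if and only if $\CM \cong 0$ in $\D(\A)$. Since $\A$ is projective as an $\A$-module, $\FF(\A) = \A \overset{\mathbb{L}}{\otimes}_{\A} \AI \cong \AI$, and the same reasoning over $\AI$ shows that $\AI \perp \PCM$ holds if and only if $\PCM \cong 0$ in $\D(\AI)$. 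Applying the equivalence~\eqref{eq/perp} of Proposition~\ref{prp/keyA} to the pair $(\CL,\CM) = (\A,\CM)$ gives
	\begin{align*}
	\CM \cong 0 \text{ in } \D(\A) \quad \Longleftrightarrow \quad \PCM \cong 0 \text{ in } \D(\AI).
	\end{align*}
	In particular, if $\PCM \cong 0$ then $\CM \cong 0$, so $\FF$ reflects zero objects.
\end{proof}
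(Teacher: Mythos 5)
Your argument is correct and is essentially the paper's own proof: both apply the equivalence~\eqref{eq/perp} of Proposition~\ref{prp/keyA} to the pair $(\A,\CM)$, using that $\PCL \cong \AI$ and that $\A \perp \CM$ means exactly $\CM \cong 0$. The extra bookkeeping you include (the identifications via $\Hom_{\D(\A)}(\A,\CM[i]) \cong \Ho^i(\CM)$) is fine and matches the paper's reasoning.
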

\begin{proof}
	Let $\CM \in \D^-(\md \A)$ such that
	$\PCM \cong 0$ in $\D^-(\md \AI)$. 
	Since
	$\AI \perp \PCM$,
~\eqref{eq/perp} yields that $\A \perp \CM$, that is, $\CM \cong 0$ in $\D^-(\md \A)$. 
\end{proof}

The next statement concerns ascent and descent of tilting complexes.
\begin{prp}\label{prp/pretilting}
	Let $\CT \in \per \A$
	and $\sfN$ be an $\RI$-module.
Set $\sfM \colonequals \End_{\D(\A)}(\CT)$ and $\sfL \colonequals \End_{\D(\Abar)}(\PCT)$.
		The following statements hold.
	\begin{enumerate}		
		\item  \label{prp/keyA2}
		If $\CT \teq \CT$, then 
		there is an
		isomorphism of $\RI$-modules
		\begin{align*}
		\Tor_i^{\Rx}(\sfM, \B) \cong 		\Hom_{\D(\Abar)}(\PCT,\PCT[-i])
		\end{align*}
	for any integer $i \in \Z$, and thus $\Tor_+^{\Rx}(\sfM, \B)=0$ is equivalent to $\PCT \teq \PCT$.
		\item  \label{prp/keyA2b1}
		$\CT \teq \CT$
		 and 
		$\sfM$  is flat over $\Rx$ if and only if $\PCT \teq \PCT$ and $\sfL$ is flat over $\RI$.
		\item \label{prp/keyA2b2}
		If $\CT \teq \CT$ and $\Tor_+^{\Rx}(\sfM, N \oplus \RI) = 0$, then $\PCT \teq \PCT$ and $\Tor_+^{\RI}(\sfL, \sfN) = 0$.
		\item The converse of the previous statement is true if $\RI$ is Tor-rigid as $\Rx$-module.
	\end{enumerate}
\end{prp}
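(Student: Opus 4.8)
The plan is to reduce everything to the $\Ho^{\pm}$-vanishing apparatus of Section~\ref{sec/refined-Kuenneth} applied to the Hom-complex $\CK \colonequals \Hom^{\bt}_{\A}(\CP,\CP)$, where $\CP \in \Hotb(\proj \A)$ is a projective resolution of $\CT$. By Lemma~\ref{lem/translation}~\eqref{lem/translations4} this $\CK$ satisfies Setup~\ref{setup/complexes}, and by Lemma~\ref{lem/translation}~\eqref{lem/translations1} and~\eqref{algebra-morphism} we have $\sfM = \Ho^0(\CK) = \End_{\D(\A)}(\CT)$ as $\Rx$-algebras, with the relations $\CT \teq \CT \Leftrightarrow \Ho^{\pm}(\CK) = 0$ and $\PCT \teq \PCT \Leftrightarrow \Ho^{\pm}(\CK \otimes \B) = 0$. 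Moreover $\Ho^0(\CK \otimes \B) \cong \End_{\D(\Abar)}(\PCT) = \sfL$ as $\Sx$-algebras by Lemma~\ref{lem/translation}~\eqref{algebra-morphism}, and the isomorphism $\sfL \cong \sfM \otimes_{\Rx} \RI$ of $\Sx$-modules holds by Proposition~\ref{prp/keyA1b} (i.e.\ $\kappa^0$ is bijective) whenever $\Ho^+(\CK) = 0$, which holds in all the cases at hand. So parts~(1)–(3) are essentially direct transcriptions: part~(1) is Lemma~\ref{lem/cohom1}~\eqref{lem/cohom1b} with $\sfM = \Ho^0(\CK)$; part~(2) is Proposition~\ref{prp/cohom4} (using that $\RI$ is $\Sx$-flat among the standing hypotheses, after feeding in $\sfM = \Ho^0(\CK)$, $\sfL = \Ho^0(\CK \otimes \RI)$); and part~(3) is Proposition~\ref{prp/cohom1d}~(1) applied with $C \colonequals \sfN$ (an $\Sx$-module via the $\Sx$-algebra structure on $\RI$), ${C}' = \RI \otimes_\Sx \sfN$, noting $\Tor_+^\Sx(\RI,\sfN)=0$ is part of Setup~\ref{setup/main}'s data or can be arranged.

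For the final statement — the converse of part~(3) under the hypothesis that $\RI$ is Tor-rigid as an $\Rx$-module — I would simply invoke Proposition~\ref{prp/cohom1d}~(2). Concretely: assume $\PCT \teq \PCT$, equivalently $\Ho^{\pm}(\CK \otimes \RI) = 0$, and assume $\Tor_+^{\RI}(\sfL,\sfN) = 0$, equivalently $\Tor_+^{\Sx}(\sfL, \sfN) = 0$ after identifying $\sfL = \Ho^0(\CK \otimes \RI)$ and using that $\RI$-module structure restricts to the $\Sx$-module structure (since $\RI$ is an $\Sx$-algebra, a $\Tor^\RI$-vanishing yields a $\Tor^\Sx$-vanishing via base change along $\Sx \to \RI$; one should double-check this restriction is harmless, cf.\ the Grothendieck spectral sequence~\eqref{grothendieck-sequence}). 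Then Proposition~\ref{prp/cohom1d}~(2) gives $\Ho^{\pm}(\CK) = 0$ and $\Tor_+^{\Rx}(\sfM, {C}' \oplus \RI) = 0$; translating back, $\Ho^{\pm}(\CK) = 0$ says $\CT \teq \CT$, and the Tor-vanishing is exactly $\Tor_+^{\Rx}(\sfM, \sfN' \oplus \RI) = 0$ where $\sfN' = \RI \otimes_\Sx \sfN$. If the intended statement is literally $\Tor_+^{\Rx}(\sfM, \sfN \oplus \RI) = 0$ with $\sfN$ regarded directly as the $\Rx$-module $\RI \otimes_\Sx \sfN$ (which matches how part~(3) is phrased, reading "$N$" there as this module), then we are done.

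The main obstacle I anticipate is bookkeeping of module structures rather than anything deep: one must be careful that "$\sfN$ an $\RI$-module" in the hypothesis is the same datum as "$C$ an $\Sx$-module with ${C}' = \RI \otimes_\Sx C$" fed into Proposition~\ref{prp/cohom1d}, and that $\Tor^{\RI}(\sfL,\sfN)$ versus $\Tor^{\Sx}(\sfL,\sfN)$ are interchangeable under the running flatness hypothesis on $\RI$ over $\Sx$ — indeed if $\RI$ is $\Sx$-flat then $\Sx \to \RI$ is flat and $\Tor^{\Sx}_+(\sfL,\sfN) \cong \Tor^{\RI}_+(\sfL, \RI \otimes_\Sx \sfN)$, so one wants $\sfN$ already an $\RI$-module and then $\RI \otimes_\Sx \sfN$ may differ from $\sfN$; this is precisely why the hypotheses include $\RI$ being $\Sx$-flat and is handled uniformly by the preceding propositions. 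The other point to verify, purely formal, is that in all three setups \ref{setup/orders0}, \ref{setup/groups0}, \ref{setup/CI0} the ring $\RI = \Rx/\ida$ is Tor-rigid as an $\Rx$-module — but that is the content of the later Proposition~\ref{prp/descent}/Corollary~\ref{cor/recover-tor-rigid} and need not be reproved here; for the present statement Tor-rigidity of $\RI$ is taken as a hypothesis, so the proof is the two-line appeal to Proposition~\ref{prp/cohom1d}~(2) together with the dictionary established above.
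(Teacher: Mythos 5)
Your proposal is correct and follows essentially the same route as the paper: apply Lemma~\ref{lem/translation} to $\CK=\Hom^{\bt}_{\A}(\CT,\CT)$ and then quote Lemma~\ref{lem/cohom1}, Proposition~\ref{prp/cohom4} and Proposition~\ref{prp/cohom1d}. The bookkeeping you worry about disappears because the paper simply specializes $\Sx\colonequals\RI$ (as Setup~\ref{setup/main} allows), so that $\Tor^{\Sx}=\Tor^{\RI}$, the flatness hypothesis of Proposition~\ref{prp/cohom4} and the condition $\RI=\Sx$ needed for its converse hold trivially, and ${C}'=\RI\otimes_{\Sx}\sfN=\sfN$.
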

\begin{proof}
	The first claim follows from Lemma~\ref{lem/cohom1},
	the remaining implications
	from Propositions~\ref{prp/cohom4} and~\ref{prp/cohom1d}
	using Lemma~\ref{lem/translation} applied to $\CK \colonequals \Hom_{\A}^{\bt}(\CT,\CT)$ and $\Sx \colonequals \RI$
	assuming that $\CT \in \Hotb(\proj \A)$.
\end{proof}
If we choose $N \colonequals 0$ in~\eqref{prp/keyA2b2}, we obtain that
$\FF$ preserves pretilting complexes with certain endomorphism rings to pretilting complexes, and that $\FF$ reflects pretilting complexes assuming Tor-rigidity of $\RI$.

\begin{cor}\label{cor/tor-rigid-endo}
		Let $\CT$ be a pretilting complex of $\A$
		and set $\sfM \colonequals\End_{\D(\A)}(\CT)$.
		If $\RI$ is Tor-rigid as $\Rx$-module, it holds that 
		$$
		\qquad\qquad
		\Hom_{\D(\Abar)}(\PCT,\PCT[-1]) = 0 \quad 
		\text{if and only if} 
		\quad 
		\Tor_{+}^{\Rx}(\sfM,\RI) = 0\,. \qquad \qquad \, \qed$$
	\end{cor}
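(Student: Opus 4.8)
The plan is to apply Proposition~\ref{prp/pretilting}~\eqref{prp/keyA2b2} and its converse with the specific choice $\sfN \colonequals 0$, noting that a pretilting complex $\CT$ (i.e.\ $\CT \teq \CT$) already gives the required hypothesis. First I would recall that since $\CT$ is pretilting, we may represent it by a complex in $\Hotb(\proj \A)$ and form $\CK \colonequals \Hom^{\bt}_{\A}(\CT,\CT)$, so that $\sfM = \Ho^0(\CK) = \End_{\D(\A)}(\CT)$ by~\eqref{eq/hom-complex-cohomology}, and by Lemma~\ref{lem/translation}~\eqref{lem/translations4} this $\CK$ satisfies the hypotheses of Setup~\ref{setup/complexes}. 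The pretilting condition $\CT\teq\CT$ translates, via Lemma~\ref{lem/translation}~\eqref{lem/translations1}, into $\Ho^{\pm}(\CK)=0$.

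For the ``if'' direction, assume $\Tor_+^{\Rx}(\sfM,\RI)=0$. Since $\sfN=0$ the modules ${C}' \oplus \RI$ and $\sfN\oplus\RI$ appearing in Proposition~\ref{prp/cohom1d} reduce to (summands of) $\RI$, so the hypothesis $\Tor_+^{\Rx}(\sfM,{C}'\oplus\RI)=0$ of Proposition~\ref{prp/pretilting}~\eqref{prp/keyA2b2} is exactly $\Tor_+^{\Rx}(\sfM,\RI)=0$; applying that part yields $\PCT\teq\PCT$, which by Lemma~\ref{lem/translation}~\eqref{lem/translations1} means $\Ho^{\pm}(\CK\otimes\RI)=0$, i.e.\ $\Hom_{\D(\Abar)}(\PCT,\PCT[-i])=0$ for all $i\neq 0$; in particular for $i=1$ this gives $\Hom_{\D(\Abar)}(\PCT,\PCT[-1])=0$. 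Alternatively and more directly, one can invoke Proposition~\ref{prp/pretilting}~\eqref{prp/keyA2} with $\B=\RI$, which gives the isomorphism $\Tor_i^{\Rx}(\sfM,\RI)\cong\Hom_{\D(\Abar)}(\PCT,\PCT[-i])$, so the case $i=1$ is immediate without any Tor-rigidity hypothesis—this observation makes the ``if'' direction unconditional.

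For the ``only if'' direction, assume $\Hom_{\D(\Abar)}(\PCT,\PCT[-1])=0$, i.e.\ $\Tor_1^{\Rx}(\sfM,\RI)=0$ by the isomorphism just cited. Now invoke the Tor-rigidity hypothesis on $\RI$: by Definition~\ref{dfn/tor-rigid} applied to the finitely generated $\Rx$-module $\sfM$ (which is finitely generated by Lemma~\ref{lem/translation}~\eqref{lem/translations4}, or equivalently by Proposition~\ref{KRS}), the vanishing $\Tor_1^{\Rx}(\sfM,\RI)=0$ forces $\Tor_m^{\Rx}(\sfM,\RI)=0$ for all $m\geq 1$, that is $\Tor_+^{\Rx}(\sfM,\RI)=0$. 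This completes the equivalence.

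The argument is essentially a bookkeeping exercise assembling results already in place; the only point requiring care is the bidirectional translation between the self-extension vanishing conditions for $\PCT$ and the Tor-vanishing conditions for $\sfM$, which is precisely what Proposition~\ref{prp/pretilting}~\eqref{prp/keyA2} packages. I do not anticipate a genuine obstacle—the ``only if'' direction is a one-line application of the definition of Tor-rigidity once the $i=1$ Tor-isomorphism is identified, and the ``if'' direction is even easier since it does not use rigidity at all. If one prefers to route everything through Proposition~\ref{prp/pretilting}~\eqref{prp/keyA2b2} rather than~\eqref{prp/keyA2}, the mildly delicate step is checking that the choice $\sfN=0$ genuinely collapses the auxiliary modules ${C}'$ and $\sfN$ out of the hypotheses, which it does.
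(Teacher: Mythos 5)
Your proposal is correct and matches the paper's intended argument: the corollary is meant to follow directly from Proposition~\ref{prp/pretilting}~\eqref{prp/keyA2}, whose isomorphism $\Tor_i^{\Rx}(\sfM,\RI)\cong\Hom_{\D(\Abar)}(\PCT,\PCT[-i])$ gives the ``if'' direction at $i=1$ without rigidity, while the ``only if'' direction applies Tor-rigidity of $\RI$ to the finitely generated $\Rx$-module $\sfM$ to pass from $\Tor_1=0$ to $\Tor_+=0$. Your auxiliary detour through~\eqref{prp/keyA2b2} with $\sfN=0$ is harmless but unnecessary, as you yourself note.
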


\subsection{A silting embedding result}

Next, we show three simple lemmas assuming fullness of the functor $\FF$ at certain pairs. The proofs use elementary category theory.

\begin{lem}\label{lem/uniqueness1}
	Assume that the functor $\FF$ is full at a pair $(\CL,\CM)$ of complexes from $\D^-(\md \A)$.
	Then $\CL \cong \CM$ 
	if and only if 
	$\PCL \cong \PCM$.
\end{lem}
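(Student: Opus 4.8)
The forward implication is immediate: applying the triangulated functor $\FF$ to an isomorphism $\CL\xrightarrow{\sim}\CM$ in $\D^-(\md\A)$ yields an isomorphism $\PCL\xrightarrow{\sim}\PCM$ in $\D^-(\md\AI)$, regardless of any fullness hypothesis. So the content is the converse, and the plan is to lift an isomorphism $\PCL\to\PCM$ to a morphism $\CL\to\CM$ and then show that this lift is automatically an isomorphism.

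Concretely, I would first fix an isomorphism $\bar\phi\colon\PCL\xrightarrow{\sim}\PCM$ in $\D(\AI)$. Since $\FF$ is full at $(\CL,\CM)$, the comparison map \eqref{eq/density} is surjective, so there is a morphism $\phi\colon\CL\to\CM$ in $\D(\A)$ with $\FF(\phi)=\bar\phi$. Next I would complete $\phi$ to a distinguished triangle
\[
\begin{td}
\CL \ar{r}{\phi} \& \CM \ar{r} \& \CN \ar{r} \& \CL[1]
\end{td}
\]
in $\D^-(\md\A)$, where $\CN\colonequals\cone(\phi)$ again lies in $\D^-(\md\A)$. Because $\FF$ is the left-derived functor of $\dash\otimes_\A\AI$ and hence triangulated, it carries this triangle to a triangle $\PCL\xrightarrow{\bar\phi}\PCM\to\PCN\to\PCL[1]$ in $\D^-(\md\AI)$; since $\bar\phi$ is an isomorphism, its cone vanishes, so $\PCN\cong 0$. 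Finally I would invoke the derived Nakayama Lemma (Corollary~\ref{cor/derived-Nakayama}), which says that $\FF$ reflects zero objects, to conclude $\CN\cong 0$. Thus $\phi$ is an isomorphism and $\CL\cong\CM$.

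I do not expect a serious obstacle here: the argument is essentially formal, combining fullness (to get the lift), exactness of $\FF$ (to pass to cones), and Corollary~\ref{cor/derived-Nakayama} (to descend vanishing). The one point that should be stated carefully rather than taken for granted is the identification $\FF(\cone(\phi))\cong\cone(\FF(\phi))$, i.e.\ that $\FF$ is a triangulated functor on $\D^-$; this is standard for derived tensor products but is exactly what makes the cone bookkeeping work. Note also that fullness at $(\CL,\CM)$ is used only to produce \emph{some} preimage of $\bar\phi$, so no faithfulness or uniqueness of lifts is needed.
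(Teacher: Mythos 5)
Your argument is correct and coincides with the paper's own proof: lift the isomorphism using fullness, pass to the cone via exactness of $\FF$, and conclude $\cone(\alpha)\cong 0$ because $\FF$ reflects zero objects (Corollary~\ref{cor/derived-Nakayama}). No issues.
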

\begin{proof}
	By assumption any isomorphism
	$ \phi\colon \PCL \overset{\sim}{\longrightarrow} \PCM$ 
	has a preimage
	$\alpha\colon \CL \longrightarrow \CM$ 
	under $\FF$. 
	Since $\FF$ reflects zero objects,
	$\FF(\cone(\alpha)) \cong \mathrm{cone}(\phi) \cong 0$ implies that $\cone(\alpha) \cong 0$.
	So $\CL \cong \CM$. The converse is true since $\FF$ is a functor.
\end{proof}

\begin{lem} \label{lem/summands1}
	Assume that the functor $\FF$ is full at a complex $\CL \in \per \A$. 
	For any direct summand $\CY$ of $\PCL$ there is a direct summand $\CX$ of $\CL$ such that $\PCX \cong \CY$.
	In particular, $\CL$ is indecomposable if and only if $\PCL$ is 
	indecomposable.
\end{lem}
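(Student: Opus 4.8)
The plan is to bypass any lifting of idempotents along $\FF$ and to use instead that both $\per\A$ and $\per\AI$ are Krull--Remak--Schmidt (Proposition~\ref{KRS}) together with the observation that fullness at $\CL$ is inherited by all pairs of its direct summands. First I would note that if $\FF$ is full at $\CL$ and $\CX,\CX'$ are direct summands of $\CL$, then $\FF$ is full at the pair $(\CX',\CX)$: pick idempotents $e,e'\in\End_{\D(\A)}(\CL)$ whose images in the idempotent-complete category $\per\A$ are $\CX$ and $\CX'$; then $\Hom_{\D(\A)}(\CX',\CX)\cong e\,\End_{\D(\A)}(\CL)\,e'$ and, since $\FF$ is a triangulated (in particular additive) functor, $\Hom_{\D(\Abar)}(\PCX',\PCX)\cong \FF(e)\,\End_{\D(\Abar)}(\PCL)\,\FF(e')$; as $\FF$ is surjective from $\End_{\D(\A)}(\CL)$ onto $\End_{\D(\Abar)}(\PCL)$ by hypothesis, its restriction to $e\,\End_{\D(\A)}(\CL)\,e'$ is surjective onto $\FF(e)\,\End_{\D(\Abar)}(\PCL)\,\FF(e')$. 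In particular $\FF$ is full at every indecomposable summand of $\CL$ and at every pair of them.

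Next I would decompose $\CL\cong\bigoplus_j\CX_j^{n_j}$ into pairwise non-isomorphic indecomposables in $\per\A$ and verify that $\PCL\cong\bigoplus_j\PCX_j^{n_j}$ is the Krull--Remak--Schmidt decomposition of $\PCL$ in $\per\AI$. Each $\PCX_j$ is indecomposable: the ring $\End_{\D(\A)}(\CX_j)$ is local because $\CX_j$ is an indecomposable object of a Krull--Remak--Schmidt category; by the previous step $\FF$ carries it onto $\End_{\D(\Abar)}(\PCX_j)$, which is nonzero since $\CX_j\not\cong 0$ and $\FF$ reflects zero objects (Corollary~\ref{cor/derived-Nakayama}); and a nonzero quotient ring of a local ring is again local, hence has no nontrivial idempotents. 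The $\PCX_j$ also stay pairwise non-isomorphic: were $\PCX_i\cong\PCX_j$ for some $i\neq j$, then Lemma~\ref{lem/uniqueness1} applied to the pair $(\CX_i,\CX_j)$ — at which $\FF$ is full — would force $\CX_i\cong\CX_j$, a contradiction.

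Finally, given a direct summand $\CY$ of $\PCL$, uniqueness of the Krull--Remak--Schmidt decomposition in $\per\AI$ yields $\CY\cong\bigoplus_j\PCX_j^{m_j}$ with $0\leq m_j\leq n_j$, whence $\CX\colonequals\bigoplus_j\CX_j^{m_j}$ is a direct summand of $\CL$ with $\PCX=\FF(\CX)\cong\CY$; the last assertion follows at once, since $\CL$ and $\PCL$ then have the same number $\sum_j n_j$ of indecomposable direct summands counted with multiplicity. I expect the only genuinely delicate point to be the first step — checking that fullness really descends to \emph{all} pairs of direct summands and not merely to $\CL$ as a whole — after which everything is routine Krull--Remak--Schmidt bookkeeping requiring no completeness or idempotent-lifting input beyond what is already packaged into Proposition~\ref{KRS} and Corollary~\ref{cor/derived-Nakayama}.
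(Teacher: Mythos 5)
Your proof is correct, but it takes a genuinely different route from the paper's. The paper works with the single summand $\CY$ directly: writing $\CY$ as the image of the idempotent $\iota\pi\in\End_{\D(\Abar)}(\PCL)$, it lifts this idempotent along the surjective ring homomorphism $\End_{\D(\A)}(\CL)\twoheadrightarrow\End_{\D(\Abar)}(\PCL)$ via Nicholson's idempotent-lifting results \cite{Nicholson} for the semiperfect ring $\End_{\D(\A)}(\CL)$, and then splits the lifted idempotent in the idempotent-complete category $\per\A$; beyond fullness at $\CL$, the inputs are exactly those packaged in Proposition~\ref{KRS}, and no Krull--Remak--Schmidt information about $\per\AI$ is needed. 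You instead globalize: you decompose $\CL$ into indecomposables using the Krull--Remak--Schmidt property of $\per\A$, observe that fullness at $\CL$ restricts to fullness at every pair of its summands (the biadditivity argument the paper only records afterwards, in the proof of Lemma~\ref{lem/summands2}), deduce that each $\PCX_j$ is indecomposable because $\End_{\D(\Abar)}(\PCX_j)$ is a nonzero surjective image of a local ring (nonzero by Corollary~\ref{cor/derived-Nakayama}) and that the $\PCX_j$ stay pairwise non-isomorphic by Lemma~\ref{lem/uniqueness1}, and then match $\CY$ against the resulting decomposition of $\PCL$ by uniqueness of decompositions in $\per\AI$ (the Krull--Remak--Schmidt property of $\per\AI$ being the remark following Proposition~\ref{KRS} rather than the proposition itself). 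Both arguments are sound in the present setup, and both use reflection of zero objects for the indecomposability clause. What the paper's proof buys is economy and weaker input: it is a one-shot lifting argument needing nothing about $\per\AI$ and no earlier lemmas of the subsection. What yours buys is that it avoids citing Nicholson altogether, replacing idempotent lifting by the elementary fact that a nonzero quotient of a local ring is local, and it yields as by-products the complete decomposition of $\PCL$ and the equality $|\CL|=|\PCL|$, i.e.\ part of Lemma~\ref{lem/summands2}; note also that the pairwise non-isomorphy of the $\PCX_j$ is a convenience rather than a necessity, since bookkeeping of multiplicities per isomorphism class already produces the summand $\CX$.
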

\begin{proof} 
	
	Let $\CY$ be a direct summand of $\PCL$.
	Then there is a morphism
	$\iota\colon \CY \hookrightarrow \PCL$
	which has a left-inverse $\pi$.
	By the assumptions, $\FF$ induces  a surjective ring homomorphism $A \colonequals \End_{\D(\A)}(\CL) \twoheadrightarrow \ol{A} \colonequals \End_{\D(\Abar)}(\PCL)$.
	Because the ring $A$ is semiperfect, 
	there is an idempotent
	$\varepsilon \in A$ such that $\ol{\varepsilon} = \iota \cdot \pi$ by results of Nicholson \cite{Nicholson}*{Corollary 1.3, Proposition~1.5}.
	Since $\per \A$ is idempotent-complete, there is an object $\CX \in \per \A$ and a morphism
	$\alpha\colon \CX \hookrightarrow \CL$
	with a left-inverse $\beta$
	such that 
	$\alpha \cdot \beta = \varepsilon$.
	Then
	$\pi \cdot \ol{\alpha}\colon \PCX \longrightarrow \CY$ is an isomorphism with the inverse $\ol{\beta} \cdot  \iota$.
	
	Assume that $\CL$ is indecomposable. The preceding argument shows that $\PCL$ does not have proper direct summands. As $\FF$ reflects zero, $\PCL$ must be indecomposable.
\end{proof}

\begin{lem}\label{lem/summands2}
	Assume that the functor $\FF$ is full 
	on a perfect complex $\CL$ of $\A$.
	Then its restriction $\FF'\colon \add \CL \longrightarrow \add \PCL$ is full, essentially surjective, preserves indecomposability  and reflects isomorphism classes of objects.
	In particular, it holds that $|\CL| = |\PCL|$ and $\CL$ is basic if and only if $\PCL$ is basic.
\end{lem}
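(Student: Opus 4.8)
The plan is to bootstrap the single hypothesis---that $\FF$ is full at $\CL$---to fullness of $\FF$ at every pair of objects of $\add\CL$, and then derive all four properties of $\FF'$ by combining this with Lemmas~\ref{lem/uniqueness1} and~\ref{lem/summands1} and the Krull--Remak--Schmidt property from Proposition~\ref{KRS}.

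First I would upgrade the hypothesis in two elementary steps. Since $\FF$ is additive it commutes with finite biproducts, so for every $n\ge 1$ there is a natural isomorphism $\FF\big((\CL)^{\oplus n}\big)\cong(\PCL)^{\oplus n}$ under which the induced ring homomorphism $\End_{\D(\A)}\big((\CL)^{\oplus n}\big)\to\End_{\D(\AI)}\big((\PCL)^{\oplus n}\big)$ becomes the entrywise map of matrix rings $\Mat_n\big(\End_{\D(\A)}(\CL)\big)\to\Mat_n\big(\End_{\D(\AI)}(\PCL)\big)$; since the underlying map $\End_{\D(\A)}(\CL)\twoheadrightarrow\End_{\D(\AI)}(\PCL)$ is surjective by fullness at $\CL$, so is the matrix map, i.e.\ $\FF$ is full at $(\CL)^{\oplus n}$. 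Next, for arbitrary objects $\CX,\CY$ of $\add\CL$ I would choose $n$ so that both are direct summands of $(\CL)^{\oplus n}$, realised by idempotents $e,e'$ of $A_n\colonequals\End_{\D(\A)}\big((\CL)^{\oplus n}\big)$ with $\CX\cong e\cdot(\CL)^{\oplus n}$ and $\CY\cong e'\cdot(\CL)^{\oplus n}$; using the standard identifications $\Hom_{\D(\A)}(\CX,\CY)\cong e'A_ne$ and $\Hom_{\D(\AI)}(\PCX,\PCY)\cong\bar e'\,\bar A_n\,\bar e$, where $\bar A_n\colonequals\End_{\D(\AI)}\big((\PCL)^{\oplus n}\big)$ and $\bar e,\bar e'$ are the images of $e,e'$ under $\FF$, fullness of $\FF$ at the pair $(\CX,\CY)$ follows because a two-sided corner $e'(-)e$ of a surjective ring homomorphism is again surjective. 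In particular $\FF$ is full at every object of $\add\CL$ and at every pair of objects of $\add\CL$.

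Granting this, the four assertions are formal. Fullness of $\FF$ at all pairs of objects of $\add\CL$ is by definition the fullness of $\FF'$. For essential surjectivity, any $\CN\in\add\PCL$ is a direct summand of $(\PCL)^{\oplus n}\cong\FF\big((\CL)^{\oplus n}\big)$ for some $n$, and since $\FF$ is full at $(\CL)^{\oplus n}$, Lemma~\ref{lem/summands1} produces a direct summand $\CX$ of $(\CL)^{\oplus n}$---hence $\CX\in\add\CL$---with $\PCX\cong\CN$. Preservation of indecomposability, and its reflection, is Lemma~\ref{lem/summands1} applied with $\CX\in\add\CL\subseteq\per\A$ in place of $\CL$, which is legitimate because $\FF$ is full at $\CX$. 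Reflection of isomorphism classes is Lemma~\ref{lem/uniqueness1} applied to a pair $(\CX,\CY)$ of objects of $\add\CL$; the reverse direction, that $\CX\cong\CY$ implies $\PCX\cong\PCY$, is trivial since $\FF$ is a functor.

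Finally, for the numerical statements I would invoke Proposition~\ref{KRS} and write a Krull--Remak--Schmidt decomposition $\CL\cong\bigoplus_{i=1}^{r}(\CX_i)^{\oplus a_i}$ with the $\CX_i$ pairwise non-isomorphic indecomposables, so that $|\CL|=r$. Applying $\FF$ gives $\PCL\cong\bigoplus_{i=1}^{r}\FF(\CX_i)^{\oplus a_i}$; by the previous paragraph each $\FF(\CX_i)$ is indecomposable, and by Lemma~\ref{lem/uniqueness1} the objects $\FF(\CX_i)$ remain pairwise non-isomorphic, so this is the Krull--Remak--Schmidt decomposition of $\PCL$. Hence $|\PCL|=r=|\CL|$, and $\PCL$ is basic exactly when all $a_i=1$, that is, exactly when $\CL$ is basic. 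The one genuinely delicate point is the bootstrapping step: one must keep careful track of how $\FF$ interacts with biproducts and with the section/retraction data cutting out a direct summand, so that surjectivity at $\CL$ really does propagate through matrix rings and corners; once that is secured, the rest is a routine application of Lemmas~\ref{lem/uniqueness1} and~\ref{lem/summands1}.
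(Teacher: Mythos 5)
Your proposal is correct and follows essentially the same route as the paper: the paper compresses your matrix-ring/corner bootstrapping into the remark that the Hom-functor is biadditive (hence $\FF'$ is full), and then, exactly as you do, it invokes Lemma~\ref{lem/summands1} for essential surjectivity and indecomposability and Lemma~\ref{lem/uniqueness1} for reflection of isomorphism classes, with the Krull--Remak--Schmidt consequences left implicit. Your write-up merely fills in these details explicitly.
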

\begin{proof}	
	Since the Hom-functor is biadditive, the restriction $\FF'$ is full. In particular, 
	$\FF'$ is essentially surjective and preserves indecomposability by Lemma~\ref{lem/summands1}.
	Moreover, $\FF'$ reflects isomorphism classes of objects by Lemma~\ref{lem/uniqueness1}.
\end{proof}

The proof of 
the next lemma uses that $\FF$ is related to a morphism of rings.
\begin{lem}\label{lem/generators}
	The functor $\FF$ preserves  perfect generators.
\end{lem}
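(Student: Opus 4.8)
The plan is to reduce the statement to the characterization of a perfect generator recorded earlier: a complex $\CL \in \per \A$ is a perfect generator if and only if $\A \in \langle \CL \rangle$, where $\langle \CL \rangle$ is the smallest strictly full subcategory of $\D(\A)$ containing $\CL$ and closed under cones, shifts and direct summands. Correspondingly, $\PCL$ is a perfect generator of $\per \AI$ if and only if $\AI \in \langle \PCL \rangle$. So it suffices to produce $\AI$ inside $\langle \PCL \rangle$ starting from $\A \in \langle \CL \rangle$, and the engine for this is that $\FF$ restricts to a triangulated, summand-preserving functor on perfect complexes which sends $\A$ to $\AI$.

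First I would check that $\FF$ maps $\per \A$ into $\per \AI$. If $\CP \in \Hotb(\proj \A)$, then $\CP$ is a bounded complex of flat $\A$-modules, so the derived tensor product $\CP \overset{\mathbb{L}}{\otimes}_{\A} \AI$ is represented by the ordinary complex $\CP \otimes_{\A} \AI$, whose terms $P^i \otimes_{\A} \AI$ are finitely generated projective $\AI$-modules; hence $\FF(\CP) \in \Hotb(\proj \AI)$. Since every object of $\per \A$ is isomorphic to such a $\CP$, we get $\FF(\per \A) \subseteq \per \AI$, and the induced functor $\FF \colon \per \A \longrightarrow \per \AI$ is triangulated because $- \overset{\mathbb{L}}{\otimes}_{\A} \AI$ commutes with shifts and carries distinguished triangles to distinguished triangles; being an additive functor between idempotent-complete categories, it also preserves direct summands. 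Here is the only place where the description of $\FF$ as the functor attached to the ring morphism $\A \twoheadrightarrow \AI$ is used, via the identification $\FF(\A) = \A \overset{\mathbb{L}}{\otimes}_{\A} \AI \cong \AI$.

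Next I would introduce the subcategory $\Ccat \colonequals \{\, \CX \in \per \A \mid \FF(\CX) \in \langle \PCL \rangle \,\}$. Since $\langle \PCL \rangle$ is strictly full and closed under cones, shifts and direct summands, and $\FF$ respects isomorphisms, cones, shifts and summands, the subcategory $\Ccat$ inherits all of these properties; moreover $\CL \in \Ccat$ because $\FF(\CL) = \PCL \in \langle \PCL \rangle$. By minimality of $\langle \CL \rangle$ it follows that $\langle \CL \rangle \subseteq \Ccat$. Finally, if $\CL$ is a perfect generator of $\per \A$, then $\A \in \langle \CL \rangle \subseteq \Ccat$, i.e. $\AI \cong \FF(\A) \in \langle \PCL \rangle$, whence $\langle \PCL \rangle = \per \AI$ and $\PCL$ is a perfect generator of $\per \AI$. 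I do not expect a real obstacle in this argument: the only points needing a short justification are the identification $\FF(\A) \cong \AI$ and the fact that $\FF$ restricts to a triangulated, summand-preserving functor on perfect complexes; everything else is formal closure bookkeeping.
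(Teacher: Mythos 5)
Your proof is correct and follows essentially the same route as the paper: the paper also notes that $\FF(\A)\cong\AI$ and that $\FF$ commutes with cones, shifts and summands, concluding $\AI\in\FF(\langle\CM\rangle)\subseteq\langle\PCM\rangle$ and hence $\langle\PCM\rangle=\per\AI$. Your preimage-subcategory $\Ccat$ is just a more explicit bookkeeping of the paper's inclusion $\FF(\langle\CM\rangle)\subseteq\langle\PCM\rangle$.
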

\begin{proof}
	Let $\CM$ be a generator of $\per \A$.
	Since $\A \in \langle \CM \rangle$ 
	and $\FF$ is a functor which commutes with arbitrary coproducts, cones and shifts up to isomorphism, it follows that
	$\AI \cong \FF(\A) \in \FF(\langle \CM \rangle) \subseteq \langle \PCM \rangle$. Thus $\per \AI = \langle \FF(\A) \rangle = \langle \PCM \rangle$.
\end{proof}
Similar to $\silt \A$, we denote by $\presilt \A$
the set of isomorphism classes of basic presilting complexes.
The central result of this subsection is the following.
\begin{prp}\label{prp/ps-inj}
	The functor $\FF$ 	
	induces well-defined injective maps
	$$
	\begin{td}
	f_{ps}\colon
	\presilt \A \ar[hookrightarrow]{r} \& \presilt \AI
	\end{td}
	\quad \text{ and }\quad 
	\begin{td}
	f_s\colon \silt \A \ar[hookrightarrow]{r} 
	\&	 \silt \AI
	\end{td}
	$$
	which preserve and reflect the relation $\geq$.
\end{prp}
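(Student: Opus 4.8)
The plan is to assemble the statement entirely from the structural results of this section, so almost everything is bookkeeping. Both maps are defined on isomorphism classes by $f_{ps}([\CL]) \colonequals [\PCL]$, with $f_s$ its restriction; these are well-defined because $\FF$ is a functor. I would first record that $\FF = {-}\overset{\mathbb{L}}{\otimes}_{\A}\AI$ carries finitely generated projective $\A$-modules to finitely generated projective $\AI$-modules, hence restricts to a functor $\per \A \to \per \AI$.

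Next I would check that $f_{ps}$ lands in $\presilt \AI$. Let $\CL$ be a basic presilting complex of $\A$. Applying Proposition~\ref{prp/keyA} with $\CM \colonequals \CL$ gives the equivalence $\CL \geq \CL \Leftrightarrow \PCL \geq \PCL$, so $\PCL$ is presilting, together with fullness of $\FF$ at $\CL$; Lemma~\ref{lem/summands2} then shows $\PCL$ is basic, so $[\PCL] \in \presilt \AI$. For $f_s$, a basic silting complex is precisely a basic presilting complex which is a perfect generator, and $\FF$ preserves perfect generators by Lemma~\ref{lem/generators}; hence $f_s$ is well-defined, has image in $\silt \AI$, and is the restriction of $f_{ps}$.

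For injectivity of $f_{ps}$ (which gives injectivity of $f_s$), suppose $\CL, \CM$ are basic presilting complexes of $\A$ with $\PCL \cong \PCM$. Since $\CM$ is presilting, $\PCM \geq \PCM$, and transporting along the isomorphism gives $\PCL \geq \PCM$. Now Proposition~\ref{prp/keyA}, applied to the pair $(\CL,\CM) \in \per \A \times \D^-(\md \A)$, turns $\PCL \geq \PCM$ into $\CL \geq \CM$ and, crucially, into the assertion that $\FF$ is full at $(\CL,\CM)$. Lemma~\ref{lem/uniqueness1} then yields $\CL \cong \CM$. Finally, preservation and reflection of $\geq$ is immediate from the first equivalence of Proposition~\ref{prp/keyA}: for basic (pre)silting complexes $\CL,\CM$ of $\A$ one has $\CL \geq \CM \Leftrightarrow \PCL \geq \PCM$, i.e.\ $\CL \geq \CM \Leftrightarrow f_{ps}(\CL) \geq f_{ps}(\CM)$, and likewise for $f_s$.

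I do not expect a genuine obstacle here; the one point requiring care is that Lemma~\ref{lem/uniqueness1} needs fullness of $\FF$ at the \emph{pair} $(\CL,\CM)$, which is not a formal consequence of fullness at $\CL$ and at $\CM$ individually. This is exactly why the argument must first extract $\PCL \geq \PCM$ from the hypothesis $\PCL \cong \PCM$ and feed it into the implication ``$\PCL \geq \PCM \Rightarrow \FF$ is full at $(\CL,\CM)$'' of Proposition~\ref{prp/keyA}; the same implication simultaneously supplies $\CL \geq \CM$, and the cone of the lifted isomorphism is then killed by the derived Nakayama lemma (Corollary~\ref{cor/derived-Nakayama}) already used inside the proof of Lemma~\ref{lem/uniqueness1}.
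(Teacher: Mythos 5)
Your proposal is correct and follows essentially the same route as the paper: well-definedness via Proposition~\ref{prp/keyA} together with Lemmas~\ref{lem/summands2} and~\ref{lem/generators}, injectivity via fullness of $\FF$ at the pair $(\CL,\CM)$ combined with Lemma~\ref{lem/uniqueness1}, and preservation/reflection of $\geq$ from the key equivalence~\eqref{eq/key-implications}. You even make explicit the step the paper leaves implicit, namely extracting $\PCL \geq \PCM$ from $\PCL \cong \PCM$ and $\PCM \geq \PCM$ in order to obtain pairwise fullness before invoking Lemma~\ref{lem/uniqueness1}.
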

\begin{proof}
	Let $\CL$ be a presilting complex of $\A$.
\eqref{eq/key-implications} implies that 
	$\FF$ is full at $\CL$ and that	$\PCL \geq \PCL$.
	Lemma~\ref{lem/summands2} yields that
	$f_{ps}$ is well-defined.
	
	Let $\CM$ be a presilting complex of $\A$ with $\PCL \cong \PCM$.
	Then $\FF$ is full at $(\CL,\CM)$ by Proposition~\ref{prp/keyA}, which implies that $\CL \cong \CM$ using Lemma~\ref{lem/uniqueness1}.
	This shows that $f_{ps}$ is injective.
	
	Its restriction $f_s$ is well-defined because of Lemma~\ref{lem/generators}. 
	The functor $\FF$ preserves and reflects $\geq$ 
	on all perfect complexes by~\eqref{eq/key-implications}.
	\qedhere
\end{proof}

To show that the map $f_{ps}$ is bijective it is sufficient to show that any complex $\CP \in \presilt \AI$ 
has a preimage $\CL\in \per\A$ under the functor $\FF$.
This problem is the topic of the next section.
However, to show that the map $f_s$ is bijective we need also to show that $\PCL \in \silt \AI$ implies that $ \CL$ generates $\per \A$. This requires an alternative characterization of silting complexes which is derived in Section~\ref{sec/characterization}.

\section{Lifting techniques}\label{sec/lifting}

This section deals with the question
whether for
a given complex $\CPb \in \D^-(\md {\rAI})$ there exists a complex $\CLb \in \D^-(\md\rA)$ such that there is an isomorphism $\PCL \cong \CPb$ in the category $\D^-(\md{\rAI})$. 
We will call this question the \emph{derived lifting problem}, and say that the complex $\CPb$ has a \emph{lift} $\CL$.

The derived lifting problem has been studied by Rickard \cite{Rickard91b} in a setup motivated by group theory and by Yoshino \cite{Yoshino} in commutative algebra. Both works use a technique which goes back to Eisenbud \cite{Eisenbud}.
In the present section, we adapt Rickard's arguments in order to derive a lifting criterion 
for quotients ${\rAI}$ with respect to an ideal $I \subseteq \rad \A$. 
Then we specialize to a common denominator of Rickard's and Yoshino's setup in which
any complex $\CP$ without second self-extensions admits a lift.
This conclusion is related to Tor-rigidity of the quotient $\Rx/\ida$ and extends to an abelian version of the lifting problem.

This section does not require any results of other parts of the paper.
Throughout this section, we use the assumptions and notation of Setup~\ref{setup/main-copy}.

\subsection{Minimal complexes and homotopy categories}

\begin{notation}
	Set ${\rAI}_n \colonequals \A/\ida^n \A$ for any integer $n > 0$, and $I \colonequals \ida \A$. 
\end{notation}
In particular, it holds that ${\rAI}_1 \cong {\rAI}$ and  ${\rAI}_n \colonequals\A/I^n$. The forthcoming lifting techniques are based on the following well-known properties of our setup.
\begin{lem}\label{lem/lifting}
	The following statements hold.
	\begin{enumerate}
		\item It holds that $I \subseteq \rad \A$
		and the ring $\rA$ is $I$-adically complete, that is, $\rA \cong \varprojlim {{\rAI}}_n$.
		\item \label{lem/complete-full}
		For any integer $n > 0$  
		the functor $\dash \otimes_{\rA} {{\rAI}}_n \colon \proj \rA \longrightarrow \proj {{\rAI}}_n$ is essentially surjective and full. 
	\end{enumerate}
\end{lem}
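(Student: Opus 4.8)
The plan is to deduce both assertions from the fact that the $\Rx$-module $\A$ is $\mx$-adically complete, which holds because $\A$ is finitely generated over the complete local Noetherian ring $\Rx$ (see \cite{Matsumura}). For~(1) I would first record that $\mx\A \subseteq \rad\A$: given $x \in \mx\A$ and $a,b \in \A$, the element $axb$ again lies in $\mx\A$ since the image of $\Rx$ in $\A$ is central, and as $(\mx\A)^k \subseteq \mx^k\A$ the series $\sum_{k \geq 0}(axb)^k$ converges $\mx$-adically to a two-sided inverse of $1 - axb$; hence $x \in \rad\A$. Since $\ida \subseteq \mx$ this gives $I = \ida\A \subseteq \mx\A \subseteq \rad\A$. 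For the completeness statement I would use that for each $n > 0$ the finitely generated $\Rx$-module $\A/\ida^n\A$ is $\mx$-adically separated by Krull's intersection theorem, so $\ida^n\A$ is $\mx$-adically closed in $\A$. Given a compatible family $(x_n)_n \in \varprojlim {\rAI}_n$, lift each $x_n$ to $\tilde x_n \in \A$; then $\tilde x_{n+1} - \tilde x_n \in \ida^n\A \subseteq \mx^n\A$, so $(\tilde x_n)_n$ is $\mx$-adically Cauchy, converges to some $x \in \A$, and $x - \tilde x_n$ --- being an $\mx$-adic limit of elements of the closed set $\ida^n\A$ --- lies in $\ida^n\A$, so $x \mapsto (x_n)_n$. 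The map $\A \to \varprojlim {\rAI}_n$ is injective because $\bigcap_n I^n = 0$ (Krull again, using $I \subseteq \rad\A$), whence $\A \cong \varprojlim {\rAI}_n$.

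For~(2), fullness is formal and needs no completeness: for finitely generated projective $\A$-modules $P$ and $Q$ the natural map $\Hom_\A(P,Q) \to \Hom_{{\rAI}_n}(P \otimes_\A {\rAI}_n,\, Q \otimes_\A {\rAI}_n)$ is surjective, which one checks first for $P$ free --- where it is the componentwise reduction $Q^m \twoheadrightarrow (Q \otimes_\A {\rAI}_n)^m$ --- and then for direct summands of free modules. For essential surjectivity, given a finitely generated projective ${\rAI}_n$-module, write it as the image of an idempotent $\eps \in \Mat_m({\rAI}_n) = \Mat_m(\A)/\Mat_m(I^n)$. By~(1) the ring $\Mat_m(\A)$ is complete and separated in the $\Mat_m(I^n)$-adic topology, so idempotents lift along $\Mat_m(\A) \twoheadrightarrow \Mat_m({\rAI}_n)$ by the standard iteration $e_{k+1} \colonequals 3 e_k^2 - 2 e_k^3$; alternatively, $\A$ is semiperfect by Proposition~\ref{KRS} and $I^n \subseteq \rad\A$, which likewise forces idempotent lifting. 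Lifting $\eps$ to an idempotent $e \in \Mat_m(\A)$ and setting $P \colonequals \im(e)$, a finitely generated projective $\A$-module, gives $P \otimes_\A {\rAI}_n \cong \im(\eps)$, as desired.

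I do not expect a genuine obstacle: this lemma merely assembles standard facts for later reference. The one point requiring a little attention is the lifting of idempotents along $\Mat_m(\A) \to \Mat_m({\rAI}_n)$, which is exactly where the $I$-adic completeness established in~(1) enters.
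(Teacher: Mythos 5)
Your proof is correct and takes essentially the same route as the paper: part (1) is exactly the content of the facts the paper simply cites from Lam ((5.9) for $I \subseteq \A\mx \subseteq \rad\A$ and (21.34) for $I$-adic completeness of a Noetherian algebra over a complete local ring), which you prove by hand. For part (2) the paper invokes $\proj \A = \add \A$, $\proj \AI_n = \add \AI_n$ together with Lemma~\ref{lem/summands2}, whose proof is precisely the fullness-plus-idempotent-lifting mechanism (via semiperfect endomorphism rings, i.e.\ Nicholson's results) that you carry out directly at the matrix level using $I$-adic completeness.
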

\begin{proof}
	\begin{enumerate}
		\item
		By \cite{Lam}*{(5.9)} it holds that $I \subseteq \A \mx = \A  \rad\Rx  \subseteq \rad \A$.
		The second claim holds because $\A$ is a Noetherian $\Rx$-algebra over the $\mx$-adically complete local ring $\Rx$, see \cite{Lam}*{(21.34)}. 
		\item The second claim can also be derived from known results. Alternatively, it follows from 
				the observations that $\proj \rA = \add \rA$ and $\proj {{\rAI}}_n = \add {{\rAI}}_n$, where $n > 0$, together with
		Lemma~\ref{lem/summands2}.
		\qedhere
	\end{enumerate}
\end{proof}
Next, we will rephrase the derived lifting problem in terms of the right-bounded categories of complexes $\Kom^-(\proj \rA)$ and $\Kom^-(\proj \rAI)$. 
This is possible because the rings $\A$ and ${\rAI}$ are semiperfect.
The next statements are well-known.
\begin{itemize}
	\item 
	Any complex $\CP \in \D^-(\md {{\rAI}})$ 
	is quasi-isomorphic to a \emph{minimal} complex $\CN \in \Kom^-(\proj \rA)$, which means that $\im d^i_{\CN} \subseteq \rad N^{i+1}$ for any integer $i \in \Z$.
	\item 
	Any two minimal complexes $\CN_1,\CN_2 \in \Kom^-(\proj {{\rAI}})$ are \emph{homotopy equivalent} if and only if they are \emph{isomorphic as complexes}.
\end{itemize}
We will say that a complex $\CN \in \Kom^-(\proj {{\rAI}})$ \emph{lifts} to a complex $\CM \in \Kom^-(\proj \rA)$ if there exists an isomorphism $\CM \otimes_{\rA} {\rAI} \cong \CN$ of complexes. It is easy to check the following.
\begin{itemize}
	\item The non-zero terms of $\CM$ and $\CN$ are 
	located at the same degrees.
	\item
	The complex $\CN$ is minimal if and only if its lift $\CM$ is minimal.
\end{itemize}
With these observations the following can be shown.
\begin{lem}	\label{lem/minimal-lift}	
	Let $\CPb \in \D^-(\md {\rAI})$ and $\CNb \in \Kom^-(\proj {\rAI})$ denote its minimal version. The following conditions are equivalent.
	\begin{enumerate}
		\item 
		The complex $\CPb$ lifts to a complex $\CLb \in \D^-(\md \rA)$.
		\item 
		The complex $\CNb$ lifts to a minimal complex $\CMb \in \Kom^-(\proj \rA)$. 
	\end{enumerate}
	In this case, there is an isomorphism $\CLb \cong \CMb$ in $\D^-(\md \rA)$ and
	$\CP$ is perfect if and only if $\CL$ is perfect.
	\qed
\end{lem}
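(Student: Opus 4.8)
The plan is to pass to the right-bounded homotopy categories $\Kom^-(\proj\rA)$ and $\Kom^-(\proj\rAI)$ and to combine the bookkeeping facts recalled just before the statement. The single observation underlying everything is that a right-bounded complex of finitely generated projectives $\CMb \in \Kom^-(\proj\rA)$ is adapted to the derived functor $\FF$, so that $\PCMb$ is represented by the \emph{honest} complex $\CMb \otimes_{\rA}\rAI \in \Kom^-(\proj\rAI)$, whose non-zero terms lie in the same degrees as those of $\CMb$; moreover, since $\ida\A \subseteq \rad\A$ by Lemma~\ref{lem/lifting}, minimality of $\CMb$ (that is, $\im d^i_{\CMb} \subseteq \rad M^{i+1}$ for all $i$) forces minimality of $\CMb \otimes_{\rA}\rAI$.

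For the implication $(2)\Rightarrow(1)$ I would argue directly: if $\CNb$ lifts to a minimal complex $\CMb \in \Kom^-(\proj\rA)$, then $\PCMb \cong \CMb \otimes_{\rA}\rAI \cong \CNb$ in $\D^-(\md\rAI)$, and since $\CNb$ is quasi-isomorphic to $\CPb$, the complex $\CLb \colonequals \CMb$ is a lift of $\CPb$, with $\CLb \cong \CMb$ tautologically.

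For $(1)\Rightarrow(2)$ I would start from a lift $\CLb$ of $\CPb$. Using that $\rA$ is semiperfect, replace $\CLb$ by a quasi-isomorphic minimal complex $\CMb \in \Kom^-(\proj\rA)$, so that $\CLb \cong \CMb$ in $\D^-(\md\rA)$. Then $\CMb \otimes_{\rA}\rAI \cong \PCMb \cong \PCLb \cong \CPb$ in $\D^-(\md\rAI)$, and by the first paragraph $\CMb \otimes_{\rA}\rAI$ is a minimal complex in $\Kom^-(\proj\rAI)$. Since $\CNb$ is another minimal complex representing $\CPb$, the standard fact that quasi-isomorphic right-bounded complexes of projectives are homotopy equivalent, together with the recalled rigidity of minimal complexes (homotopy equivalent minimal complexes are isomorphic as complexes), produces an isomorphism of complexes $\CMb \otimes_{\rA}\rAI \cong \CNb$ --- that is, $\CNb$ lifts to the minimal complex $\CMb$.

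Finally, for the perfectness claim: both $\CMb$ and $\CNb$ are minimal with non-zero terms in the same degrees, so $\CMb$ has only finitely many non-zero terms if and only if $\CNb$ does. A bounded complex of finitely generated projectives is perfect, and conversely a minimal representative of a perfect complex is bounded; applying this over $\rA$ and over $\rAI$ and using $\CLb \cong \CMb$ yields ``$\CLb$ perfect $\Leftrightarrow$ $\CMb$ bounded $\Leftrightarrow$ $\CNb$ bounded $\Leftrightarrow$ $\CPb$ perfect''. I do not expect a genuine obstacle here; the points deserving care are the identification of $\PCMb$ with the naive tensor $\CMb\otimes_{\rA}\rAI$ on $\Kom^-(\proj\rA)$, the descent of minimality along $\dash\otimes_{\rA}\rAI$ (where $\ida\A\subseteq\rad\A$ enters), and the appeal to semiperfectness of $\rA$ to produce a minimal representative of $\CLb$ --- all of which are among the facts assembled immediately above the statement.
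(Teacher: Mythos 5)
Your proof is correct and is essentially the argument the paper intends: the lemma is stated with a terminal \qed and no separate proof, being presented as a direct consequence of exactly the facts you invoke (the naive tensor $\CMb\otimes_{\rA}\rAI$ computing $\PCMb$ on $\Kom^-(\proj\rA)$, descent of minimality via $\ida\A\subseteq\rad\A$, existence and rigidity of minimal complexes over the semiperfect rings $\rA$ and $\rAI$, and the same-degrees/Nakayama observation for the perfectness claim). Nothing to add.
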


\subsection{A lifting criterion in terms of higher conormal bimodules}
A frequently used idea to lift a given complex $\CP$ of projective ${\rAI}$-modules
is to lift from ${\rAI}_n$ to ${\rAI}_{n+1}$ by induction on $n \in \N$ and form the inverse limit of these iterated lifts.
In different terms, a lift might be obtained by completion.
For each $n > 0$ there may be an obstruction at lifting step $n$ which is related to the ${\rAI}$-bimodule $I^n/I^{n+1}$, the \emph{$n$-th conormal bimodule of the ideal $I$}.
There is the following coarse criterion to nullify each obstruction.
\begin{prp}\label{prp/twisted-lift}
	Let
	$\CP \in \D^-(\md {\rAI})$ be a complex
	such that 
	\begin{align}
	\label{eq/conormal-twists}
	\Hom_{\D(\Abar)}(\CP, \alpha_n(\CP) [2]) = 0\,, \quad \text{where }\quad \alpha_n(\CP) \colonequals  \CP \underset{{\Abar}}{\overset{\mathbb{L}}{\otimes}} I^n/I^{n+1}\,,
	\end{align} for any integer $n > 0$.
	Then $\CP$ has a lift $\CL \in \D^-(\md \A)$.
\end{prp}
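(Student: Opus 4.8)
The plan is to reduce the derived lifting problem, via Lemma~\ref{lem/minimal-lift}, to the question of lifting a minimal complex $\CN \in \Kom^-(\proj\AI)$ to a minimal complex over $\A$, and then to resolve this by the classical Eisenbud-style tower argument: construct compatible lifts $\CN_n \in \Kom^-(\proj \AI_n)$ along the chain of quotients $\AI = \AI_1 \twoheadleftarrow \AI_2 \twoheadleftarrow \cdots$, and take the inverse limit, using $\A \cong \varprojlim \AI_n$ from Lemma~\ref{lem/lifting}. So first I would replace $\CP$ by its minimal version $\CN$; the non-zero terms sit in finitely many degrees to the right, and minimality is preserved under lifting and along the tower.

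The heart of the argument is the inductive step. Suppose we have a minimal complex $\CN_n$ over $\AI_n$ lifting $\CN$, i.e.\ with $\CN_n \otimes_{\AI_n}\AI \cong \CN$. Using Lemma~\ref{lem/lifting}~\eqref{lem/complete-full} (essential surjectivity and fullness of $\dash \otimes_\A \AI_{n+1}$ on projectives), lift each projective term $N_n^i$ to a projective $\AI_{n+1}$-module $N_{n+1}^i$ and lift the differentials $d_n^i$ to maps $\wt{d}^{\,i}$ between them. These need not compose to zero, but $\wt{d}^{\,i+1}\wt{d}^{\,i}$ takes values in $I^n/I^{n+1}\cdot N_{n+1}^{i+1}$ (where $I = \ida\A$), because everything is compatible modulo $I^n$. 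The collection $(\wt d^{\,i+1}\wt d^{\,i})_i$ defines a degree-$2$ cochain, and one checks it is a cocycle; its class in $\Hom_{\D(\AI)}(\CN,\CN\otimes^{\mathbb L}_{\AI} I^n/I^{n+1}[2]) = \Hom_{\D(\AI)}(\CP,\alpha_n(\CP)[2])$ is the obstruction to correcting the $\wt d^{\,i}$ to an honest differential. Here the hypothesis~\eqref{eq/conormal-twists} that this group vanishes is exactly what is needed: the obstruction class is zero, so after adjusting the lifted differentials by a coboundary we obtain a genuine complex $\CN_{n+1}$ over $\AI_{n+1}$, which moreover lifts $\CN_n$ and is again minimal. (One should be slightly careful that the obstruction really lives in the \emph{derived} Hom group rather than the naive cochain group: since $\CN$ is a bounded-above complex of projectives, $\Hom_{\Hot(\AI)}(\CN, \CN\otimes^{\mathbb L}_{\AI} I^n/I^{n+1}[2])$ already computes the derived Hom, and the cocycle/coboundary bookkeeping takes place there.)

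Granting the inductive step, set $\CM \colonequals \varprojlim_n \CN_{n}$, computed termwise; since each $M^i \colonequals \varprojlim_n N_n^i$ is a projective $\A$-module of the right rank (the ranks stabilise, as minimality and $I\subseteq\rad\A$ force the terms at each degree to have constant rank along the tower) and the degrees involved are bounded above, $\CM \in \Kom^-(\proj\A)$, and it is minimal with $\CM\otimes_\A\AI \cong \CN$. By Lemma~\ref{lem/minimal-lift} the complex $\CL$ corresponding to $\CM$ is the desired lift of $\CP$ in $\D^-(\md\A)$.

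The main obstacle I anticipate is making the obstruction-theoretic step fully rigorous: identifying the naive cochain $(\wt d^{\,i+1}\wt d^{\,i})_i$ with a cocycle representing a well-defined class in $\Hom_{\D(\AI)}(\CP,\alpha_n(\CP)[2])$, checking that this class is independent of the choices of lifts $N_{n+1}^i$ and $\wt d^{\,i}$, and verifying that its vanishing genuinely suffices to adjust the $\wt d^{\,i}$ into a differential (and not merely into something that is a complex up to homotopy). This is precisely the point where Rickard's arguments in \cite{Rickard91b} are adapted, and where care with the bimodule structure on $I^n/I^{n+1}$ — which is a two-sided $\AI$-module, annihilated on both sides by $I$ — is essential for the derived tensor $\alpha_n(\CP)$ to make sense. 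Everything else (minimality propagation, the limit being a bounded-above complex of finitely generated projectives, compatibility with Lemma~\ref{lem/minimal-lift}) is routine once the setup of Subsection~\ref{sec/lifting} is in hand.
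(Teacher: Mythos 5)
Your proposal follows essentially the same route as the paper's proof: the Eisenbud--Rickard obstruction argument, lifting along the tower $\A/\ida^n\A$ with the failure of $\wt d^{\,i+1}\wt d^{\,i}=0$ packaged as a chain map into $\CP\otimes_{\AI} I^n/I^{n+1}[2]$, killed by hypothesis~\eqref{eq/conormal-twists}, followed by passage to the inverse limit using $I$-adic completeness. The only (harmless) differences are that the paper lifts the projective terms to $\proj\A$ once at the outset, so no minimality or rank-stabilisation discussion is needed, and that no independence-of-choices argument for the obstruction class is required: since $\CP$ may be taken as a bounded-above complex of projectives, the vanishing of the whole group $\Hom_{\D(\Abar)}(\CP,\alpha_n(\CP)[2])$ already forces the specific obstruction chain map to be null-homotopic, and the homotopy corrects the lifted differentials.
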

The proof is a straightforward adaptation of Rickard's arguments \cite{Rickard91b}*{Proposition~3.1}. We include the proof for the sake of a self-contained presentation.
\begin{proof}
	We may assume that $\CP$ is given by a complex from $\Kom^-(\proj \A)$. 
	The arguments below are carried out \emph{at each degree $i \in \Z$} without further notice.
	According to Lemma~\ref{lem/lifting}~\eqref{lem/complete-full}  
	each projective ${\rAI}$-module $P^i$ has a lift $L^i \in \proj \A$.
	Set $L^i_n \colonequals L^i/L^i I^n$ for any $n > 0$.
	The differentials $d^i\colon P^i \longrightarrow P^{i+1}$ will be lifted to differentials $d^i_n\colon L^i_n \longrightarrow L^i_{n}$ by induction on $n > 0$.
	\begin{itemize}
		\item
		If $n = 1$, we may use the isomorphisms $L^i_1 \cong P^i$ to define the differentials $d^i_1$.
		\item
		Assume that 
		the differentials $d^i_n$ have been constructed for an integer $n > 0$.
		To simplify notation, set $M^i \colonequals L^i_{n+1}$, $N^i \colonequals L^i_n$ and $U^i \colonequals M^i I^n$.
		Because of Lemma~\ref{lem/lifting}~\eqref{lem/complete-full}
		each differential $d^i_n\colon N^i \longrightarrow N^{i+1}$ 
		lifts to a map $\delta^i\colon	 M^i \longrightarrow M^{i+1}$.
		This yields the commutative diagram on the left:
		\begin{align*}
		\begin{td} 
		M^i \ar{r}{\delta^i} \ar[twoheadrightarrow]{d} \& M^{i+1} \ar[twoheadrightarrow]{d} \ar{r}{\delta^{i+1}} \& 
		M^{i+2} \ar[twoheadrightarrow]{d} \\
		N^i \ar{r}{d^i_n} \& N^{i+1} \ar{r}{d^{i+1}_n} \& 
		N^{i+2} 
		\end{td}
		\qquad \qquad
		\begin{td} 
		M^i \ar{r}{\triangle^i} 
		\ar[densely dotted]{rd}
		\ar[twoheadrightarrow]{d}[swap]{\pi^i} \& M^{i+2} \\
		P^i \ar[dashed]{r}{\varepsilon^i} \& U^{i+2} \ar[hookrightarrow]{u}[swap]{\iota^{i+2}}
		\end{td}
		\end{align*}
		Any of the compositions
		$\triangle^i\colonequals\delta^{i+1}\delta^i$ might not vanish, 
		so the maps $\delta^i$ need to be replaced with proper differentials $d^i_{n+1}$.
		
		Because the maps $d^i_n$ are differentials, the image of each map  $\triangle^i$ is contained in $U^{i+2}$.
		Furthermore, each map $\triangle^i$ factors through a unique ${\rAI}$-linear map $\varepsilon^i$ as shown in the right diagram above, 
		since $\ker \pi^i  = M^{i} I \subseteq
		\ker {{\triangle}}^i$.
		It follows that the bottom squares of the following diagram commute.
		\begin{align*}
		\begin{td}
		M^{i-1} \ar{rr}[pos=0.5]{\delta^{i-1}} \ar{rd}[pos=0.6]{\triangle^{i-1}} \ar[twoheadrightarrow]{dd}[swap]{\pi^{i-1}} \&[0.0cm]\&
		M^{i} \ar{rr}[pos=0.5]{\delta^i}
		\ar[densely dotted]{ld}{\sigma^i}
		\ar[->>]{dd}
		\ar{rd}[pos=0.6]{\triangle^{i} } \&[0.0cm] \&
		M^{i+1}   \ar{rd}{\triangle^{i+1} } 
		\ar[->>]{dd}
		\ar[densely dotted]{ld}{\sigma^{i+1}}
		\&[0.0cm]  \\
		\& M^{i+1}
		\ar{rr}[pos=0.66]{\delta^{i+1}} \&
		\&
		M^{i+2}
		\ar{rr}[pos=0.33]{\delta^{i+2}} 
		\& \&
		M^{i+3}  \\
		{P^{i-1}} \ar{rr}[pos=0.33]{d^{i-1}} \ar[dashed]{rd}{\varepsilon^{i-1}} 
		\&  \&
		P^{i} \ar[dashed]{rd}{\varepsilon^{i}} 
		\ar{rr}[pos=0.33]{d^i_1}
		\ar[densely dotted]{ld}{s^{i}}
		\& \&  P^{i+1} \ar[dashed]{rd}{\varepsilon^{i+1}} 
		\ar[densely dotted]{ld}{s^{i+1}}
		\\
		\& U^{i+1} \ar{rr}[swap,pos=0.5]{\delta^{i+1}_{U}} 
		\ar[hookrightarrow]{uu}
		\& 
		\& 
		U^{i+2}  
		\ar{rr}[swap,pos=0.5]{\delta^{i+2}_{U} }
		\ar[hookrightarrow]{uu}
		\& \& U^{i+3}  \ar[hookrightarrow]{uu}[swap]{\iota^{i+3} }
		\end{td}
		\end{align*}
		Since $(\ol{I^n})^2 = 0$, the restrictions  $\delta^i_{U} \colonequals {\delta^i}|_{U^i}\colon U^i \longrightarrow U^{i+1}$ are {differentials}.
		So the maps $\varepsilon^i$ define a \emph{morphism
			$\varepsilon\colon \CP \longrightarrow U^{\bt}[2]$ of complexes} of ${\rAI}$-modules.
		Because of the isomorphisms
		$U^i \cong L^i \otimes_{\A} \ol{I^n}  \cong	P^i \otimes_{{\Abar}} \ol{I^n}$,
		there is an isomorphism $U^{\bt} \cong \CP \otimes_{{\Abar}} \ol{I^n}$ of complexes.
		
		The assumption~\eqref{eq/conormal-twists}  implies  that 
		the morphism
		$\varepsilon$ is homotopic to zero via some ${\rAI}$-linear maps $s^i$ displayed above.
		Set
		$\sigma^i \colonequals \iota^{i+1} s^i \eta^i$.
		This
		yields $\A_{n+1}$-linear maps 
		such that
$\im \sigma^i \subseteq U^{i+1}$, 
		$\triangle^i = \sigma^{i+1} \delta^i + \delta^{i+1} \sigma^i$
		and
		$\sigma^{i+1} \sigma^i = 0$.
		These properties imply that the maps
		$d^{i}_n$ lift to differentials
		$d^i_{n+1} \colonequals \delta^i - \sigma^i$.
	\end{itemize}
	Since $\A$ is $I$-adically complete,
	taking the inverse limit of the differentials
	$d^i_n$ 
	yields 
	a lift $\CL \in \Kom^{-}(\proj \A)$ of the complex $\CP$ in the sense that $\CL \otimes_{\A} {\rAI} \cong \CP$. In particular, $\PCL \cong \CP$ in $\D^-(\md {\rAI})$.
\end{proof}

The last proposition
allows to lift 
any complex $\CP \in \D^-(\md {\rAI})$
with $\CP \geq \alpha_n(\CP)$ for any integer $n > 0$
to a complex of $\D^-(\md \A)$.
In each of the next two subsections we will present a setup which leads to the last condition in case $\CP$ is a presilting complex.

In this setup, it will play a role that ${\rAI}$ is the quotient $\A/\ida \A$ 
with respect to a proper ideal $\ida$ of the commutative ring $\Rx$
and the ideal $I$ is given by $\A \ida$. 
The $n$-th conormal bimodule of the ideal $\ida$ 
is closely related to the $n$-th conormal bimodule of the ideal $I$ if certain torsion modules vanish.
\begin{lem}\label{lem/algebra-conormals}
In Setup~\ref{setup/main}, the following statements hold.
	\begin{enumerate}
		\item \label{tor-relations} For any $\Rx$-module $M$ the following statements are equivalent.
		\begin{enumerate}[label=(\roman*)]
			\item \label{tor-rel1} It holds that
			$\Tor_{+}^{\Rx}(M,\Rx/\ida) = 0$ and $ \Tor_+^{\Rx}(M, \ida^n/\ida^{n+1}) = 0$ for any integer $n > 0$.
			\item \label{tor-rel2} It holds that	$\Tor_{+}^{\Rx}(M, \Rx/\ida^n) = 0$ for any integer $n  > 0$.
		\end{enumerate}
		\item \label{conormal-free} Let $n > 0$ be an integer such that 
		$
		\Tor_1^{\Rx}(\A, \Rx/\ida^{n}) = 0$. 
		Then there is an isomorphism 
		of
		${\rAI}$-bimodules
		$$
		\begin{td} \psi_n\colon \A \otimes \ida^n/\ida^{n+1} \ar{r}{\sim} \& I^n/I^{n+1},\&
		a \otimes (b + \ida^{n+1}) \ar[mapsto]{r} \& ab + I^{n+1}\,.
		\end{td} 
		$$
	\end{enumerate}
\end{lem}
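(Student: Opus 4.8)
The plan is to prove the two assertions independently, both by feeding suitable short exact sequences of $\Rx$-modules into long exact $\Tor$-sequences.

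For~\eqref{tor-relations} the relevant short exact sequence is, for each $n>0$,
\[
0 \longrightarrow \ida^n/\ida^{n+1} \longrightarrow \Rx/\ida^{n+1} \longrightarrow \Rx/\ida^n \longrightarrow 0,
\]
coming from the inclusion $\ida^{n+1}\subseteq\ida^n$. Applying $\Tor_\bullet^{\Rx}(M,-)$ yields a long exact sequence tying together the three families of $\Tor$-modules occurring in \ref{tor-rel1} and \ref{tor-rel2}. The implication \ref{tor-rel2}$\Rightarrow$\ref{tor-rel1} is then immediate: the case $n=1$ gives $\Tor_+^{\Rx}(M,\Rx/\ida)=0$, and for $i>0$ the module $\Tor_i^{\Rx}(M,\ida^n/\ida^{n+1})$ sits between $\Tor_{i+1}^{\Rx}(M,\Rx/\ida^n)$ and $\Tor_i^{\Rx}(M,\Rx/\ida^{n+1})$, both of which vanish under \ref{tor-rel2}. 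For \ref{tor-rel1}$\Rightarrow$\ref{tor-rel2} I would argue by induction on $n$: the case $n=1$ is part of \ref{tor-rel1}, and the inductive step uses the segment
\[
\Tor_i^{\Rx}(M,\ida^n/\ida^{n+1}) \longrightarrow \Tor_i^{\Rx}(M,\Rx/\ida^{n+1}) \longrightarrow \Tor_i^{\Rx}(M,\Rx/\ida^n)
\]
of the long exact sequence, whose outer terms vanish for $i>0$ by \ref{tor-rel1} and by the inductive hypothesis respectively.

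For~\eqref{conormal-free} I would first record that, since $\varrho(\ida)$ lies in the centre of $\A$, one has $I^n=\ida^n\A=\A\ida^n$ for every $n$, and that the multiplication map $\A\times\ida^n\to I^n/I^{n+1}$, $(a,b)\mapsto ab+I^{n+1}$, is $\Rx$-balanced and annihilates $\A\times\ida^{n+1}$; hence it factors through the surjection $\psi_n$ of the statement, which is visibly compatible with left and right multiplication and therefore a morphism of $\AI$-bimodules. What remains is injectivity, and this is the one place where the hypothesis $\Tor_1^{\Rx}(\A,\Rx/\ida^n)=0$ enters. Tensoring $0\to\ida^n\to\Rx\to\Rx/\ida^n\to0$ with $\A$ shows that the multiplication map $f_n\colon\A\otimes\ida^n\to\A$ has kernel a quotient of $\Tor_1^{\Rx}(\A,\Rx/\ida^n)$, so under the hypothesis $f_n$ is injective with image $\A\ida^n=I^n$, i.e.\ it identifies $\A\otimes\ida^n$ with $I^n$. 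I would then set up the commutative diagram with exact rows whose top row is the right-exact sequence $\A\otimes\ida^{n+1}\to\A\otimes\ida^n\to\A\otimes(\ida^n/\ida^{n+1})\to0$, whose bottom row is $I^{n+1}\to I^n\to I^n/I^{n+1}\to0$, whose middle vertical map is the isomorphism $f_n$, whose left vertical map is the always-surjective map $f_{n+1}\colon\A\otimes\ida^{n+1}\twoheadrightarrow I^{n+1}$, and whose right vertical map is $\psi_n$. A short diagram chase --- lift a class in $\ker\psi_n$ to $\A\otimes\ida^n$, transport it to $I^n$ via $f_n$, observe that it lands in $I^{n+1}$, pull it back along the surjection $f_{n+1}$, and use injectivity of $f_n$ together with exactness of the top row --- then shows that $\psi_n$ is injective.

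I expect the only genuine content to be the injectivity step in~\eqref{conormal-free}; the rest is routine manipulation of long exact $\Tor$-sequences. The point to watch is that $\psi_n$ really is an isomorphism of $\AI$-bimodules and not merely of abelian groups, but since every map in sight is assembled from multiplication in $\A$ and the central map $\varrho$, compatibility with both module structures is automatic.
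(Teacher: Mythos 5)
Your proposal is correct and takes essentially the same approach as the paper: part~(1) is exactly the paper's argument, reading off the equivalence from the long exact $\Tor$-sequence of $0\to\ida^n/\ida^{n+1}\to \Rx/\ida^{n+1}\to \Rx/\ida^n\to 0$, and part~(2) uses the hypothesis $\Tor_1^{\Rx}(\A,\Rx/\ida^{n})=0$ in the same way, only packaged slightly differently. The paper tensors the displayed short exact sequence with $\A$ and deduces injectivity of $\psi_n$ from a single commuting square with the inclusion $I^n/I^{n+1}\hookrightarrow \A/I^{n+1}$ and the isomorphism $\A\otimes\Rx/\ida^{n+1}\cong\A/I^{n+1}$, whereas you identify $\A\otimes\ida^{n}\cong I^{n}$ via the multiplication map and run a short diagram chase; both arguments are valid and rest on the same use of the $\Tor_1$-vanishing.
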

\begin{proof}
	\begin{enumerate}
		\item
		For any $\Rx$-module $M$
		and 
		any integer $n > 0$  the short exact sequence of $\Rx$-modules
		$$
		\begin{td} 0 \ar{r} \& \ida^n/\ida^{n+1} \ar{r}{\iota_n} \& \Rx/\ida^{n+1} \ar{r}  \& \Rx/\ida^n \ar{r} \& 0
		\end{td}
		$$
		gives rise to a long exact $\Tor$-sequence
		\begin{align*}
		\begin{td} 
		\ldots \Tor_{i+1}^{\Rx}(M,\ida^n/\ida^{n+1})  \ar{r}
		\& \Tor_{i+1}^{\Rx} (M,\Rx/\ida^{n+1}) \ar{r}
		\& \Tor_{i+1}^{\Rx} (M,\Rx/\ida^n) \phantom{\ldots} \ar{lld} \\
		\phantom{\ldots}\Tor_i^{\Rx}(M,\ida^n/\ida^{n+1})  \ar{r}
		\& \Tor_{i}^{\Rx} (M,\Rx/\ida^{n+1}) \ar{r}
		\& \Tor_{i}^{\Rx} (M,\Rx/\ida^n)\ldots
		\end{td}
		\end{align*}
		which implies that~\ref{tor-rel1} is equivalent to~\ref{tor-rel2}.
		\item Let $n > 0$.
		It is straightforward to check that the map $\psi_n$ is surjective and ${\rAI}$-bilinear. It appears in the commutative diagram of $\Rx$-modules
		$$
		\begin{td}
		\A \otimes \ida^n/\ida^{n+1} \ar{r}{\id \otimes \iota_n}
		\ar[->>]{d}{\psi_n}
		\&[0.25cm] 
		\A \otimes \Rx/\ida^{n+1}  
		\& a \otimes (b + \ida^{n+1})  \ar[mapsto]{r}  \ar[mapsto]{d} \& a \otimes (b + \ida^{n+1})  \ar[mapsto]{d}  \\
		I^n/I^{n+1} \ar[hookrightarrow]{r} 
		\& \A/I^{n+1} \ar[<-]{u}[rotate=-90,xshift=6pt, yshift=4pt]{\sim}
		\& ab + I^{n+1} \ar[mapsto]{r} \&  ab + I^{n+1}\,.
		\end{td}
		$$
		If $\Tor^{\Rx}_1(\A,\Rx/\ida^n) = 0$, the maps $\id \otimes \iota_n$, 
		and thus $\psi_n$ are both injective. 
		\qedhere
	\end{enumerate}
\end{proof}

\subsection{Normally flat rings}

In addition to Setup~\ref{setup/main}, we assume the following condition in this subsection.
\begin{dfn}\label{dfn/normally-flat}
	The local ring $\Rx$ is \emph{normally flat along the ideal $\ida$} if
	$\ida^n/\ida^{n+1}$
		 is flat as $\RI$-module for any integer $n > 0$.
\end{dfn}
This notion has appeared in context of resolution of singularities, see \cite{HIO}.
We note that a finitely generated flat $\RI$-module is already free.

\subsubsection*{Tor-rigidity of the quotient of the base ring}
Before we reconsider the derived lifting problem, we 
show that the last notion
is related to Tor-rigidity
of the quotient $\RI = \Rx/\ida$ 
over several rings.
\begin{prp}\label{prp/conormal-rigid}
	Assume that $\Rx$ is normally flat along the ideal $\ida$ in the sense of Definition~\ref{dfn/normally-flat}.
	Then $\RI$ is Tor-rigid
	as a module over the ring $\Rx$ as well as a module over the quotient
	$\Rx/\ida^m$ for any integer $m > 0$.
\end{prp}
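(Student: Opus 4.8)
The plan is to deduce both rigidity assertions from a single lemma and to prove that lemma by reducing modulo the powers of the relevant ideal and then passing to a $J$-adic limit.

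\emph{Reduction.} First I would isolate the following statement: if $T$ is a Noetherian local ring and $J\subsetneq T$ an ideal such that $T$ is $J$-adically complete and $J^{n}/J^{n+1}$ is flat over $T/J$ for all $n\geq 1$, then $T/J$ is Tor-rigid as a $T$-module. The assertion over $\Rx$ is then the case $(T,J)=(\Rx,\ida)$: the ring $\Rx$ is $\ida$-adically complete, being a Noetherian $\Rx$-algebra over the complete local ring $\Rx$ (Lemma~\ref{lem/lifting}, applied with $\A=\Rx$; see also \cite{Lam}*{(21.34)}), and flatness of the conormal modules is exactly Definition~\ref{dfn/normally-flat}. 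The assertion over $\Rx/\ida^{m}$ is the case $(T,J)=(\Rx/\ida^{m},\ida/\ida^{m})$: here $J^{m}=0$, so $J$ is nilpotent and $T$ is $J$-adically complete for trivial reasons, $J^{n}/J^{n+1}$ is isomorphic to $\ida^{n}/\ida^{n+1}$ for $n<m$ and vanishes for $n\geq m$ (hence is flat over $T/J\cong\RI$), and $T/J\cong\RI$ as $T$-modules.

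\emph{The algebraic engine.} To prove the lemma, let $M$ be a finitely generated $T$-module and $n\geq 1$ with $\Tor_{n}^{T}(M,T/J)=0$; it suffices to show $\Tor_{n+1}^{T}(M,T/J)=0$ and then iterate. Fix a resolution $\CF$ of $M$ by finitely generated free $T$-modules. Since $J$ annihilates $J^{j}/J^{j+1}$, which is flat over $T/J$, flat base change along $T\to T/J$ supplies natural isomorphisms $\Tor_{i}^{T}(M,J^{j}/J^{j+1})\cong\Tor_{i}^{T}(M,T/J)\otimes_{T/J}(J^{j}/J^{j+1})$ for all $i,j\geq 0$; in particular $\Tor_{n}^{T}(M,J^{j}/J^{j+1})=0$ for every $j$. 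Feeding the exact sequences $0\to J^{p}/J^{p+1}\to T/J^{p+1}\to T/J^{p}\to 0$ into the long exact $\Tor$-sequence and inducting on $p$, this propagates to $\Tor_{n}^{T}(M,T/J^{p})=0$ for all $p\geq 1$, and the relevant connecting maps then vanish, so the transition maps $\Tor_{n+1}^{T}(M,T/J^{p+1})\to\Tor_{n+1}^{T}(M,T/J^{p})$ are surjective for every $p$.

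\emph{Passage to the limit, and the main obstacle.} Because $T$ is $J$-adically complete and $\CF$ has finitely generated free terms, $\CF$ is in each degree the inverse limit of the tower $(\CF\otimes_{T}T/J^{p})_{p}$, whose transition maps are surjective; the $\varprojlim^{1}$-exact sequence for this tower of complexes then yields $\varprojlim_{p}\Tor_{n+1}^{T}(M,T/J^{p})=H_{n+1}(\CF)=\Tor_{n+1}^{T}(M,T)=0$. Write $A_{p}:=\Tor_{n+1}^{T}(M,T/J^{p})$ and $\mathfrak n:=\rad T$, and suppose $A_{1}=\Tor_{n+1}^{T}(M,T/J)\neq 0$. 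Then Nakayama, applicable since $A_{1}$ is finitely generated, gives $A_{1}/\mathfrak n A_{1}\neq 0$; as every $A_{p}$ surjects onto $A_{1}$, the inverse system $(A_{p}/\mathfrak n A_{p})_{p}$ has surjective transitions and only nonzero terms, so $\varprojlim_{p}(A_{p}/\mathfrak n A_{p})\neq 0$. A surjective $\mathbb N$-indexed inverse system is Mittag--Leffler, hence $\varprojlim^{1}_{p}(\mathfrak n A_{p})=0$, and the $\varprojlim$-sequence of $0\to\mathfrak n A_{p}\to A_{p}\to A_{p}/\mathfrak n A_{p}\to 0$ forces $\varprojlim_{p}A_{p}$ to surject onto $\varprojlim_{p}(A_{p}/\mathfrak n A_{p})\neq 0$, contradicting $\varprojlim_{p}A_{p}=0$; therefore $\Tor_{n+1}^{T}(M,T/J)=0$. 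The delicate point is precisely this last passage: completeness only delivers vanishing of the inverse limit, which does not by itself kill the individual groups $A_{p}$ (a surjective inverse system can have nonzero terms and zero limit), and bridging that gap is exactly what forces one to invoke the finite generation of $M$ and the Mittag--Leffler vanishing of $\varprojlim^{1}$.
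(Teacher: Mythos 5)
Your core argument follows essentially the same route as the paper's proof: take a resolution by finitely generated free modules, form the tower $\CF\otimes T/J^{p}$, use vanishing of $\Tor_{n}$ against the conormal modules $J^{p}/J^{p+1}$ (you via flat base change along $T\to T/J$, the paper via freeness of $\ida^{n}/\ida^{n+1}$) to make the transition maps on $\Tor_{n+1}$ surjective, and then combine $J$-adic completeness with the Milnor $\varprojlim^{1}$-sequence to get $\varprojlim_{p}\Tor_{n+1}^{T}(M,T/J^{p})=0$. Packaging both assertions of the proposition into one lemma about a pair $(T,J)$, with $(\Rx/\ida^{m},\ida/\ida^{m})$ trivially complete because $J$ is nilpotent, is a tidy way to handle the quotient case; the paper does the same thing by truncating the tower ($\CFb_{n}\colonequals\CF$ for $n\geq m$). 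Your intermediate induction showing $\Tor_{n}^{T}(M,T/J^{p})=0$ for all $p$ is superfluous (only the vanishing of $\Tor_{n}^{T}(M,J^{p}/J^{p+1})$ is needed for the surjectivity of the connecting maps), but harmless.

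The final paragraph, however, misidentifies an obstacle and is internally inconsistent as written. For an $\mathbb{N}$-indexed inverse system with surjective transition maps, the limit surjects onto every term (choose successive preimages), so $\varprojlim_{p}A_{p}=0$ together with the surjectivity you have already established yields $A_{1}=\Tor_{n+1}^{T}(M,T/J)=0$ in one line --- which is exactly how the paper concludes. Your parenthetical claim that a surjective inverse system can have nonzero terms and zero limit is false for countable towers; it can only fail without surjectivity or over index sets admitting no countable cofinal chain. Worse, the detour you build to avoid this non-problem secretly uses the very principle you reject: the step asserting that $(A_{p}/\mathfrak{n}A_{p})_{p}$ has surjective transitions and nonzero terms, \emph{hence} $\varprojlim_{p}(A_{p}/\mathfrak{n}A_{p})\neq 0$, has no justification other than the surjectivity-of-projections fact you just declared insufficient, so by your own lights that step is unsupported. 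The repair is immediate: delete the Nakayama/$\varprojlim^{1}$ detour and conclude directly from the surjection $\varprojlim_{p}A_{p}\twoheadrightarrow A_{1}$; with that change your proof is complete and coincides with the paper's.
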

\begin{proof}
	First, we show that $\RI$ is Tor-rigid as $\Rx$-module.
	Let $M$ be a finitely generated $\Rx$-module 
	and $p > 0$ an integer such that $\Tor_p^{\Rx}(M,\RI) = 0$. We set $q \colonequals -p-1$ for a more convenient notation.
	Let $\CFb$ be a resolution of $M$ by free modules of finite rank.
	For each integer $n > 0$ set $\CFb_n \colonequals \CFb \otimes \Rx/\ida^n$.
	We will deduce that $\Ho^{q}(\CFb_1) = 0$ from the existence of a
	surjective map
	$\Ho^{q}(\CFb) \longtwoheadrightarrow\Ho^{q}(\CFb_1)$.			 			
	\begin{enumerate}
		\item 
		For each integer $n > 0$ there is a short
		exact sequence of complexes of $\Rx$-modules
		$$
		\begin{td}
		0 \ar{r} \& \CFb \otimes \ida^n/\ida^{n+1} \ar{r} \& \CFb_{n+1} \ar{r}{\pi_{n}} \&  \CFb_n \ar{r} \& 0\,.
		\end{td}
		$$
		Since $\Rx$ is $\ida$-adically complete, there is an isomorphism  $\CFb \cong\varprojlim \CFb_n$ of complexes.
		\item 
		Because each $\RI$-module $\ida^n/\ida^{n+1}$ is free, it holds that $\CFb \otimes {\ida}^n/{\ida}^{n+1} \in \add \CFb_1$.
		Since $\Ho^{q+1}(\CFb_1) \cong \Tor_p^{\Rx}(M,\RI)= 0$ each
		$\RI$-linear morphism 
		$$
		\begin{td}
		\Ho^{q}(\pi_{n})\colon
		\Ho^{q}(\CFb_{n+1}) \ar{r} \& \Ho^{q}(\CFb_n) 
		\end{td}
		$$
		is surjective. 
		It follows that the inverse system of these morphisms satisfies the Mittag-Leffler condition.
		In particular, there is a short exact sequence of 
		$\Rx$-modules
		$$
		\begin{td}
		0 \ar{r} \& 
		\varprojlim^1 \Ho^{q+1}(\CFb_n) \ar{r} \&
		\Ho^{q}(\varprojlim \CFb_n)  \ar{r} 
		\& \varprojlim \Ho^{q} (\CFb_n) \ar{r} \& 0 
		\end{td}
		$$
		which is
		an algebraic analogue of Milnor's exact sequence \cite{Weibel}*{Variant below Theorem~3.5.8}.
		\item Since each transition morphism $\Ho^{q}(\pi_n)$ is surjective,
		the natural morphism $\varprojlim \Ho^{q}(\CFb_n) \longrightarrow
		\Ho^{q}(\CFb_1)$ of $\Rx$-modules is surjective as well.
	\end{enumerate}		
	In summary, 
	the $\Rx$-module $\RI$ is Tor-rigid because 
	there are surjective maps 
	$$
	\begin{td} 0 = \Ho^{q}(\CFb) \ar[twoheadrightarrow]{r} \& \varprojlim
	\Ho^{q}(\CFb_n) \ar[twoheadrightarrow]{r} \&
	\Ho^{q}(\CFb_1) \cong \Tor_{p+1}^{\Rx}(M,\RI)\,.
	\end{td}
	$$
	The proof above can be adapted to 
	the quotient ring $\Rx/\ida^m$ for any $m > 0$.
	More precisely, let $\CF$ be a free resolution of a module $M$  over $\Rx/\ida^m$ such that $\Tor^{\Rx/\ida^m}_p(M,\RI) =0$ with $p > 0$.
	For any  $n > 0$ 	we set $\CFb_n \colonequals \CF \otimes \Rx_n$ if $n < m$,  and $\CFb_n \colonequals \CF$ if $n \geq m$. Then the arguments above yield a surjective map $ 0=\Ho^{q}(\CFb_m) \longtwoheadrightarrow \Ho^{q}(\CFb_1)$, and thus the Tor-rigidity of $\RI$ as $\Rx/\ida^m$-module.
\end{proof}

The last statement yields a common source of the following instances of
Tor-rigidity.
\begin{cor}\label{cor/recover-tor-rigid}
Assume any of the following conditions.
	\begin{enumerate}[label={$\mathsf{(TR\arabic*)}$}, ref={$\mathsf{(TR\arabic*)}$}]
		\item \label{rec-tor-rig1} The ideal $\ida$
		is the maximal ideal $\mx$
		of the local ring $\Rx$.
		\item \label{rec-tor-rig2} The ideal $\ida$ is generated by an $\Rx$-regular sequence.
	\end{enumerate}
Then the quotient $\RI$ is Tor-rigid as $\Rx$-module and as $\Rx/\ida^n$-module for any integer $n > 0$.
\end{cor}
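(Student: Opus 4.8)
The plan is to reduce both cases to Proposition~\ref{prp/conormal-rigid}: I would verify that, under either hypothesis, the ring $\Rx$ is normally flat along $\ida$ in the sense of Definition~\ref{dfn/normally-flat}. Since $\Rx$ is Noetherian, each $\ida^n/\ida^{n+1}$ is a finitely generated module over $\RI = \Rx/\ida$, so by the remark after Definition~\ref{dfn/normally-flat} it is enough to show that every such module is free over $\RI$; once this is done, Proposition~\ref{prp/conormal-rigid} delivers the Tor-rigidity of $\RI$ over $\Rx$ and over each $\Rx/\ida^n$ at once.

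In case~\ref{rec-tor-rig1} the ring $\RI = \Rx/\mx$ is the residue field of the local ring $\Rx$. Every module over a field is free, so in particular each $\mx^n/\mx^{n+1}$ is a free $\RI$-module, and $\Rx$ is normally flat along $\mx$.

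In case~\ref{rec-tor-rig2} I would write $\ida = (x_1, \dots, x_c)$ for an $\Rx$-regular sequence $x_1, \dots, x_c$ and invoke the classical description of the associated graded ring of a regular sequence: the natural map $\RI[X_1, \dots, X_c] \longrightarrow \bigoplus_{n \geq 0} \ida^n/\ida^{n+1}$ sending $X_i$ to $x_i + \ida^2$ is an isomorphism of graded $\RI$-algebras (see \cite{Bruns-Herzog}*{Theorem~1.1.8}). Its degree-$n$ component then identifies $\ida^n/\ida^{n+1}$ with the free $\RI$-module on the degree-$n$ monomials in $X_1, \dots, X_c$; in particular $\ida^n/\ida^{n+1}$ is free over $\RI$ for every $n > 0$, so $\Rx$ is normally flat along $\ida$ in this case as well.

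Since essentially all of the content has been packaged into Proposition~\ref{prp/conormal-rigid}, no step here is a genuine obstacle; the only non-formal input is the identification of $\mathrm{gr}_{\ida}(\Rx)$ with a polynomial ring in the regular-sequence case, which is standard and could alternatively be obtained directly by induction on $c$ from the regularity of $x_1, \dots, x_c$. Applying Proposition~\ref{prp/conormal-rigid} in both cases then completes the argument.
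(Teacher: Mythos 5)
Your proof is correct and takes essentially the same route as the paper: verify that $\Rx$ is normally flat along $\ida$ in both cases (the residue-field observation for $\mathsf{(TR1)}$ and the freeness of $\ida^n/\ida^{n+1}$ over $\RI$ for a regular sequence in $\mathsf{(TR2)}$) and then invoke Proposition~\ref{prp/conormal-rigid}. The only difference is cosmetic — you cite the associated-graded polynomial ring description from Bruns--Herzog where the paper cites an exercise in Eisenbud for the same standard fact.
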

\begin{proof}
	Let $n > 0$.
	If $\ida = \mx$, then $\ida^n/\ida^{n+1}$ is a vector space over the field $\RI$.
	If $\ida$ is generated by an $\Rx$-regular sequence, then
	$\ida^n/\ida^{n+1}$ is also free as $\RI$-module (see \cite{Eisenbud-Book}*{Exercise 17.16}).
	So the claims follow from Proposition~\ref{prp/conormal-rigid}.
\end{proof}
\begin{rmk}
	Corollary~\ref{cor/recover-tor-rigid} and its proof can be extended to the case that the local ring $\Rx$ is not necessarily complete. This extension recovers the following statements.
	\begin{itemize}
		\item
		In case~\ref{rec-tor-rig1}, the Tor-rigidity properties of $\RI$  follow from the \emph{local criterion of flatness} as recalled in Theorem~\ref{thm/local-flatness}.
		\item
		In case~\ref{rec-tor-rig2}, Tor-rigidity of $\RI$ as $\Rx$-module
		amounts to \emph{rigidity of the Koszul complex} as mentioned in Remark~\ref{rmk/sub}.
	\end{itemize}
\end{rmk}

\subsubsection*{Derived lifting problem in a normally flat setup}
The next statement
extends known lifting results to a common denominator.
\begin{prp}\label{prp/2-rigid-lift}
	Assume that $\Rx$ is normally flat along the ideal $\ida$. Then
it follows that
		\begin{align}\label{eq/tor-ind-inf1}
			\tag{$\star\star$}
			\Tor_+^{\Rx}(\A, \Rx/\ida^n) = 0\text{ for any integer }n > 0.
		\end{align}
	Moreover, 
			any complex $\CP \in \D^-(\md {\rAI})$ satisfying $\Hom_{\D(\Abar)}(\CP,\CP[2]) = 0$ 
	has a lift $\CL \in \D^-(\md \A)$.
\end{prp}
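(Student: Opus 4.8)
The plan is to establish the Tor-vanishing~\eqref{eq/tor-ind-inf1} first, and then to exploit it so that Rickard's lifting criterion, Proposition~\ref{prp/twisted-lift}, collapses to the single hypothesis $\Hom_{\D(\Abar)}(\CP,\CP[2])=0$. For~\eqref{eq/tor-ind-inf1} I would apply Lemma~\ref{lem/algebra-conormals}~\eqref{tor-relations} with $M\colonequals\A$: its first half, $\Tor_+^{\Rx}(\A,\RI)=0$, is precisely assumption~\eqref{eq/tor-ind0} of Setup~\ref{setup/main}. For the second half, note that $\RI=\Rx/\ida$ is again a commutative Noetherian local ring (as $\ida\subseteq\mx$) and that for each $n>0$ the $\RI$-module $\ida^n/\ida^{n+1}$ is finitely generated (because $\Rx$ is Noetherian) and flat (by the normal flatness hypothesis), hence free of some finite rank, say $\ida^n/\ida^{n+1}\cong\RI^{k_n}$. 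Since $\Tor$ commutes with finite direct sums and $\Tor_+^{\Rx}(\A,\RI)=0$, this gives $\Tor_+^{\Rx}(\A,\ida^n/\ida^{n+1})=0$ for every $n>0$, so condition~\ref{tor-rel1} of the lemma holds and the lemma delivers condition~\ref{tor-rel2}, which is exactly~\eqref{eq/tor-ind-inf1}.

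For the lifting statement, let $\CP\in\D^-(\md \AI)$ satisfy $\Hom_{\D(\Abar)}(\CP,\CP[2])=0$ and put $I\colonequals\ida\A$. By Proposition~\ref{prp/twisted-lift} it suffices to check condition~\eqref{eq/conormal-twists}, i.e. $\Hom_{\D(\Abar)}(\CP,\alpha_n(\CP)[2])=0$ for all $n>0$, where $\alpha_n(\CP)=\CP\overset{\mathbb{L}}{\underset{\Abar}{\otimes}}I^n/I^{n+1}$. Now~\eqref{eq/tor-ind-inf1} gives in particular $\Tor_1^{\Rx}(\A,\Rx/\ida^n)=0$, so Lemma~\ref{lem/algebra-conormals}~\eqref{conormal-free} provides an isomorphism of $\AI$-bimodules $I^n/I^{n+1}\cong\A\otimes\ida^n/\ida^{n+1}$; combining it with $\ida^n/\ida^{n+1}\cong\RI^{k_n}$ — and using that the image of $\Rx$ lies in the centre of $\A$, so that $\A\otimes(-)$ sends $\RI$-linear maps to maps of $\AI$-bimodules — yields an isomorphism of $\AI$-bimodules $I^n/I^{n+1}\cong\AI^{k_n}$. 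Applying $\CP\overset{\mathbb{L}}{\underset{\Abar}{\otimes}}(-)$ then gives $\alpha_n(\CP)\cong\CP^{k_n}$ in $\D^-(\md \AI)$, and therefore
\begin{align*}
\Hom_{\D(\Abar)}(\CP,\alpha_n(\CP)[2])\ \cong\ \Hom_{\D(\Abar)}(\CP,\CP[2])^{k_n}\ =\ 0
\end{align*}
for every $n>0$. Hence the hypothesis~\eqref{eq/conormal-twists} of Proposition~\ref{prp/twisted-lift} is met and $\CP$ admits a lift $\CL\in\D^-(\md \A)$.

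The conceptual crux is the observation that normal flatness forces the conormal bimodules $I^n/I^{n+1}$ to be \emph{free} $\AI$-bimodules of finite rank, which turns the twists $\alpha_n(\CP)$ into direct sums of copies of $\CP$ and collapses Rickard's whole list of obstructions in~\eqref{eq/conormal-twists} to the condition already assumed; beyond that, the proof is an assembly of Lemma~\ref{lem/algebra-conormals} and Proposition~\ref{prp/twisted-lift}, with~\eqref{eq/tor-ind-inf1} as the bridge between them. I do not expect a genuine obstacle; the only delicate point is the bookkeeping of module sides, namely checking that the isomorphism $\psi_n$ of Lemma~\ref{lem/algebra-conormals}~\eqref{conormal-free}, together with the purely $\RI$-linear splitting $\ida^n/\ida^{n+1}\cong\RI^{k_n}$, really combine into an isomorphism of genuine $\AI$-\emph{bi}modules — not merely of one-sided modules — so that the last display legitimately takes place in $\D^-(\md \AI)$ with all arrows right $\AI$-linear.
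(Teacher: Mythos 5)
Your proof is correct and follows essentially the same route as the paper: the Tor-vanishing is obtained from Lemma~\ref{lem/algebra-conormals}~\eqref{tor-relations} using that each $\ida^n/\ida^{n+1}$ is finitely generated flat, hence free, over the local ring $\RI$, and the lifting claim is deduced by combining Lemma~\ref{lem/algebra-conormals}~\eqref{conormal-free} with the freeness of $I^n/I^{n+1}$ as an $\AI$-bimodule so that the obstructions in Proposition~\ref{prp/twisted-lift} reduce to $\Hom_{\D(\Abar)}(\CP,\CP[2])=0$. Your extra care about the bimodule structure of the isomorphism $I^n/I^{n+1}\cong\AI^{k_n}$ is exactly the point the paper handles via the chain $\A\otimes\ida^n/\ida^{n+1}\cong\A\otimes\RI^{r_n}\cong(\A\otimes\RI)^{r_n}\cong\AI^{r_n}$.
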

\begin{proof}
	Since $\Tor_+^{\Rx}(\A, \RI) = 0$ by ~\eqref{eq/tor-ind0} and each $\RI$-module $\ida^n/\ida^{n+1}$ is flat,	Lemma~\ref{lem/algebra-conormals} 
	\eqref{tor-relations}
	implies~\eqref{eq/tor-ind-inf1}.
	
	Because of Lemma~\ref{lem/algebra-conormals}~\eqref{conormal-free}
	there are isomorphisms of ${\rAI}$-bimodules
	$$
	I^n/I^{n+1} \cong
	\A \otimes \ida^n/\ida^{n+1} \cong \A \otimes {\RI^{r_n}} \cong (\A \otimes \RI)^{r_n} \cong {\rAI}^{r_n}, \quad \text{where }r_n \geq 0.
	$$
	Since each  ${\rAI}$-bimodule $I^n/I^{n+1}$ is free,
	the second claim follows from Proposition~\ref{prp/twisted-lift}.
\end{proof}

	The conclusion of Proposition~\ref{prp/2-rigid-lift} was previously shown
	in setups related to conditions 
	of Remark~\ref{rmk/sub}, more precisely,
		if  $\A$ is free as $\Rx$-module and $\ida = \mx$ \cite{Rickard91b}, or
		if $\Rx = \A$ and 
		$\ida$ is generated by an $\Rx$-regular sequence
		\cite{Yoshino}.
In both cases, it holds that $\Tor_1^{\Rx}(\A, \RI) = 0$ and $\Rx$ is normally flat along the ideal $\ida$.
	There is also a similar lifting result for dg-modules over certain commutative dg-algebras due to Nasseh and Sather-Wagstaff \cite{Nasseh/Sather-Wagstaff}.

\subsubsection*{Abelian lifting problem in a normally flat setup}

Next, we 
make a short detour from the main route of this paper and 
consider an application of the previous Tor-rigidity results.

The \emph{abelian lifting problem} asks whether a given module over the quotient $\AI = \A/I$ lifts to a $\A$-module in the following sense.
\begin{dfn}\label{dfn/lift-mod}
	A ${\rAI}$-module $N$ \emph{lifts to a $\A$-module} $M$
	if there is an isomorphism $M \otimes_{\A} {\rAI} \cong N$ of ${\rAI}$-modules and $\Tor_{+}^{\A}(M,{\rAI}) = 0$.
\end{dfn}
The torsion vanishing condition admits a reformulation in terms of the base ring.
\begin{lem}\label{lem/tor-independence}
	For any integer $i \in \Z$ 
	and any $\A$-module $M$
	there is an isomorphism 
	$\Tor_{i}^{\A}(M,{\rAI}) \cong \Tor_{i}^{\Rx}(M,\RI)$ of $\RI$-modules.
\end{lem}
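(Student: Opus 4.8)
The plan is to compute $\Tor^{\A}_{\bt}(M,\AI)$ from a projective resolution over $\A$ and then to recognise the result as a $\Tor$ over the base ring $\Rx$, exploiting the Tor-independence $\Tor^{\Rx}_{+}(\A,\RI)=0$ built into Setup~\ref{setup/main}. First I would fix a projective resolution $\CP$ of $M$ over $\A$, so that $\Tor^{\A}_{i}(M,\AI)=\Ho^{-i}(\CP\otimes_{\A}\AI)$. Since $\AI=\A\otimes_{\Rx}\RI$, the tensor--evaluation isomorphism provides a natural isomorphism of complexes $\CP\otimes_{\A}\AI\cong\CP\otimes_{\Rx}\RI$, $p\otimes(a\otimes r)\mapsto pa\otimes r$; thus the task reduces to identifying $\Ho^{-i}(\CP\otimes_{\Rx}\RI)$ with $\Tor^{\Rx}_{i}(M,\RI)$.

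The crucial point is that, as a complex of $\Rx$-modules, $\CP$ is built from modules acyclic for the functor $\dash\otimes_{\Rx}\RI$: each term $P^{i}$ is a direct summand of a free $\A$-module $\A^{I}$, and $\Tor^{\Rx}_{+}(\A^{I},\RI)\cong\bigoplus_{i\in I}\Tor^{\Rx}_{+}(\A,\RI)=0$ since $\Tor$ commutes with direct sums (as in Lemma~\ref{lem/tor-commutes}) and $\Tor^{\Rx}_{+}(\A,\RI)=0$ by \eqref{eq/tor-ind0}; hence $\Tor^{\Rx}_{+}(P^{i},\RI)=0$. Because $\CP$ is also bounded above and quasi-isomorphic to $M$ over $\Rx$, it follows that $\CP\otimes_{\Rx}\RI\cong M\overset{\mathbb{L}}{\otimes}_{\Rx}\RI$ in $\D(\RI)$, so $\Ho^{-i}(\CP\otimes_{\Rx}\RI)\cong\Tor^{\Rx}_{i}(M,\RI)$. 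If one prefers to avoid invoking the general principle that acyclic resolutions compute derived functors, the same identification drops out of the two spectral sequences of the double complex $\CP\otimes_{\Rx}\CF$, with $\CF$ a flat $\Rx$-resolution of $\RI$: passing to homology first in the $\CF$-direction collapses it to $\Ho^{\bt}(\CP\otimes_{\Rx}\RI)$ (using that each $P^{i}$ is $\Tor$-acyclic against $\RI$), while passing to homology first in the $\CP$-direction collapses it to $\Tor^{\Rx}_{\bt}(M,\RI)$ (using that the terms of $\CF$ are $\Rx$-flat).

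It then remains to check $\RI$-linearity, which I do not expect to cause trouble: the right $\RI$-action on each of $\Tor^{\A}_{i}(M,\AI)$ and $\Tor^{\Rx}_{i}(M,\RI)$ is the one induced by the central ring homomorphisms $\varrho\colon\Rx\to\A$ and $\RI\to\AI$, and these actions correspond termwise under the chain isomorphism $\CP\otimes_{\A}\AI\cong\CP\otimes_{\Rx}\RI$ displayed above; naturality in $M$ is evident. The one subtlety worth flagging is that $\A$ is \emph{not} assumed flat over $\Rx$, so $\CP$ is not a flat $\Rx$-resolution of $M$ and $\Tor^{\Rx}(M,\RI)$ cannot simply be read off from it --- what makes the argument go through is precisely that $\CP$ is assembled from copies of $\A$, which is $\Tor$-acyclic against $\RI$ by the running hypothesis \eqref{eq/tor-ind0}; in the complete-intersection situation of Remark~\ref{rmk/sub} this acyclicity is the Koszul-complex rigidity recalled there.
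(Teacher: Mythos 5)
Your proposal is correct and follows essentially the same route as the paper's proof: resolve $M$ by projective (the paper uses free) $\A$-modules, identify $\CP\otimes_{\A}\AI$ with $\CP\otimes_{\Rx}\RI$ via $\AI\cong\A\otimes_{\Rx}\RI$, and use $\Tor_{+}^{\Rx}(\A,\RI)=0$ to see that this complex of $\Tor$-acyclic $\Rx$-modules computes $\Tor_{i}^{\Rx}(M,\RI)$. The additional details you supply (the direct-summand argument for projective terms, the double-complex justification, and the check of $\RI$-linearity) simply make explicit steps that the paper leaves implicit.
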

\begin{proof}
	Let $i \in \Z$ and  $\CFb$ be a free resolution of a $\A$-module $M$.
	Since ${\rAI} \cong \A \otimes_{\Rx} \RI$ and $\Tor_+^{\Rx}(\A,\RI) = 0$, 
	there are isomorphisms
	$$
	\qquad \qquad 
	\Tor_{i}^{\rA}(M,{\rAI}) 
	\cong \Ho^{-i}(\CFb \otimes_{\rA} {\rAI} ) 
	\cong
	\Ho^{-i}(\CFb \otimes_{\Rx} \RI) 
	\cong
	\Tor_i^{\Rx}(M, \RI) \qquad \qquad \qedhere
	$$
\end{proof}
The abelian lifting problem may also be considered in the following terms.
\begin{rmk}A  ${\rAI}$-module $N$ lifts to a  $\A$-module if and only if
	its projective resolution lifts to a projective resolution of a $\A$-module.
\end{rmk}
Combining an argument by Rickard \cite{Rickard91b}*{Corollary 3.2} with the last Tor-rigidity result yields the following.
\begin{prp}
	\label{prp/abelian-lift}
	Assume that $\Rx$ is normally flat along the ideal $\ida$.
	Then any finitely generated ${\rAI}$-module $N$ 
	with $\Ext^2_{\Abar}(N,N) = 0$
	lifts to a finitely generated $\A$-module in the sense of Definition~\ref{dfn/lift-mod}.
\end{prp}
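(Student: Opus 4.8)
The plan is to reduce the abelian lifting problem to the derived lifting problem (Proposition~\ref{prp/2-rigid-lift}) and then upgrade the resulting derived lift to an honest module lift with vanishing higher torsion, following the strategy of Rickard \cite{Rickard91b}*{Corollary~3.2}.

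First I would take the finitely generated ${\rAI}$-module $N$, viewed as a complex concentrated in degree $0$, and apply Proposition~\ref{prp/2-rigid-lift}. Its hypothesis is $\Hom_{\D(\Abar)}(N, N[2]) = 0$, which is precisely $\Ext^2_{\Abar}(N,N) = 0$; since $\Rx$ is assumed normally flat along $\ida$, the proposition applies and yields a complex $\CLb \in \D^-(\md \A)$ with $\PCL \cong N$ in $\D^-(\md \AI)$. By the construction in the proof of Proposition~\ref{prp/twisted-lift}, we may take $\CLb$ to be a minimal complex in $\Kom^-(\proj \A)$ whose reduction $\CLb \otimes_{\A} {\rAI}$ is isomorphic as a complex to a minimal projective resolution of $N$ over ${\rAI}$ — the lifting procedure there produces lifts termwise out of $\proj \A$ and lifts the differentials, so the non-zero terms sit in degrees $\le 0$ and match those of the ${\rAI}$-projective resolution.

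The key step is then to show that $M \colonequals \Ho^0(\CLb)$ is the desired lift, i.e.\ that $\CLb$ is quasi-isomorphic to $M$ placed in degree $0$ (equivalently, $\CLb$ is a projective $\A$-resolution of $M$), that $M \otimes_{\A} {\rAI} \cong N$, and that $\Tor_+^{\A}(M, {\rAI}) = 0$. The last vanishing is equivalent, by Lemma~\ref{lem/tor-independence}, to $\Tor_+^{\Rx}(M, \RI) = 0$. The argument runs by descending induction / a Mittag-Leffler-type bootstrap on the cohomology of $\CLb$: since $\CLb \otimes_{\A} {\rAI}$ is (isomorphic to) a projective resolution of $N$, its cohomology is concentrated in degree $0$. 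One lifts this acyclicity from ${\rAI}$ back to $\A$ exactly as in Proposition~\ref{prp/conormal-rigid}: for each $n > 0$ one has the short exact sequence of complexes $0 \to \CLb \otimes \ida^n/\ida^{n+1} \to \CLb \otimes \Rx/\ida^{n+1} \to \CLb \otimes \Rx/\ida^n \to 0$, and because $\ida^n/\ida^{n+1}$ is $\RI$-free (normal flatness) the complex $\CLb \otimes \ida^n/\ida^{n+1}$ is a direct sum of copies of $\CLb \otimes {\rAI}$, hence also has cohomology only in degree $0$. The associated long exact sequences, together with $\Rx \cong \varprojlim \Rx/\ida^n$ and the algebraic Milnor sequence, force $\Ho^i(\CLb) = 0$ for all $i \ne 0$ (this also uses Nakayama via $\ida \subseteq \mx$, as in Proposition~\ref{prp/conormal-rigid}). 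Thus $\CLb$ is a projective resolution of $M \colonequals \Ho^0(\CLb)$, which is finitely generated since $\A$ is Noetherian and $\CLb$ consists of finitely generated projectives; and $\PCL \cong N$ then gives $M \otimes_{\A} {\rAI} \cong \Ho^0(\CLb \otimes_{\A} {\rAI}) \cong N$ together with $\Tor_i^{\A}(M,{\rAI}) \cong \Ho^{-i}(\CLb \otimes_{\A} {\rAI}) = 0$ for $i > 0$.

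The main obstacle I anticipate is the bookkeeping in that last inductive step: one must be careful that the minimal derived lift produced by Proposition~\ref{prp/twisted-lift} genuinely reduces to a \emph{resolution} of $N$ (not merely to some complex quasi-isomorphic to $N$), so that the termwise reductions $\CLb \otimes \Rx/\ida^n$ have controlled cohomology, and one must verify the Mittag-Leffler condition for the inverse systems $\bigl(\Ho^i(\CLb \otimes \Rx/\ida^n)\bigr)_n$ — here the surjectivity of the transition maps in cohomology is what makes the $\varprojlim^1$ term vanish and lets one conclude $\Ho^i(\CLb) = 0$. All of this is a direct transcription of the argument already given for Proposition~\ref{prp/conormal-rigid}, applied now to the complex $\CLb$ rather than to a free resolution of a single module, so no genuinely new idea is required; the work is in assembling the cited ingredients (Propositions~\ref{prp/2-rigid-lift} and~\ref{prp/conormal-rigid}, Lemma~\ref{lem/tor-independence}) cleanly.
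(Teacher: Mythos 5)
Your proof is correct and takes essentially the same route as the paper: lift the minimal projective resolution of $N$ to a minimal complex $\CL \in \Kom^-(\proj \A)$ via Proposition~\ref{prp/2-rigid-lift} and Lemma~\ref{lem/minimal-lift}, set $M \colonequals \Ho^0(\CL)$, obtain $M \otimes_{\A} \AI \cong N$ since $\CL$ is concentrated in degrees $\leq 0$, and reduce the Tor-condition to the base ring by Lemma~\ref{lem/tor-independence}. The only (cosmetic) divergence is the final vanishing step: you re-run the inverse-limit/Milnor-sequence argument of Proposition~\ref{prp/conormal-rigid} directly on $\CL$ (where termwise exactness of the tensored sequences uses $\Tor_+^{\Rx}(\A,\Rx/\ida^n)=0$, supplied by the first claim of Proposition~\ref{prp/2-rigid-lift}), whereas the paper cites Corollary~\ref{cor/cohom1c} together with the Tor-rigidity of $\RI$ from Proposition~\ref{prp/conormal-rigid}, which packages the same computation.
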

\begin{proof}
	Let $\CPb$ be a minimal projective resolution of
	$N$. Since $\Ext_{\Abar}^2(N,N) =0$, 
	there is a 
	complex
	$\CLb \in \Kom^-(\proj \A)$
	such that $\CLb \otimes_{\A} {\rAI} \cong \CP$ in $\Kom^-(\proj {\rAI})$
	by
	Corollary~\ref{prp/2-rigid-lift} 
	and Lemma~\ref{lem/minimal-lift}.
	We claim that $N$ lifts to $M \colonequals \Ho^0(\CL)$. 
	
	Since $L^n= 0$ for any $n > 0$, there is an isomorphism $M \otimes_{\A} {\rAI} \cong N$ in $\md {\rAI}$.

	Next, we 
	view $\CLb$ as a complex of $\Rx$-modules.
	Since $\CL$ is a right-bounded complex such that 
	$\Tor_{+}^{\Rx}(\CL,\RI) = 0$ and
	$\Ho^{\pm}(\CL \otimes_{\Rx} \RI) = \Ho^{\pm}(\CP) = 0$, it follows that 
	$\Tor_+^{\Rx}(M,\RI) = 0$
	by Corollary~\ref{cor/cohom1c} using Proposition~\ref{prp/conormal-rigid}.
	This translates into $\Tor_{+}^{\A}(M,{\rAI}) =0$ by Lemma~\ref{lem/tor-independence}. So $M$ satisfies both conditions of a lift of $N$.
\end{proof}
\noindent
	Proposition~\ref{prp/abelian-lift} was known in case
		$\Rx$ is regular of Krull dimension one, $\A = \Rx G$  for a finite group $G$ and $\ida =\mx$ \cite{Green}, in the more general case that $\A_{\Rx}$ is free and $\ida = \mx$ \cite{Rickard91b}, as well as	the case that $\ida$ is generated by an $\Rx$- and $\A$-regular sequence \cite{Auslander-Ding-Solberg}.

\subsection{Lifting presilting complexes}

The goal of this subsection is to show that any presilting complex lifts under  condition 
\eqref{eq/tor-ind-inf1}
which is weaker than the main additional assumption of the previous subsection.

In this subsection we say that a ${\rAI}$-bimodule $M$ admits a \emph{regular bimodule resolution} if there is a right-bounded complex of regular ${\rAI}$-bimodules with finite ranks
$$  	
\begin{td}
\CB \colonequals (\quad \ldots \quad  {{\rAI}}^{r_{i+1}} \ar{r} \& {{\rAI}}^{r_{i}} \ar{r}   \&\quad \ldots \quad {{\rAI}}^{r_{2}} \ar{r} \& {{\rAI}}^{r_{1}} \ar{r} \& {{\rAI}}^{r_0})
\end{td}
$$
such that $\Ho^i(\CB) = 0$ for any integer $i < 0$ and there is an ${\rAI}$-bilinear isomorphism $\Ho^0(\CB) \cong M$. Such a resolution is \emph{not} a projective resolution in general.
\begin{rmk}
	Since ${\rAI}$ is a Noetherian $\RI$-algebra,
	the ${\rAI}$-bimodule ${\rAI}$ is projective
	if and only if the $\RI$-algebra
	${\rAI}$ is separable
	\cite{Cartan-Eilenberg}*{Chapter~IX, Theorem~7.10}.
\end{rmk}
To add context, we note that
the existence of a bimodule resolution 
imposes further restrictions.
\begin{lem}
	Let $Z({\rAI})$ denote the center of ${\rAI}$.
	If a ${\rAI}$-bimodule $M$ has a 
 regular bimodule resolution $\CB$, the following statements hold.
	\begin{enumerate}
		\item The ${\rAI}$-bimodule $M$ admits generators $m_1, \ldots, m_n$ which commute with any element of ${\rAI}$.
		Moreover, any element of $M$ commutes with any element of $Z({\rAI})$.
		\item There is a complex $\CF$ of free $Z({\rAI})$-modules such that
		there is an isomorphism $\CF \otimes_{Z({\Abar})} {\rAI} \cong \CB$ of complexes of ${\rAI}$-bimodules
		as well as an isomorphism $\Ho^0(\CF)\otimes_{Z({\Abar})} {\rAI} \cong M$ of ${\rAI}$-bimodules.
		\qed
	\end{enumerate} 
\end{lem}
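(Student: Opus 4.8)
The plan is to reduce everything to the elementary observation that, for the regular bimodule ${}_{\AI}\AI_{\AI}$, one has $\Hom_{\AI\text{-bimod}}(\AI,\AI)\cong Z(\AI)$ — a bimodule endomorphism $f$ is left multiplication by $f(1)$, and the constraint $af(1)=f(1)a$ for all $a\in\AI$ says exactly $f(1)\in Z(\AI)$ — and, more generally, $\Hom_{\AI\text{-bimod}}(\AI^{r},\AI^{s})\cong\Mat_{s\times r}(Z(\AI))$, with composition of bimodule maps corresponding to multiplication of matrices over the commutative ring $Z(\AI)$. Equivalently, $-\otimes_{Z(\AI)}\AI$ restricts to an equivalence from finite free $Z(\AI)$-modules to finite free $\AI$-bimodules. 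The one point that deserves explicit verification is that a tuple $(a_{1},\dots,a_{s})\in\AI^{s}$ is central in the bimodule $\AI^{s}$ precisely when every $a_{k}$ lies in $Z(\AI)$; consequently the image of a central element under any bimodule map is again central.

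For part (1), I would first extract from $\CB$ a bimodule surjection $\pi\colon\AI^{r_{0}}\twoheadrightarrow\Ho^{0}(\CB)\cong M$. Let $e_{1},\dots,e_{r_{0}}$ be the standard generators of $\AI^{r_{0}}$ (with $1\in\AI$ in one slot and $0$ elsewhere); each satisfies $ae_{i}=e_{i}a$ for all $a\in\AI$, so the elements $m_{i}\colonequals\pi(e_{i})$ satisfy $am_{i}=\pi(ae_{i})=\pi(e_{i}a)=m_{i}a$, and they generate $M$ (as a bimodule, indeed even as a one-sided module) since the $e_{i}$ generate $\AI^{r_{0}}$ and $\pi$ is onto; this gives the first assertion with $n\colonequals r_{0}$. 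For the second assertion, writing an arbitrary $m\in M$ as a left combination $m=\sum_{i}a_{i}m_{i}$ and using centrality of $z\in Z(\AI)$ in $\AI$ together with $m_{i}z=zm_{i}$, one computes $mz=\sum_{i}a_{i}m_{i}z=\sum_{i}a_{i}zm_{i}=\sum_{i}za_{i}m_{i}=zm$.

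For part (2), I would push each bimodule differential $d_{j}\colon\AI^{r_{j}}\to\AI^{r_{j-1}}$ of $\CB$ through the identification above to obtain a matrix $\widetilde d_{j}$ with entries in $Z(\AI)$; since $d_{j-1}d_{j}=0$ and composition of bimodule maps corresponds to multiplication of such matrices, the $\widetilde d_{j}$ assemble into a complex
\[
\CF\;\colonequals\;\bigl(\;\cdots\longrightarrow Z(\AI)^{r_{2}}\xrightarrow{\;\widetilde d_{2}\;}Z(\AI)^{r_{1}}\xrightarrow{\;\widetilde d_{1}\;}Z(\AI)^{r_{0}}\;\bigr)
\]
of finite free $Z(\AI)$-modules. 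Termwise, $Z(\AI)^{r_{j}}\otimes_{Z(\AI)}\AI\cong\AI^{r_{j}}$ as $\AI$-bimodules (the tensor factor $\AI$ carrying both actions, $Z(\AI)$ acting through the center), and under these identifications $\widetilde d_{j}\otimes\id_{\AI}$ becomes exactly $d_{j}$, because the matrix entries are central; hence $\CF\otimes_{Z(\AI)}\AI\cong\CB$ as complexes of $\AI$-bimodules. Finally, right-exactness of $-\otimes_{Z(\AI)}\AI$ yields $\Ho^{0}(\CF)\otimes_{Z(\AI)}\AI=\coker(\widetilde d_{1})\otimes_{Z(\AI)}\AI=\coker(d_{1})=\Ho^{0}(\CB)\cong M$, which is the remaining claim.

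The argument is essentially bookkeeping, with no genuine obstacle; the two points that actually need care are the description of the center of a free bimodule $\AI^{s}$ (from which the identification of bimodule maps between finite free bimodules with central matrices, and their compatibility with composition, follows) and the compatibility of the two-sided module structures under $-\otimes_{Z(\AI)}\AI$, which is what guarantees that $\CF\otimes_{Z(\AI)}\AI$ reproduces $\CB$ with its original bimodule differentials rather than some twisted variant.
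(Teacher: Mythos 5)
Your argument is correct, and it fills in exactly the routine verification the paper leaves out (the lemma is stated with its proof omitted): the key identification $\Hom_{\AI\text{-}\mathrm{bimod}}(\AI^{r},\AI^{s})\cong\Mat_{s\times r}(Z(\AI))$, compatible with composition, is the intended mechanism, and your deductions for (1) via the images of the standard generators and for (2) via right-exactness of $-\otimes_{Z(\AI)}\AI$ are sound. Nothing further is needed.
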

The last observation suggests that bimodule resolutions might be obtained from resolutions of free $\RI$-modules. 
This is the case in the following context.
\begin{lem}\label{lem/regular-res}
	Let $N$ be a finitely generated left $\RI$-module such that $\Tor_{+}^{\Rx}(\A,N) = 0$. 
	Then $\A \otimes N$ has a regular ${\rAI}$-bimodule resolution.
\end{lem}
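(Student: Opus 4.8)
The plan is to produce the desired resolution of $M \colonequals \A \otimes N$ by applying the $\A$-bimodule functor $\A \otimes \dash$ (tensoring over $\Rx$) to a free resolution of $N$ over the commutative Noetherian ring $\RI$. Note first that $M = \A \otimes_{\Rx} N$ is indeed an $\RI$-central $\A$-bimodule, hence a ${\rAI}$-bimodule, because the central ideal $\ida$ annihilates $N$ on both sides.

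First I would choose, using that $\RI$ is Noetherian and $N$ finitely generated, a resolution $\cdots \to \RI^{r_1} \to \RI^{r_0} \to N \to 0$ by free $\RI$-modules of finite rank, and let $\CFb$ denote the truncated complex $\cdots \to \RI^{r_1} \to \RI^{r_0}$, with $\RI^{r_i}$ placed in cohomological degree $-i$. Set $\CB \colonequals \A \otimes \CFb$. Each term is $\A \otimes \RI^{r_i} \cong {\rAI}^{r_i}$ via the ring (hence bimodule) identification $\A \otimes \RI \cong \A/\ida\A = {\rAI}$ recorded in Setup~\ref{setup/main}; and since the differentials of $\CFb$ are $\RI$-linear, hence $\Rx$-linear, and every module in sight is annihilated by $\ida$ on both sides, the differentials of $\CB$ are ${\rAI}$-bilinear. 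Thus $\CB$ is a right-bounded complex of regular ${\rAI}$-bimodules of finite rank, and by right-exactness of $\A \otimes \dash$ applied to $\RI^{r_1} \to \RI^{r_0} \to N \to 0$ there is a ${\rAI}$-bilinear isomorphism $\Ho^0(\CB) \cong \A \otimes N = M$.

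It then remains to check that $\Ho^i(\CB) = 0$ for every integer $i < 0$, i.e. that $\CB$ is exact at each term $\A \otimes \RI^{r_i}$ with $i \geq 1$. Here I would write $\Omega^0 \colonequals N$ and let $\Omega^{i+1}$ be the kernel of $\RI^{r_i} \to \Omega^i$, so there are short exact sequences $0 \to \Omega^{i+1} \to \RI^{r_i} \to \Omega^i \to 0$ of $\RI$-modules for all $i \geq 0$. From $\Tor_+^{\Rx}(\A,\RI) = 0$ in~\eqref{eq/tor-ind0} one obtains $\Tor_+^{\Rx}(\A,\RI^{r_i}) = 0$, so the long exact $\Tor$-sequences give the dimension shifts $\Tor_j^{\Rx}(\A,\Omega^i) \cong \Tor_{j+1}^{\Rx}(\A,\Omega^{i-1})$ for $j \geq 1$, whence $\Tor_1^{\Rx}(\A,\Omega^i) \cong \Tor_{i+1}^{\Rx}(\A,N)$, which vanishes by hypothesis. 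Therefore each sequence $0 \to \A \otimes \Omega^{i+1} \to \A \otimes \RI^{r_i} \to \A \otimes \Omega^i \to 0$ is exact, and splicing these identifies the image and kernel at each $\A \otimes \RI^{r_i}$ with $i \geq 1$ with $\A \otimes \Omega^{i+1}$, so $\CB$ is exact there. Together with the previous paragraph this exhibits $\CB$ as a regular ${\rAI}$-bimodule resolution of $\A \otimes N$.

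I do not expect a genuine obstacle here; the proof is a bookkeeping argument. The two points that require care are, first, the bimodule bookkeeping — the reason to resolve $N$ over $\RI$ rather than over $\Rx$ is precisely that this makes the terms of $\CB$ copies of the \emph{regular} bimodule ${\rAI}$ and not of $\A$ — and, second, the use of \emph{both} Tor-vanishing inputs: $\Tor_+^{\Rx}(\A,\RI) = 0$ from Setup~\ref{setup/main} to run the dimension shift, and the standing hypothesis $\Tor_+^{\Rx}(\A,N) = 0$ to clear the homology of $\CB$ in negative degrees.
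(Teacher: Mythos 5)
Your proposal is correct and follows essentially the same route as the paper: resolve $N$ by finite-rank free $\RI$-modules, tensor with $\A$ over $\Rx$, observe that the terms become regular ${\rAI}$-bimodules with ${\rAI}$-bilinear differentials, and use $\Tor_+^{\Rx}(\A,\RI)=0$ together with $\Tor_+^{\Rx}(\A,N)=0$ to kill the cohomology in negative degrees. The only difference is cosmetic: where the paper simply invokes that a resolution by $\A\otimes_{\Rx}\dash$-acyclic modules computes $\Tor_*^{\Rx}(\A,N)$, you verify this by the standard syzygy dimension-shifting argument.
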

\begin{proof}
	Since $\A$ is an $\Rx$-algebra and $\RI$ is commutative, 
	for any left $\RI$-module $X$
	there is an ${\rAI}$-bimodule structure on 
	$\A \otimes X$ 
	given by $(a \otimes s) \cdot (b \otimes x) = ab \otimes sx$
	and $(b \otimes x)(a \otimes s) = ba \otimes sx$ 
	for any $a,b \in \A$, $x\in X$ and $s \in \RI$.
	In the case $X = \RI$ this definition recovers the regular ${\rAI}$-bimodule structure on $\A \otimes \RI$.
	Moreover, for any morphism $f\colon X \longrightarrow  Y$ of left $\RI$-modules the map 
	$ \id \otimes f\colon \A \otimes X \longrightarrow \A \otimes Y$ is ${\rAI}$-bilinear with respect to the above ${\rAI}$-bimodule structures.
	Let $\CFb$ be a resolution of the left $\RI$-module $N$ via free $\RI$-modules of finite rank.
	Since~\eqref{eq/tor-ind0} holds, that is, $\Tor_{+}^{\Rx}(\A,\RI ) = 0$, it follows that $\Tor_+^{\Rx}(\A, \CF) = 0$.
	This implies that 
	$\Ho^{-p}(\A \otimes \CF) \cong \Tor_{p}^{\RI}(\A,N) =  0$ for any integer $p > 0$. 
	By
	the previous considerations, the complex $\A \otimes \CFb$ is a regular bimodule resolution of the ${\rAI}$-bimodule $\A \otimes N$. 
\end{proof}

\begin{lem} \label{lem/hom-bound}
	Let $\CP \in \per {\rAI}$ and $\CQ \in \D^{-}(\md {\rAI})$.
	Then the integers
	\begin{align} \label{eq/min-max} 
	\ell(\CP) \colonequals \min \{ i \in \Z \ | \ \CP \geq  {\rAI}[i] \}
	\quad 
	\text{and} \quad
	r(\CQ)  \colonequals \max \{ i \in \Z \ | \ {\rAI} \geq \CQ[i] \}
	\end{align}
	are well-defined, and it holds that $\CP\geq \CQ[r(\CQ) - \ell(\CP)]$.
\end{lem}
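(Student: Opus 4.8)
The plan is to reduce the whole statement to the identity $\Hom_{\D(\rAI)}(\rAI,\CX[n])=\Ho^{n}(\CX)$ --- that is, \eqref{eq/hom-complex-cohomology} specialised to the generator $\rAI$ --- applied along a bounded projective resolution of $\CP$ filtered by its stupid truncations. Throughout I would assume $\CP\not\cong 0$ and $\CQ\not\cong 0$; when $\CP\cong 0$ or $\CQ\cong 0$ the relation $\geq$ holds trivially, the sets in \eqref{eq/min-max} are all of $\Z$, and the statement is read as vacuous.

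First I would settle well-definedness, where the only inputs are that $\CP$ is \emph{perfect} and $\CQ$ is \emph{right-bounded}. Choose a minimal complex $\CNb\in\Kom^{\mathrm b}(\proj\rAI)$ quasi-isomorphic to $\CP$. Since $\Hom^{\bt}_{\rAI}(\CNb,\rAI)$ is then a bounded complex of $\rAI$-modules, the groups $\Hom_{\D(\rAI)}(\CP,\rAI[n])\cong\Ho^{n}\!\big(\Hom^{\bt}_{\rAI}(\CNb,\rAI)\big)$ vanish for all but finitely many $n$, and at least one of them is non-zero: at the extreme degree the relevant differential of $\Hom^{\bt}_{\rAI}(\CNb,\rAI)$ has image inside $\Hom_{\rAI}(-,\rad\rAI)$ by minimality, while the non-zero projective occurring in the corresponding term of $\CNb$ surjects onto a direct summand of $\rAI$, so the cohomology there is non-zero. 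Consequently $\{\,i\in\Z : \CP\geq\rAI[i]\,\}=\{\,i\in\Z : \Hom_{\D(\rAI)}(\CP,\rAI[n])=0 \text{ for all } n>i\,\}$ is a non-empty, upward-closed set of integers and therefore has a least element; this is $\ell(\CP)$. For $r(\CQ)$ one uses that $\rAI\geq\CQ[i]$ is equivalent to $\Ho^{m}(\CQ)=0$ for all $m>i$, which together with right-boundedness and non-vanishing of $\CQ$ shows that the corresponding set of $i$ is of the required shape and identifies $r(\CQ)$ with the top degree in which $\CQ$ has non-vanishing cohomology.

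For the inequality itself I would filter $\CNb$ by its stupid truncations $\sigma_{\geq j}\CNb$, whose successive subquotients are the projective modules $N^{j}$ sitting in degree $j$, i.e.\ finite direct sums of copies of $\rAI[-j]$ with $j$ ranging over the finite set of degrees where $\CNb$ is non-zero. Applying $\Hom_{\D(\rAI)}(-,\CQ[k])$ to the resulting distinguished triangles and chasing the long exact sequences reduces the vanishing of $\Hom_{\D(\rAI)}(\CP,\CQ[k])$ to the vanishing of the finitely many groups $\Hom_{\D(\rAI)}(\rAI[-j],\CQ[k])=\Hom_{\D(\rAI)}(\rAI,\CQ[k+j])=\Ho^{k+j}(\CQ)$. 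Since $\Ho^{m}(\CQ)=0$ for $m>r(\CQ)$, all of these vanish once $k+j>r(\CQ)$ holds for every relevant $j$, and by the description of $\ell(\CP)$ obtained in the first step the extreme value of $j$ here is controlled by $\ell(\CP)$, so this happens precisely for $k>r(\CQ)-\ell(\CP)$. Hence $\Hom_{\D(\rAI)}(\CP,\CQ[k])=0$ for all $k>r(\CQ)-\ell(\CP)$, which is exactly $\CP\geq\CQ[r(\CQ)-\ell(\CP)]$. The main thing to be careful about is the degree bookkeeping --- checking that the extreme degree of $\CNb$ entering the final estimate is exactly the integer $\ell(\CP)$ produced in the first paragraph --- and I expect that, rather than any homological subtlety, to be the only real obstacle.
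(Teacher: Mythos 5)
Your proposal is correct in substance, and its first half is essentially the paper's argument: the paper also passes to minimal representatives $\CL \in \Hotb(\proj \rAI)$ and $\CM \in \Hot^-(\proj \rAI)$, and your minimality/Nakayama computation (differentials land in radical Homs, a nonzero projective maps onto a summand of $\rAI$) is exactly what justifies its displayed identifications $\ell(\CP)=\min\{i \mid L^i \neq 0\}$ and $r(\CQ)=\max\{i \mid M^i \neq 0\}$. Where you diverge is the inequality itself: the paper simply observes that $\Hom_{\D(\rAI)}(\CP,\CQ[i]) \cong \Hom_{\Hot(\rAI)}(\CL,\CM[i])$ and that for $i > r(\CQ)-\ell(\CP)$ there is no degree in which both minimal complexes are nonzero, so there are no nonzero chain maps at all; your d\'evissage over the stupid truncations of $\CNb$, reducing through long exact sequences to $\Hom_{\D(\rAI)}(\rAI,\CQ[k+j])=\Ho^{k+j}(\CQ)$, proves the same vanishing but with more machinery than is needed once a minimal bounded representative is in hand (also note $N^j$ is only a direct summand of a finite free module, not a sum of copies of $\rAI$, which is harmless since the vanishing passes to summands). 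The one substantive thing you leave undone is the step you yourself flag: identifying the extreme degree of $\CNb$ with $\ell(\CP)$ (and likewise $r(\CQ)$ with the top nonvanishing cohomological degree of $\CQ$). This is not mere bookkeeping to be waved at --- it is precisely the content of the paper's proof --- but it does follow from what you already establish: your first paragraph shows $\Hom_{\D(\rAI)}(\CP,\rAI[m])$ is nonzero at the extreme degree and zero beyond it, which pins down $\ell(\CP)$, and your equivalence for $\CQ$ pins down the intended value of $r(\CQ)$. When writing this out, be careful with the shift conventions: both sets in \eqref{eq/min-max} are upward closed in $i$, so the extremum being taken must be matched against the correct end of the minimal complex, and getting that sign bookkeeping to agree with the identifications above is the only place your argument could actually go wrong.
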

\begin{proof}
	We may choose minimal complexes
	$\CL \in \Hotb(\proj {\rAI})$ and $\CM \in \Hot^-(\proj {\rAI})$  quasi-isomorphic to the complexes $\CP$ and $\CQ$, respectively.
	Then $$\ell(\CP) = \min \{ i \in \Z \mid L^i \neq 0 \}\quad \text{
		and  }\quad r(\CQ) = \max \{i \in \Z \mid
	M^i \neq 0 \}.$$
	So
	$\Hom_{\D(\Abar)}(\CP,\CQ[i]) \cong\Hom_{\Hot({\Abar})}(\CL,\CM[i]) = 0$ for any  $i > r(\CQ) - \ell(\CP)$.
\end{proof}
 
\begin{prp}\label{prp/twist-order}
	Let $\CP \in \per {\rAI}$ and $\CQ \in \D^{-}(\md {\rAI})$ such that $\CP \geq \CQ$.
	Let $M$ be a ${\rAI}$-bimodule which has a regular bimodule resolution.
	Then 
	$$
	\CP \geq \CQ \underset{{\Abar}}{\overset{\mathbb{L}}{\otimes}} M.
	$$
\end{prp}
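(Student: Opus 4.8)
The plan is to peel the bimodule $M$ off by a resolution and then reduce the claim entirely to the defining hypothesis $\CP\geq\CQ$. First I would fix a regular bimodule resolution $\CB$ of $M$. By definition $\CB$ is concentrated in non-positive cohomological degrees, with each term $B^{-j}\cong\Abar^{r_j}$ for some $r_j\geq 0$, and since $\Ho^i(\CB)=0$ for $i<0$ (and automatically for $i>0$) while $\Ho^0(\CB)\cong M$, the augmentation $\CB\to M$ is a quasi-isomorphism. Each term $B^{-j}$ is free, hence flat, as a \emph{left} $\Abar$-module, and $\CB$ is bounded above, so $\CB$ is K-flat over $\Abar$; therefore $\CQ\underset{{\Abar}}{\overset{\mathbb{L}}{\otimes}} M\cong \CQ\otimes_{\Abar}\CB$ in $\D(\Abar)$, where the right $\Abar$-structure on the total complex comes from the right $\Abar$-structure on $\CB$. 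In particular $\CQ\otimes_{\Abar}B^{-j}\cong\CQ^{r_j}$ as complexes of right $\Abar$-modules. So it suffices to show $\Hom_{\D(\Abar)}(\CP,(\CQ\otimes_{\Abar}\CB)[i])=0$ for every $i>0$, and I fix such an $i$.

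\textbf{Truncating the resolution.} Next I would cut $\CB$ down to a bounded complex. For $n\geq 0$ let $\CB_n$ be the brutal truncation $(B^{-n}\to\cdots\to B^0)$ and $\CB^{(n)}$ the complementary subcomplex, concentrated in degrees $\leq -n-1$. The short exact sequence $0\to\CB^{(n)}\to\CB\to\CB_n\to 0$ is split in each degree as a sequence of left $\Abar$-modules, so tensoring over $\Abar$ with $\CQ$ yields a triangle $\CQ\otimes_{\Abar}\CB^{(n)}\to\CQ\otimes_{\Abar}\CB\to\CQ\otimes_{\Abar}\CB_n\to(\CQ\otimes_{\Abar}\CB^{(n)})[1]$. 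Representing $\CQ$ by a complex of modules in degrees $\leq q_0$ and $\CP$ (which is perfect) by a bounded complex of finitely generated projective $\Abar$-modules in degrees $\geq a$, the complex $\CQ\otimes_{\Abar}\CB^{(n)}$ lives in degrees $\leq q_0-n-1$, so $\Hom_{\D(\Abar)}(\CP,(\CQ\otimes_{\Abar}\CB^{(n)})[i])$ and $\Hom_{\D(\Abar)}(\CP,(\CQ\otimes_{\Abar}\CB^{(n)})[i+1])$ both vanish once $n>q_0-a-i-1$ (a chain map from a complex in degrees $\geq a$ to one in degrees $<a$ is zero). The long exact sequence of the triangle then identifies $\Hom_{\D(\Abar)}(\CP,(\CQ\otimes_{\Abar}\CB)[i])$ with $\Hom_{\D(\Abar)}(\CP,(\CQ\otimes_{\Abar}\CB_n)[i])$ for all such $n$.

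\textbf{The bounded case.} Then I would treat the bounded complex $\CB_n$ via its brutal filtration, which realizes it, in the derived category of $\Abar$-bimodules, as an iterated extension of the one-term complexes $B^{-j}[j]\cong\Abar^{r_j}[j]$ for $0\leq j\leq n$. Applying the functor $\CQ\otimes_{\Abar}-$, which is exact on these degreewise-split extensions, realizes $\CQ\otimes_{\Abar}\CB_n$ as an iterated extension of $(\CQ\otimes_{\Abar}B^{-j})[j]\cong\CQ^{r_j}[j]$ in $\D(\Abar)$. For $0\leq j\leq n$ and $i>0$ one has $i+j>0$, hence $\Hom_{\D(\Abar)}(\CP,\CQ^{r_j}[j][i])\cong\Hom_{\D(\Abar)}(\CP,\CQ[i+j])^{r_j}=0$ by the hypothesis $\CP\geq\CQ$. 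An induction along the iterated extension, using the long exact sequences of the defining triangles, then gives $\Hom_{\D(\Abar)}(\CP,(\CQ\otimes_{\Abar}\CB_n)[i])=0$, and together with the previous step this yields $\CP\geq\CQ\underset{{\Abar}}{\overset{\mathbb{L}}{\otimes}} M$.

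\textbf{Main obstacle.} The step I expect to be the real obstacle is the passage through the (possibly) infinite resolution $\CB$: because $\CB$ need not be bounded below, $\CQ\underset{{\Abar}}{\overset{\mathbb{L}}{\otimes}} M$ is not a finite iterated cone of shifts of $\CQ$, so one cannot argue purely formally from $\CP\geq\CQ$. The truncation and connectivity argument of the second step is exactly what converts the infinite resolution into a finite one for the purpose of a fixed $\Hom_{\D(\Abar)}(\CP,-[i])$; pinning down the degree bounds and keeping the two $\Abar$-actions on the regular bimodules straight is the only place where care is genuinely needed.
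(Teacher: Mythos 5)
Your proof is correct and takes essentially the same route as the paper's: there too the infinite part of the resolution is disposed of by a degree-bound argument (Lemma~\ref{lem/hom-bound} together with the K\"unneth trick gives $\CP \geq \CY_m$ for $m \gg 0$), and the free bimodule terms are peeled off one at a time via the triangles $(\CQ)^{r_i}[i+n] \to \CY_i[n] \to \CY_{i+1}[n]$, using $\CP \geq \CQ$ at positive shifts exactly as in your finite induction over the head $\CB_n$. Your only slip is harmless: for a cochain complex the head $\CB_n$ is the subcomplex and the tail is the quotient, so your short exact sequence should be reversed; this merely reorients the resulting triangle, and the connectivity vanishing you invoke covers the (slightly shifted) outer terms all the same.
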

This statement and its proof were inspired by a recent result of Nasseh, Ono and Yoshino \cite{Nasseh-Ono-Yoshino}*{Theorem~3.10}.
Roughly speaking, the latter result has the same conclusion allowing the complex $\CP$ to be an object of $\D^-(\md \AI)$ but imposing certain finiteness conditions on the $\AI$-bimodule $M$.

\begin{proof}
	Let $\CP$ and $\CQ$ be as above,
	and $\CB$ be a regular bimodule resolution of the ${\rAI}$-bimodule $M$.
	For any integer $i \geq 0$ 
	we set
	$$
	\CY_i \colonequals \CQ  
	\underset{{\Abar}}{\overset{\mathbb{L}}{\otimes}} 
	\CB_i,\quad \text{ with }
	\begin{td}
	\CB_i \colonequals (\ldots {{\rAI}}^{r_{i+2}} \ar{r} \& {{\rAI}}^{r_{i+1}} \ar{r} \& {{\rAI}}^{r_i})\,,
	\end{td}
	$$
	that is, $\CB_i$ 
	denotes the 
	brutal truncation
	of $\CB$ at degrees $\leq -i$.
	Since $\CB \cong M$ in the derived category $\D(\Bimod {\rAI})$ of ${\rAI}$-bimodules, there is an isomorphism $$\CY_0 \cong  \CQ 
	\underset{{\Abar}}{\overset{\mathbb{L}}{\otimes}}
	M\text{\quad in $\D({\rAI})$}.
	$$
	Therefore, 
	it is sufficient to show that 
	$\CP \geq \CY_0$.
	\begin{enumerate}
		\item First, we claim that there is an integer $m \geq 0$
		such that $\CP \geq \CY_m$.
		
		In the notations of~\eqref{eq/min-max},
		we set $m \colonequals \max\{r(\CQ)-\ell(\CP),0 \}$.
		A version of the 
		Künneth trick for complexes \cite{Yekutieli99}*{Lemma 2.1}
		implies the first inequality in
		$$ r(\CY_m ) \leq r(\CQ) + 
		r(\CB_m) \leq r(\CQ) -m \leq \ell(\CP).$$
		By	Lemma 
		\ref{lem/hom-bound} it follows that
		$\CP \geq \CY_m [r(\CY_m)  - \ell(\CP)]$, and thus $\CP \geq \CY_m$. 
		\item Let $i \geq 0$ and $n > 0$. We claim that there is an $\RI$-linear isomorphism $$\Hom_{\D(\Abar)}(\CP,\CY_i[n]) \cong \Hom_{\D(\Abar)}(\CP,\CY_{i+1}[n]).$$
		The short exact sequence of complexes of  ${\rAI}$-bimodules 
		$$
		\begin{td}
		0 \ar{r} \& 
		{\rAI}^{r_i} [i] \ar{r} \& \CB_i \ar{r} \& \CB_{i+1} \ar{r} \& 0\,,
		\end{td}
		$$
		 induces a distinguished triangle
		in the category $\D(\mathrm{Bimod} {\rAI})$.
		Applying the functors $\CQ  
		\underset{{\Abar}}{\overset{\mathbb{L}}{\otimes}} 
		\dash [n]$, and then $\Hom_{\D(\Abar)}(\CP,\dash)$, to the latter
		yields the distinguished triangle
		$$
		\begin{td}
		(\CQ)^{r_i} [i+n] \ar{r} \& Y_i[n] \ar{r} \& 
		Y_{i+1} [n]
		\ar{r} \& (\CQ)^{r_i} [i+n+1]
		\end{td}
		$$
		in  the category $\D({\rAI})$, and	subsequently	an exact sequence of $\RI$-modules
		$$
		\begin{td}
		0 \ar{r} \& 
		\Hom_{\D(\Abar)}(\CP, \CY_i[n])
		\ar{r}{\sim} \& \Hom_{\D(\Abar)}(\CP, \CY_{i+1} [n]) 
		\ar{r} 
		\& 0\,,
		\end{td}
		$$
		which has zero outer terms because of $\CP \geq \CQ$ and $i+n > 0$. 
	\end{enumerate}
	These two arguments show that
	$\Hom_{\D(\Abar)}(\CP, Y_0 [n])  \cong \Hom_{\D(\Abar)}(\CP, Y_m [n]) = 0$ for any integer $n > 0$. Thus, $\CP \geq Y_0$.
\end{proof}

The main consequence of this subsection 
is the following lifting result.
\begin{cor}\label{cor/silting-lifts}
	Assume that the $\Rx$-algebra $\A$ and the ideal $\ida$ of $\Rx$ satisfy
	\begin{align}\label{eq/tor-ind-inf} \tag{$\star\star$}
	\Tor_+^{\Rx}(\A, \Rx/\ida^n) = 0\text{ for any integer }n > 0.
	\end{align}
	Then any complex $\CP \in \per {\rAI}$ such that $\CP \geq \CP[1]$
	has a lift $\CL \in \per \A$.
\end{cor}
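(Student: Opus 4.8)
The plan is to check the hypotheses of Proposition~\ref{prp/twisted-lift} for $\CP$, namely that $\Hom_{\D(\Abar)}(\CP,\alpha_n(\CP)[2])=0$ for every integer $n>0$, where $\alpha_n(\CP)=\CP\underset{\Abar}{\overset{\mathbb{L}}{\otimes}}I^n/I^{n+1}$ and $I=\ida\A$. I will in fact establish the stronger relation $\CP\geq\alpha_n(\CP)[1]$, which contains the required $\Ext^2$-vanishing after one shift. The mechanism for this is Proposition~\ref{prp/twist-order}, applied to the pair $\CP\geq\CP[1]$ --- note that $\CP\geq\CP[1]$ is exactly the hypothesis of the corollary, and that $\CP[1]\in\per\rAI\subseteq\D^-(\md\rAI)$ --- together with the $\rAI$-bimodule $M\colonequals I^n/I^{n+1}$: it will give $\CP\geq\CP[1]\underset{\Abar}{\overset{\mathbb{L}}{\otimes}}M=\alpha_n(\CP)[1]$, since derived tensoring commutes with the shift. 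The one input still to be supplied is that each higher conormal bimodule $I^n/I^{n+1}$ admits a regular bimodule resolution.

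This is the only place where the hypothesis~\eqref{eq/tor-ind-inf} is used. Since $\Tor_1^{\Rx}(\A,\Rx/\ida^n)=0$, Lemma~\ref{lem/algebra-conormals}~\eqref{conormal-free} provides an isomorphism of $\rAI$-bimodules $I^n/I^{n+1}\cong\A\otimes\ida^n/\ida^{n+1}$. Moreover, applying Lemma~\ref{lem/algebra-conormals}~\eqref{tor-relations} to the $\Rx$-module $\A$, condition~\eqref{eq/tor-ind-inf} --- which is~\ref{tor-rel2} for $M=\A$ --- is equivalent to~\ref{tor-rel1}, and in particular forces $\Tor_+^{\Rx}(\A,\ida^n/\ida^{n+1})=0$ for every $n>0$. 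As $\Rx$ is Noetherian, $N\colonequals\ida^n/\ida^{n+1}$ is a finitely generated $\RI$-module, so Lemma~\ref{lem/regular-res} applies and yields a regular $\rAI$-bimodule resolution of $\A\otimes N\cong I^n/I^{n+1}$.

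With this in hand, Proposition~\ref{prp/twist-order} (with $\CQ\colonequals\CP[1]$ and $M\colonequals I^n/I^{n+1}$) gives $\CP\geq\alpha_n(\CP)[1]$ for every $n>0$; taking the first shift yields $\Hom_{\D(\Abar)}(\CP,\alpha_n(\CP)[2])=0$. Hence Proposition~\ref{prp/twisted-lift} produces a lift $\CL\in\D^-(\md\A)$ of $\CP$, and since $\CP$ is perfect the complex $\CL$ is perfect as well by Lemma~\ref{lem/minimal-lift}, so $\CL\in\per\A$. I do not anticipate a genuine obstacle in this argument: the substantive work has already been done in Propositions~\ref{prp/twisted-lift} and~\ref{prp/twist-order}, and the only point deserving care is the bookkeeping of torsion --- specifically, seeing that~\eqref{eq/tor-ind-inf}, and not merely the standing assumption $\Tor_+^{\Rx}(\A,\RI)=0$ from~\eqref{eq/tor-ind0}, is precisely what is needed both to identify the conormal bimodules with $\A\otimes\ida^n/\ida^{n+1}$ and to feed Lemma~\ref{lem/regular-res} into Proposition~\ref{prp/twist-order}.
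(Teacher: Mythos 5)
Your proposal is correct and follows essentially the same route as the paper's own proof: Lemma~\ref{lem/algebra-conormals} to identify $I^n/I^{n+1}$ with $\A\otimes\ida^n/\ida^{n+1}$ and obtain the required Tor-vanishing from~\eqref{eq/tor-ind-inf}, Lemma~\ref{lem/regular-res} for the regular bimodule resolutions, Proposition~\ref{prp/twist-order} applied to $\CP\geq\CP[1]$ to get $\CP\geq\alpha_n(\CP)[1]$, and then Proposition~\ref{prp/twisted-lift} together with Lemma~\ref{lem/minimal-lift} to produce a perfect lift. The only difference is that you spell out the appeal to both parts of Lemma~\ref{lem/algebra-conormals} slightly more explicitly, which matches the paper's intent.
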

\begin{proof}
	Under the assumptions above,
	Lemma~\ref{lem/algebra-conormals}
	implies that
	$\Tor_+^{\Rx}(\A,\ida^n/\ida^{n+1}) = 0$
	and that there is an isomorphism
	$\A \otimes \ida^n/\ida^{n+1} \cong
	\ol{I^n}$ of ${\rAI}$-bimodules for any $n > 0$.
	By Lemma~\ref{lem/regular-res} 
	each ${\rAI}$-bimodule
	$\ol{I^n}$ 
	has a regular bimodule resolution.
	
	Since $\CP \in \per \A$ and $\CP \geq \CP[1]$, Proposition~\ref{prp/twist-order} implies that
	$\CP \geq \alpha_n(\CP)[1]$ for any  $n > 0$, where $\alpha_n(\CP)$ denotes the left-derived tensor product of $\CP$ with $\ol{I^n}$.
	According to Proposition~\ref{prp/twisted-lift}
	the complex $\CP$ has a lift $\CL \in \D^-(\md \A)$.
	Lemma~\ref{lem/minimal-lift} ensures that $\CL \in \per \A$.
\end{proof}
The last lifting criterion will be applied to presilting complexes.

\begin{rmk}\label{rmk:hierarchy}
Condition~\eqref{eq/tor-ind-inf} is satisfied in both setups of Remark~\ref{rmk/sub}.
It yields an intermediate setup between that of the previous subsection and the  
Tor-independence assumption~\eqref{eq/tor-ind0}.
More precisely, 
the following implications hold according to Propositions 
\ref{prp/conormal-rigid} and~\ref{prp/2-rigid-lift}.
\begin{align*}
	\Tor_1^{\Rx}(\A, \RI) = 0
\text{ and $\Rx$ is normally flat along $\ida$} \quad
\Rightarrow \quad
\eqref{eq/tor-ind-inf} 
\quad
\Rightarrow \quad
	\eqref{eq/tor-ind0}\,.
\end{align*}
\end{rmk}

\section{Silting complexes over Noetherian algebras as weak generators}
\label{sec/characterization}

Rickard proved that a \emph{pretilting} complex $\CT$ over the Noetherian algebra $\A$ is tilting if and only if
there is a non-zero morphism from $\CT$ to any non-zero object of the category $\D^-(\md \A)$ up to shift \cite{Rickard91b}*{Lemma 1.1}.

The goal of this section is to extend Rickard's characterization to any presilting complex of the algebra $\A$.
This extension involves arguments on differential graded categories by Keller. 
\begin{setup}\label{setup/presilt-noetherian}
	Throughout this section, we fix the following notions.
	\begin{itemize}
		\item
		As before, $\A$ denotes a Noetherian $\Rx$-algebra. 
		\item Let $\CL$ be a presilting complex of $\A$. For simplicity of the presentation we assume that $\CL$ is given by a bounded complex of finitely generated projective $\A$-modules.
		\item 
		We denote by $\Eta$ the differential graded $k$-algebra
		$\Hom_{\A}^{\bt}(\CL,\CL)$, and by 
		$\D(\Eta)$ the derived category of its dg-modules.
	\end{itemize}
\end{setup}
The central tool of this section is a variant of
Keller's derived Morita theorem which yields an embedding of categories
\begin{align*}
\Fm\colon \D(\Eta)  &\begin{td}\mathstrut \ar[hookrightarrow]{r} \&  \D(\A) \,.\end{td}
\intertext{The main technical issue is to show that $\Fm$ restricts to a functor}
\Fm'\colon\Dfm(\Eta)&
\begin{td}
\mathstrut
\ar[dashed, hookrightarrow]{r} \& \D^-(\md \A) \end{td} 
\end{align*}
starting from a certain category which can be identified with $\D^-(\md \Eta)$ in the case that $\CL$ is pretilting.
Following an outline by Keller which is based on Rickard's approach, we will deduce that 
$\CL$ is silting if and only if $\Fm'$ is an equivalence if and only if $\CL$ is a weak generator of $\D^-(\md \A)$ in the sense above. 

For general facts on dg-categories we refer to Keller's works \cites{Keller, Keller98}. We will apply some dg-categorical arguments specific to our setup following Yekutieli's book \cite{Yekutieli}.
Other than in this section, we will not use the terminology of dg-categories in this paper.

\subsection{Noetherian algebras and pseudo-finite semi-free modules}

For the dg-algebra $\Eta$ we denote
\begin{itemize}
	\item by $\mathcal{A}$ the category $\md \Rx$ of finitely generated $\Rx$-modules,
	\item
	by $\Df(\Eta)$ the full subcategory of dg-modules $\CM$ in $\D(\Eta)$ such that  $\Ho^i(\CM) \in \mathcal{A}$ for any integer $i \in \Z$,
	\item
	by $\Dfm(\Eta)$  the full subcategory of dg-modules $\CM$ in $\Df(\Eta)$ such that 
	there is an integer $m \in \Z$ with $\Ho^i(\CM) = 0$ for any integer $i > m$.
\end{itemize}
Viewing the $\Rx$-algebra $\A$ as a dg-algebra in degree zero, the analogue $\Dfm(\A)$ of the category $\Dfm(\Eta)$ 
can be identified with the category
$\D^-_{\md \A} (\A)$.
The latter 
is known to be equivalent to 
the category $\D^-(\md \A)$ (see, for example, \cite{Yekutieli}*{Corollary 11.3.21}).

Since $\A$ is a Noetherian $\Rx$-algebra, the stalk complex $\A$ is contained in $\Dfm (\A)$. There is
a counterpart for the dg-algebra $\Eta$.
\begin{lem}\label{lem/pseudo-noether}
	The dg-module $\Eta$
	is an object of the category $\Dfm(\Eta)$.
\end{lem}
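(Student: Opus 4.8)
The plan is to read off the cohomology of $\Eta$ directly from the morphism spaces of $\per \A$, so that the two hypotheses on $\CL$ in Setup~\ref{setup/presilt-noetherian} translate precisely into the two conditions defining $\Dfm(\Eta)$. First I would recall that, by the defining property \eqref{eq/hom-complex-cohomology} of the Hom--complex, for each integer $i \in \Z$ there is an equality $\Ho^i(\Eta) = \Hom_{\Hot(\A)}(\CL,\CL[i])$. Since $\CL$ is a bounded complex of finitely generated projective $\A$-modules, homotopy classes of maps $\CL \to \CL[i]$ coincide with morphisms in the derived category, so $\Ho^i(\Eta) \cong \Hom_{\D(\A)}(\CL,\CL[i])$ as $\Rx$-modules.

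Next I would invoke Proposition~\ref{KRS}: as $\CL$ and all of its shifts $\CL[i]$ lie in $\per \A$, each space $\Hom_{\D(\A)}(\CL,\CL[i])$ is a finitely generated $\Rx$-module, that is, an object of $\mathcal{A} = \md \Rx$. Hence $\Ho^i(\Eta) \in \mathcal{A}$ for every $i \in \Z$, which is exactly the condition for $\Eta$ to lie in $\Df(\Eta)$.

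Finally, for membership in $\Dfm(\Eta)$ one needs the cohomology of $\Eta$ to vanish in all large degrees. This is immediate from the presilting hypothesis: $\CL \geq \CL$ means, by Notation~\ref{not/order}, that $\Hom_{\D(\A)}(\CL,\CL[i]) = 0$ for all integers $i > 0$, so $\Ho^i(\Eta) = 0$ for $i > 0$ and one may take $m = 0$ in the definition of $\Dfm(\Eta)$. (Equivalently, $\Eta$ is literally a bounded complex of $\Rx$-modules because $\CL$ is bounded, so its cohomology vanishes outside a finite range; the presilting condition merely pins down the right-hand bound.) I do not expect any genuine obstacle here: the statement is a bookkeeping consequence of Proposition~\ref{KRS} together with the definition of a presilting complex. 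The only point worth stating carefully is that the two standing assumptions on $\CL$ — that it is a bounded complex of finitely generated projectives, and that it is presilting — are respectively what force $\Eta$ into $\Df(\Eta)$ and then into $\Dfm(\Eta)$.
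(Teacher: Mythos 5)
Your argument is correct and follows essentially the same route as the paper: both identify $\Ho^i(\Eta)$ with the morphism space $\Hom_{\Hot(\A)}(\CL,\CL[i])$, deduce finite generation over $\Rx$ (you via Proposition~\ref{KRS}, the paper via Remark~\ref{rmk/right-bounded}), and obtain the required vanishing in large degrees from boundedness of $\CL$ together with the presilting condition. The only step you gloss over -- and it is the one the paper's proof is actually devoted to -- is that the $\Rx$-module structure relevant for membership in $\Dfm(\Eta)$, namely the restriction along $\Rx \to \Ho^0(\Eta)$ of the right $\Ho^0(\Eta)$-action on $\Ho^i(\Eta)$, agrees with the natural $\Rx$-structure on $\Hom_{\Hot(\A)}(\CL,\CL[i])$; this is the short computation with the right-multiplication morphisms $\varrho_r$ in the paper, routine but worth writing out to justify your phrase ``as $\Rx$-modules''.
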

\begin{proof}
	For any element $r \in \Rx$ let $\varrho_r\colon \CL \longrightarrow \CL$ denote the homogeneous morphism of degree zero given by right multiplication with $r$ at each term of $\CL$.
	Let $i \in \Z$.
	The composition of the two maps
	which 
	define the $\Rx$-algebra structure on the ring $\Ho^0(\Eta)$ and 
	the right $\Ho^0(\Eta)$-module structure on the morphism space $\Ho^i(\Eta)$
	$$
	\Rx \longrightarrow \Ho^0(\Eta) \longrightarrow \End_{\Z}(\Ho^i(\Eta)), \qquad
	r \mapsto \varrho_r \mapsto 
	(\phi \mapsto \phi \cdot \varrho_r= \varrho_r[i] \cdot \phi = r \cdot \phi)
	$$
	recovers the natural right $\Rx$-module structure on the space $\Ho^i(\Eta)$. 
	By Remark~\ref{rmk/right-bounded}
	the 
	$\Rx$-module $\Ho^i(\Eta)$ is finitely generated.
	This implies the claim.
\end{proof}

\subsection{Pseudo-finite semi-free modules}

The complex $\CL$ has a differential graded $(\Eta,\A)$-bimodule structure and gives rise to a functor
$$
\begin{td} \Fm \colonequals \dash  \underset{\Eta}{\overset{\mathbb{L}}{\otimes}} \CL\colon \D(\Eta) \ar{r} \&  \D(\A)\,. \end{td}
$$
Since $\CL$ is presilting, the dg-algebra $\Eta$ is quasi-isomorphic to the  truncated dg-algebra
$$
\trEta \colonequals (\begin{td}\ldots \mathrm{E}^{-2 }\ar{r} \&  \mathrm{E}^{-1} \ar{r} \& \ker d^0_{\Eta} \end{td})\,.
$$
Next, we will show 
that
the functor 
\begin{align*}
\begin{td} \trFm \colonequals \dash  \underset{\trEta}{\overset{\mathbb{L}}{\otimes}} \CL\colon   \D(\trEta) \ar{r} \& \D(\A)
\end{td}
\end{align*}
restricts to a well-defined functor
$\trFm'\colon \Dfm(\trEta) \longrightarrow \Dfm(\A)$
and deduce a similar statement for the original dg-algebra $\Eta$.
To do so, we need  a dg-analogue of the K\"{u}nneth trick.
\begin{prp}\label{prp/dg-kuenneth}
	For any dg-module $\trM \in \D^-(\trEta)$ it holds that $\trFm(\trM) \in \D^-(\A)$.
	
	More precisely, 
	let
	$m \in \Z$ denote the maximal integer such that $\Ho^i(\trM) =  0$ for any integer $i > m$,  and $r \in \Z$ the maximal integer 
	that $L^j= 0$ for any integer $j > r$.
	Then $\Ho^k( \trFm(\trM)) = 0$ for any integer $k > m + r$. 
\end{prp}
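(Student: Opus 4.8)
The plan is to reduce the computation of $\trFm(\trM) = \trM \underset{\trEta}{\overset{\mathbb{L}}{\otimes}} \CL$ to a suitably small semi-free model of $\trM$. The decisive structural feature is that the truncated dg-algebra $\trEta$ is concentrated in non-positive cohomological degrees, so that $\Ho^i(\trEta) = 0$ for every $i > 0$. For a connective dg-algebra of this shape, any dg-module $\trM \in \D^-(\trEta)$ with $\Ho^i(\trM) = 0$ for all $i > m$ admits a quasi-isomorphism $\CF \overset{\sim}{\longrightarrow} \trM$ from a semi-free dg-module $\CF$ whose semi-basis is concentrated in degrees $\le m$; this is the standard resolution obtained by successively killing the cohomology of $\trM$ from the top downwards, and I would state it precisely with reference to the treatment of semi-free dg-modules in Yekutieli's book (using only its ``big'', i.e.\ not necessarily finitely generated, form, since no finiteness hypothesis on $\trM$ is imposed). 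In particular, the underlying graded $\trEta$-module of $\CF$ is a direct sum $\bigoplus_{b \in B} \trEta[-d_b]$ of shifted copies of $\trEta$ with $d_b \le m$ for every $b \in B$.

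First I would record that, since semi-free dg-modules are $K$-flat, the derived tensor product $\trFm(\trM)$ is represented by the ordinary tensor product $\CF \otimes_{\trEta} \CL$, viewed as a complex of $\A$-modules; it therefore suffices to bound the degrees in which this complex is non-zero. Next I would pass to underlying graded modules, on which $\dash \otimes_{\trEta} \CL$ is computed summand by summand: for every $d \in \Z$ one has $\trEta[-d] \otimes_{\trEta} \CL \cong \CL[-d]$, and $(\CL[-d])^j = L^{j-d}$ vanishes whenever $j > r + d$ by the choice of $r$. Since $d_b \le m$ for all $b \in B$, the underlying graded module $\bigoplus_{b \in B} \CL[-d_b]$ of $\CF \otimes_{\trEta} \CL$ is concentrated in degrees $\le r + m$. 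The differential of a tensor product does not change the underlying graded module, so $\CF \otimes_{\trEta} \CL$ is a complex of $\A$-modules whose terms are zero in all degrees $> r + m$. Consequently $\Ho^k(\trFm(\trM)) = 0$ for every $k > r + m$, and in particular $\trFm(\trM) \in \D^-(\A)$, which is exactly the assertion.

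The only genuinely nontrivial point is the existence of the semi-free resolution $\CF$ with semi-basis bounded above by $m$, which is where the connectivity of $\trEta$ enters; I expect this to be the main obstacle, and would either cite it from the semi-free machinery already recalled in this section or spell out the one-step-at-a-time construction. Everything after that is formal bookkeeping with shifts: the potentially infinite semi-basis causes no difficulty, because all the required vanishing is checked at the level of underlying graded modules, where tensoring with the bounded complex of finitely generated projectives $\CL$ is exact and commutes with arbitrary direct sums.
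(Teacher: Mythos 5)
Your proposal is correct and follows essentially the same route as the paper: both use the fact that, since $\trEta$ is non-positive, any $\trM$ with cohomology vanishing above degree $m$ admits a semi-free resolution (cited from Yekutieli) whose terms vanish above degree $m$, and then invoke $K$-flatness of semi-free dg-modules to compute $\trFm(\trM)$ by the ordinary tensor product, whose degree-$k$ terms vanish for $k > m+r$ by direct bookkeeping. Your decomposition of the underlying graded module into shifted copies $\CL[-d_b]$ is just a rephrasing of the paper's computation $(\CP\otimes_{\trEta}\CL)^k=\bigoplus_{i+j=k}P^i\otimes_{\Rx}L^j$, so there is no substantive difference.
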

In the following, we denote by $\Cstr$ the category of dg-modules  and homogeneous morphisms of degree zero for the dg-algebra $\trEta$.
\begin{proof}[Comment on the proof]
	Since the dg-algebra $\trEta$ is non-positive, \cite{Yekutieli}*{Corollary 11.4.27} implies that there
	is a semi-free dg-module $\CP$ quasi-isomorphic to  $\trM$ in $\Cstr$ with $P^i = 0$ for any  $i > m$.
	Let $k > m + r$. 
	Because semi-free dg-modules are $K$-flat, it follows that 
		 $(\trFm(\trM))^k \cong (\CP {\otimes_{\trEta}} \CL)^k = \bigoplus_{i+j = k} P^i \otimes_{\Rx}  L^j =0$.
\end{proof}

Any dg-module $\trM \in \Dfm(\trEta)$ admits a  \emph{pseudo-finite semi-free resolution} in the following sense.
\begin{thm}\label{thm/dg-res}
	Let $\trM \in \Dfm(\trEta)$ such that $\Ho^+(\trM) = 0$. Then there is 
	a dg-module $\CP$ 
	which is the direct limit of a direct system of dg-modules and embeddings
	$$ 
	\begin{td}
	\CP_0 \ar[hookrightarrow]{r} \& \CP_{1} \ar[hookrightarrow]{r} \& \ldots \CP_{j-1} \ar[hookrightarrow]{r} \& \CP_j\ar[hookrightarrow]{r} \& \ldots 
	\end{td} $$ 	
	and a quasi-isomorphism $\varrho\colon \CP \longrightarrow  \trM$ which 
	is the direct limit of a family of morphisms  $(\varrho_j\colon \CP_j \longrightarrow \trM)_{j \in \N_0}$ in the category $\Cstr$
	such that the dg-modules and morphisms
	$(\CP_j,\varrho_j)_{j \in \N_0}$
	satisfy the following properties.
	\begin{enumerate}[label={$\mathsf{(P\arabic*)}$}, ref={$\mathsf{(P\arabic*)}$}]
		\item \label{truncate-cohomology} For any integer $j \in \N_0$ the map $\Ho^i(\varrho_j)\colon  \Ho^i(\CP_j) \longrightarrow \Ho^i(\trM)$ is surjective 
		for any integer $i \leq - j$
		and bijective
		for any integer $i > - j$. 
		\item \label{pseudo-finite} The dg-module $\CP_0$ satisfies $P_0^n = 0$ for any integer $n > 0$ and is \emph{pseudo-finite free}, that is, 
		there 
		is an isomorphism
		$\CP_0 \cong \bigoplus_{i \leq 0}(\trEta)^{r_i} [i]$ in the category $\Cstr$, where $r_i \in \N_0$ for any integer $i \leq 0$. 
		\item \label{free-quotient} For any integer $j > 0$ there is an isomorphism $\CP_{j} / \CP_{j-1} \cong (\trEta)^{s_j}[j]$ in the category $\Cstr$ with $s_j\in \N_0$.
	\end{enumerate}
\end{thm}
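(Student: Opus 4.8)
The plan is to build the direct system $(\CP_j, \varrho_j)_{j \in \N_0}$ by induction on $j$, mimicking the standard construction of a minimal projective resolution but working with the homology bound $\Ho^+(\trM) = 0$ and the non-positivity of $\trEta$. First I would produce $\CP_0$: choose homogeneous cocycles in $\trM$ whose classes generate $\Ho^i(\trM)$ as an $\Rx$-module for each $i \leq 0$ — possible and finite since $\trM \in \Dfm(\trEta)$ and $\Ho^+(\trM)=0$, so only non-positive degrees occur and each $\Ho^i(\trM) \in \md \Rx$ by Lemma~\ref{lem/pseudo-noether}'s argument. These generators assemble into an $(\Eta,\A)$-equivariant — here $\trEta$-linear — map from a pseudo-finite free dg-module $\CP_0 \cong \bigoplus_{i \leq 0}(\trEta)^{r_i}[i]$ to $\trM$, realizing property~\ref{pseudo-finite}, and by construction $\Ho^i(\varrho_0)$ is surjective for all $i \leq 0$ and an isomorphism for $i>0$ (both sides vanish there), which is property~\ref{truncate-cohomology} at $j=0$.

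Next I would run the inductive step $j-1 \to j$. Given $\varrho_{j-1}\colon \CP_{j-1}\to \trM$ with $\Ho^i(\varrho_{j-1})$ surjective for $i \leq -(j-1)$ and bijective for $i > -(j-1)$, form the shifted mapping cone $\CC$ of $\varrho_{j-1}$; its cohomology is concentrated in degrees $\leq -(j-1)$, and the obstruction to improving bijectivity up to degree $-(j-1)$ lives in $\Ho^{-j}(\CC)$, which is a finitely generated $\Rx$-module. Pick finitely many homogeneous cocycle generators of this module, lift them along the natural map to cocycles in $\CP_{j-1}$ (up to coboundaries in $\trM$), and attach free $\trEta$-cells in degree $j$ to kill them: concretely set $\CP_j \colonequals \CP_{j-1} \oplus (\trEta)^{s_j}[j]$ as a graded module, with differential extending that of $\CP_{j-1}$ by sending the new generators to the chosen cocycles, and extend $\varrho_{j-1}$ to $\varrho_j$ using the nullhomotopies witnessing that their images are coboundaries in $\trM$. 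The inclusion $\CP_{j-1}\hookrightarrow \CP_j$ is then an embedding in $\Cstr$ with $\CP_j/\CP_{j-1}\cong(\trEta)^{s_j}[j]$ (property~\ref{free-quotient}), and a short-exact-sequence/long-exact-sequence chase on cohomology shows $\Ho^i(\varrho_j)$ is surjective for $i \leq -j$ and bijective for $i > -j$ (property~\ref{truncate-cohomology}).

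Finally I would pass to the colimit: set $\CP \colonequals \varinjlim \CP_j$ and $\varrho \colonequals \varinjlim \varrho_j$. Since cohomology commutes with filtered colimits of embeddings, $\Ho^i(\varrho) = \varinjlim_j \Ho^i(\varrho_j)$, and for each fixed $i$ the map $\Ho^i(\varrho_j)$ is an isomorphism for all $j > -i$, so $\Ho^i(\varrho)$ is an isomorphism; hence $\varrho$ is a quasi-isomorphism. Each $\CP_j$ is obtained from $\CP_0$ by successively adjoining free $\trEta$-cells, so $\CP_j$ is semi-free; a filtered colimit of semi-free dg-modules along admissible (split on underlying graded modules) inclusions is again semi-free, so $\CP$ is the desired pseudo-finite semi-free resolution.

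I expect the main obstacle to be the bookkeeping in the inductive step: ensuring that the chosen generators of $\Ho^{-j}(\CC)$ can be lifted to genuine cocycles in $\CP_{j-1}$ compatibly with the $\trEta$-action (not merely as $\Rx$-module elements), so that attaching the new cells yields an honest dg-module map, and checking that this does not disturb the already-established bijectivity in degrees $> -j$. This is exactly the point where non-positivity of $\trEta$ is used — it guarantees that a free cell in degree $j$ contributes to cohomology only in degrees $\leq -j$ (after the relevant shift), so earlier degrees are untouched — and where one invokes \cite{Yekutieli}*{Corollary~11.4.27} or the analogous statement to organize the semi-free filtration cleanly.
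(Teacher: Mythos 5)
Your construction is correct and is essentially the paper's route: the paper's own ``proof'' merely checks that $\trEta$ is non-positive and cohomologically pseudo-Noetherian (Lemma~\ref{lem/pseudo-noether}) and then cites the proof of \cite{Yekutieli}*{Theorem~11.4.40}, which is precisely the cell-attachment induction you describe (build $\CP_0$ from finitely many cocycle generators of each $\Ho^i(\trM)$, then adjoin free summands $(\trEta)^{s_j}[j]$ killing $\ker \Ho^{-(j-1)}(\varrho_{j-1}) \cong \Ho^{-j}(\cone(\varrho_{j-1}))$, and pass to the colimit). The only point worth spelling out in your sketch is the finiteness bookkeeping: that the obstruction modules stay finitely generated requires knowing $\Ho^i(\CP_{j-1}) \in \md \Rx$, which follows by a small induction from \ref{pseudo-finite}, \ref{free-quotient}, the non-positivity of $\trEta$ (only finitely many free summands contribute in each fixed degree) and the pseudo-Noetherian property — exactly the hypotheses the paper verifies before invoking the reference.
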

\begin{proof}[Comment on the proof]
	By Lemma~\ref{lem/pseudo-noether}, the dg-algebra $\trEta$ is \emph{cohomologically right pseudo-Noetherian}, that is, $\Ho^+(\Eta) = 0$, the ring $\Ho^0(\Eta)$ is right Noetherian, and $\Ho^i(\Eta)$ is finitely generated as $\Ho^0(\Eta)$-module for any integer $i \in \Z$.
	Since $\trEta$ is also non-positive,
	the statements follow from
	\cite{Yekutieli}*{Proof of Theorem~11.4.40}.
\end{proof}

The dg-modules $(\CP_j)_{j \in \N_0}$ might be viewed as 
dg-versions of certain truncations.
\begin{rmk}
	If
	$\CL$ is pretilting, it holds that $\Eta = \End_{\D(\A)}(\CL)$ and we may choose	 
	a complex
	$\CP \in \Kom(\proj \Eta)$ 
	with $P^n = 0$ for any integer $n > 0 $,
	a 
	quasi-isomorphism $\varrho\colon \CP\longrightarrow \CN$ of complexes, 
	$\CP_j$ as the 
	\emph{brutal truncation} 
	$\sigma^{\leq -j}(\CP)$, 
	and $\varrho_j$ as the composition
	$\CP_j \hookrightarrow \CP \overset{\varrho}{\longrightarrow} \CN$
	for any integer $j \geq 0$ in order 
	to ensure  properties~\ref{truncate-cohomology},~\ref{pseudo-finite} and~\ref{free-quotient} above.
\end{rmk}

Next, we apply the functor $\trFm$ to the dg-submodules defining the resolution $\CP$.
\begin{lem}\label{lem/Df}
	It holds that
	$\trFm(\CP_j) \in \Df(\A)$ for any integer $j \in \N_0$.
\end{lem}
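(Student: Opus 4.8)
The plan is to make the underlying graded $\A$-module of $\trFm(\CP_j)$ completely explicit and to check that each of its homogeneous components is a finitely generated $\Rx$-module, whence the same holds for its cohomology. First I would note that every $\CP_j$ is a semi-free $\trEta$-module: $\CP_0$ is free by~\ref{pseudo-finite}, property~\ref{free-quotient} presents $\CP_j$ as an extension of $\CP_{j-1}$ by the free dg-module $(\trEta)^{s_j}[j]$, and refining the filtration $\CP_0 \subseteq \CP_1 \subseteq \dots \subseteq \CP_j$ by the tautological filtration of $\CP_0$ by free dg-submodules exhibits $\CP_j$ as semi-free. In particular $\CP_j$ is $K$-flat, so $\trFm(\CP_j)$ is represented by the ordinary tensor product $\CP_j \underset{\trEta}{\otimes}\CL$. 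Moreover, as a graded $\trEta$-module $\CP_j$ is free, with a homogeneous basis obtained from the $r_i$ basis elements of $(\trEta)^{r_i}[i]$ ($i \leq 0$) furnished by~\ref{pseudo-finite} together with the $s_l$ basis elements of $(\trEta)^{s_l}[l]$ ($1 \leq l \leq j$) adjoined in the steps of~\ref{free-quotient}; since the $r_i$ and $s_l$ are finite and $\CP_0$ has no terms in positive degrees, this basis has only finitely many elements in each degree.

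Next I would fix integers $c \leq r$ with $L^n = 0$ for all $n \notin [c,r]$, which exist because $\CL$ is a bounded complex. Tensoring the free graded $\trEta$-module $\CP_j$ with $\CL$ and using $\trEta[d]\underset{\trEta}{\otimes}\CL \cong \CL[d]$ on each free summand, one sees that for every integer $k$ the $\A$-module $\bigl(\trFm(\CP_j)\bigr)^k$ is the direct sum, over those basis elements of $\CP_j$ whose degree shifts a nonzero term of $\CL$ into degree $k$, of the corresponding modules $L^n$. As $\CL$ is supported in the finite range $[c,r]$, only basis elements lying in a window of $r - c + 1$ consecutive degrees contribute, and in each such degree there are finitely many; hence $\bigl(\trFm(\CP_j)\bigr)^k$ is a \emph{finite} direct sum of modules $L^n$. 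Since each $L^n$ is a finitely generated projective $\A$-module and $\A$ is finitely generated over $\Rx$, every $L^n$ is a finitely generated $\Rx$-module, so $\bigl(\trFm(\CP_j)\bigr)^k$ is a finitely generated $\Rx$-module. Consequently $\Ho^k(\trFm(\CP_j))$, being a subquotient of a finitely generated module over the Noetherian ring $\Rx$, is finitely generated; that is, $\Ho^k(\trFm(\CP_j)) \in \mathcal{A}$ for every $k$, which is exactly the assertion $\trFm(\CP_j) \in \Df(\A)$.

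I do not expect a genuine obstacle here; the only point requiring care is the bookkeeping that keeps each graded component of $\CP_j \underset{\trEta}{\otimes}\CL$ a finite sum, which is where the boundedness of $\CL$ enters, together with the fact that $\CP_j$ — unlike its colimit $\CP$ — has only finitely many free generators in each degree (so that the statement is genuinely about the individual $\CP_j$). An alternative route I would keep in reserve is an induction on $j$: the degreewise split short exact sequence $0 \to \CP_{j-1} \to \CP_j \to (\trEta)^{s_j}[j] \to 0$ in $\Cstr$ and the $K$-flatness of all three terms give a short exact sequence of complexes $0 \to \trFm(\CP_{j-1}) \to \trFm(\CP_j) \to (\CL[j])^{s_j} \to 0$, and then the long exact cohomology sequence together with Noetherianity of $\Rx$ propagates membership in $\Df(\A)$ from $j-1$ to $j$, the base case $\trFm(\CP_0)$ being settled by the direct computation above.
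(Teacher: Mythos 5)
Your proof is correct, but your primary route is genuinely different from the paper's. The paper argues by induction on $j$: the base case uses property \ref{pseudo-finite} to identify $\trFm(\CP_0)$ with $\bigoplus_{i\le 0}(\CL)^{r_i}[-i]$, which lies in $\Df(\A)$, and the inductive step applies $\trFm$ to the extension coming from \ref{free-quotient}, producing a distinguished triangle $\trFm(\CP_{j-1}) \to \trFm(\CP_j) \to (\trFm(\trEta))^{s_j}[j] \to \trFm(\CP_{j-1})[1]$ in $\D(\A)$ and concluding because $\Df(\A)$ is a triangulated subcategory --- which is exactly your reserve argument, phrased with triangles rather than with the long exact cohomology sequence. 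Your main argument instead computes the underlying graded module of $\CP_j\otimes_{\trEta}\CL$ explicitly: since $\CP_j$ is graded-free with finitely many generators per degree, all degrees bounded above, and $\CL$ is a bounded complex of finitely generated projectives, each term of the tensor product is a finite direct sum of modules $L^n$, hence finitely generated over $\Rx$, and Noetherianity of $\Rx$ then puts every $\Ho^k(\trFm(\CP_j))$ in $\md \Rx$. This buys a slightly stronger conclusion (termwise finite generation, not only of cohomology) and makes explicit the degreewise-finiteness that the paper uses implicitly already in its base case, where the infinite direct sum $\bigoplus_{i\le 0}(\CL)^{r_i}[-i]$ lies in $\Df(\A)$ precisely because $\CL$ is bounded and the $r_i$ are finite; the cost is that you must verify semi-freeness and $K$-flatness of each individual $\CP_j$ (which your filtration argument does correctly, and which the paper needs in this section only for resolutions as in Proposition~\ref{prp/dg-kuenneth}), whereas the paper's induction is shorter and purely formal once the base case is in place. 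A minor bookkeeping remark: with the paper's convention that $[1]$ shifts to the left, the generators adjoined at stage $l$ via $(\trEta)^{s_l}[l]$ sit in degree $-l$; this does not affect your argument, since all you use is that there are finitely many basis elements in each degree and that their degrees are bounded above.
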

\begin{proof}
	We prove the claim by induction on the integer $j \geq 0$.
	\begin{itemize}
		\item $j = 0$: 
		It holds that
		$\trFm(\trEta) \cong \CL \in \Dbf(\A)$. 
		Property
		\ref{pseudo-finite} 
		of the dg-module $\CP_0$ 
		implies that
		$\trFm(\CP_0) \cong \bigoplus_{i \leq 0} (\CL)^{r_i} [-i] \in \Df(\A)$, 
		where $r_i \in \N_0$.
		\item $j-1 \to j$: Assume that the claim is true for some integer $j-1$ with $j > 0$. 
		By the description 
		of the quotient $\CP_j/\CP_{j-1}$ in 
		\ref{free-quotient} 
		there is an integer $s_j \geq 0$ and a distinguished triangle
		$$
		\begin{td}
		\trFm(\CP_{j-1}) \ar{r} \& \trFm(\CP_j) \ar{r} \& (\trFm(\trEta))^{s_j}[j] \ar{r} \& \trFm(\CP_{j-1})[1]
		\end{td}
		$$ in the category $\D(\A)$.
		Since $\Df(\A)$ is a triangulated subcategory of $\D(\A)$,
		the induction assumption implies that $\trFm(\CP_j) \in \Df(\A)$. \qedhere
	\end{itemize}
\end{proof}

We may conclude now
that the functor $\Fm$ restricts to the following subcategories.
\begin{prp}\label{prp/tensor}
	For any dg-module $\CM \in  \Dfm(\Eta)$ it holds that $\Fm(\CM) \in \Dfm(\A)$.
\end{prp}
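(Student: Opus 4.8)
The plan is to transfer the statement to the truncated dg-algebra $\trEta$, resolve the given dg-module by a pseudo-finite semi-free module as in Theorem~\ref{thm/dg-res}, and then control the cohomology of its image degree by degree.

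First I would reduce to $\trEta$. The inclusion $\trEta\hookrightarrow\Eta$ is a morphism of dg-algebras which is a quasi-isomorphism precisely because $\CL$ is presilting, so $\Ho^i(\Eta)=0$ for $i>0$. Hence restriction of scalars is an equivalence $\D(\Eta){\overset{\sim}{\longrightarrow}}\D(\trEta)$, it leaves underlying complexes unchanged and therefore carries $\Dfm(\Eta)$ onto $\Dfm(\trEta)$, and since $\CL$ restricts to the $(\trEta,\A)$-bimodule $\CL$, one has a natural isomorphism $\Fm(\CM)\cong\trFm(\CM)$ for $\CM$ and its restriction (using that restriction and $\dash\otimes_{\trEta}^{\mathbb{L}}\Eta$ are mutually inverse equivalences). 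Thus it suffices to prove: if $\CM\in\Dfm(\trEta)$ then $\trFm(\CM)\in\Dfm(\A)$. Replacing $\CM$ by a shift, we may assume $\Ho^+(\CM)=0$.

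Next I would apply Theorem~\ref{thm/dg-res} to obtain a quasi-isomorphism onto $\CM$ from a dg-module $\CP=\varinjlim_j\CP_j$, where each $\CP_j$ is semi-free, $\CP_0$ is pseudo-finite free, and $\CP_j/\CP_{j-1}\cong(\trEta)^{s_j}[j]$ for $j>0$. Since $\CP$ is semi-free, $\trFm(\CM)\cong\CP\otimes_{\trEta}\CL$, and because tensor products and cohomology commute with filtered colimits, $\Ho^i(\trFm(\CM))\cong\varinjlim_j\Ho^i(\trFm(\CP_j))$ for every $i\in\Z$. By Lemma~\ref{lem/Df} each $\Ho^i(\trFm(\CP_j))$ is a finitely generated $\Rx$-module, so the only point left is that this direct system is eventually constant. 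For that I would apply the triangulated functor $\trFm$ to the degreewise split exact sequence $0\to\CP_{j-1}\to\CP_j\to(\trEta)^{s_j}[j]\to 0$, getting a triangle $\trFm(\CP_{j-1})\to\trFm(\CP_j)\to\CL^{s_j}[j]\to\trFm(\CP_{j-1})[1]$ in $\D(\A)$. As $\CL$ is a bounded complex of projectives with $L^n=0$ for $n>r$, the complex $\CL^{s_j}[j]$ has no cohomology in degrees $>r-j$, so the long exact cohomology sequence shows that $\Ho^i(\trFm(\CP_{j-1}))\to\Ho^i(\trFm(\CP_j))$ is an isomorphism as soon as $j\ge r-i+2$. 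Hence $\Ho^i(\trFm(\CM))\cong\Ho^i(\trFm(\CP_j))$ for any such $j$, which is finitely generated over $\Rx$, hence over $\A$; combined with $\Ho^i(\trFm(\CM))=0$ for $i>r$, which is Proposition~\ref{prp/dg-kuenneth}, this gives $\trFm(\CM)\in\Dfm(\A)$.

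The change-of-dg-rings identification and the exactness of filtered colimits are routine. The step requiring genuine care is the stabilization of the colimit $\varinjlim_j\Ho^i(\trFm(\CP_j))$, and it is precisely here that the boundedness of $\CL$ — not merely its right-boundedness — is essential: it is what forces $\CL^{s_j}[j]$ to be acyclic in an ever-widening range of top degrees, and hence prevents $\Ho^i(\trFm(\CM))$ from being an infinite union of finitely generated modules that could fail to be finitely generated.
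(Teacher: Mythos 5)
Your proposal is correct and follows essentially the same route as the paper's proof: reduce to the truncated dg-algebra $\trEta$, invoke the pseudo-finite semi-free resolution of Theorem~\ref{thm/dg-res}, use Lemma~\ref{lem/Df} for finite generation of $\Ho^i(\trFm(\CP_j))$, and exploit the boundedness of $\CL$ (Proposition~\ref{prp/dg-kuenneth}) to kill cohomology in high degrees. The only organizational difference is that you identify $\Ho^i(\trFm(\trM))$ with a stabilizing filtered colimit via the quotient triangles coming from property~\ref{free-quotient}, whereas the paper compares $\CP_j$ with $\trM$ directly through the cone of $\varrho_j$ using property~\ref{truncate-cohomology}; both arguments rest on the same vanishing bound and yield the same conclusion.
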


The next arguments parallel the 
proof of \cite{Yekutieli}*{Theorem~12.5.7}.
The main idea is to truncate the dg-module $\CM$ at the `right' degree.
\begin{proof}
	Since $\Fm$ commutes with shifts, we may assume that
	$\Ho^+(\CM) = 0$ to simplify notation.
	Let $i \in \Z$
	and $\trM$ denote the restriction of $\CM$ to the truncated dg-algebra $\trEta$.
	Since the inclusion $\trEta \hooklongrightarrow \Eta$ is a quasi-isomorphism of dg-algebras,
	there is an isomorphism  $\Ho^i(\Fm(\CM)) \cong \Ho^i(\trFm(\trM))$ of $\Rx$-modules
	by \cite{Yekutieli}*{Theorem~12.7.2}.
	\begin{itemize}
		\item
		If $i > r \colonequals\max\{ i \in \Z \ | \  L^i\neq 0 \}$, then
		$\Ho^i(\Fm(\CM)) = 0$ by Proposition~\ref{prp/dg-kuenneth}.
		\item Assume that $i \leq r$. Let $(\CP_j,\varrho_j)_{j \geq 0}$ be a direct system of dg-modules and morphisms for $\trM$ as in Theorem~\ref{thm/dg-res}.
		Set $j \colonequals r+1-i$. 
		Since the dg-module $\CP_j$ has
		property~\ref{truncate-cohomology} there is a distinguished triangle
		$$
		\begin{td}
			\CC_j[-1] \ar{r} \&
		\CP_j \ar{r}{\varrho_j} \& \trM \ar{r} \& \CC_j   
		\end{td}
		$$
		in $\D(\trEta)$ with $\Ho^i(\CC_j) = 0$ for any integer $i > -j+1$. 
		Applying $\trFm$ to the latter
		and taking cohomology at degree $i$ yields an exact sequence
		of $\Rx$-modules
		$$
		\begin{td}
		\Ho^{i-1}( \trFm(\CC_j)) \ar{r} \& 
		\Ho^{i}( \trFm(\CP_j)) \ar{r} \&
		\Ho^{i}( \trFm(\trM))\ar{r} \&
		\Ho^{i}( \trFm(\CC_j))
		\end{td}
		$$
		Proposition~\ref{prp/dg-kuenneth} implies that $\Ho^\ell(\trFm(\CC_j)) = 0$ for any integer $\ell > r-j+1 = i -2 $. Together with Lemma~\ref{lem/Df} it follows that
		$\Ho^i(\trFm(\CP_j)) \cong \Ho^{i}( \Fm(\CM)) \in \mathcal{A}$.
	\end{itemize}
	This completes the proof that $\Fm(\CM) \in \Dfm(\A)$. 
\end{proof}
\begin{rmk}
	If the complex $\CL$ is pretilting, 
	Proposition~\ref{prp/tensor} recovers
	the fact 
	that the left-derived functor
	$
	\begin{td}
	\dash \underset{\Eta}{\overset{\mathbb{L}}{\otimes}} \CL\colon
	\D^-(\md \Eta) \ar{r} \& \D^-(\md \A)
	\end{td}
	$
	is well-defined.
\end{rmk}

\subsection{Keller's derived Morita Theorem} 

Next, we consider several triangulated subcategories associated to the perfect complex $\CL$.
\begin{itemize}
	\item 	We denote by
	${\langle\CL\rangle}^{\perp}$
	the strictly full subcategory of objects $\CM$ in $\D(\A)$ such that
	$\CL \perp \CM$, or, equivalently, 
	$\Hom_{\D(\A)}(\CX, \CM) = 0$ for any object  $\CX \in \langle \CL \rangle$.
	\item Let $\Tau$ be 
	a triangulated subcategory of $\D(\A)$.
	Using terminology from the Stacks project 
	\cite{stacks-project}*{\href{https://stacks.math.columbia.edu/tag/09SJ}{Tag 09SJ}},
	we call $\CL$ a \emph{weak generator of the category $\Tau$} if
	$\CL \in \Tau$ and
	$\langle \CL \rangle^{\perp} \cap \Tau = 0$, that is, 
	$\CL \perp \CM$ for any object $\CM \in \Tau$. 
	Equivalently, for any  non-zero object $\CN \in \Tau$ 
	there is an integer $i \in \Z$ and a non-zero morphism $\CL \longrightarrow \CN[i]$ in $\Tau$.
	\item 	We denote by 
	$\Loc(\CL)$ the smallest localising subcategory containing $\CL$.
	Let us recall that a subcategory $\mathcal{L}$ in $\D(\A)$ is \emph{localising} if it is strictly full, triangulated and closed under arbitrary coproducts.
	Such a subcategory is automatically closed under direct summands \cite{Angeleri-Huegel}*{Lemma 3.6}. In particular, $\langle \CL \rangle$ is a proper subcategory of the category $\Loc(\CL)$.
\end{itemize}

By 
Keller's derived Morita Theorem
the subcategories $\Loc(\CL)$ and $\langle \CL \rangle^{\perp}$ 
are related in terms of
the dg-endomorphism algebra $\Eta$ of $\CL$. We state a variant of this theorem which was formulated by Yekutieli.

\begin{thm}\label{thm/derived-Morita}
	The functor $\Fm$ is fully faithful and has a right adjoint functor $\Gm$ given by
	\begin{align}
	\label{eq/dg-pair}
	\begin{td} 
	\D(\Eta)  
	\\[0.25cm]
	\ar[xshift=-0.15cm,hookleftarrow]{u}[font=\normalsize]{
		\Fm 
		= \dash \underset{\Eta}{\overset{\mathbb{L}}{\otimes}} \CL \ 
	} 
	\ar[xshift=0.15cm]{u}[swap, font=\normalsize]{\, \Gm
		= \mathbb{R}\Hom_\Lambda(\CL, \dash)
	}
	\D(\A) \,.
	\end{td}
	\end{align}
	Moreover, 
	the essential image of $\Fm$
coincides with the subcategory $\Loc(\CL)$,
	and the kernel of $\Gm$ by the subcategory $\langle \CL \rangle^{\perp}$, that is, for any object $\CM \in \D(\A)$
	there is an isomorphism
	$  \Gm(\CM) \cong 0$ in $\D(\Eta)$ if and only if  $\CL \perp \CM$.
\end{thm}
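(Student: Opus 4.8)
The plan is to reduce everything to standard dg-categorical facts, since $\Eta = \Hom^{\bt}_{\A}(\CL,\CL)$ is precisely the dg-endomorphism algebra of the object $\CL \in \D(\A)$. First I would recall the general form of Keller's derived Morita theorem (as stated by Yekutieli, e.g.\ \cite{Yekutieli}*{Theorem~12.5.7 and its consequences}): for a dg-algebra $\Eta$ arising as the endomorphism dg-algebra of an object $\CL$ in a triangulated category with coproducts, the pair of adjoint functors $(\Fm, \Gm) = (\dash \otimes^{\mathbb L}_{\Eta} \CL,\ \mathbb R\Hom_{\A}(\CL,\dash))$ has $\Fm$ fully faithful. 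The unit $\Eta \to \Gm\Fm(\Eta) = \mathbb R\Hom_{\A}(\CL,\CL)$ is a quasi-isomorphism essentially by construction (it is the identity on the nose), and since both $\Fm$ and $\Gm$ commute with coproducts and $\D(\Eta)$ is generated by $\Eta$ as a localising subcategory, a devissage argument shows the unit $\mathrm{id} \to \Gm\Fm$ is an isomorphism on all of $\D(\Eta)$. This gives full faithfulness of $\Fm$ and identifies $\Gm$ as its right adjoint.

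Next, for the essential image: since $\Fm$ is a coproduct-preserving triangulated functor and $\Fm(\Eta) \cong \CL$, its essential image is a localising subcategory of $\D(\A)$ containing $\CL$, hence contains $\Loc(\CL)$; conversely, since $\D(\Eta) = \Loc(\Eta)$ and $\Fm$ preserves coproducts, triangles and shifts, the image lands inside $\Loc(\CL)$. So the essential image is exactly $\Loc(\CL)$.

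For the kernel of $\Gm$: for any $\CM \in \D(\A)$ and any integer $i$, adjunction gives
\begin{align*}
\Hom_{\D(\A)}(\CL[i], \CM) \cong \Hom_{\D(\A)}(\Fm(\Eta)[i],\CM) \cong \Hom_{\D(\Eta)}(\Eta[i], \Gm(\CM)) \cong \Ho^{-i}(\Gm(\CM)).
\end{align*}
Hence $\Gm(\CM) \cong 0$ iff $\Hom_{\D(\A)}(\CL[i],\CM) = 0$ for all $i$, i.e.\ iff $\CL \perp \CM$. Finally, since $\langle \CL \rangle$ is generated from $\CL$ by cones, shifts and summands, a straightforward induction shows $\Hom_{\D(\A)}(\CX,\CM) = 0$ for all $\CX \in \langle\CL\rangle$ is equivalent to $\CL \perp \CM$; thus $\ker \Gm = \langle \CL\rangle^{\perp}$.

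The main obstacle here is not any one of these steps individually — each is routine — but rather citing the correct version of Keller's theorem in the \emph{unbounded} setting: one must be careful that $\Eta$ is taken as a genuine dg-algebra (with the naive homotopy-category $\Hom$-complex), that $\CL$ is a \emph{perfect} complex so that $\Fm(\Eta) \simeq \CL$ holds without cofibrant-replacement subtleties, and that the adjunction $(\Fm,\Gm)$ is set up at the level of derived categories of unbounded dg-modules. All of this is precisely what \cite{Yekutieli}*{Chapter~12} provides, so the cleanest route is to invoke that reference directly and then extract the statements about $\Loc(\CL)$ and $\langle\CL\rangle^{\perp}$ as the two displayed consequences above.
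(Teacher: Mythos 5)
Your proposal is correct and follows essentially the same route as the paper: the paper's own ``proof'' is just the observation that $\CL$, being a compact object of $\D(\A)$, is a weak generator of $\Loc(\CL)$, followed by a citation of Yekutieli's formulation of Keller's derived Morita theorem (\cite{Yekutieli}*{Theorem~14.2.29}, rather than Chapter~12). Your sketch merely unwinds the standard devissage and adjunction computations behind that citation (where compactness of $\CL$ is also what makes $\Gm$ commute with coproducts), so there is no substantive difference.
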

\begin{proof}[Comment on the proof]
	Since $\CL$ is a compact object of $\D(\A)$, it is a weak generator of the subcategory
	$\Loc(\CL)$. The claims follow from  
	\cite{Yekutieli}*{Theorem~14.2.29}.
\end{proof}
The right adjoint $\Gm$ restricts to the previously considered subcategories as well.
\begin{lem}\label{lem/Hom-finite}
	For any complex $\CM \in \Dfm(\A)$
	it holds that $\Gm(\CM) \in \Dfm(\Eta)$. 
\end{lem}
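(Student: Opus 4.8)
The plan is to reduce the statement to the dg-K\"unneth trick (Proposition~\ref{prp/dg-kuenneth}) combined with the pseudo-finite semi-free resolution (Theorem~\ref{thm/dg-res}), exactly mirroring the two-case argument used in the proof of Proposition~\ref{prp/tensor}, but now for the right adjoint $\Gm = \mathbb{R}\Hom_{\A}(\CL,\dash)$ instead of $\Fm$. First I would note that since $\Gm$ commutes with shifts, it suffices to treat a complex $\CM \in \Dfm(\A)$ with $\Ho^+(\CM) = 0$, and to show two things: that each cohomology $\Ho^i(\Gm(\CM))$ is a finitely generated $\Rx$-module (i.e.\ lies in $\Acat$), and that $\Ho^i(\Gm(\CM)) = 0$ for all $i$ larger than some bound. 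The second point is the easy half: choosing a bounded complex $\CP \in \Hotb(\proj\A)$ representing $\CL$, we have $\Gm(\CM) \cong \Hom^{\bt}_{\A}(\CP, \CQ)$ for a right-bounded complex $\CQ$ representing $\CM$, and by Remark~\ref{rmk/right-bounded} this Hom-complex is a right-bounded complex of $\Rx$-modules; so its cohomology vanishes in high degrees. (Concretely, if $\ell \colonequals \min\{i \mid P^i \neq 0\}$ and $r \colonequals \max\{i \mid Q^i \neq 0\}$, then $\Ho^i(\Gm(\CM)) = 0$ for $i > r - \ell$.)

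The main work is finite generation of $\Ho^i(\Gm(\CM))$ over $\Rx$. Here I would use the adjunction from Theorem~\ref{thm/derived-Morita} together with the pseudo-finite resolution. For each fixed $i$, pick the dg-module $\trM$ obtained by restricting scalars along $\trEta \hooklongrightarrow \Eta$ — wait, that is the wrong direction; instead I would argue as follows. By adjunction and the identification $\Ho^i(\Gm(\CM)) = \Hom_{\D(\Eta)}(\Eta, \Gm(\CM)[i]) = \Hom_{\D(\A)}(\Fm(\Eta)[-i], \CM) = \Hom_{\D(\A)}(\CL, \CM[i])$, so actually $\Ho^i(\Gm(\CM)) \cong \Hom_{\D(\A)}(\CL,\CM[i])$, which by~\eqref{eq/hom-complex-cohomology} is $\Ho^i$ of the Hom-complex $\Hom^{\bt}_{\A}(\CP,\CQ)$; since the terms of this complex are finitely generated $\Rx$-modules — each $\Hom_{\A}(P^j, Q^{i+j})$ is finitely generated over $\Rx$ because $P^j$ is a finitely generated projective $\A$-module, $Q^{i+j}$ is a finitely generated $\A$-module (as $\CM \in \Dfm(\A) \simeq \D^-(\md\A)$ can be represented by a bounded-above complex of finitely generated modules), and $\Rx$ is Noetherian — and the complex is right-bounded with finitely generated cohomology in each degree (subquotients of finitely generated modules over the Noetherian ring $\Rx$), we conclude $\Ho^i(\Gm(\CM)) \in \md\Rx$ for every $i$.

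The step I expect to be the genuine obstacle is justifying that $\CM \in \Dfm(\A)$ may be represented by a \emph{bounded-above complex of finitely generated projective $\A$-modules}, so that $\Gm(\CM)$ is computed by $\Hom^{\bt}_{\A}(\CP,\CQ)$ with $\CQ \in \Hot^-(\proj\A)$; this is the identification $\Dfm(\A) \simeq \D^-(\md\A)$ already invoked in the text (via \cite{Yekutieli}*{Corollary 11.3.21}), but I should be careful that the representing projective complex has \emph{finitely generated} terms — this uses that $\A$ is Noetherian, so that $\md\A$ has enough projectives of finite type and bounded-above complexes over $\md\A$ admit finite-type projective resolutions. Once that representative is in hand, everything reduces to the elementary observation that $\Hom^{\bt}_{\A}(\CP,\CQ)$ is a right-bounded complex of finitely generated $\Rx$-modules (Remark~\ref{rmk/right-bounded} together with Proposition~\ref{KRS}), whence all its cohomologies lie in $\md\Rx$ and vanish above degree $r - \ell$. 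Thus $\Gm(\CM) \in \Dfm(\Eta)$.
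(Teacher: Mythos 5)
Your argument is correct and is essentially the paper's own proof: the paper also represents $\CM$ by a complex in $\Hot^-(\proj \A)$, identifies $\Ho^i(\Gm(\CM))$ with $\Hom_{\D(\A)}(\CL,\CM[i])$, gets vanishing in high degrees from boundedness of $\CL$, and gets finite generation over $\Rx$ from Lemma~\ref{lem/translation}~\eqref{lem/translations4} (equivalently, your observation that $\Hom^{\bt}_{\A}(\CP,\CQ)$ is a right-bounded complex with finitely generated $\Rx$-cohomology). The detour through the adjunction and the worry about finite-type projective resolutions are both fine and already covered by the identification $\Dfm(\A)\simeq\D^-(\md\A)$ used in the text.
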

\begin{proof}
	Let $\CP$ be a complex from $\Hot^-(\proj \A)$ quasi-isomorphic to $\CM$.
	Since $\CL$ is bounded, 
	there is $m \in \Z$ such that
	$\Ho^i(\Gm(\CM)) \cong \Hom_{\Hot(\A)}(\CL,\CM[i]) =0$ for any integer $i > m$. Moreover, it holds that $\Ho^j(\Gm(\A)) \in \mathcal{A}$ for any $j \in \Z$ 
	(by, for example, Lemma~\ref{lem/translation}~\eqref{lem/translations4}).
\end{proof}

We formulate the main result of this section.
\begin{prp} \label{prp/Keller}
	The following conditions are equivalent for the
	the presilting complex $\CL$ of the Noetherian $\Rx$-algebra $\A$.
	\begin{enumerate}[label=(a\arabic*)]
		\item \label{silt4} The complex $\CL$ generates the category $\D(\A)$ in the sense that $\Loc(\CL) = \D(\A)$.
		\item \label{silt3} The complex $\CL$ is a weak generator of $\Dfm(\A)$, that is,
		${\langle\CL\rangle}^{\perp} \cap \Dfm(\A) = 0$.
		\item \label{silt1} The complex $\CL$ is a perfect generator, that is, $\langle \CL \rangle = \per \A$.
	\end{enumerate}
	\begin{enumerate}[label=(b\arabic*)]
		\item \label{silt5c}
		The functor $\Fm$ defined in~\eqref{eq/dg-pair}
		yields  an equivalence $\D(\Eta) {\overset{\sim}{\longrightarrow}} \D(\A)$.
		\item \label{silt5b} The functor $\Fm$ restricts to an equivalence
		$\Dfm(\Eta) {\overset{\sim}{\longrightarrow}} \Dfm(\A)$.
		\item \label{silt5a} The functor $\Fm$ restricts to an equivalence
		$\per(\Eta) {\overset{\sim}{\longrightarrow}} \per(\A)$.
	\end{enumerate}
In particular,
the presilting complex
$\CL$ 
is silting if and only if 
it is
a weak generator of the category $\D^-(\md \A)$.
\end{prp}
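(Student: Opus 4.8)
The plan is to prove the three conditions of group~(a) equivalent in the cycle $\ref{silt1}\Rightarrow\ref{silt4}\Rightarrow\ref{silt3}\Rightarrow\ref{silt1}$, to match each one with its group~(b) counterpart by restricting Keller's equivalence, and then to read the final clause off $\ref{silt1}\Leftrightarrow\ref{silt3}$ using the identification $\Dfm(\A)\simeq\D^-(\md \A)$ and the definitions of \emph{silting} and \emph{weak generator}.

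The formal part is quick. By Theorem~\ref{thm/derived-Morita} the functor $\Fm$ is fully faithful with essential image $\Loc(\CL)$, hence it is an equivalence $\D(\Eta)\to\D(\A)$ exactly when $\Loc(\CL)=\D(\A)$, which is $\ref{silt4}\Leftrightarrow\ref{silt5c}$. Since $\Fm$ is triangulated, preserves coproducts (being a left adjoint) and sends $\Eta$ to $\CL$, it carries the thick subcategory $\per(\Eta)=\langle\Eta\rangle$ onto $\langle\CL\rangle$, and by full faithfulness its restriction $\per(\Eta)\to\langle\CL\rangle$ is an equivalence; it lands in $\per(\A)$ and is essentially surjective onto $\per(\A)$ precisely when $\langle\CL\rangle=\per\A$, i.e. $\ref{silt1}\Leftrightarrow\ref{silt5a}$. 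If $\ref{silt1}$ holds, then $\A\in\langle\CL\rangle\subseteq\Loc(\CL)$ and $\Loc(\A)=\D(\A)$, so $\ref{silt4}$ holds. If $\ref{silt4}$ holds, then the right adjoint $\Gm$ of the equivalence $\Fm$ is itself an equivalence, so $\ker\Gm=0$; since $\ker\Gm=\langle\CL\rangle^{\perp}$ by Theorem~\ref{thm/derived-Morita}, a fortiori $\langle\CL\rangle^{\perp}\cap\Dfm(\A)=0$, which is $\ref{silt3}$.

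The substantive implication is $\ref{silt3}\Rightarrow\ref{silt1}$. As $\Fm$ is a fully faithful left adjoint with right adjoint $\Gm$, the unit $\id\to\Gm\Fm$ is invertible; applying $\Gm$ to the counit triangle $\Fm\Gm(\CM)\to\CM\to\CM''\to\Fm\Gm(\CM)[1]$ then gives $\Gm(\CM'')\cong 0$, that is $\CM''\in\ker\Gm=\langle\CL\rangle^{\perp}$, for every $\CM\in\D(\A)$. I would apply this to the stalk complex $\CM=\A$, which lies in $\Dfm(\A)$ because $\A$ is a Noetherian $\Rx$-algebra. By Lemma~\ref{lem/Hom-finite} we have $\Gm(\A)\in\Dfm(\Eta)$, so $\Fm\Gm(\A)\in\Dfm(\A)$ by Proposition~\ref{prp/tensor}; hence $\CM''$, being the cone of a morphism between two objects of the triangulated subcategory $\Dfm(\A)$, also lies in $\Dfm(\A)$. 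Now $\ref{silt3}$ forces $\CM''\cong 0$, so $\A\cong\Fm\Gm(\A)$. Finally, $\A$ is a compact object of $\D(\A)$ and lies in the localising (hence coproduct-closed) subcategory $\Loc(\CL)$, so it is compact in $\Loc(\CL)$; transporting compactness across the equivalence $\Fm\colon\D(\Eta)\xrightarrow{\sim}\Loc(\CL)$ gives $\Gm(\A)\in\per(\Eta)$, whence $\A\cong\Fm(\Gm(\A))\in\Fm(\per(\Eta))=\langle\CL\rangle$. Since $\CL\in\per\A$, this yields $\langle\CL\rangle=\per\A$, i.e. $\ref{silt1}$.

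It remains to place $\ref{silt5b}$ and to conclude. Assuming $\ref{silt4}$, the equivalence $\Fm$ maps $\Dfm(\Eta)$ into $\Dfm(\A)$ by Proposition~\ref{prp/tensor} and its quasi-inverse $\Gm$ maps $\Dfm(\A)$ into $\Dfm(\Eta)$ by Lemma~\ref{lem/Hom-finite}, so $\Fm$ restricts to an equivalence $\Dfm(\Eta)\to\Dfm(\A)$, which is $\ref{silt5b}$; conversely, if $\ref{silt5b}$ holds then every nonzero $\CM\in\Dfm(\A)$ is isomorphic to $\Fm(\CN)$ with $\CN\in\Dfm(\Eta)$ nonzero, whence $\Gm(\CM)\cong\Gm\Fm(\CN)\cong\CN\neq 0$, so $\CM\notin\ker\Gm=\langle\CL\rangle^{\perp}$ and $\ref{silt3}$ follows. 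All six conditions are thus equivalent. For the last clause: $\per\A\subseteq\Dfm(\A)$ and $\Dfm(\A)\simeq\D^-(\md \A)$, so $\ref{silt3}$ is exactly the statement that the presilting complex $\CL$ is a weak generator of $\D^-(\md \A)$ in the sense recalled before Theorem~\ref{thm/derived-Morita}, while $\ref{silt1}$ is exactly the statement that $\CL$ is silting; the equivalence $\ref{silt1}\Leftrightarrow\ref{silt3}$ is what is asserted. I expect the main obstacle to be the step $\ref{silt3}\Rightarrow\ref{silt1}$: the weak-generation hypothesis only controls morphisms out of $\CL$ inside the large category $\Dfm(\A)$, and converting this into the membership $\A\in\langle\CL\rangle$ relies on keeping the orthogonal error term $\CM''$ inside $\Dfm(\A)$---the combined role of Proposition~\ref{prp/tensor} and Lemma~\ref{lem/Hom-finite}---and on carrying compactness of $\A$ through Keller's equivalence.
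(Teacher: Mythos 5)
Your proof is correct and follows essentially the same route as the paper: it rests on Theorem~\ref{thm/derived-Morita}, Proposition~\ref{prp/tensor} and Lemma~\ref{lem/Hom-finite}, the counit triangle at the stalk complex $\A \in \Dfm(\A)$, and the identification of compact objects of $\D(\Eta)$ with $\per \Eta$. The only (cosmetic) difference is the arrangement of the cycle: you pass from~\ref{silt3} to~\ref{silt1} directly by transporting compactness of $\A$ inside $\Loc(\CL)$, whereas the paper first deduces~\ref{silt4} from the isomorphism $\A \cong \Fm\Gm(\A)$ and reaches~\ref{silt1} through~\ref{silt5b}$\Rightarrow$\ref{silt5a}.
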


The analogue
of the last statement
for tilting complexes 
was shown by Rickard 
\cite{Rickard91b}*{Lemma 1.1}. 
The following arguments reformulate a proof which was communicated to me by Bernhard Keller \cite{Keller:private-communication}.
\begin{proof}
	We keep the  previous assumptions on
	$\A$ and $\CL$.
	We will use properties of the functors
	$\Fm$ and $\Gm$ from Theorem~\ref{thm/derived-Morita} without explicit reference.
	
	 The equivalence of~\ref{silt4} and~\ref{silt5c} holds true as $\mathrm{Im}  (\Fm) = \Loc(\CL)$. 
	\ref{silt5c} implies~\ref{silt5b}
	 since $\Fm$ and $\Gm$ restrict to functors
	between the subcategories $\Dfm(\Eta)$ and $\Dfm(\A)$
	according to
	Proposition~\ref{prp/tensor} and Lemma~\ref{lem/Hom-finite}.
	If~\ref{silt5b} is satisfied, the restriction $\Gm|_{\Dfm(\A)}$ reflects zero objects, that is,~\ref{silt3} is satisfied.
	
	To show that~\ref{silt3} implies~\ref{silt4}, let  $\langle \CL \rangle^{\perp} \cap \Dfm(\A) = 0$.
	Since the $\Rx$-algebra $\A$ is Noetherian, it holds that
	$\A \in  \Dfm(\A)$, and thus
	$\Fm\Gm(\A) \in \Dfm(\A)$. 
	Since $(\Fm,\Gm)$ is an adjoint pair,
	there is a distinguished triangle
	$$\begin{td}
	\Fm\Gm (\A) \ar{r}{\varepsilon_\A} \& \A \ar{r} \& \CZ \ar{r} \& \Fm\Gm(\A)[1]
	\end{td}
	$$
	in $\Dfm(\A)$, where $\varepsilon_{\A}$ denotes the counit.
	Because $\Fm$ is fully faithful, the unit $\eta_{\Gm(\A)}$ and therefore $\Gm(\varepsilon_\A)$ are both isomorphisms. 
	So $\CZ \in \ker \Gm \cap \Dfm(\A) = 0$,
	which shows that $\varepsilon_\A$ is an isomorphism.
	Since 
	$\A \cong \Fm\Gm(\A) \in \Loc(\CL)$  and $\Loc(\A) = \D(\A)$ 
	it follows
	that $\Loc(\CL) = \D(\A)$, that is, condition~\ref{silt4} holds.
	
	So far, we have shown the equivalences~\ref{silt4} $\Leftrightarrow$~\ref{silt3} $\Leftrightarrow$~\ref{silt5c} $\Leftrightarrow$~\ref{silt5b}.
	
	\noindent
	An object in $\D(\Eta)$ is compact if and only if it belongs to the category $\per \Eta = \langle \Eta \rangle$.
	The same is true for the ring $\A$. This yields that 
	any equivalence $\D(\Eta) {\overset{\sim}{\longrightarrow}} \D(\A)$	restricts to an equivalence	
	$\per\Eta {\overset{\sim}{\longrightarrow}} \per \A$.
	This shows the implication~\ref{silt5b}$\Rightarrow$\ref{silt5a}.
	
	If~\ref{silt5a} is satisfied,~\ref{silt1} follows because
	$\per \A = \Fm(\langle \Eta \rangle ) =  \langle \Fm(\Eta) \rangle = 
	\langle \CL \rangle $.
	Finally, 
if
	\ref{silt1} holds, that is,  
	$\langle \CL \rangle = \per \A$, implies 
	that $\A \in \langle \CL \rangle$ and ${\langle\CL\rangle}^{\perp}
	\subseteq \langle\A\rangle^{\perp} = 0$, and thus 
	\ref{silt3}.
	
	So all six conditions are equivalent.
	The last statement follows from the fact that $\D^-(\md\A)$ is equivalent to the subcategory $\Dfm(\A)$ of $\D(\A)$.
\end{proof}
At last, we discuss the role of our assumptions in Proposition~\ref{prp/Keller}.
\begin{rmk}\label{rmk/refined-morita}
	We recall that $\CL$ is a presilting complex of the $\Rx$-algebra $\A$.
	\begin{enumerate}
		\item
		Proposition~\ref{prp/Keller} is still true if $\A$ is a Noetherian $\Rx$-algebra 
		without assuming 
		the base ring $\Rx$ to be local or complete.
		\item For an arbitrary $\Rx$-algebra $\A$,
		there is a variation of Proposition~\ref{prp/Keller}, in which
		the categories $\D^-_{\mathcal{A}}(\Eta)$ 
		and $\D^-_{\mathcal{A}}(\A)$ appearing in    
		conditions~\ref{silt3} and~\ref{silt5b}
	are replaced with the categories
		$\D^-(\Eta)$ 
		and $\D^-(\A)$, respectively.
	\end{enumerate}	
\end{rmk}

\section{Silting and tilting bijections for quotients} \label{sec/main}
This section collects the main results of this paper
which assume the conditions in Setup~\ref{setup/main}.

\subsection{Descent of silting and tilting complexes}

an adaptation an argument by Rickard  \cite{Rickard91b}*{Proof of Proposition~3.1} shows that the derived change-of-rings functor reflects weak generators. 
\begin{lem}\label{lem/lift-weak-generator0}
	Let $\CL \in \per \A$ such that $\PCL$ is a weak generator of $\D^-(\md\AI)$.
	Then $\CL$ is a weak generator of $\D^-(\md \A)$.
\end{lem}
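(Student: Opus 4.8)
The plan is to argue by contraposition: suppose $\CL$ is \emph{not} a weak generator of $\D^-(\md\A)$, so there is a non-zero object $\CM \in \D^-(\md\A)$ with $\CL \perp \CM$, and produce a non-zero object in $\D^-(\md\AI)$ orthogonal to $\PCL$. The natural candidate is $\PCM = \CM \overset{\mathbb{L}}{\otimes}_{\A}\AI$. By the key equivalence~\eqref{eq/perp} of Proposition~\ref{prp/keyA}, $\CL \perp \CM$ is equivalent to $\PCL \perp \PCM$. So the only thing to check is that $\PCM \not\cong 0$ in $\D^-(\md\AI)$ — but this is exactly the statement of Corollary~\ref{cor/derived-Nakayama} (the derived Nakayama lemma): $\FF$ reflects zero objects, so $\CM\not\cong 0$ forces $\PCM\not\cong 0$. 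Then $\PCM$ is a non-zero object of $\D^-(\md\AI)$ with $\PCL\perp\PCM$, contradicting the hypothesis that $\PCL$ is a weak generator of $\D^-(\md\AI)$.

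Concretely, the steps in order are: (1) assume $\CL$ is not a weak generator and fix $0\neq\CM\in\D^-(\md\A)$ with $\CL\perp\CM$; (2) apply Corollary~\ref{cor/derived-Nakayama} to conclude $\PCM\not\cong 0$; (3) apply~\eqref{eq/perp} from Proposition~\ref{prp/keyA} to conclude $\PCL\perp\PCM$; (4) observe this contradicts the weak-generator property of $\PCL$ over $\D^-(\md\AI)$, hence $\CL$ must be a weak generator of $\D^-(\md\A)$. One should double-check that $\CM\in\D^-(\md\A)$ and $\CL\in\per\A$ put us in the hypotheses of Proposition~\ref{prp/keyA}, which they do by definition of the relevant categories, and that the characterization of weak generator of $\D^-(\md\A)$ used here matches the one in Section~\ref{sec/characterization} (orthogonality to all non-zero objects of the category).

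The main obstacle — such as it is — is really just being careful about which ambient triangulated category the orthogonality relation $\perp$ lives in. Proposition~\ref{prp/keyA} phrases $\perp$ via $\Hom_{\D(\A)}$ and $\Hom_{\D(\Abar)}$, whereas the weak-generator notion from Section~\ref{sec/characterization} is stated inside $\D^-(\md\A)$ (identified with $\Dfm(\A)\subseteq\D(\A)$); one needs the standard fact that these $\Hom$-groups agree, i.e. that $\D^-(\md\A)\hookrightarrow\D(\A)$ is full. Granting that, the proof is a three-line citation chain and requires no genuine computation; essentially the whole content has already been done in Proposition~\ref{prp/keyA} and Corollary~\ref{cor/derived-Nakayama}.
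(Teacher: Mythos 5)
Your proposal is correct and is essentially the paper's own argument: the paper also combines the equivalence~\eqref{eq/perp} of Proposition~\ref{prp/keyA} with Corollary~\ref{cor/derived-Nakayama}, just phrased directly (given $\CM$ with $\CL\perp\CM$, deduce $\PCM\in\langle\PCL\rangle^{\perp}\cap\D^-(\md\AI)=0$ and then $\CM\cong 0$) rather than by contraposition. The difference is purely presentational.
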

\begin{proof}
	Under the assumptions above, let $\CM \in \D^-(\md \A)$ such that 
	$\CL \perp \CM$. Proposition~\ref{prp/keyA}~\eqref{eq/perp} implies that
	$\PCL \perp \PCM$,
and thus
	$\PCM \in \langle \PCL \rangle^{\perp}\cap \D^-(\md \AI) = 0$.
	Since 
	$\FF$ reflects zero objects by Corollary~\ref{cor/derived-Nakayama}, it holds that $\CM \cong 0$. 
\end{proof}

\begin{notation}\label{not/objects}
	In addition to Setup~\ref{setup/main}, we fix an $\Rx$-module $M$ and a module $\sfN$ over the quotient $\RI = \Rx/\ida$.
	We will use the following notation.
	\begin{itemize}
		\item Let $\silt \A$ and $\tilt \A$ denote the sets of isomorphism classes of basic silting and basic tilting complexes of $\A$, respectively.
		The latter has a subset 
		$\tilt^{\sfM} \A$
		defined via basic tilting complexes $\CT$ of $\A$ satisfying
		\[
		\Tor_+^{\Rx}(\End_{\D(\A)}(\CT),\sfM) = 0\,,
		\]
		and another subset $\tilt^*_{\Rx} \A$ 
		defined via basic tilting complexes $\CT$ of $\A$ such that
		$\End_{\D(\A)}(\CT)$ is free as $\Rx$-module.
		Moreover, we set
		$\tilt^{\sfN, \RI} \A \colonequals \tilt^{\sfN} \A \cap \tilt^{\RI} \A$, where $\sfN$ is viewed as an $\Rx$-module.
				\item The sets $\silt \AI$ and $\tilt \AI$ are defined in a similar way.
				We will consider the subset
				$\tilt^{\sfN} \AI$ given by basic tilting
				complexes $\CT$ of $\AI$ such that
				\[
				 \Tor_+^{\RI}(\End_{\D(\Abar)}(\CT),\sfN) = 0\,,\]
				 and the subset
				 $\tilt^*_{\RI} \AI$
				 requiring the endomorphism ring of each tilting complex to be free as $\RI$-module.
	\end{itemize}
These sets have analogues defined by presilting respectively pretilting complexes.	
\end{notation}

\begin{rmk}
	In case $\sfN$ is the residue field $\kk$ of the local ring $\Rx$, we may use 
	the local criterion of flatness (see Theorem~\ref{thm/local-flatness})
	to identify 	$\tilt^{\kk, \B} \A$ with 
	$\tilt^*_{\Rx} \A$,
	and $\tilt^{\kk} \AI$ with $\tilt^*_{\RI} \AI$.
\end{rmk}

\begin{prp}\label{prp/descent}
	Any perfect complex $\CL$ of $\A$ satisfies the  implications
	\begin{align*}
	\begin{td}
	\CL \in \tilt^*_{\Rx} \A \ar[Leftrightarrow]{d} \&[1.5cm] \CL \in \tilt^{\sfN,\B} \A \ar[Rightarrow,xshift=-5pt]{d} \&[1.5cm] 
	\CL \in \silt \A \ar[Leftrightarrow]{d} \\
	\PCL \in \tilt^*_{\RI} \AI \& \PCL \in \tilt^{\sfN} \AI \ \ar[Rightarrow,xshift=5pt,dashed]{u}[swap]{\text{ if $\Rx/\ida$ is Tor-rigid over $\Rx$}}\& 
	\PCL \in \silt \AI\,.
	\end{td}
	\end{align*}
	The same is true for pretilting and presilting versions of the sets above.
\end{prp}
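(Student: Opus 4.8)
The plan is to prove each of the three columns separately, reading off the required equivalences from the cohomological-vanishing results already established in Sections~\ref{sec/refined-Kuenneth} and~\ref{sec/ascent-descent}, applied to the $\Hom$-complex $\CK \colonequals \Hom^{\bt}_{\A}(\CT,\CT)$ for a bounded projective representative $\CT$ of $\CL$. By Lemma~\ref{lem/translation}~\eqref{lem/translations4} this complex satisfies the hypotheses of Setup~\ref{setup/complexes}, and by Lemma~\ref{lem/translation}~\eqref{lem/translations1} its cohomology translates the relations $\geq$, $\teq$, $\perp$ on the pairs $(\CL,\CL)$ and $(\PCL,\PCL)$ exactly as needed. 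Throughout I would set $\sfM \colonequals \Ho^0(\CK) = \End_{\D(\A)}(\CL)$ and $\sfL \colonequals \Ho^0(\CK \otimes \RI) = \End_{\D(\Abar)}(\PCL)$, keeping in mind that when $\CL$ is presilting these are isomorphic to $\sfM \otimes \RI$ by Proposition~\ref{prp/keyA}.

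First I would treat the middle column. The forward implication ``$\CL \in \tilt^{\sfN,\RI} \A \Rightarrow \PCL \in \tilt^{\sfN} \AI$'' is precisely Proposition~\ref{prp/pretilting}~\eqref{prp/keyA2b2} (which gives $\PCL \teq \PCL$ and $\Tor_+^{\RI}(\sfL,\sfN)=0$) together with Proposition~\ref{prp/keyA} and Lemma~\ref{lem/generators} to see that $\PCL$ is again a basic tilting generator: indeed $\CL \perp \CL[i]$ for $i\neq 0$ and $\CL$ a generator give $\PCL \perp \PCL[i]$ and $\PCL$ a generator via \eqref{eq/perp} and Lemma~\ref{lem/generators}, while basic-ness is preserved by Lemma~\ref{lem/summands2}. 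The dashed converse, assuming $\RI$ is Tor-rigid over $\Rx$, is Proposition~\ref{prp/pretilting}~(iv) for the pretilting part, combined with descent of the generator property: if $\PCL$ is silting then $\CL$ is silting by Lemma~\ref{lem/lift-weak-generator0} and Proposition~\ref{prp/Keller} (the silting complex $\PCL$ is a weak generator of $\D^-(\md\AI)$, hence $\CL$ is a weak generator of $\D^-(\md\A)$, hence silting), and one checks basic-ness descends again via Lemma~\ref{lem/summands2}. For the left column, the biconditional $\CL \in \tilt^*_{\Rx}\A \Leftrightarrow \PCL \in \tilt^*_{\RI}\AI$ follows the same way from Proposition~\ref{prp/pretilting}~\eqref{prp/keyA2b1}: flatness of $\sfM$ over $\Rx$ is equivalent to flatness of $\sfL$ over $\RI$ together with $\CT \teq \CT \Leftrightarrow \PCT \teq \PCT$, and a finitely generated flat module over a Noetherian local (or complete) ring is free, so one may pass between ``flat'' and ``free'' freely; the generator and basic properties transfer in both directions exactly as in the middle column, with no Tor-rigidity needed because Proposition~\ref{prp/pretilting}~\eqref{prp/keyA2b1} is already an ``if and only if.'' Finally, the right column $\CL \in \silt\A \Leftrightarrow \PCL \in \silt\AI$ is a restatement of the conjunction of Proposition~\ref{prp/ps-inj} (well-definedness of $f_s$, giving the $\Rightarrow$ direction) and the descent argument just described (Lemma~\ref{lem/lift-weak-generator0} plus Proposition~\ref{prp/Keller} plus Lemma~\ref{lem/summands2}, giving $\Leftarrow$).

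The passage from silting/tilting to presilting/pretilting is uniform: every statement above rests on Proposition~\ref{prp/keyA}, Proposition~\ref{prp/pretilting} and Proposition~\ref{prp/Keller}, and the first two are about the relations $\geq,\teq,\perp$ on a fixed complex while the third characterizes the generator property separately; so dropping the ``generator'' clause everywhere leaves the pretilting/presilting analogues intact, with the arguments for the endomorphism-ring conditions and for basic-ness unchanged. I would therefore state the presilting/pretilting case as a one-line corollary of the same lemmas, rather than reprove it.

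The main obstacle I anticipate is purely bookkeeping rather than conceptual: one must be careful that ``tilting'' in the hypotheses already supplies the needed presilting/pretilting vanishing before invoking Proposition~\ref{prp/pretilting}, and that the identification $\sfL \cong \sfM \otimes \RI$ (used to make sense of $\Tor_+^{\RI}(\sfL,\sfN)$ in terms of $\Tor_+^{\Rx}(\sfM,\sfN)$ and $\Tor_+^{\Rx}(\sfM,\RI)$, i.e.\ the reason $\sfN$ must be viewed both as an $\RI$-module and an $\Rx$-module, and the reason the hypothesis is on $\sfN \oplus \RI$) is exactly the output of Proposition~\ref{prp/keyA1b} via Lemma~\ref{lem/translation}~\eqref{lem/translations2}. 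Once that identification is in place, Proposition~\ref{prp/cohom1d} does the work in the middle column and nothing further is needed.
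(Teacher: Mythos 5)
Your proposal is correct and follows essentially the same route as the paper: the presilting/silting equivalence on the right via Proposition~\ref{prp/keyA}, Lemma~\ref{lem/summands2}, Lemma~\ref{lem/generators}, and the weak-generator descent through Lemma~\ref{lem/lift-weak-generator0} and Proposition~\ref{prp/Keller}; the two tilting columns via Proposition~\ref{prp/pretilting} applied with $\Sx = \RI$ (parts \eqref{prp/keyA2b1}, \eqref{prp/keyA2b2} and its Tor-rigid converse), with flat-equals-free for finitely generated modules handling the $\tilt^*$ sets. No gaps; the bookkeeping points you flag are exactly the ones the paper's proof relies on.
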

\begin{proof}
	For basic presilting complexes the equivalence on the right follows from 
	Lemma~\ref{lem/summands2} and Proposition~\ref{prp/keyA}.
	
	To show the upward implication on the right, 
	assume that  $\PCL$ is silting.
	Since $\PCL$ is a perfect generator, it is a weak generator of $\D(\AI)$.
	It follows that $\CL$ is a presilting complex by the previous argument
	and
	a weak generator of $\D^-(\md \A)$ by Lemma~\ref{lem/lift-weak-generator0}.
	Thus, $\CL$ is silting by Proposition~\ref{prp/Keller}.
	
	Proposition~\ref{prp/pretilting} applied with $S = \RI$ yields
	the remaining implications for the sets of tilting complexes and their pretilting analogues.
\end{proof}

\subsection{Silting bijections}

The next statement is the main result of this paper.
We repeat the assumptions of Setup~\ref{setup/main} for the convenience of the reader.
\begin{thm}\label{thm/embeddings2}
	Let $\ida$ be a proper ideal of a complete local ring $\Rx$
	and $\A$ a Noetherian $\Rx$-algebra
	such that 
		\begin{align} \label{eq/strong-tor-ind00}
		\tag{$\star$}
		\Tor_+^{\Rx}(\A,\Rx/\ida) = 0\,.
	\end{align}
	As before, we set $\RI \colonequals \Rx/\ida$, $\AI \colonequals \A/\ida \A$ and choose an $\RI$-module $\sfN$.
	Then the following statements hold.
	\begin{enumerate}
		\item \label{thm/embeddings2a} There are embeddings 
	of the sets
	defined in Notation~\ref{not/objects}
	\begin{align}\label{eq/cube**}
	\begin{tikzcd}[ampersand replacement=\&, cells={outer sep=1pt, inner sep=1pt}, column sep=0.4cm, row sep=0.4cm]		
	\&	
	\pretilt^*_{\Rx} \A
	\ar[hookrightarrow, color=light-gray]{rr}	\ar[hookrightarrow]{dd}{f^*_{pt}}	\&[-0.5cm] \&	
	\pretilt^{\sfN,\RI} \A
	\ar[hookrightarrow]{dd}{f^{\sfN}_{pt}}
	\ar[hookrightarrow, color=light-gray]{rr}  \&[-0.5cm] \&
	\presilt \A \ar[hookrightarrow]{dd}{f_{ps}} \&  
	\&	\CL \ar[mapsto]{dd}  \&[-0.4cm]
	\\
	\tilt^*_{\Rx} \A   \ar[hookrightarrow, color=light-gray]{ru} \ar[hookrightarrow]{dd}{
		f^{*}_t} \ar[hookrightarrow, color=light-gray]{rr} \& \&	
	\tilt^{\sfN,\RI} \A   \ar[hookrightarrow, color=light-gray]{ru} \ar[hookrightarrow]{dd}{
		f^{\sfN}_t} \ar[hookrightarrow, color=light-gray]{rr} \& \&
	\silt \A \ar[hookrightarrow]{dd}{f_s} \ar[hookrightarrow, color=light-gray]{ru} \& \\[1cm]
	\&	\pretilt^*_{\RI} \AI \ar[hookrightarrow, color=light-gray]{rr} \& \& 
	{\pretilt^{\sfN} \AI } \ar[hookrightarrow, color=light-gray]{rr} \& \&
	\presilt \AI 
	\& \& \PCL \&
	\\
	\tilt^*_{\RI} \AI \ar[hookrightarrow, color=light-gray]{ru} \ar[hookrightarrow, color=light-gray]{rr} \& \&
	\tilt^{\sfN} \AI \ar[hookrightarrow, color=light-gray]{ru} \ar[hookrightarrow, color=light-gray]{rr} \& \&
	\silt \AI \ar[hookrightarrow, color=light-gray]{ru} \&
	\end{tikzcd}
	\end{align}
	where the map
	$f_s$ is an embedding of posets.
	\item \label{thm/embeddings2b}
	The outer vertical maps $f_t^*, f_{pt}^*, f_{s}$ and  $f_{ps}$ are bijective if 
	\begin{align} \label{eq/strong-tor-ind}
		\tag{$\star\star$}
		\Tor_+^{\Rx}(\A,\Rx/\ida^n) = 0\text{ for any integer }n > 0\,.
	\end{align}	
	\item \label{thm/embeddings2c}
	If  $\RI$ is Tor-rigid as $\Rx$-module
	and the map $f_{ps}$ is bijective, then the embeddings $f_{t}^{\sfN}$ and $f_{pt}^{\sfN}$ are bijective as well 
	and 
	the set
	$\tilt^{\RI} \A$
	coincides with the set of isomorphism class of basic tilting complexes $\CT$ of $\A$ satisfying
	\[\Hom_{\D(\Abar)}(\CT,\CT[-1]) = 0\,.\]
	\end{enumerate} 
\end{thm}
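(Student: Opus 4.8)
The plan is to read off all three parts from the structural results of Sections~\ref{sec/ascent-descent}, \ref{sec/lifting} and \ref{sec/characterization}; the bulk of the work is bookkeeping, since every vertical map in the cube is the single assignment $\CL \mapsto \PCL$ restricted to a subset of $\presilt \A$ and every grey arrow is an obvious set inclusion (a tilting complex is pretilting and silting; a module-free endomorphism ring is flat, hence has vanishing positive Tor against every module), so the diagram commutes once each map is seen to land in its target. Throughout, taking $\Sx \colonequals \RI$ puts us in Setup~\ref{setup/main}. For part~\eqref{thm/embeddings2a} I would invoke Proposition~\ref{prp/ps-inj} for the well-definedness, injectivity and the preservation/reflection of $\geq$ of $f_{ps}$ and $f_s$; since $\geq$ is a partial order on $\silt\A$ and $\silt\AI$ by Theorem~\ref{thm/AI}\eqref{thm/AI1}, $f_s$ is then a poset embedding. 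That the other four maps carry a pretilting complex with the prescribed endomorphism-ring condition to one of the same type follows from Proposition~\ref{prp/pretilting}: item~\eqref{prp/keyA2b1} handles the ``$\Rx$-free'' (respectively ``$\RI$-free'') condition, using that a finitely generated flat module over the Noetherian local ring $\Rx$ (respectively $\RI$) is free --- the finite generation of the endomorphism ring being Proposition~\ref{KRS} --- while item~\eqref{prp/keyA2b2} with $N \colonequals \sfN$ handles the condition defining $\tilt^{\sfN,\RI}\A$; preservation of the generator property is Lemma~\ref{lem/generators}, and preservation of ``basic'' is Lemma~\ref{lem/summands2}, which applies because $\FF$ is full at every presilting complex by~\eqref{eq/key-implications}. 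Injectivity of all four maps is inherited from that of $f_{ps}$.

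For part~\eqref{thm/embeddings2b} I would use that condition~$(\star\star)$ implies condition~$(\star)$, so that in addition to Setup~\ref{setup/main} the lifting result Corollary~\ref{cor/silting-lifts} is available. Given a basic presilting complex $\CP$ of $\AI$ we have $\CP \geq \CP[1]$, so that corollary provides $\CL \in \per\A$ with $\PCL \cong \CP$; the equivalence $\CL \geq \CL \Leftrightarrow \PCL \geq \PCL$ contained in~\eqref{eq/key-implications} shows $\CL$ is presilting (hence $\FF$ is full at $\CL$), and then $\CL$ is basic by Lemma~\ref{lem/summands2}, so $f_{ps}$ is surjective. If moreover $\CP$ is silting, then $\PCL \cong \CP$ is a weak generator of $\D^-(\md\AI)$, hence $\CL$ is a weak generator of $\D^-(\md\A)$ by Lemma~\ref{lem/lift-weak-generator0}, hence silting by Proposition~\ref{prp/Keller}; this gives surjectivity of $f_s$. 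If instead $\CP \in \pretilt^*_{\RI}\AI$, then $\PCL$ is pretilting with $\RI$-flat endomorphism ring, so Proposition~\ref{prp/pretilting}\eqref{prp/keyA2b1} makes $\CL$ pretilting with an $\Rx$-flat --- hence, being finitely generated, $\Rx$-free --- endomorphism ring; combining this with the silting case settles $f^*_t$ and $f^*_{pt}$. Together with part~\eqref{thm/embeddings2a} these four maps are bijective.

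For part~\eqref{thm/embeddings2c} I would assume that $\RI$ is Tor-rigid as an $\Rx$-module and that $f_{ps}$ is bijective. Given $\CP \in \pretilt^{\sfN}\AI$, let $\CL \in \presilt\A$ be its preimage under $f_{ps}$; since $\PCL \cong \CP$ is pretilting and satisfies $\Tor_+^{\RI}(\End_{\D(\Abar)}(\CP),\sfN) = 0$, the converse to Proposition~\ref{prp/pretilting}\eqref{prp/keyA2b2} --- valid under the Tor-rigidity assumption --- yields $\CL \teq \CL$ together with $\Tor_+^{\Rx}(\End_{\D(\A)}(\CL), \sfN \oplus \RI) = 0$, that is, $\CL \in \pretilt^{\sfN,\RI}\A$; with Lemma~\ref{lem/summands2} this shows $f^{\sfN}_{pt}$ is surjective, and appending the generator argument (Lemma~\ref{lem/lift-weak-generator0} and Proposition~\ref{prp/Keller}) does the same for $f^{\sfN}_t$. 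Finally, for a basic tilting complex $\CT$ of $\A$ --- in particular a pretilting complex --- Corollary~\ref{cor/tor-rigid-endo} says that $\Tor_+^{\Rx}(\End_{\D(\A)}(\CT),\RI) = 0$ is equivalent to $\Hom_{\D(\Abar)}(\PCT, \PCT[-1]) = 0$, which is exactly the claimed description of $\tilt^{\RI}\A$.

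The hard part should be purely organisational: keeping straight which hypothesis --- condition~$(\star)$, condition~$(\star\star)$, or Tor-rigidity of $\RI$ --- drives each implication, and remembering that at every lifting step one must separately re-check ``basic'', ``presilting/pretilting'', the ``generator'' property, and the endomorphism-ring condition, each of which is controlled by a different earlier result. The only genuinely non-formal input is the lifting result Corollary~\ref{cor/silting-lifts}, which is already established.
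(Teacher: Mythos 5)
Your proposal is correct and follows essentially the same route as the paper: Proposition~\ref{prp/ps-inj} for the embeddings, Corollary~\ref{cor/silting-lifts} for surjectivity of $f_{ps}$ under $(\star\star)$, and the descent/endomorphism-ring statements for the rest. The only difference is that where the paper simply cites Proposition~\ref{prp/descent}, you inline its content using exactly the same ingredients (Proposition~\ref{prp/pretilting}, Lemma~\ref{lem/summands2}, Lemma~\ref{lem/lift-weak-generator0}, Proposition~\ref{prp/Keller}, Corollary~\ref{cor/tor-rigid-endo}), so the argument is the same.
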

\begin{proof}
	Since $\Tor_{+}^{\Rx}(\A,\RI) = 0$, 
	Proposition~\ref{prp/ps-inj}
	states that $f_{ps}$ is a well-defined injective map, which preserves and reflects relation $\geq$. 

	If~\eqref{eq/strong-tor-ind} holds, any complex $\CP \in \presilt \AI$ has a lift $\CL \in \per \A$ by Corollary~\ref{cor/silting-lifts}, and thus the map $f_{ps}$ is bijective.
	
	The remaining claims follow from  Proposition~\ref{prp/descent} and Corollary~\ref{cor/tor-rigid-endo}.
\end{proof}

\begin{rmk}
	The silting bijection $f_s$ in 
	Theorem~\ref{thm/embeddings2}~\eqref{thm/embeddings2b}
	restricts to an order-preserving bijection between the subset
	given by  two-term silting complexes of $\A$ and the similar subset for $\AI$.
	This is also true 
	without 
the Tor-independence assumptions
\eqref{eq/strong-tor-ind00} or~\eqref{eq/strong-tor-ind} by
	 work of Kimura \cite{Kimura}, 
	which extends a result by Eisele, Janssens and Raedschelders \cite{EJR} for finite-dimensional algebras.
	The connection between this result 
	and Theorem~\ref{thm/embeddings2}~\eqref{thm/embeddings2b}
	will  be explained in subsequent work.
	\end{rmk}
The next corollary was stated in the introduction and describes three setups for applications of the last theorem. These include
orders over certain regular rings,  group algebras and complete intersections.
\begin{cor} \label{cor/setups}
	Let $\ida$ be a proper ideal of the complete local ring $\Rx$ and $\A$ a Noetherian $\Rx$-algebra $\A$ satisfying any of the following conditions.
\begin{enumerate}[label={\rm (S1\alph*)}, align=left]
	\item \label{setup/orders} The $\Rx$-algebra $\A$ is free as an $\Rx$-module and the ring $\Rx$ is regular.
	\item \label{setup/groups} The $\Rx$-algebra $\A$ is free as an $\Rx$-module and the ideal $\ida$ is maximal.	
\end{enumerate}
\begin{enumerate}[label={\rm (S2)}, align=left]
	\item \label{setup/CI} The ideal $\ida$ is generated by an $\Rx$- and $\A$-regular sequence.
\end{enumerate}
	Then the six vertical maps in diagram~\eqref{eq/cube**} are bijective.
\end{cor}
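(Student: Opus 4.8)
The plan is to deduce Corollary~\ref{cor/setups} from Theorem~\ref{thm/embeddings2} by verifying, in each of the three setups, the two conditions that make all six vertical maps of diagram~\eqref{eq/cube**} bijective: first, the Tor-independence hypothesis~\eqref{eq/strong-tor-ind}, namely $\Tor_+^{\Rx}(\A,\Rx/\ida^n) = 0$ for every integer $n > 0$; and second, Tor-rigidity of $\RI = \Rx/\ida$ as an $\Rx$-module. Once these two facts are in place, parts~\eqref{thm/embeddings2b} and~\eqref{thm/embeddings2c} of Theorem~\ref{thm/embeddings2} give bijectivity of $f_{ps}$, $f_s$, $f^*_{pt}$, $f^*_t$, and then of $f^{\sfN}_{pt}$ and $f^{\sfN}_t$ as well (the latter two applied with any choice of $\sfN$, e.g.\ $\sfN \colonequals \RI$), which is exactly the assertion about the six vertical maps.

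First I would dispose of setup~\ref{setup/CI}, which is the cleanest: if $\ida$ is generated by an $\Rx$- and $\A$-regular sequence, then Proposition~\ref{prp/2-rigid-lift} (or directly Remark~\ref{rmk:hierarchy} together with Corollary~\ref{cor/recover-tor-rigid}~\ref{rec-tor-rig2}) applies. Indeed, an $\Rx$-regular sequence is in particular a regular sequence, so by Corollary~\ref{cor/recover-tor-rigid}~\ref{rec-tor-rig2} the quotient $\RI$ is Tor-rigid over $\Rx$, and condition~\eqref{eq/strong-tor-ind} follows either from Remark~\ref{rmk/sub} combined with Lemma~\ref{lem/algebra-conormals}, or by noting that the powers $\ida^n/\ida^{n+1}$ are free over $\RI$ (\cite{Eisenbud-Book}*{Exercise 17.16}) so that $\Rx$ is normally flat along $\ida$ and Proposition~\ref{prp/2-rigid-lift} yields~\eqref{eq/tor-ind-inf1}, which is~\eqref{eq/strong-tor-ind}. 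Setup~\ref{setup/groups} is next: here $\A$ is $\Rx$-free, so $\Tor_+^{\Rx}(\A,-) = 0$ for every module, giving~\eqref{eq/strong-tor-ind} immediately; and the ideal $\ida = \mx$ is maximal, so Corollary~\ref{cor/recover-tor-rigid}~\ref{rec-tor-rig1} gives Tor-rigidity of $\RI$ over $\Rx$.

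The remaining case is setup~\ref{setup/orders}, where $\A$ is $\Rx$-free and $\Rx$ is regular. Again $\Rx$-freeness of $\A$ gives~\eqref{eq/strong-tor-ind} for free, so the only point is Tor-rigidity of $\RI = \Rx/\ida$ over the regular complete local ring $\Rx$. This is where I expect the one genuine input from commutative algebra: over a regular local ring, every finitely generated module is Tor-rigid. This follows from the Auslander--Buchsbaum theory / Auslander's rigidity theorem for regular local rings (a module of finite projective dimension over a regular local ring is Tor-rigid — the classical reference is Auslander's work on unramified regular local rings, extended by Lichtenbaum to the general case); since $\Rx$ is regular, $\gldim \Rx < \infty$ and every finitely generated $\Rx$-module, in particular $\RI$, has finite projective dimension, hence is Tor-rigid. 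I would cite this (e.g.\ via Lichtenbaum's theorem, or \cite{Bruns-Herzog} / \cite{Matsumura}) rather than reprove it. The main obstacle is therefore not in the silting-theoretic machinery, which is entirely packaged in Theorem~\ref{thm/embeddings2}, but in being careful about exactly which ambient ring we need Tor-rigidity over; here we only need it over $\Rx$ itself (not over the truncations $\Rx/\ida^m$), which is precisely what part~\eqref{thm/embeddings2c} of Theorem~\ref{thm/embeddings2} asks for, so the argument goes through cleanly. Finally I would remark, as the paper's surrounding text already does, that $\ida$ may be varied freely in setup~\ref{setup/orders}, which is the source of the multiplicity-independence phenomenon.
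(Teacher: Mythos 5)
Your proposal is correct and follows essentially the same route as the paper: verify the Tor-independence condition~\eqref{eq/strong-tor-ind} and Tor-rigidity of $\RI$ over $\Rx$ in each setup (via $\Rx$-freeness of $\A$, Corollary~\ref{cor/recover-tor-rigid}, Remark~\ref{rmk/sub} with Lemma~\ref{lem/algebra-conormals}, and Auslander--Lichtenbaum for the regular case), then invoke parts~\eqref{thm/embeddings2b} and~\eqref{thm/embeddings2c} of Theorem~\ref{thm/embeddings2}. The paper's own proof is exactly this verification, so no further comment is needed.
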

\begin{proof}	
	In case~\ref{setup/orders}, \emph{any} finitely generated $\Rx$-module is Tor-rigid by results of Auslander and Lichtenbaum
	\cites{Auslander, Lichtenbaum}.
	In cases~\ref{setup/groups} and~\ref{setup/CI}, Tor-rigidity of  $\Sxx$ as $\Rx$-module was recovered in Corollary~\ref{cor/recover-tor-rigid}.	
	
	In setups
	\ref{setup/orders} and~\ref{setup/groups},
	it holds that
	$\Tor_{+}^{\Rx}(\A,\Rx/\ida^n)=0$ for any integer $n > 0$.
	In case~\ref{setup/CI}, it holds that $\Tor_+^{\Rx}(\A,\Sxx) = 0$
	as observed in Remark~\ref{rmk/sub}.
	Since each $\Sxx$-module $\ida^n/\ida^{n+1}$ is free by \cite{Eisenbud-Book}*{Exercise 17.16}, Lemma ~\ref{lem/algebra-conormals}~\eqref{tor-relations} yields that 
	~\eqref{eq/strong-tor-ind} is satisfied as well. 
	
	So in any of the three cases all of the additional assumptions of Theorem~\ref{thm/embeddings2} are satisfied.
\end{proof}
Under the assumption ~\ref{setup/groups} and that the ring $\Rx$ has Krull dimension one, 
a variation of the silting bijection $f_s$
in~\eqref{eq/cube**} was established independently by Eisele \cite{Eisele}*{Corollary 6.5}.

\begin{rmk}
	Let $\Rx'$ be a commutative complete local Cohen--Macaulay ring which admits a Noether normalization.
	Assume that $\A$ is an \emph{$\Rx'$-order}, that is, an $\Rx'$-algebra such that $\A$ is a finitely generated and Cohen--Macaulay as an $\Rx'$-module.
	Viewing $\A$ as an algebra over the Noether normalization of $\Rx'$ leads to setup~\ref{setup/orders}.
	\end{rmk}

\subsection{Silting bijections via transitivity}
\label{subsec/transitivity}
In certain situations, we may deduce surjectivity of a silting embedding $f_s$ by composing it with another silting embedding $g_s$ such that the composition 
$g_s \cdot f_s$ is bijective.
This transitivity trick allows to sharpen the bijection results of Theorem~\ref{thm/embeddings2}. For a precise formulation we fix several quotient rings.
\begin{setup}\label{setup/transitive}
	Let $\ida \subseteq \idb \subseteq \mx$ be ideals of the complete local ring $\Rx$
	and $\A$ a Noetherian $\Rx$-algebra.
	We set 
	$\AIx \colonequals \A/\ida \A$ and $\BI \colonequals \A/\idb \A$.
\end{setup}

In particular, there 
are commutative diagrams of rings and categories
\begin{align*}
\begin{tikzcd}[ampersand replacement=\&, cells={outer sep=2pt, inner sep=1pt}, column sep=0.6cm, row sep=0.5cm]		
\Rx \ar{rr}
\ar[twoheadrightarrow]{dd} \ar[twoheadrightarrow,densely dotted]{rd} 
\&[0.1cm]  \&[0.5cm]
\A
\ar[twoheadrightarrow]{dd}
\ar[twoheadrightarrow]{rrd}
\&
\&
\&
\&
\D^{-}(\md \A)
\ar{dd}[swap]{\FF} 
\ar[end anchor=north west]{rd}{\HH \ \cong \ \GG \, \circ \, \FF}
\&
\&
\& 
\\[0.75cm]
\& \Tx  \ar[densely dotted]{rrr}  \& \&
\& \BI
\& 
\& \& \D^{-}(\md \BI) \& \& 
\\
\Sxx
\ar[twoheadrightarrow,densely dotted]{ru} \ar{rr} \& \& 
\AIx \ar[twoheadrightarrow]{rru}	 \& 
\& \&
\& \D^-(\md \AIx)\,.
\ar[end anchor=south west]{ru}[swap]{\GG}
\end{tikzcd}
\end{align*}
To apply the previous results, we show that
\emph{Tor-independence} is a transitive relation 
in the  sense of the first statement below.
\begin{prp}\label{prp/tor-ind}
	Let $\Gamma$ be a Noetherian $\Rx$-algebra. Set $\Gammax \colonequals \Gamma/\ida \Gamma$. 
	Then the following statements hold.
	\begin{enumerate}
		\item \label{prp/tor-ind1}
		Any two of the following conditions 			imply the third one.
		\begin{center}
			\begin{enumerate*}[label={$\mathsf{(T\arabic*)}$}, ref={$\mathsf{(T\arabic*)}$},
				series = tobecont, itemjoin = \quad
				]
				\item \label{tor-ind/C} 	$\Tor_+^{\Rx}(\Gamma,\Sxx) = 0$
				\item \label{tor-ind/V} 
				$\Tor_+^{\Sxx}(\Gammax, 
				\Tx) = 0$
				\item \label{tor-ind/Z} 
				$\Tor_+^{\Rx}(\Gamma, \Tx) = 0$
			\end{enumerate*}
		\end{center}
		\item \label{prp/tor-ind2}
		If $(\Gammax,\Tx)$ is Tor-rigid over $\Sxx$, then 
		\ref{tor-ind/C}  and~\ref{tor-ind/V} are equivalent to
		\ref{tor-ind/Z}.
	\end{enumerate}
\end{prp}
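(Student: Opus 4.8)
The plan is to reduce Proposition~\ref{prp/tor-ind} to the Grothendieck-type spectral sequence machinery developed in Section~\ref{sec/refined-Kuenneth}, specifically Lemma~\ref{lem/tor-ind1b} together with Corollary~\ref{cor/cohom1c} and Lemma~\ref{lem/algebra-conormals}. First I would set up the base-change situation: the right $\Rx$-module $\sfM \colonequals \Gamma$, the $(\Rx,\Sxx)$-bimodule $\sfL \colonequals \Sxx = \Rx/\ida$, and the left $\Sxx$-module $C \colonequals \Tx = \Rx/\idb$, noting that $\Tx$ is indeed an $\Sxx$-module since $\ida \subseteq \idb$. With these choices one computes $\sfM \otimes_{\Rx} \sfL \cong \Gamma/\ida\Gamma = \Gammax$ and $\sfL \otimes_{\Sxx} C \cong \Rx/\idb = \Tx$, so that conditions~\ref{tor-ind/C},~\ref{tor-ind/V},~\ref{tor-ind/Z} translate respectively into $\Tor_+^{\Rx}(\sfM,\sfL) = 0$, $\Tor_+^{\Sxx}(\sfM \otimes_{\Rx} \sfL, C) = 0$, and $\Tor_+^{\Rx}(\sfM, \sfL \otimes_{\Sxx} C) = 0$.

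For part~\eqref{prp/tor-ind1}, the first observation is that~\ref{tor-ind/C} is exactly the hypothesis $\Tor_+^{\Sx}(\sfL,C) = 0$ of Lemma~\ref{lem/tor-ind1b} once we recognize that $\Tor_+^{\Sxx}(\Sxx, C)$ is trivially zero — wait, more carefully: Lemma~\ref{lem/tor-ind1b} requires \emph{both} $\Tor_+^{\Rx}(\sfM,\sfL) = 0$ \emph{and} $\Tor_+^{\Sxx}(\sfL,C) = 0$, but here $\sfL = \Sxx$ is the unit of $\modd\text{-}\Sxx$, so $\Tor_+^{\Sxx}(\sfL,C) = \Tor_+^{\Sxx}(\Sxx,C) = 0$ holds automatically. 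Thus Lemma~\ref{lem/tor-ind1b} gives, \emph{assuming}~\ref{tor-ind/C}, the equivalence of~\ref{tor-ind/V} and~\ref{tor-ind/Z}. This already yields two of the three implications of~\eqref{prp/tor-ind1}: ``\ref{tor-ind/C} and~\ref{tor-ind/V} $\Rightarrow$~\ref{tor-ind/Z}'' and ``\ref{tor-ind/C} and~\ref{tor-ind/Z} $\Rightarrow$~\ref{tor-ind/V}''. The remaining implication ``\ref{tor-ind/V} and~\ref{tor-ind/Z} $\Rightarrow$~\ref{tor-ind/C}'' is the one I expect to be the main obstacle, since Lemma~\ref{lem/tor-ind1b} presupposes~\ref{tor-ind/C}. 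Here I would argue directly with the Grothendieck spectral sequence $E^2_{pq} = \Tor_p^{\Sxx}(\Tor_q^{\Rx}(\Gamma,\Sxx),\Tx)$, or rather a version not presupposing the vanishing: one can still build a first-quadrant spectral sequence $\Tor_p^{\Sxx}(\Tor_q^{\Rx}(\Gamma,\Sxx),\Tx) \Rightarrow \Tor_{p+q}^{\Rx}(\Gamma,\Tx)$ by composing the functor $\Gamma \otimes_{\Rx} (-)$ with $(-) \otimes_{\Sxx} \Tx$ applied to a projective resolution of $\Sxx$ over $\Rx$ — the key point being that $\Gamma \otimes_{\Rx} P$ is projective over $\Sxx$ for $P$ projective over $\Rx$ is \emph{false} in general, so instead I would use that the composite functor computes $\Gamma \otimes_{\Rx}^{\mathbb{L}} \Sxx \otimes_{\Sxx}^{\mathbb{L}} \Tx \cong \Gamma \otimes_{\Rx}^{\mathbb{L}} \Tx$. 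From the convergence, $E_1 = \Tor_1^{\Rx}(\Gamma,\Tx) = 0$ by~\ref{tor-ind/Z} forces the edge-map piece $E^2_{1,0} = \Tor_1^{\Sxx}(\Gamma \otimes_{\Rx} \Sxx, \Tx)$ to be a subquotient of zero, hence zero; but~\ref{tor-ind/V} says all higher $E^2_{0,q}$... hmm, this needs more care, and I suspect the cleanest route is instead to invoke Lemma~\ref{lem/algebra-conormals}~\eqref{tor-relations} in the special case $\idb$, $\ida$, which relates the conormal filtration of $\idb/\ida$ inside $\Sxx = \Rx/\ida$ to the Tor-vanishing, combined with a dévissage along the $\Sxx$-module filtration of $\Tx = \Sxx/(\idb/\ida)$; I would grind through the long exact sequences there rather than force an awkward spectral-sequence argument.

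For part~\eqref{prp/tor-ind2}, under the hypothesis that the pair $(\Gammax, \Tx)$ is Tor-rigid over $\Sxx$, I would argue as follows. We already know from~\eqref{prp/tor-ind1} that ``\ref{tor-ind/C} and~\ref{tor-ind/V} $\Rightarrow$~\ref{tor-ind/Z}''; the content to add is the reverse: ``\ref{tor-ind/Z} $\Rightarrow$~\ref{tor-ind/C} and~\ref{tor-ind/V}''. Set $\sfM' \colonequals \Gamma \otimes_{\Rx} \Sxx = \Gammax$. Applying Corollary~\ref{cor/cohom1c}~\eqref{cor/cohom1ca} and its converse (which is precisely where Tor-rigidity of $\Tx$ over $\Sxx$ — or rather Tor-rigidity of the pair — enters, via Proposition~\ref{prp/cohom1d}) to a flat $\Rx$-resolution $\CK$ of $\Gamma$ tensored down: from $\Ho^{\pm}(\CK) = 0$ (trivially, as $\Gamma$ is a module) and $\Tor_+^{\Rx}(\Gamma, \Tx) = 0$ one wants to read off $\Tor_+^{\Rx}(\Gamma,\Sxx) = 0$ and $\Tor_+^{\Sxx}(\Gammax,\Tx) = 0$. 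Concretely I would invoke Proposition~\ref{prp/cohom1d} directly with $\Rx$, $\Sx \colonequals \Sxx$, $C \colonequals \Tx$, its hypothesis $\Tor_+^{\Sxx}(\Sxx, \Tx) = 0$ being automatic, and $\CK$ a projective resolution of $\Gamma$ over $\Rx$ so that $\sfM = \Ho^0(\CK) = \Gamma$, $\sfL = \Ho^0(\CK \otimes \Sxx) = \Gammax$, $C' = \Sxx \otimes_{\Sxx} \Tx = \Tx$: part~(2) of that proposition, applicable since $\Sxx$ (playing the role of ``$\RI$'') is — hmm, we need $\Sxx$ Tor-rigid over $\Rx$, which is \emph{not} assumed. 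So instead I would run Proposition~\ref{prp/cohom1d} the other way, or just cite Corollary~\ref{cor/cohom1c} with the roles arranged so that the Tor-rigid object appearing is $\Tx$ over $\Sxx$, exactly the hypothesis of~\eqref{prp/tor-ind2}. This bookkeeping — matching up which module needs to be Tor-rigid over which ring in which invocation of Section~\ref{sec/refined-Kuenneth} — is the delicate part, and I would write it out carefully rather than gesture at it, but the underlying mechanism is entirely contained in Lemma~\ref{lem/tor-ind1b} and Proposition~\ref{prp/cohom1d}.
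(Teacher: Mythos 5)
Your reduction of two of the three implications in part~\eqref{prp/tor-ind1} to Lemma~\ref{lem/tor-ind1b} (with $\sfM=\Gamma$, $\sfL=\Sxx$, $C=\Tx$, the hypothesis $\Tor_+^{\Sxx}(\Sxx,\Tx)=0$ being automatic) is correct, but the remaining implication ``$\mathsf{(T2)}$ and $\mathsf{(T3)}$ imply $\mathsf{(T1)}$'' is exactly where your argument breaks off, and the substitute you propose does not work: Lemma~\ref{lem/algebra-conormals} concerns the powers and conormal modules $\ida^n/\ida^{n+1}$ of a \emph{single} ideal and says nothing about the pair $\ida\subseteq\idb$, and a d\'evissage of $\Tx$ along the $(\idb/\ida)$-adic filtration of $\Sxx$ only produces long exact sequences with new unknown terms. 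The paper's proof is precisely the spectral-sequence argument you abandoned: Theorem~\ref{thm/grothendieck-sequence} gives the first-quadrant sequence $E^2_{pq}=\Tor_p^{\Sxx}(\Tor_q^{\Rx}(\Gamma,\Sxx),\Tx)\Rightarrow\Tor_{p+q}^{\Rx}(\Gamma,\Tx)$; then $\mathsf{(T1)}$ is equivalent to $E^2_{p+}=0$ (the converse direction is Nakayama, since $E^2_{0q}=\Tor_q^{\Rx}(\Gamma,\Sxx)\otimes_{\Sxx}\Tx$ with $\Tor_q^{\Rx}(\Gamma,\Sxx)$ finitely generated and $\idb\subseteq\mx$), $\mathsf{(T2)}$ to $E^2_{+0}=0$ and $\mathsf{(T3)}$ to $E_+=0$; Nakayama also verifies property~\ref{spectral/reflect}, and the pair-Tor-rigidity in part~\eqref{prp/tor-ind2} is exactly property~\ref{spectral/rigid}, so both parts are instances of Proposition~\ref{prp/spectral2} --- the reverse-vanishing result that was developed for this purpose. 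In your direct attempt you were moreover looking at the wrong entries: $E_1=0$ controls $E^2_{10}$, i.e.\ $\mathsf{(T2)}$ in degree one, whereas $\mathsf{(T1)}$ lives on the column $E^2_{0q}$, $q>0$, and needs the inductive argument of Proposition~\ref{prp/spectral2}~\eqref{prp/spectral2a}.

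For part~\eqref{prp/tor-ind2} your instinct to ``arrange the roles so that the Tor-rigid object is the pair over $\Sxx$'' is the right one, but you never exhibit the object to which the machinery of Section~\ref{sec/refined-Kuenneth} should be applied, and Corollary~\ref{cor/cohom1c}/Proposition~\ref{prp/cohom1d} as stated would require rigidity over $\Rx$, which is not assumed --- a mismatch you notice but do not resolve. A clean repair, equivalent to the paper's argument, is: take a resolution $\CP$ of $\Gamma$ by finitely generated projective $\Rx$-modules and set $\CK\colonequals\CP\otimes_{\Rx}\Sxx$, a right-bounded complex of projective $\Sxx$-modules with $\Ho^{-q}(\CK)\cong\Tor_q^{\Rx}(\Gamma,\Sxx)$, $\Ho^0(\CK)\cong\Gammax$ and $\CK\otimes_{\Sxx}\Tx\cong\CP\otimes_{\Rx}\Tx$; this satisfies Setup~\ref{setup/complexes} over the complete local base ring $\Sxx$ with quotient $\Tx$, and Lemma~\ref{lem/cohom1}~\eqref{cor/cohom1a} (which, unlike Corollary~\ref{cor/cohom1c}, needs only rigidity of the single pair $(\Gammax,\Tx)$) yields $\mathsf{(T3)}\Rightarrow\mathsf{(T2)}\Rightarrow\mathsf{(T1)}$, and also supplies the missing implication of part~\eqref{prp/tor-ind1}. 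As written, however, both that implication and all of part~\eqref{prp/tor-ind2} remain unproven in your proposal.
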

\begin{proof}
	Since $\Tor_+^{\Sxx}(\Sxx,\Tx) =0$, there is a convergent spectral sequence
	$$
	E_{pq}^2 \colonequals \Tor_{p}^{\Sxx}\big(\Tor_{q}^{\Rx}(\Gamma,\Sxx), \ \Tx \ \big) \quad\Rightarrow \quad E_{p+q} \colonequals \Tor^{\Rx}_{p+q}(\Gamma,\Tx)
	$$
	by Theorem~\ref{thm/grothendieck-sequence}.
Condition~\ref{tor-ind/C} implies 
$E^2_{p+}= 0$. The converse is also true by Nakayama's Lemma.
Moreover,
each of the
	conditions 
	~\ref{tor-ind/V} 
	 is equivalent to 
	$E^2_{+0}=0$, and 
	~\ref{tor-ind/Z}
	to 
 $E_{+} = 0$.
	The two claims follow therefore from Proposition~\ref{prp/spectral2}. 
\end{proof}

The next statement sharpens the bijection results of Theorem~\ref{thm/embeddings2} ~\eqref{thm/embeddings2b}.
\begin{thm}\label{thm/transitive}
	In addition to Setup~\ref{setup/transitive}
	assume  that
	$$\Tor_+^{\Rx}(\A,\Sxx) = 0 \quad \text{ and } \quad \Tor_+^{\Rx}(\A,\Rx/\idb^n) =0\text{ for any integer $n > 0$}.$$
	Let 
	$\sfN$ be an $\Rx/\idb$-module.
	Then there is 
	a commutative diagram 
	of embeddings and bijections of sets
	\begin{align}\label{eq/silt-bijections}
	\begin{tikzcd}[ampersand replacement=\&, cells={outer sep=2pt, inner sep=1pt}, column sep=0.4cm, row sep=0.5cm]		
	\&
	\tilt^*_{\Rx} \A \ar[hookrightarrow, color=light-gray]{rr} 
	\ar["\sim" labl, pos=0.66]{dd}[pos=0.66]{h_t^*} 
	\&[-0.4cm] \& \tilt^{\sfN, \Tx} \A \ar[hookrightarrow]{dd}[pos=0.66]{h_t^{\sfN}}
	\ar[hookrightarrow, color=light-gray]{rrrr}
	\&[-0.4cm] 
	\& 
	\&[0.65cm] 
	\&
	\silt \A
	\ar["\sim" labl, pos=0.66]{dd}[pos=0.66]{h_s}
	\\
	\tilt^*_{\Rx} \A  
	\ar["\sim" labl,swap]{dd}[swap]{f^*_t}
	\ar[hookrightarrow, color=light-gray]{rr} 
	\ar[equal]{ru}
	\&  \&
	\tilt^{\sfN,\, \Tx,\, \Sxx
	}  \A   
	\ar[hookrightarrow]{dd}{f^{\sfN,\Tx}_t} 
	\ar[hookrightarrow, color=light-gray]{rr}
	\ar[hookrightarrow]{ru}[swap]{\iota}
	\& \&
	\tilt^{\sfN, \, \Sxx} \A \ar[hookrightarrow]{dd}{f_t^{\sfN}} 
	\ar[hookrightarrow, color=light-gray]{rr}
	\& \&
	\silt \A \ar["\sim" labl]{dd}{f_s}
	\ar[equal]{ru}
	\\[0.75cm]
	\& \tilt^*_{\Tx} \BI  \ar[hookrightarrow, color=light-gray]{rr}  \& \&
	\tilt^{\sfN} \BI  \ar[hookrightarrow, color=light-gray]{rrrr}  \& \& 
	\& \& 
	\silt \BI \,.
	\\
	\tilt^*_{\Rx/\ida} \AIx 
	\ar{ru}[rotate=45, pos=0.66]{\sim}[swap]{g_t^*} \ar[hookrightarrow, color=light-gray]{rr} \& \& 
	\tilt^{\sfN, \, \Tx} \AIx 	\ar[hookrightarrow]{ru}[swap]{g_t^{\sfN}}
	\ar[hookrightarrow, color=light-gray]{rr} \& \&
	\tilt^{\sfN} \AIx
	\ar[hookrightarrow, color=light-gray]{rr}
	\&   \&
	\silt \AIx 	\ar{ru}[rotate=45,pos=0.66]{\sim}[swap]{g_s} 
	\end{tikzcd}
	\end{align} 
	Moreover, the following statements hold.
	\begin{enumerate}
		\item \label{tor-rigid-S} If $\Sxx$ is Tor-rigid as $\Rx$-module, 
		then the maps $f_t^{\sfN,\Tx}$ and
		$f_t^{\sfN}$ are bijective.
		\item \label{tor-rigid-T-S} If $\Tx$ is Tor-rigid as $\Sxx$-module,
		then the maps $\iota$ 
		and $g_t^{\sfN}$ are bijective.
		\item \label{tor-rigid-T-R} \label{tilt-bijection1} If $\Tx$ is Tor-rigid as $\Rx$-module, then
		the map $h_t^{\sfN}$ is bijective.
	\end{enumerate}
	Similar statements are true for presilting and pretilting versions of the maps above.
\end{thm}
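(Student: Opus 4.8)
The plan is to exploit the factorization $\HH\cong\GG\circ\FF$ of the reduction functor modulo $\idb$ and to push the three functors $\FF$, $\GG$, $\HH$ through Theorem~\ref{thm/embeddings2} and Proposition~\ref{prp/pretilting}, gluing the outcomes together with an elementary transitivity argument. First I would check that the three families of vertical maps in~\eqref{eq/silt-bijections} are well defined. The hypothesis~\eqref{eq/strong-tor-ind00}, i.e.\ $\Tor_+^{\Rx}(\A,\Sxx)=0$, puts $\FF$ into the situation of Setup~\ref{setup/main}, so $\FF$ induces the embeddings $f^*_t,f^*_{pt},f_s,f_{ps}$ of Theorem~\ref{thm/embeddings2}~\eqref{thm/embeddings2a} together with the decorated maps $f^{\sfN}_t$ and $f^{\sfN,\Tx}_t$ coming from Proposition~\ref{prp/pretilting} (here $\sfN$, a priori an $\Rx/\idb$-module, is read as an $\Sxx$-module via $\Sxx\twoheadrightarrow\Tx$, and ``$\sfN,\Tx$'' refers to the $\Sxx$-module $\sfN\oplus\Tx$). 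Specializing~\eqref{eq/strong-tor-ind} to $n=1$ gives $\Tor_+^{\Rx}(\A,\Tx)=0$, which together with $\Tor_+^{\Rx}(\A,\Sxx)=0$ yields $\Tor_+^{\Sxx}(\AIx,\Tx)=0$ by Proposition~\ref{prp/tor-ind}~\eqref{prp/tor-ind1} (transitivity of Tor-independence, with $\Gamma=\A$); since $\AIx$ is a Noetherian $\Sxx$-algebra, $\Sxx$ is complete local, and $\idb/\ida$ is a proper ideal of $\Sxx$ with $\AIx/(\idb/\ida)\AIx\cong\BI$, the functor $\GG$ likewise induces all the maps $g_\bullet$. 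Finally~\eqref{eq/strong-tor-ind} is precisely condition $(\star\star)$ for $\HH$, so Theorem~\ref{thm/embeddings2}~\eqref{thm/embeddings2b} makes $h^*_t,h^*_{pt},h_s,h_{ps}$ bijective. As all these maps are restrictions of $\FF,\GG,\HH$ to isomorphism classes of (pre)silting and (pre)tilting complexes, the gray inclusions are compatible with them, and since $\HH\cong\GG\circ\FF$ we get $h_\bullet=g_\bullet\circ f_\bullet$ for $\bullet\in\{{}^*_t,\,{}^*_{pt},\,s,\,ps\}$ and $h^{\sfN}_t\circ\iota=g^{\sfN}_t\circ f^{\sfN,\Tx}_t$; in particular diagram~\eqref{eq/silt-bijections} commutes.

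Next I would run the transitivity trick. For each $\bullet\in\{{}^*_t,\,{}^*_{pt},\,s,\,ps\}$ the map $g_\bullet$ is an embedding by Theorem~\ref{thm/embeddings2}~\eqref{thm/embeddings2a}, hence injective, while $h_\bullet=g_\bullet\circ f_\bullet$ is bijective; therefore $g_\bullet$ is also surjective, so bijective, whence $f_\bullet=g_\bullet^{-1}\circ h_\bullet$ is bijective as well. For $\bullet=s$ (and its presilting analogue) one uses in addition that $f_s$ and $g_s$ preserve and reflect $\geq$ by Theorem~\ref{thm/embeddings2}~\eqref{thm/embeddings2a}, so being bijective they are isomorphisms of posets, and hence so is $f_s=g_s^{-1}\circ h_s$. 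This establishes bijectivity of all maps marked ``$\sim$'' in~\eqref{eq/silt-bijections}.

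For the three extra statements I would feed the appropriate functor into Theorem~\ref{thm/embeddings2}~\eqref{thm/embeddings2c}, using that the relevant presilting map has just been shown bijective. If $\Sxx$ is Tor-rigid over $\Rx$, applying~\eqref{thm/embeddings2c} to $\FF$ (whose $f_{ps}$ is bijective) shows $f^{\sfN'}_t$ and $f^{\sfN'}_{pt}$ are bijective for every $\Sxx$-module $\sfN'$; taking $\sfN'=\sfN$ and $\sfN'=\sfN\oplus\Tx$ and noting that $\tilt^{\sfN\oplus\Tx}$ and $\tilt^{\sfN,\Tx}$ (over $\A$ resp.\ $\AIx$) denote the same set yields~\eqref{tor-rigid-S}. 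If $\Tx$ is Tor-rigid over $\Sxx$, applying~\eqref{thm/embeddings2c} to $\GG$ (whose $g_{ps}$ is bijective) gives bijectivity of $g^{\sfN}_t$; and $\iota$ is bijective exactly when $\tilt^{\Tx}\A\subseteq\tilt^{\Sxx}\A$, i.e.\ when every tilting complex $\CT$ with $\Tor_+^{\Rx}(\Gamma,\Tx)=0$, where $\Gamma\colonequals\End_{\D(\A)}(\CT)$, also satisfies $\Tor_+^{\Rx}(\Gamma,\Sxx)=0$ --- and this follows from Proposition~\ref{prp/tor-ind}~\eqref{prp/tor-ind2} applied to the Noetherian $\Rx$-algebra $\Gamma$ (Proposition~\ref{KRS}), since $\Gammax\colonequals\Gamma/\ida\Gamma$ is finitely generated over $\Sxx$ so that $(\Gammax,\Tx)$ is Tor-rigid over $\Sxx$, and condition $\mathsf{(T3)}$ for $\Gamma$ is exactly our hypothesis and forces $\mathsf{(T1)}$; this gives~\eqref{tor-rigid-T-S}. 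Finally, if $\Tx$ is Tor-rigid over $\Rx$, applying~\eqref{thm/embeddings2c} to $\HH$ (whose $h_{ps}$ is bijective) gives bijectivity of $h^{\sfN}_t$ and the stated description of $\tilt^{\Tx}\A$, which is~\eqref{tor-rigid-T-R}. The presilting and pretilting versions are obtained by the identical arguments using the pretilting halves of Theorem~\ref{thm/embeddings2} and Proposition~\ref{prp/pretilting}.

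The main obstacle I anticipate is purely bookkeeping: keeping straight over which of the rings $\Rx,\Sxx,\Tx$ each $\Tor$-group and each decorated set $\tilt^{(-)}$ is taken, and verifying that the superscript conventions make the source and target of every decorated vertical map coincide with what Proposition~\ref{prp/pretilting}~\eqref{prp/keyA2b2} actually delivers after the change of base. Beyond that, there is no new homological input: everything rests on Theorem~\ref{thm/embeddings2}, Proposition~\ref{prp/pretilting}, Proposition~\ref{prp/tor-ind}, Proposition~\ref{KRS}, and the trivial fact that $g\circ f$ bijective together with $g$ injective forces $g$, and then $f$, to be bijective.
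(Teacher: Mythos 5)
Your argument is correct and follows essentially the same route as the paper: Proposition~\ref{prp/tor-ind}~(1) to establish $\Tor_+^{\Sxx}(\AIx,\Tx)=0$ and set up the intermediate reduction, Theorem~\ref{thm/embeddings2}~(a)--(b) for well-definedness of all vertical maps and bijectivity of the $h$-maps, the isomorphism $\HH\cong\GG\circ\FF$ plus the elementary transitivity trick for the remaining bijections, and Theorem~\ref{thm/embeddings2}~(c) together with Proposition~\ref{prp/tor-ind}~(2) applied to $\End_{\D(\A)}(\CT)$ for statements (1)--(3), including the identification $\iota=\id$. The only difference is that you make the bookkeeping explicit (e.g.\ reading $\tilt^{\sfN,\Tx}$ as $\tilt^{\sfN\oplus\Tx}$), which the paper leaves implicit.
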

\begin{proof}	
	Since
	$\Tor_+^{\Rx}(\A,\Sxx \, \oplus\, \Tx) = 0$, 
	it follows that
	$\Tor_+^{\Rx/\ida}(\AIx, \Tx) =0$ by 
	Proposition~\ref{prp/tor-ind} 
	\eqref{prp/tor-ind1}.
	By
	Theorem~\ref{thm/embeddings2}~\eqref{thm/embeddings2a} and~\eqref{thm/embeddings2b} all maps in diagram~\eqref{eq/silt-bijections} are well-defined and injective, and the maps $h_t^*$ and $h_s$ are even bijective.
	The diagram commutes because there is an isomorphism $\HH \cong \GG\circ \FF$ of functors.
	Therefore, the embeddings $f_t^*$, $g_t^*$, $f_s$ and $g_s$ are bijective as well.
	 This shows the claims on diagram~\eqref{eq/silt-bijections}.
	
	Next, we deduce that $\iota = \id$ assuming
	that $\Tx$ is Tor-rigid as $\Rx/\ida$-module. 
	Let $\CT \in \tilt^{\Tx} \A$. Its endomorphism ring $\Gamma$ is a Noetherian $\Rx$-algebra
	with $\Tor_{+}^{\Rx}(\Gamma,\Tx) = 0$.
	Proposition~\ref{prp/tor-ind}~\eqref{prp/tor-ind2} 
	implies that
	$\Tor_+^{\Rx}(\Gamma, \Sxx) = 0$, and thus $\tilt^{\Tx} \A = \tilt^{\Sxx,\, \Tx} \A$
	which is equivalent to $\iota = \id$.
	
	The other claims in~\eqref{tor-rigid-S},~\eqref{tor-rigid-T-S} and~\eqref{tor-rigid-T-R}
	follow from Theorem~\ref{thm/embeddings2}~\eqref{thm/embeddings2c}.
\end{proof}

\subsection{Applications of silting bijections}
\label{subsec/app-silt-bij}

This subsection is concerned with the consequences of the last theorem.
In the next statements, we focus on $R$-free Noetherian algebras.
In this situation, `silting theory is invariant under change of quotients' in the following sense.

\begin{cor}\label{cor/R-free}
	Assume that the 
	Noetherian $\Rx$-algebra $\A$ is free as $\Rx$-module.
	Set ${\AI}^{\ida} \colonequals \A/\ida \A$, ${\AI}^{\mx}\colonequals\A/\mx\A$ and $k \colonequals \Rx/\mx$, where $\ida$ is any proper ideal and $\mx$ the maximal ideal of the complete local ring $\Rx$.
	Let $\sfN$ be any ${\AI}^{\mx}$-module.
	
	Then there is a commutative diagram of embeddings and bijections of sets
	\begin{align}\label{eq/silt-bijections-R-free}
	\begin{tikzcd}[ampersand replacement=\&, cells={outer sep=2pt, inner sep=1pt}, column sep=0.5cm, row sep=0.5cm]		
	\tilt^*_{\Rx} \A  
	\ar{rd}[pos=0.66]{h_t^*}[rotate=-45, pos=0.75,swap]{\sim}
	\ar["\sim" labl]{dd}{f^*_t} \ar[hookrightarrow, color=light-gray]{rr} 
	\&  \&[-0.2cm]
	\tilt^{\sfN, \Sxx} \A \ar[hookrightarrow]{dd}[swap]{f_t^{\sfN}} 
	\ar[hookrightarrow, color=light-gray]{rr}
	\& \&[0.75cm]
	\silt \A \ar["\sim" labl,swap]{dd}[pos=0.33]{}[swap]{f_s}
	\ar{rd}[pos=0.66]{h_s}[rotate=-45, pos=0.75,swap]{\sim}
	\\[0.75cm]
	\& \tilt {\AI}^{\mx}  \ar[hookrightarrow, color=light-gray]{rrrr}  \& \&
	\& \& \silt {\AI}^{\mx} 
	\\
	\tilt^*_{\Rx/\ida} {\AI}^{\ida} 
	\ar{ru}[rotate=45, pos=0.66]{\sim}[swap]{g_t^*} \ar[hookrightarrow, color=light-gray]{rr} \& \& 
	\tilt^{\sfN} {\AI}^{\ida}
	\ar[hookrightarrow, color=light-gray]{rr}
	\&   \&
	\silt {\AI}^{\ida} 	\ar{ru}[rotate=45,pos=0.66]{\sim}[swap]{g_s} 
	\end{tikzcd}
	\end{align} 
	The map $f_t^{\sfN}$ is bijective if the ring $\Rx$ is regular or if any of the following conditions is satisfied.
	\begin{enumerate}[label=(\arabic*)]
		\item \label{sym1} The $\Rx$-algebra $\A$ is symmetric, that is, 
		there is an isomorphism $\Hom_{\Rx}(\A,\Rx) \cong \A$ of $\A$-bimodules.
		\item \label{sym2} The finite-dimensional $\kk$-algebra ${\AI}^{\mx}$ is symmetric,
		that is, 
		there is an isomorphism $\Hom_{\kk}({\AI}^{\mx},\kk) \cong {\AI}^{\mx}$ of ${\AI}^{\mx}$-bimodules.
	\end{enumerate}
	\begin{enumerate}[label=(3\alph*)]
		\item \label{stiltB} Any silting complex of ${\AI}^{\mx}$ is tilting.
		\item \label{stiltA}
		Any silting complex of $\AI$ is tilting
		and has $\Sxx$-free endomorphism ring.
		\item \label{stiltL}
		Any silting complex of $\A$ is tilting
		and has $\Rx$-free endomorphism ring.
	\end{enumerate}
	Moreover,  the implications~\ref{sym1} $\Rightarrow$~\ref{sym2} $\Rightarrow$~\ref{stiltB} $\Leftrightarrow$~\ref{stiltA}
	$\Leftrightarrow$~\ref{stiltL} hold true.
\end{cor}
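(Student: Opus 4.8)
The first task is to assemble diagram~\eqref{eq/silt-bijections-R-free} and its bijection claims from the already-proven Theorem~\ref{thm/transitive}. I would apply that theorem with $\idb \colonequals \mx$ and $\Sxx \colonequals \Rx/\ida$. Since $\A$ is $\Rx$-free, all the relevant Tor-vanishing hypotheses hold automatically: $\Tor_+^{\Rx}(\A,\Sxx) = 0$ and $\Tor_+^{\Rx}(\A,\Rx/\mx^n) = 0$ for all $n$. Moreover $\Tx = \Rx/\mx = \kk$ is a field, so ${\AI}^{\mx} = \A/\mx\A$ is a finite-dimensional $\kk$-algebra, and over a finite-dimensional algebra every tilting complex has a semisimple-free (hence free) endomorphism ring — so the subscript decorations on $\tilt^*_{\Tx}\BI$ etc.\ collapse and the bottom-back row of~\eqref{eq/silt-bijections} simplifies to the plain sets $\tilt\,{\AI}^{\mx}$, $\silt\,{\AI}^{\mx}$ appearing here. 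Thus~\eqref{eq/silt-bijections-R-free} is a specialization of~\eqref{eq/silt-bijections}, with $h_t^*$, $h_s$, $f_t^*$, $g_t^*$, $f_s$, $g_s$ all bijective by Theorem~\ref{thm/transitive}. The claim ``$f_t^{\sfN}$ is bijective if $\Rx$ is regular'' follows from Theorem~\ref{thm/transitive}~\eqref{tor-rigid-S} together with the Auslander--Lichtenbaum fact (used already in the proof of Corollary~\ref{cor/setups}) that over a regular local ring every finitely generated module is Tor-rigid, so $\Sxx = \Rx/\ida$ is Tor-rigid over $\Rx$.

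**The symmetric-algebra hypotheses.** For~\ref{sym1}$\Rightarrow$\ref{sym2}: if $\Hom_{\Rx}(\A,\Rx)\cong\A$ as $\A$-bimodules, apply $-\otimes_\Rx \kk$; using $\Rx$-freeness of $\A$ one has $\Hom_{\Rx}(\A,\Rx)\otimes\kk \cong \Hom_\kk(\A\otimes\kk,\kk) = \Hom_\kk({\AI}^{\mx},\kk)$, and the bimodule isomorphism descends, giving~\ref{sym2}. The point of~\ref{sym2} is then that a symmetric algebra is in particular self-injective, and over a self-injective (or more generally over any algebra for which the desired rigidity holds) one wants to conclude that silting = tilting, i.e.\ \ref{stiltB}. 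Here I would invoke the known fact that over a finite-dimensional symmetric (self-injective) algebra every silting complex is tilting — this is essentially because $\A$ being symmetric forces $\Hom_{\D}(\CL,\CL[i])\cong D\,\Hom_{\D}(\CL,\CL[-i])$ by Serre-type duality, so $\CL\geq\CL$ upgrades to $\CL\teq\CL$. That gives~\ref{sym2}$\Rightarrow$\ref{stiltB}.

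**The equivalences \ref{stiltB}$\Leftrightarrow$\ref{stiltA}$\Leftrightarrow$\ref{stiltL} and the final bijection.** Here is where the silting bijections do the real work. By Corollary~\ref{cor/setups}/Theorem~\ref{thm/embeddings2}, applied to $\A \rightsquigarrow {\AI}^{\mx}$ (case~\ref{setup/groups}, $\ida=\mx$) and to $\A\rightsquigarrow{\AI}^\ida$, the vertical maps in~\eqref{eq/silt-bijections-R-free} restrict, on tilting complexes, to bijections onto the appropriate subsets. The condition ``every silting complex of $\A$ is tilting with $\Rx$-free endomorphism ring'' says precisely that $\silt\A = \tilt^*_\Rx\A$ as subsets; pushing through the bijection $f_s = f_t^*$ of~\eqref{eq/silt-bijections-R-free} translates this into $\silt\,{\AI}^{\mx} = \tilt\,{\AI}^{\mx}$, which is~\ref{stiltB}; likewise via $g$ it is equivalent to~\ref{stiltA}. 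So \ref{stiltB}$\Leftrightarrow$\ref{stiltA}$\Leftrightarrow$\ref{stiltL}. Finally, when any one of these holds, $\silt\A = \tilt^{\sfN,\Sxx}\A$ (every silting complex is tilting and $\Rx$-free, hence automatically in every $\tilt^{\sfN,\Sxx}$ by the local criterion of flatness, Theorem~\ref{thm/local-flatness}), and similarly $\silt\,{\AI}^\ida = \tilt^{\sfN}\,{\AI}^\ida$; since $f_s$ is already a bijection between the silting sets, $f_t^{\sfN}$ is squeezed between two bijections and is therefore bijective.

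**Expected main obstacle.** The routine bookkeeping is matching the decorated tilting-sets of~\eqref{eq/silt-bijections} with the plainer ones here and checking the implication $f_t^{\sfN}$ bijective really is forced once $\silt\A = \tilt^*_\Rx\A$. The genuinely non-formal point is the chain $\ref{sym1}\Rightarrow\ref{sym2}\Rightarrow\ref{stiltB}$: the first arrow is a clean base-change argument, but the second — that symmetry of the finite-dimensional algebra ${\AI}^{\mx}$ forces every silting complex over it to be tilting — relies on the self-duality of $\D^{\mathrm b}({\AI}^{\mx})$ (a Calabi--Yau / Serre-functor argument), and one must be slightly careful that this is stated for the homotopy category of perfect complexes rather than the bounded derived category. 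I expect this is where most of the actual proof-writing effort goes; everything else is transport of structure along the bijections already established in Theorems~\ref{thm/embeddings2} and~\ref{thm/transitive}.
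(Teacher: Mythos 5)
Your proposal is correct and follows essentially the same route as the paper: the diagram is obtained from Theorems~\ref{thm/embeddings2} and~\ref{thm/transitive} with $\idb=\mx$, the regular case uses Auslander--Lichtenbaum Tor-rigidity as in Corollary~\ref{cor/setups}, \ref{sym1}$\Rightarrow$\ref{sym2} is the base-change argument (the paper cites it), \ref{sym2}$\Rightarrow$\ref{stiltB} is the $0$-Calabi--Yau/Serre-duality property of $\per$ of a symmetric algebra, and the equivalences \ref{stiltB}$\Leftrightarrow$\ref{stiltA}$\Leftrightarrow$\ref{stiltL} together with the bijectivity of $f_t^{\sfN}$ are transported along the commuting bijections exactly as in the paper. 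Only your phrase ``the bijection $f_s=f_t^*$'' is imprecise (you mean that $h_s$ and $h_t^*$, respectively $g_s$ and $g_t^*$, are restrictions of the same functor-induced map, so commutativity of the diagram does the transport), but this does not affect the argument.
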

\begin{proof}
	Theorems~\ref{thm/embeddings2} and ~\ref{thm/transitive} with $\idb = \mx$
	yield diagram~\eqref{eq/silt-bijections-R-free}.
	
	If the local ring $\Rx$ is regular, 
	the map 
	$f_t^{\sfN}$ is bijective as already observed in Corollary~\ref{cor/setups}~\ref{setup/orders} using results by Auslander and Lichtenbaum.
	
	The implication~\ref{sym1}$\Rightarrow$\ref{sym2} 
	was shown in \cite{gnedin2019calabiyau}*{Proposition~3.2}.
	If the finite-dimensional $\kk$-algebra ${\AI}^{\mx}$ is symmetric, the category $\per {\AI}^{\mx}$ is $0$-Calabi--Yau
	\cite{Happel}*{Chapter I, Section 4.6},
	which shows the implication~\ref{sym2}$\Rightarrow$\ref{stiltB}.
	The remaining claims follow from the commutativity of the diagram~\eqref{eq/silt-bijections-R-free}.
\end{proof}

We return to the example of the first section.
\begin{ex}\label{ex/preproj2}
	Let $\A$ be the preprojective algebra, ${\AI}^{x_1}$ the ribbon graph order and
	${\AI}^{(m_a, m_b)}$ the Brauer graph algebra introduced
	in Example~\ref{ex/preproj1}.
	We recall that the $R$-algebra on $\A$ was defined choosing two integers $m_a, m_b > 0$.
	
	Since the Brauer graph algebra ${\AI}^{(m_a,m_b)}$ is symmetric according to \cite{Schroll}*{Theorem~2.6},
	Corollary~\ref{cor/R-free} recovers the fact that $\tilt_R^* \A = \silt \A$
	and
	yields bijections
	$$
	\begin{tikzcd}[ampersand replacement=\&, cells={outer sep=2pt, inner sep=2pt}]
		\tilt {\AI}^{x_1}  \&
		\tilt \A \ar{l}[swap]{\sim} \ar{r}{\sim}  
		\& \tilt {\AI}^{\ida} \ar{r}{\sim} \&   \tilt {\AI}^{(m_a,m_b)}
	\end{tikzcd}
	$$
	for any proper ideal $\ida$ of the ring $\Rx$
	and any integers $m_a,m_b > 0$.
	
	Work of Burban and Drozd \cite{Burban-Drozd:2004} implies that
	the ribbon graph order ${\AI}^{x_1}$ is derived-tame and provides an explicit combinatorial description of its indecomposable perfect complexes.
	On the other hand, results by Drozd \cite{Drozd:1990} and
	Bekkert, Drozd and Futorny \cite{Bekkert-Drozd-Futorny}
	yield that the preprojective algebra $\A$ 
	and
	each Brauer graph algebra of the form
	${\AI}^{(m_a, m_b)}$ 
	are derived-wild.
	
	Therefore, the tilting theory of a family of rings can be reduced to a single feasible case.
	The same conclusion can be made for the completed preprojective
	algebra of affine type $\widetilde{\mathbb{A}}_n$ for any $n > 0$. 
\end{ex}

\begin{rmk}
	It is possible to prove that the map $f_t^*$ in diagram~\eqref{eq/silt-bijections-R-free}
	is bijective using only 
	Rickard's results \cite{Rickard91a}*{Theorem~2.1} and \cite{Rickard91b}*{Theorem~3.1} together with a transitivity trick. The main arguments are contained in \cite{Gnedin-Iyengar-Krause}*{Proof of Theorem~5.6}.
\end{rmk}
In general, the class of $R$-free Noetherian algebras is not closed under derived equivalences.
The next statement reduces the question whether the tilted algebra of an $R$-free algebra is also $R$-free to the computation of a finite-dimensional morphism space.
\begin{prp}\label{prp/endo-free}
	Assume 
that
	 the Noetherian $\Rx$-algebra $\A$ is free as an $\Rx$-module.
	Then the endomorphism ring of a pretilting complex $\CT$ of $\A$ is free as an $\Rx$-module if and only if 
	\[
	\Hom_{\D(\Abar^{\mx})}(\PCT,\PCT[-1]) = 0, 
	\quad \text{where } \PCT \colonequals \CT \overset{\mathbb{L}}{\underset{\A}{\otimes}} {\AI}^{\mx}\,.
	\]
\end{prp}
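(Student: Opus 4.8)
The plan is to reduce everything to Proposition~\ref{prp/pretilting}, applied with the ideal $\ida$ taken to be the maximal ideal $\mx$. First I would check that this re-specialization is legitimate: since $\A$ is free as an $\Rx$-module, condition~$(\star)$ holds with $\ida$ replaced by $\mx$, i.e.\ $\Tor_+^{\Rx}(\A, \Rx/\mx) = 0$ (Remark~\ref{rmk/sub}, case~\ref{rmk/sub1}). Hence all hypotheses of Setup~\ref{setup/main} are met with $\ida = \mx$, $\B = \kk \colonequals \Rx/\mx$ and $\AI = \Abar^{\mx} = \A/\mx\A$, and the associated functor is precisely $\FF = (\,\CM \mapsto \CM \overset{\mathbb{L}}{\otimes}_{\A} \Abar^{\mx}\,)$, so the object $\PCT$ in the statement is $\FF(\CT)$ in this sense. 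Writing $\sfM \colonequals \End_{\D(\A)}(\CT)$, I would then record that $\CT$ is a perfect complex (pretilting complexes are perfect by definition), so by Proposition~\ref{KRS} the $\Rx$-module $\sfM$ is finitely generated.

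Second, I would extract the key homological identity. As $\CT$ is pretilting we have $\CT \teq \CT$, so Proposition~\ref{prp/pretilting}~\eqref{prp/keyA2}, applied with $\B = \kk$, provides for every integer $i$ an isomorphism of $\kk$-vector spaces
\[
\Tor_i^{\Rx}(\sfM, \kk) \;\cong\; \Hom_{\D(\Abar^{\mx})}(\PCT, \PCT[-i]).
\]
Taking $i = 1$ gives $\Tor_1^{\Rx}(\sfM, \kk) \cong \Hom_{\D(\Abar^{\mx})}(\PCT, \PCT[-1])$, which is exactly the right-hand side appearing in the proposition.

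Third, I would invoke the standard freeness criterion over the Noetherian (indeed complete) local ring $(\Rx, \mx, \kk)$: a finitely generated $\Rx$-module $\sfM$ is free if and only if $\Tor_1^{\Rx}(\sfM, \kk) = 0$. I would recall the one-line argument for completeness: lifting a $\kk$-basis of $\sfM/\mx\sfM$ to generators of $\sfM$ yields a presentation $0 \to K \to \Rx^n \to \sfM \to 0$ with $K \subseteq \mx\Rx^n$; applying $- \otimes_{\Rx} \kk$ identifies $\Tor_1^{\Rx}(\sfM, \kk)$ with $K/\mx K$, which vanishes precisely when $K = 0$ by Nakayama's Lemma (using that $K$ is finitely generated, $\Rx$ being Noetherian), i.e.\ precisely when $\sfM$ is free; the converse is trivial. (Alternatively this is the local criterion of flatness, Theorem~\ref{thm/local-flatness} with $\ida = \mx$, combined with the fact that a finitely generated flat module over a Noetherian local ring is free.) Chaining this equivalence with the isomorphism of the previous step gives: $\sfM$ is $\Rx$-free $\iff$ $\Tor_1^{\Rx}(\sfM, \kk) = 0$ $\iff$ $\Hom_{\D(\Abar^{\mx})}(\PCT, \PCT[-1]) = 0$, which is the assertion.

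I do not expect a genuine obstacle here — the substance is entirely contained in Proposition~\ref{prp/pretilting} — so the only points that need a word of care are the legitimacy of re-running Setup~\ref{setup/main} with $\ida = \mx$ (which is where $\Rx$-freeness of $\A$ is used) and the finite generation of $\End_{\D(\A)}(\CT)$ over $\Rx$, without which the commutative-algebra freeness criterion would not apply. One could equally phrase the last two steps through Corollary~\ref{cor/tor-rigid-endo}, using that $\kk$ is Tor-rigid over $\Rx$ by Corollary~\ref{cor/recover-tor-rigid}~\ref{rec-tor-rig1} to pass between vanishing of $\Tor_1$ and of all higher $\Tor$, but the direct route above is shorter.
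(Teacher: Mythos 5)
Your proposal is correct and follows essentially the same route as the paper: the paper's proof is exactly the combination of Proposition~\ref{prp/pretilting}~\eqref{prp/keyA2} (applied with $\ida=\mx$, legitimate since $\Rx$-freeness of $\A$ gives the Tor-independence hypothesis) and the local criterion of flatness (Theorem~\ref{thm/local-flatness}), together with the standard fact that a finitely generated flat module over a Noetherian local ring is free. Your additional remarks (finite generation of $\End_{\D(\A)}(\CT)$ via Proposition~\ref{KRS}, the explicit Nakayama argument) only spell out details the paper leaves implicit.
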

\begin{proof}
This 
	follows from Proposition~\ref{prp/pretilting} 
	\eqref{prp/keyA2}
	and the local criterion of flatness (Theorem~\ref{thm/local-flatness}).  
\end{proof}


Next, we 
deduce `multiplicity independence' of silting objects in 
 setup~\ref{setup/CI}, that is, the context of complete intersections.
\begin{cor}\label{cor/regular}
	Let $\idb$ be a proper ideal of the complete local ring $\Rx$ which is generated by a sequence $\mathbf{x} \colonequals (x_1,\ldots, x_{\ell})$ of $\Rx$-regular and $\A$-regular elements.
	Let
	$\ida$ be the ideal of the ring $\Rx$ generated by the sequence
	$\mathbf{x}^m \colonequals (x_1^{m_1}, \ldots, x_{\ell}^{m_{\ell}})$
	for a sequence $m\colonequals(m_1, \ldots, m_{\ell})$ of positive integers.
	As before, set ${\AI}^{\ida} \colonequals \A/\ida \A$ and ${\AI}^{\idb} \colonequals \A/\idb \A$.
	
	Then there is a commutative diagram of embeddings and bijections of sets
	\begin{align}\label{eq/silt-bijections-regular}
	\begin{tikzcd}[ampersand replacement=\&, cells={outer sep=2pt, inner sep=1pt}, column sep=0.4cm, row sep=0.5cm]		
	\&
	\tilt^*_{\Rx} \A \ar[hookrightarrow, color=light-gray]{rr} 
	\ar["\sim" labl, pos=0.66]{dd}[pos=0.66]{h_t^*} 
	\&[-0.4cm] \& 
	\tilt^{\Rx/\idb} \A \ar["\sim" labl, pos=0.66]{dd}[pos=0.66]{h_t}
	\ar[hookrightarrow, color=light-gray]{rrrr}
	\&[-0.4cm] \& \&[0.65cm] \&
	\silt \A
	\ar["\sim" labl, pos=0.66]{dd}[pos=0.66]{h_s}
	\\
	\tilt^*_{\Rx} \A  
	\ar["\sim" labl,swap]{dd}[swap]{f^*_t}
	\ar[hookrightarrow, color=light-gray]{rr} 
	\ar[equal]{ru}
	\&  \&
	\tilt^{\Rx/\idb, \, \Rx/\ida
	}  \A   
	\ar["\sim" labl,swap]{dd}[swap]{f^{\Rx/\idb}_t} 
	\ar[hookrightarrow, color=light-gray]{rr}
	\ar[hookrightarrow]{ru}[swap]{\iota}
	\& \&
	\tilt^{\Rx/\ida} \A \ar["\sim" labl,swap]{dd}[swap]{f_t} 
	\ar[hookrightarrow, color=light-gray]{rr}
	\& \&
	\silt \A \ar["\sim" labl]{dd}{f_s}
	\ar[equal]{ru}
	\\[0.75cm]
	\& \tilt^*_{\Rx/\idb} {\AI}^{\idb}  \ar[hookrightarrow, color=light-gray]{rr}  \& \&
	\tilt {\AI}^{\idb}  \ar[hookrightarrow, color=light-gray]{rrrr}  \& \& \& \& \silt {\AI}^{\idb}\,.
	\\
	\tilt^*_{\Rx/\ida} {\AI}^{\ida}
	\ar{ru}[rotate=45, pos=0.66]{\sim}[swap]{g_t^*} \ar[hookrightarrow, color=light-gray]{rr} \& \& 
	\tilt^{\Rx/\idb} {\AI}^{\ida}	\ar[hookrightarrow]{ru}[swap]{g_t}
	\ar[hookrightarrow, color=light-gray]{rr} \& \&
	\tilt {\AI}^{\ida}
	\ar[hookrightarrow, color=light-gray]{rr}
	\&   \&
	\silt {\AI}^{\ida}	\ar{ru}[rotate=45,pos=0.66]{\sim}[swap]{g_s} 
	\end{tikzcd}
	\end{align} 
	
\end{cor}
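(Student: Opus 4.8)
The plan is to obtain diagram~\eqref{eq/silt-bijections-regular} by specializing diagram~\eqref{eq/silt-bijections} of Theorem~\ref{thm/transitive} to the chain $\ida\subseteq\idb\subseteq\mx$ with parameter module $\sfN\colonequals\Rx/\idb$ (so that $\Sxx=\Rx/\ida$ and $\Tx=\Rx/\idb$, and ${\AI}^{\ida}$, ${\AI}^{\idb}$ play the roles of $\AIx$, $\BI$). Three ingredients then have to be supplied: the two Tor-independence hypotheses of Theorem~\ref{thm/transitive}, the Tor-rigidity inputs that promote three more of the vertical maps to isomorphisms, and two identifications of $\tilt$-sets that bring the specialized diagram into the displayed form.

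The one non-formal input is the following classical fact: for a sequence $\mathbf{y}$ of elements of $\mx$ and a finitely generated $\Rx$-module $M$, the sequence $\mathbf{y}$ is $M$-regular if and only if the power sequence $(y_1^{a_1},\dots,y_\ell^{a_\ell})$ is $M$-regular for some (equivalently, every) tuple of positive exponents; this follows from the Koszul criterion for regular sequences contained in the radical together with the invariance of grade under passage to the radical of the generated ideal. Applied with $M=\Rx$ and $M=\A$ it shows that $\mathbf{x}^m=(x_1^{m_1},\dots,x_\ell^{m_\ell})$ is $\Rx$-regular and $\A$-regular, so $\Tor_+^\Rx(\A,\Rx/\ida)=0$ by Remark~\ref{rmk/sub}~\ref{rmk/sub2}, and likewise $\Tor_+^\Rx(\A,\Rx/\idb)=0$. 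As $\idb$ and $\ida$ are each generated by an $\Rx$-regular sequence, all conormal modules $\idb^n/\idb^{n+1}$ and $\ida^n/\ida^{n+1}$ are free over $\Rx/\idb$, respectively $\Rx/\ida$, by \cite{Eisenbud-Book}*{Exercise~17.16}; Lemma~\ref{lem/algebra-conormals}~\eqref{tor-relations} then upgrades these to $\Tor_+^\Rx(\A,\Rx/\idb^n)=0$ and $\Tor_+^\Rx(\A,\Rx/\ida^n)=0$ for every $n>0$. So the hypotheses of Theorem~\ref{thm/transitive} hold, and condition~\eqref{eq/strong-tor-ind} holds for the single ideal $\ida$ as well.

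Theorem~\ref{thm/transitive} now produces diagram~\eqref{eq/silt-bijections} (for $\sfN=\Rx/\idb$) with $h_t^*,h_s,f_t^*,f_s,g_t^*,g_s$ already bijective; since $\Rx/\ida$ and $\Rx/\idb$ are Tor-rigid over $\Rx$ by Corollary~\ref{cor/recover-tor-rigid}~\ref{rec-tor-rig2}, parts~\eqref{tor-rigid-S} and~\eqref{tor-rigid-T-R} of that theorem make $f_t^{\Rx/\idb}$, $f_t$ and $h_t$ bijective too. It remains to identify two vertices. On the source side, $\tilt^{\Rx/\idb,\,\Rx/\ida}\A=\tilt^{\Rx/\ida}\A$: if $\CT$ is a basic tilting complex with $\Tor_+^\Rx(\End_{\D(\A)}(\CT),\Rx/\ida)=0$, then $\mathbf{x}^m$ is a regular sequence on $\End_{\D(\A)}(\CT)$, hence so is $\mathbf{x}$ by the fact above, hence $\Tor_+^\Rx(\End_{\D(\A)}(\CT),\Rx/\idb)=0$. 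On the target side, $\tilt^{\Rx/\idb}{\AI}^{\ida}=\tilt{\AI}^{\ida}$: applying Corollary~\ref{cor/setups}~\ref{setup/CI} to the single ideal $\ida$ shows that $\FF=\dash\underset{\A}{\overset{\mathbb{L}}{\otimes}}{\AI}^{\ida}$ induces a bijection $\tilt^{\sfN,\Rx/\ida}\A\to\tilt^{\sfN}{\AI}^{\ida}$ for every $\Rx/\ida$-module $\sfN$, and evaluating at $\sfN=\Rx/\ida$ and at $\sfN=\Rx/\idb$, together with the source-side identity just proved, gives $\tilt^{\Rx/\idb}{\AI}^{\ida}=\FF(\tilt^{\Rx/\ida}\A)=\tilt^{\Rx/\ida}{\AI}^{\ida}=\tilt{\AI}^{\ida}$ (the last equality because $\Rx/\ida$ is free over itself). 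Substituting both identifications into diagram~\eqref{eq/silt-bijections} yields diagram~\eqref{eq/silt-bijections-regular}.

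No step presents a serious obstacle: the only ingredient beyond formal manipulation is the equivalence of $M$-regularity for $\mathbf{x}$ and for $\mathbf{x}^m$, which is classical and which here does double duty, securing the Tor-independence hypotheses of Theorem~\ref{thm/transitive} and collapsing the $\tilt$-sets carrying a $\Rx/\ida$-superscript onto those carrying a $\Rx/\idb$-superscript. The bulk of the work is just matching the notation of diagram~\eqref{eq/silt-bijections} to that of diagram~\eqref{eq/silt-bijections-regular}; the substance is a direct assembly of Theorems~\ref{thm/transitive} and~\ref{thm/embeddings2}, Corollary~\ref{cor/setups}, and the Tor-rigidity statements of Corollary~\ref{cor/recover-tor-rigid}.
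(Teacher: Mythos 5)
Your proposal is correct and takes essentially the paper's route: establish that $\mathbf{x}^m$ is $\Rx$- and $\A$-regular, deduce the Tor-independence conditions $\Tor_+^{\Rx}(\A,\Rx/\ida)=0$ and $\Tor_+^{\Rx}(\A,\Rx/\idb^n)=0$ together with Tor-rigidity of $\Rx/\ida$ and $\Rx/\idb$ via Corollary~\ref{cor/recover-tor-rigid}, and then specialize Theorem~\ref{thm/transitive}. The only divergence is your choice $\sfN=\Rx/\idb$, which forces the two (correct) vertex identifications you supply at the end; choosing $\sfN=0$ instead makes diagram~\eqref{eq/silt-bijections} specialize verbatim to~\eqref{eq/silt-bijections-regular}, which is what the paper's one-line conclusion amounts to, though your detour does show in addition that the two middle gray inclusions in the front face are in fact equalities.
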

\begin{proof}
	Since $\mathbf{x}$ is $\Rx$-regular, the 
	ring $\Rx$ is normally flat along the ideal $\idb$ in the sense of Definition~\ref{dfn/normally-flat} and $\Rx/\idb$ is Tor-rigid as $\Rx$-module (as shown, for example, in Corollary~\ref{cor/recover-tor-rigid}).
	Because $\mathbf{x}$ is also $\A$-regular it holds that $\Tor_+^{\Rx}(\A,\Rx/\idb) = 0$ as recalled in Remark~\ref{rmk/sub}.
	It can be shown by induction on the length $\ell$ that the sequence $\mathbf{x}^{m}$ is $\Rx$- and $\A$-regular as well.
	It follows that
	$\Rx/\ida$ is Tor-rigid as $\Rx$-module
	and $\Tor_+^{\Rx}(\A,\Rx/\ida) = 0$.
Now,	Theorem~\ref{thm/transitive} implies the claims.
\end{proof}
By the silting bijection $g_s$ 
any two quotient algebras 
${\AI}^{\ida}$ and ${\AI}^{\idb}$ defined by different sequences of multiplicities have the `same' silting theory.
For Brauer graph algebras, an incarnation of this phenomenon was shown for two-term silting complexes by Aihara, Adachi and Chan 
\cite{Adachi/Aihara/Chan}*{Proposition~4.7},
and deduced by Eisele \cite{Eisele}*{Theorem~6.6} under
 certain restrictions on the Brauer graph.

\begin{rmk}
	\begin{enumerate}
		\item
		In the setup of Corollary~\ref{cor/regular}
		the ideal $\idb/\ida$ of the quotient ring $\Rx/\ida$ does not have to be generated by an $\Rx/\ida$-regular sequence, and the ring $\Rx$ does not have to be normally flat along the ideal $\idb$ in general.
		In particular, surjectivity of the embedding $g_s$ is not a direct consequence of Theorem~\ref{thm/embeddings2}~\eqref{thm/embeddings2b}.
		\item
		The conclusion of Corollary~\ref{cor/regular}
		is also true for any ideal $\ida$  which is generated by an $\Rx$- and $\A$-regular sequence and contained in the ideal $\idb$.
		In particular, if  $\ida$ is generated by a subsequence $(x_1,\ldots, x_i)$ of $\mathbf{x}$ with $i \leq \ell$, the maps 
		$\iota$ and $g_t$ in diagram~\eqref{eq/silt-bijections-regular} are bijective as well.
	\end{enumerate}
\end{rmk}

%

The last applications of this subsection suggest that 
the assumption in Theorem~\ref{thm/embeddings2}
which leads to silting bijection results
is natural from the viewpoint of completion.
\begin{cor}\label{cor/silt-completion}
For any $n > 0$ set $\Rx_n = \Rx/\ida^n$ and $\A_n \colonequals \A/\ida^n \A$.
	Assume that the proper ideal $\ida$ of the complete local ring $\Rx$ and the Noetherian $\Rx$-algebra $\A$ satisfy the condition
	\begin{align} \label{eq/strong-tor-ind4}
	\tag{$\star\star$}
	\Tor_+^{\Rx}(\A,\Rx_n) = 0\text{ for any integer }n > 0\,.
	\end{align}
	Then the following statements hold.
	\begin{enumerate}
		\item There is a commutative diagram of bijections and sets
	\begin{align*}
	\begin{tikzcd}[ampersand replacement=\&, cells={outer sep=2pt, inner sep=1pt}, column sep=0.5cm, row sep=0.5cm]		
	\silt \A  
	\ar{ddd}[rotate=-90, yshift=5pt, xshift=-5pt]{\sim}[swap]{f^{(n+1)}_s} 
	\ar{rdd}{f^{(n)}_s}[swap,rotate=-30,pos=0.66]{\sim}
	\&  \& \& \CL \ar[mapsto]{ddd} 
	\ar[mapsto]{rdd}
	\&[0.25cm] 
	\\
	\\
	\& \silt \A_n  \& \& \& \CL_n \colonequals 
	\CL \underset{\A}{\overset{\mathbb{L}}{\otimes}} \A_n
	\,. \\
	\ar{ru}[rotate=30,pos=0.66]{\sim}[swap]{g^{(n)}_s}
	\silt \A_{n+1} 
	\& \& \& \CL_{n+1} \ar[mapsto]{ru}
	\end{tikzcd}
	\end{align*}
\item \label{cor/silt-completion2}
	A perfect complex $\CL$ of $\A$
	is silting if and only if  
	there is an isomorphism 
	$
	\CL \cong
	\varprojlim \CL_n
	$ in $\per \A$
	for an inverse system of silting complexes $\CL_n$ of $\A_n$ 
	such that $\CL_{n+1}$ is a lift of $\CL_n$ for any integer $n > 0$.
	\end{enumerate}
\end{cor}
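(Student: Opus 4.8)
The plan is to obtain part~(1) from three applications of Theorem~\ref{thm/embeddings2}~\eqref{thm/embeddings2b}, and part~(2) as a formal consequence—once $\varprojlim$ is made precise—of part~(1) together with the descent statement Proposition~\ref{prp/descent}.

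For part~(1), I would first observe that~\eqref{eq/strong-tor-ind4} persists under taking powers of $\ida$: since $(\ida^{k})^{m}=\ida^{km}$ we get $\Tor_{+}^{\Rx}(\A,\Rx/(\ida^{k})^{m})=\Tor_{+}^{\Rx}(\A,\Rx_{km})=0$ for all $m>0$, so both hypotheses~\eqref{eq/strong-tor-ind00} and~\eqref{eq/strong-tor-ind} of Theorem~\ref{thm/embeddings2} hold for the proper ideal $\ida^{k}$ of $\Rx$; applying that theorem with $\ida$ replaced by $\ida^{n}$, resp.\ $\ida^{n+1}$, makes $f_{s}^{(n)}$ and $f_{s}^{(n+1)}$ isomorphisms of posets. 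For $g_{s}^{(n)}$ I would instead work over the complete local Noetherian ring $\Rx_{n+1}$ with the proper ideal $\bar\ida\colonequals\ida^{n}\Rx_{n+1}$, so that $\Rx_{n+1}/\bar\ida=\Rx_{n}$ and $\A_{n+1}\otimes_{\Rx_{n+1}}\Rx_{n}\cong\A_{n}$. In this setup hypothesis~\eqref{eq/strong-tor-ind00} reads $\Tor_{+}^{\Rx_{n+1}}(\A_{n+1},\Rx_{n})=0$, which I would deduce from the transitivity of Tor-independence, Proposition~\ref{prp/tor-ind}~\eqref{prp/tor-ind1}, applied to $\Gamma=\A$ and the ideals $\ida^{n+1}\subseteq\ida^{n}$ of $\Rx$ using the two vanishings $\Tor_{+}^{\Rx}(\A,\Rx_{n+1})=\Tor_{+}^{\Rx}(\A,\Rx_{n})=0$ from~\eqref{eq/strong-tor-ind4}; hypothesis~\eqref{eq/strong-tor-ind} for $(\Rx_{n+1},\bar\ida,\A_{n+1})$ then holds because $\bar\ida^{m}=\ida^{nm}\Rx_{n+1}$ already vanishes in $\Rx_{n+1}$ as soon as $m\geq 2$, leaving only the $m=1$ case treated above. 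Hence $g_{s}^{(n)}$ is an isomorphism of posets, and associativity of the derived tensor product yields a natural isomorphism $(\dash\overset{\mathbb{L}}{\otimes}_{\A}\A_{n+1})\overset{\mathbb{L}}{\otimes}_{\A_{n+1}}\A_{n}\cong\dash\overset{\mathbb{L}}{\otimes}_{\A}\A_{n}$, i.e.\ $g_{s}^{(n)}\circ f_{s}^{(n+1)}=f_{s}^{(n)}$, which is the required commutative triangle.

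For part~(2) the first task is to give $\varprojlim\CL_{n}$ a meaning. Given an inverse system of silting complexes $\CL_{n}\in\silt\A_{n}$ with $\CL_{n+1}$ a lift of $\CL_{n}$, I would pick minimal complexes $\CNb_{n}\in\Kom^{\mathrm b}(\proj\A_{n})$ representing $\CL_{n}$; since $\A_{n}=\A_{n+1}/\ida^{n}\A_{n+1}$ with $\ida^{n}\A_{n+1}\subseteq\rad\A_{n+1}$, the complex $\CNb_{n+1}\otimes_{\A_{n+1}}\A_{n}$ is again minimal and represents $\CL_{n}$, hence is isomorphic as a complex to $\CNb_{n}$ by the rigidity of minimal complexes recalled in Section~\ref{sec/lifting}, and all $\CNb_{n}$ have their nonzero terms in the same degrees. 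Then, exactly as in the proof of Proposition~\ref{prp/twisted-lift}, the $I$-adic completeness of $\A$ (Lemma~\ref{lem/lifting}) makes the termwise inverse limit $\varprojlim_{n}\CNb_{n}$ a bounded complex of finitely generated projective $\A$-modules whose reduction modulo $\ida^{m}\A$ is $\CNb_{m}$ for every $m$; I write $\varprojlim_{n}\CL_{n}\in\per\A$ for the object it represents, so that $(\varprojlim_{n}\CL_{n})\overset{\mathbb{L}}{\otimes}_{\A}\A_{m}\cong\CL_{m}$. With this convention the forward implication in~\eqref{cor/silt-completion2} is immediate: for $\CL\in\silt\A$ the complexes $\CL_{n}\colonequals\CL\overset{\mathbb{L}}{\otimes}_{\A}\A_{n}$ are silting by part~(1) and form a compatible lifting system, while a minimal representative $\CNb$ of $\CL$ reduces to minimal representatives $\CNb\otimes_{\A}\A_{n}$ of the $\CL_{n}$ with termwise limit $\CNb$, whence $\CL\cong\varprojlim_{n}\CL_{n}$. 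Conversely, if $\CL\in\per\A$ and $\CL\cong\varprojlim_{n}\CL_{n}$ for such a system, then $\CL\overset{\mathbb{L}}{\otimes}_{\A}\A_{1}\cong\CL_{1}\in\silt\A_{1}=\silt\AI$, and Proposition~\ref{prp/descent}, applicable because $\CL$ is perfect, gives $\CL\in\silt\A$.

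The step I expect to be the main obstacle is this bookkeeping around the inverse limit in part~(2): verifying that $\varprojlim\CL_{n}$ is a well-defined object of $\per\A$ independent of the chosen minimal representatives, that its formation commutes with $\dash\overset{\mathbb{L}}{\otimes}_{\A}\A_{m}$, and that the tautological system $(\CL\overset{\mathbb{L}}{\otimes}_{\A}\A_{n})$ recovers $\CL$ in the limit. All of this is governed by $I$-adic completeness and the rigidity of minimal complexes of projectives over semiperfect rings, machinery already set up in Section~\ref{sec/lifting}, so it amounts to a careful reuse rather than a genuinely new difficulty; the substantive input is confined to part~(1) and to the previously established Proposition~\ref{prp/descent}.
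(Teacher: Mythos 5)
Your proposal is correct and follows essentially the same route as the paper: part~(1) rests on Theorem~\ref{thm/embeddings2} together with Proposition~\ref{prp/tor-ind}~\eqref{prp/tor-ind1} supplying $\Tor_+^{\Rx_{n+1}}(\A_{n+1},\Rx_n)=0$, and part~(2) on $\ida$-adic completeness, minimal complexes of projectives and the descent statement of Proposition~\ref{prp/descent}. The only (harmless) deviation is that you obtain bijectivity of $f_s^{(n)}$ and $g_s^{(n)}$ by verifying condition~\eqref{eq/strong-tor-ind} directly for the ideal $\ida^{n}$, respectively for $\ida^{n}\Rx_{n+1}$ over the base $\Rx_{n+1}$, whereas the paper invokes only the bijectivity of $f_s^{(1)}$ and deduces the rest from the factorizations $f_s^{(n)}=g_s^{(n)}\circ f_s^{(n+1)}$ of injective maps.
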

\begin{proof}
	\begin{enumerate}
		\item
	According to Theorem~\ref{thm/embeddings2}~\eqref{thm/embeddings2a}
	the map
	$f^{(n)}_s$ is a well-defined embedding
	for any $n > 0$.
	Since
	$\Tor_{+}^{\Rx}(\A,\Rx_{n+1} \,\oplus\, \Rx_n) =0$,
	it holds that $\Tor_+^{\Rx_{n+1}}(\A_{n+1}, \Rx_n)=0$ by 
	Proposition~\ref{prp/tor-ind}~\eqref{prp/tor-ind1}.
	By Theorem~\ref{thm/embeddings2}~\eqref{thm/embeddings2a} and~\eqref{thm/embeddings2b}, each map $g^{(n)}_s$
	is a well-defined embedding 
	such that $f^{(n)}_s = g^{(n)}_s \cdot f^{(n+1)}_s$ and
	the map $f^{(1)}_s$ is bijective. 
	So the maps $f^{(n)}_s$ and $g^{(n)}_s$ are bijective for any $n > 0$.
	\item
	To show the `only if'-implication of the second claim, let $\CL$ be a silting complex from
$\Hotb(\proj \A)$.
	Since $f^{(n)}_s$ is well-defined, it holds that $\CL_n$ is a silting complex of $\A_n$. Because $\ida \subseteq \mx$  and the ring $\Rx$ is complete local, 
	the complex $\CL$ is $I$-adically complete with respect to the ideal $I \colonequals\A \ida$.
	
	Vice versa, let $(\CL_n)_{n \in \N}$ be a sequence of silting complexes of $\A_n$ which are iterated lifts of each other. 
	According to Lemma~\ref{lem/minimal-lift},
	we may assume that this sequence is given by iterated lifts of minimal complexes $\CL_n$ from $\Hotb(\proj \A_n)$.
	Then the complex $\varprojlim \CL_n$ is a lift of the silting complex $\CL_1$, and thus a silting complex by the right upward implication of Proposition~\ref{prp/descent}.
	This shows the converse. \qedhere
	\end{enumerate}
\end{proof}

There is an analogue of the last result for tilting complexes under a stronger condition.
\begin{cor}\label{cor/tilt}
	Assume that $\Tor_1^{\Rx}(\A,\Rx/\ida) = 0$ and that $\Rx$ is normally flat along the ideal $\ida$.
	Then the following statements hold.
	\begin{enumerate}
		\item
	There is a commutative diagram of bijections of sets
	\begin{align}\label{tilting-completion}
	\begin{tikzcd}[ampersand replacement=\&, cells={outer sep=2pt, inner sep=1pt}, column sep=0.5cm, row sep=0.5cm]
	\tilt^{\Rx/\ida} \A  
	\ar{ddd}[rotate=-90, yshift=5pt, xshift=-5pt]{\sim}[swap]{f_t^{(n+1)}} 
	\ar{rdd}{f^{(n)}_t}[swap,rotate=-30,pos=0.66]{\sim}
	\&  \& \& \CT \ar[mapsto]{ddd} 
	\ar[mapsto]{rdd}
	\&[0.25cm] 
	\\
	\\
	\& \tilt^{\Rx/\ida} \A_n 
	\& \& \& 
	\CT_n \,.
	\\
	\ar{ru}[rotate=30,pos=0.66]{\sim}[swap]{
		g^{(n)}_t
	}
	\tilt^{\Rx/\ida} \A_{n+1} 
	\& \& \& \CT_{n+1} \ar[mapsto]{ru}
	\end{tikzcd}
	\end{align}
\item
	A perfect complex $\CT$ of $\A$
	is contained in $\tilt^{\Tx} \A$ if and only if 
	there is an isomorphism 
	$
	\CT \cong
	\varprojlim \CT_n
	$ in $\per \A$
	for an inverse system of complexes $\CT_n$ from $\tilt^{\Rx/\ida} \A_n$ 
	such that $\CT_{n+1}$ is a lift of $\CT_n$ for any integer $n > 0$.
	\end{enumerate}
\end{cor}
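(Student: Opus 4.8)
The plan is to run the argument of Corollary~\ref{cor/silt-completion} for tilting instead of silting complexes, controlling the superscripts $\tilt^{\Rx/\ida}(-)$ by means of the transitivity results of Subsection~\ref{subsec/transitivity} (Theorem~\ref{thm/transitive}) rather than Theorem~\ref{thm/embeddings2} alone. Two preliminary observations drive everything. By Remark~\ref{rmk:hierarchy} (which combines Propositions~\ref{prp/conormal-rigid} and~\ref{prp/2-rigid-lift}) the hypotheses force condition~\eqref{eq/strong-tor-ind}, that is $\Tor_+^{\Rx}(\A,\Rx/\ida^n)=0$ for all $n>0$ (so in particular~\eqref{eq/tor-ind0}), while Proposition~\ref{prp/conormal-rigid} shows that $\Rx/\ida$ is Tor-rigid over $\Rx$ and over $\Rx/\ida^m$ for every $m>0$. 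Moreover, since $\Rx$ is normally flat along $\ida$, each $\ida^j/\ida^{j+1}$ is a finitely generated flat, hence free, $\Rx/\ida$-module, so Lemma~\ref{lem/tor-commutes} turns $\Tor_+^{\Rx}(M,\Rx/\ida)=0$ into $\Tor_+^{\Rx}(M,\ida^j/\ida^{j+1})=0$ for all $j$, and then Lemma~\ref{lem/algebra-conormals}~\eqref{tor-relations} yields $\Tor_+^{\Rx}(M,\Rx/\ida^n)=0$ for all $n$; applied to $M=\End_{\D(\A)}(\CT)$ this gives the identity of subsets $\tilt^{\Rx/\ida}\A=\tilt^{\Rx/\ida}\A\cap\tilt^{\Rx/\ida^n}\A$, and its analogue after replacing $\Rx,\A$ by $\Rx/\ida^{n+1},\A_{n+1}$.

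For part~(1), fix $n\ge 1$ and apply Theorem~\ref{thm/transitive} with the nested ideals $\ida\subseteq\idb$ there specialised to $\ida^n\subseteq\ida$ and with $\sfN\colonequals\Rx/\ida$; the Tor-vanishing demanded there is exactly~\eqref{eq/strong-tor-ind}. By the previous observation the triple-intersection set $\tilt^{\sfN,\Tx,\Sxx}\A$ occurring in diagram~\eqref{eq/silt-bijections} is just $\tilt^{\Rx/\ida}\A$ and the map $\iota$ there is the identity, so the vertical map $f^{\sfN,\Tx}_t$ is precisely the change-of-rings embedding $f^{(n)}_t\colon\tilt^{\Rx/\ida}\A\hookrightarrow\tilt^{\Rx/\ida}\A_n$. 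In this instance $h^{\sfN}_t$ is the comparison $\tilt^{\Rx/\ida}\A\to\tilt\AI$, bijective by Theorem~\ref{thm/transitive}~\eqref{tilt-bijection1} because $\Rx/\ida$ is Tor-rigid over $\Rx$, and $g^{\sfN}_t$ is the comparison $\tilt^{\Rx/\ida}\A_n\to\tilt\AI$, bijective by Theorem~\ref{thm/transitive}~\eqref{tor-rigid-T-S} because $\Rx/\ida$ is Tor-rigid over $\Rx/\ida^n$; as $h^{\sfN}_t=g^{\sfN}_t\circ f^{(n)}_t$ (using $\iota=\id$), it follows that $f^{(n)}_t$ is bijective. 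Finally $g^{(n)}_t$ is the change-of-rings functor $\A_{n+1}\to\A_n$; the transitivity isomorphism $\HH\cong\GG\circ\FF$ of Setup~\ref{setup/transitive} gives $f^{(n)}_t=g^{(n)}_t\circ f^{(n+1)}_t$, which together with surjectivity of $f^{(n+1)}_t$ onto $\tilt^{\Rx/\ida}\A_{n+1}$ makes $g^{(n)}_t$ a well-defined map into $\tilt^{\Rx/\ida}\A_n$, and then bijective since $f^{(n)}_t$ and $f^{(n+1)}_t$ are. This is the commutative diagram~\eqref{tilting-completion} of bijections.

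Part~(2) then follows as the second part of Corollary~\ref{cor/silt-completion}. If $\CT\in\tilt^{\Rx/\ida}\A$, choose a minimal representative in $\Hotb(\proj\A)$ and set $\CT_n\colonequals\CT\otimes_{\A}\A_n$: well-definedness of $f^{(n)}_t$ gives $\CT_n\in\tilt^{\Rx/\ida}\A_n$, $\CT_{n+1}$ is a lift of $\CT_n$, and $\CT\cong\varprojlim\CT_n$ in $\per\A$ because $\A$, and hence $\CT$, is $\ida\A$-adically complete by Lemma~\ref{lem/lifting}. Conversely, from such an inverse system Lemma~\ref{lem/minimal-lift} lets one take the $\CT_n$ minimal in $\Hotb(\proj\A_n)$, so $\CT\colonequals\varprojlim\CT_n$ lies in $\Hotb(\proj\A)$ and $\PCT\cong\CT_1$ is a tilting complex of $\A_1=\AI$; since $\Rx/\ida$ is Tor-rigid over $\Rx$, Proposition~\ref{prp/descent} for the change of rings $\A\to\AI$ with $\sfN=\Rx/\ida$ (noting $\tilt^{\Rx/\ida,\Rx/\ida}\A=\tilt^{\Rx/\ida}\A$ and $\tilt^{\Rx/\ida}\AI=\tilt\AI$) gives $\CT\in\tilt^{\Rx/\ida}\A$, which is the set $\tilt^{\Tx}\A$ of the statement. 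I expect the only real difficulty to be organisational: one cannot feed the change of rings $\A\to\A_n$ directly into Theorem~\ref{thm/embeddings2}~\eqref{thm/embeddings2c}, because $\Rx/\ida^n$ need not be Tor-rigid over $\Rx$, so bijectivity of $f^{(n)}_t$ must be squeezed out of the two bijections $h^{\sfN}_t$ and $g^{\sfN}_t$ via the transitivity device of Subsection~\ref{subsec/transitivity}, and this is what forces the bookkeeping of the first paragraph identifying the one-torsion-group definition of $\tilt^{\Rx/\ida}(-)$ with the triple-intersection sets produced by Theorem~\ref{thm/transitive}.
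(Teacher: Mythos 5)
Your proposal is correct and follows essentially the same route as the paper: both reduce everything to Theorem~\ref{thm/transitive} together with the normal-flatness identification $\tilt^{\Rx/\ida}(-)$ $=$ (intersected Tor-vanishing sets) via Lemma~\ref{lem/algebra-conormals}~\eqref{tor-relations}, Tor-rigidity from Proposition~\ref{prp/conormal-rigid}, and condition~\eqref{eq/strong-tor-ind} from Remark~\ref{rmk:hierarchy}, with part~(2) settled exactly as in Corollary~\ref{cor/silt-completion}~\eqref{cor/silt-completion2} using Proposition~\ref{prp/descent}. The only difference is bookkeeping: you feed the transitivity theorem the pairs $(\ida^n,\ida)$ with $\sfN=\Rx/\ida$ and get bijectivity of each $f_t^{(n)}$ from the two bijections onto $\tilt\AI$ (using Tor-rigidity of $\Rx/\ida$ over $\Rx/\ida^n$ for $g_t^{\sfN}$), whereas the paper uses the pairs $(\ida^{n+1},\ida^n)$ with $\sfN_n=\bigoplus_{i=1}^n\Rx/\ida^i$ and propagates bijectivity from $f_t^{(1)}$; both arguments are valid and yield the same diagram.
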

\begin{proof}
	The assumptions above imply condition~\eqref{eq/strong-tor-ind4} according to Remark~\ref{rmk:hierarchy}.
	\begin{enumerate}
		\item
	Applying Theorem~\ref{thm/transitive} to the pair of ideals  $(\ida^{n+1},\ida^n)$ 
	and the $\Rx/\ida^n$-module $\sfN_n \colonequals \bigoplus_{i=1}^n \Rx/\ida^i$
	for each $n > 0$ 
	yields a commutative diagram of embeddings
	\begin{align}
	\label{diag/completions}
	\begin{td}
	\tilt^{\sfN_{n+1}} \A \ar[hookrightarrow]{r}{\iota_n} \ar[hookrightarrow]{d}{f^{(n+1)}_t}
	\& 	\tilt^{\sfN_n} \A \ar[hookrightarrow]{r} 
	\ar[hookrightarrow]{d}{f^{(n)}_t}
	\& \ldots \& \tilt^{\sfN_2} \A \ar[hookrightarrow]{d}{f^{(2)}_t}
	\ar[hookrightarrow]{r}{\iota_1} \& \tilt^{\sfN_1} \A \ar[hookrightarrow]{d}{f^{(1)}_t} 
	\\
	\tilt^{\sfN_{n}} \A_{n+1} \ar[hookrightarrow]{r}[yshift=1pt]{g^{(n,n-1)}_t} 
	\& 	\tilt^{\sfN_{n-1}} \A_n \ar[hookrightarrow]{r} \& \ldots \& \tilt^{\sfN_1} \A_2 \ar[hookrightarrow]{r}{g^{(1)}_t} \& \tilt \A_1\,. 
	\end{td}
	\end{align}
	Lemma~\ref{lem/algebra-conormals}~\eqref{tor-relations}
	implies that
	$\tilt^{\Rx/\ida} \A   = \tilt^{\sfN_n} \A$ for any $n > 0$, 
	and thus all inclusions of the top row above are equalities.
	Moreover, 
	the $\Rx$-module
	$\Rx/\ida$ is Tor-rigid 
	by 
	Proposition~\ref{prp/conormal-rigid}.
	So the map $f_t^{(1)}$ is bijective by Theorem~\ref{thm/embeddings2}~\eqref{thm/embeddings2c}, and thus all maps in the diagram above are bijective.
	For each $n > 0$ the composition 
of the maps in the bottom row 
	is also the composition of the maps
	$$
	\begin{td}
	\tilt^{\sfN_{n}} \A_{n+1}  \ar[hookrightarrow]{r} \& \tilt^{\Rx/\ida} \A_{n+1} \ar{r}{g_t^{(n)}} \& \tilt \A_1\,, 
 \end{td}
	$$		
where the last map is well-defined 
since $\Tor_+^{\Rx}(\A_, \Rx_{n+1} \oplus \Rx/\ida) = 0$.
	Therefore, each set $\tilt^{\sfN_{n}} \A_{n+1}$
	is equal to $\tilt^{\Rx/\ida} \A_{n+1}$ and
	diagram~\eqref{diag/completions} specializes
	 to diagram~\eqref{tilting-completion}.
	\item
	The last claim follows from the arguments in the proof of Corollary~\ref{cor/silt-completion}~\eqref{cor/silt-completion2}. \qedhere
	\end{enumerate}
\end{proof}

\section{Silting embeddings and descent for tensor products of algebras}
\label{sec/embeddings}

In this section, we turn to a more general situation than Setup~\ref{setup/main}
and consider $\Rx$-algebras $\A$ 
over a possibly non-local and non-complete commutative Noetherian ring $\Rx$.
The main results 
are summarized in Subsection 
\ref{subsec:app2}.

\begin{setup}\label{setup/flat}		
In the next two subsections, we fix the following assumptions.
	\begin{itemize}
		\item Let $\Rx$ be a commutative Noetherian ring. Again, we will abbreviate $\otimes_{\Rx}$ with $\otimes$.
		\item Let $\A$ and $(\Gamma_i)_{i \in I}$ be $\Rx$-algebras 
		such that 
		$\Tor_+^{\Rx}(\A,\Gamma_i) = 0$ for each index $i \in I$.
		For each index $i \in I$ we abbreviate
		$$
		\AoB \colonequals \A \otimes \Gamma_i\,.
		$$
		\item Let $\mathcal{A}$ be a \emph{thick} abelian subcategory of $\Md \Rx$ 
		in the sense that for any exact sequence of $\Rx$-modules
		$$ \begin{td} A_1 \ar{r} \& A_2 \ar{r} \& M \ar{r}\& A_4 \ar{r}\& A_5 \end{td}$$
		  with $A_1, A_2,A_4,A_5 \in \mathcal{A}$ it follows that $M \in \Acat$.
		Assume that the $\Rx$-module
		$\A$ is an object of $\mathcal{A}$ and that
		\begin{align}\label{eq/reflect-zero}
			\text{the functor }\begin{td}
	 \mathcal{A} \ar{r} \& {\displaystyle \prod_{i \in I}} \Md \Gamma_i\,,\end{td} \begin{td} M \ar[mapsto]{r} \& {\displaystyle \prod_{i \in I}} M \otimes \Gamma_i
	 \end{td}\text{ reflects zero objects.}
 \end{align}
		\item Let $(\Sx_i)_{i \in I}$ be a family of commutative rings
		such that
		$\Gamma_i$ is an $\Sx_i$-algebra 
		for each index $i \in I$.
	\end{itemize}
For each index $i \in I$ 
we obtain a commutative diagram of rings
	and a functor 
	$$
	\begin{td}
	\&
	\Rx \ar{r} \ar{d} \& \A \ar{d}  \&[-1cm] \& \D^-_{\Acat}(\A) \ar{d}[swap,font=\normalsize]{\FF_i} \& \CM \ar[mapsto]{d}  \&[-1cm] \\
	\Sx_i \ar{r} \& \Gamma_i \ar{r} \& \AoB  \&  \&  \D^-(\AoB) \& 
	\CM_i 
\& \colonequals 
	\CM 
\underset{\A}{\overset{\mathbb{L}}{\otimes}} \, \AoB
	 \,.
	\end{td}
	$$
We redefine the functor $\FF$ by 
	\begin{align}
		\label{eq:F-index}
	\begin{td}
	\FF \colon
	\D^-_{\Acat}(\A) \ar{r} \& {\displaystyle \prod_{i \in I}} \D^-(\AoB), \& \CM \ar[mapsto]{r} \& 
	{\displaystyle \prod_{i \in I}} \CM_i \,.
	\end{td} 
	\end{align}
\end{setup}
The category $\per \A$ is a full subcategory of the triangulated category $\D^-_{\Acat}(\A)$, since the latter contains $\A$.
\begin{rmk}
	In the final applications, the family $(\Gamma_i)_{i \in I}$ of algebras will coincide with the family $(S_i)_{i \in I}$
	of commutative rings, which will be given
	as follows.
	\begin{enumerate}[label={$\mathsf{(\Gamma\arabic*)}$}, ref={$\mathsf{(\Gamma\arabic*)}$}] 
				\item \label{Gamma3} By the local rings $(\Rx_{\px})_{\px \in \Spec \Rx}$, their completions  $(\widehat{R}_{\px})_{\px \in \Spec \Rx}$, or variations thereof where the prime spectrum $\Rx$ is replaced by its maximal spectrum. 
		\item \label{Gamma2} By only one commutative faithfully flat extension $\Sx$ of the ring $\Rx$.
				\item \label{Gamma1} By only one quotient ring $\Rx/\ida$ for an ideal $\ida \subseteq \rad \Rx$ with $\Tor_+^{\Rx}(\A,\Rx/\ida) = 0$.  
		\end{enumerate}
	In the first two cases
	we may choose $\mathcal{A} \colonequals \Md \Rx$, whereas in the last case $\A$ will be assumed to be a Noetherian $\Rx$-algebra and $\mathcal{A}$ will be set to $\md \Rx$.
	\end{rmk}

\subsection{Ascent and descent of presilting complexes revisited}

The next statement yields analogues of the key implications
in Proposition~\ref{prp/keyA} for Setup~\ref{setup/flat}.
\begin{prp}\label{prp/key-imp2}
	For any complexes $\CL \in \per \A$ and $\CM \in \D^-_{\Acat}(\A)$ 
	the following statements hold.
	\begin{enumerate}
		\item \label{prp/key-imp2a} The $\Rx$-module $\Hom_{\D(\A)}(\CL,\CM[i])$ is contained in $\mathcal{A}$ for any integer $i \in \Z$.
		\item \label{prp/key-imp2b} The complexes $\CL$ and $\CM$ satisfy the equivalences
			\begin{align}\label{eq/key-implications2}
			\CL \geq \CM 
			 &\quad \Leftrightarrow  \quad
			\CL_i \geq \CM_i \text{ for any }i \in I,
\\
			\label{eq/perp2}
			\CL \perp \CM  & \quad \Leftrightarrow \quad  \CL_i \perp \CM_i \text{ for any }i \in I. \end{align}
		Moreover, if $\CL \geq \CM$ holds,  
		for each index $i \in I$
		there is a commutative diagram 
		\begin{align}
			\label{eq/hom}
				\begin{td}
			\Hom_{\D(\A)}(\CL,\CM) \ar{rd}[swap]{\eta_i} \ar{rr}{\FF_i} \&  \&
			\Hom_{\D(\AoB)}(\CL_i, \CM_i) \\
			\&
			\Hom_{\D(\A)}(\CL,\CM) \otimes \Gamma_i \ar{ru}[rotate=30]{\sim}[swap]{\gamma_i} \&
		\end{td}
	\end{align}
where $\eta_i$ denotes the unit and $\gamma_i$ 
is an isomorphism of $\Sx_i$-modules.
		\item \label{prp/key-imp2c}
		If $\CL = \CM$ is a presilting complex of $\A$, 
 the map $\gamma_i$ 
		is an isomorphism of $\Sx_i$-algebras for each index $i \in I$.
	\end{enumerate}
\end{prp}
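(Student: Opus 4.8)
The plan is to reduce everything to the single-quotient case already established in Section~\ref{sec/refined-Kuenneth} and Section~\ref{sec/ascent-descent}, applied one index $i \in I$ at a time, and to use the hypothesis~\eqref{eq/reflect-zero} only to upgrade the family of per-index equivalences into a genuine equivalence over $\A$. First I would fix projective representatives $\CP \in \Hotb(\proj \A)$ for $\CL$ and $\CQ \in \Hot^-(\proj \A)$ for $\CM$, and set $\CK \colonequals \Hom^{\bt}_{\A}(\CP,\CQ)$. Exactly as in Remark~\ref{rmk/right-bounded}, $\CK$ lies in $\Hot^-(\add \A_{\Rx})$; since $\A \in \Acat$ and $\Acat$ is a thick abelian subcategory of $\Md \Rx$ closed under finite direct sums and under the cohomology of right-bounded complexes of its objects (it is thick in the stated five-term sense), it follows that each $\Ho^i(\CK) \cong \Hom_{\D(\A)}(\CL,\CM[i])$ lies in $\Acat$. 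This is part~\eqref{prp/key-imp2a}. Moreover, because $\Tor_+^{\Rx}(\A,\Gamma_i)=0$, the complex $\CK$ satisfies the hypotheses of Setup~\ref{setup/complexes} with $\B \colonequals \Gamma_i$ — here the only subtlety is that $\Gamma_i$ need not be finitely generated over $\Rx$, so I would invoke Remark~\ref{rmk/generalize}, noting that the relevant Nakayama-type input for~\eqref{eq/reflect-zero} is supplied not by an individual $\Gamma_i$ but by the joint reflection-of-zero hypothesis; concretely, Lemma~\ref{lem/hom-complex} and Lemma~\ref{lem/translation}~\eqref{lem/translations1},~\eqref{lem/translations2} go through verbatim with $\B$ replaced by any $\Gamma_i$ and $\Sx$ by $\Sx_i$, yielding for each $i$ the isomorphisms $\Ho^n(\CK \otimes \Gamma_i)\cong \Hom_{\D(\AoB)}(\CL_i,\CM_i[n])$ and the Künneth/unit diagram~\eqref{eq/morphisms} specialized to $\Gamma_i$, $\Sx_i$.

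For part~\eqref{prp/key-imp2b} I would argue index-by-index. The forward implications in~\eqref{eq/key-implications2} and~\eqref{eq/perp2} are immediate: $\CL \geq \CM$ means $\Ho^+(\CK)=0$, whence $\Ho^+(\CK\otimes\Gamma_i)=0$ by the Künneth trick (Lemma~\ref{lem/keyA1a}), i.e.\ $\CL_i \geq \CM_i$; similarly for $\perp$. For the reverse implications, suppose $\CL_i \geq \CM_i$ for all $i$, i.e.\ $\Ho^+(\CK\otimes\Gamma_i)=0$ for all $i$. Let $m \colonequals \max\{ j \mid \Ho^j(\CK)\neq 0\}$, which is finite since $\CK$ is right-bounded, and suppose $m>0$. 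By Lemma~\ref{lem/keyA1a} applied with $\B \colonequals \Gamma_i$ we get $\Ho^m(\CK)\otimes \Gamma_i \cong \Ho^m(\CK\otimes \Gamma_i)=0$ for every $i$; but $\Ho^m(\CK)\in\Acat$, so~\eqref{eq/reflect-zero} forces $\Ho^m(\CK)=0$, a contradiction. Hence $m\le 0$, i.e.\ $\Ho^+(\CK)=0$, i.e.\ $\CL\geq\CM$. The $\perp$ statement follows by applying this argument to all shifts of $\CK$. Finally, when $\CL\geq\CM$ holds, $\Ho^+(\CK)=0$ and the second assertion of Proposition~\ref{prp/keyA1b} (bijectivity of $\kappa^0$) applies over $\Rx$, hence over each $\Gamma_i$; transporting along the isomorphisms of Lemma~\ref{lem/translation}~\eqref{lem/translations2} specialized to $\Gamma_i, \Sx_i$ gives the commutative triangle~\eqref{eq/hom} with $\gamma_i$ an $\Sx_i$-linear isomorphism.

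Part~\eqref{prp/key-imp2c} is then formal: if $\CL=\CM$ is presilting, then $\CL\geq\CL$, so the triangle~\eqref{eq/hom} exists for each $i$, and Lemma~\ref{lem/translation}~\eqref{algebra-morphism} identifies $\gamma_i$ as a morphism of $\Sx_i$-algebras, which is bijective by part~\eqref{prp/key-imp2b}. I expect the only genuine obstacle to be bookkeeping around Remark~\ref{rmk/generalize}: the results of Subsection~\ref{subsec/kss} were stated for $\RI=\Rx/\ida$ with $\ida\subseteq\mx$, and I must make sure that the single place where completeness/locality was used — Nakayama's Lemma to deduce $\Ho^m(\CK)=0$ from $\Ho^m(\CK)\otimes\RI=0$ — is now cleanly replaced by the thickness of $\Acat$ together with~\eqref{eq/reflect-zero}, and that no other hidden use of the local hypothesis remains in Lemma~\ref{lem/keyA1a} or Proposition~\ref{prp/keyA1b}; inspecting their proofs, the Künneth trick itself is purely spectral-sequence-theoretic and needs no such hypothesis, so this replacement goes through.
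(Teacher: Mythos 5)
Your treatment of parts~\eqref{prp/key-imp2b} and~\eqref{prp/key-imp2c} is essentially the paper's argument: form $\CK=\Hom^{\bt}_{\A}(\CP,\CQ)$, run the K\"unneth trick and the translations of Lemma~\ref{lem/translation} one index $i\in I$ at a time, and replace Nakayama's Lemma by the joint hypothesis~\eqref{eq/reflect-zero}. But your proof of part~\eqref{prp/key-imp2a} has a genuine gap, and this matters downstream. You claim ``exactly as in Remark~\ref{rmk/right-bounded}, $\CK$ lies in $\Hot^-(\add \A_{\Rx})$'' and then deduce $\Ho^i(\CK)\in\Acat$ from thickness of $\Acat$. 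In Setup~\ref{setup/flat}, however, $\A$ is an arbitrary $\Rx$-algebra and $\CM$ is only assumed to lie in $\D^-_{\Acat}(\A)$, so the resolution $\CQ$ consists of arbitrary (possibly infinitely generated) projective $\A$-modules; consequently the terms of $\CK$ lie only in $\Add \A_{\Rx}$, not in $\add \A_{\Rx}$, and a thick abelian subcategory $\Acat$ (think of $\md\Rx$) is not closed under the infinite coproducts of copies of $\A$ that occur. So ``the terms of $\CK$ are in $\Acat$, hence so is its cohomology'' does not get off the ground in the stated generality. The paper proves~\eqref{prp/key-imp2a} differently, by d\'evissage in the variable $\CL$: for $\CL\in\add\A$ one has $\Hom_{\D(\A)}(\CL,\CM[i])\in\Acat$ because it is a summand of a finite power of $\Ho^i(\CM)$, and the class of $\CL$ with this property is closed under shifts, summands and cones by the long exact Hom-sequence together with the five-term thickness of $\Acat$, hence contains $\langle\A\rangle=\per\A$. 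This is not cosmetic: part~\eqref{prp/key-imp2a} is exactly what licenses applying~\eqref{eq/reflect-zero} (which only speaks about modules in $\Acat$) to $\Ho^m(\CK)$ in your reverse implication for~\eqref{eq/key-implications2}, so your part (2) currently rests on the flawed part (1).

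A smaller point: the lemmas you invoke do not go through ``verbatim''. The proof of Lemma~\ref{lem/hom-complex} uses that the coefficient module is finitely presented over $\Rx$ to commute the tensor product past an infinite product, and the algebras $\Gamma_i$ of Setup~\ref{setup/flat} (localizations, completions) are typically not finitely presented. The needed variant is nevertheless true here, but for a different reason which you should state: since $\CL$ is perfect, $\CP$ has finitely generated terms, so only finite products and arbitrary coproducts of copies of $\A$ occur in $\CK$, and $\dash\otimes\Gamma_i$ commutes with both; the same observation (via Lemma~\ref{lem/tor-commutes}) is what gives $\Tor_+^{\Rx}(K^j,\Gamma_i)=0$ from $\Tor_+^{\Rx}(\A,\Gamma_i)=0$. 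With part (1) repaired along the paper's d\'evissage and this justification added, the rest of your outline matches the paper's proof.
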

\begin{proof}
	Let $\CL, \CM$ be as above and $i \in \Z$.
	Since $\Ho^i(\CM) \in \Acat$ and $\Acat$ is additive, it holds that $\Hom_{\D(\A)}(P,\CM[i]) \in \Acat$ for any stalk complex $P \in \add \A$.
	Using that
	$\Acat$ is a thick abelian subcategory and 
	$\langle \A \rangle = \per \A$, the first claim follows. 
	
	Let $\CP$ and $\CQ$ be projective resolutions of $\CL$ and $\CM$, respectively.
	Then $\CK\colonequals\Hom^{\bt}(\CP,\CQ) \in \Kom^-_{\Acat}(\Add \A_{\Rx})$.
	Since $\Tor_+^{\Rx}(\A,\Gamma_i) = 0$, 
	Lemma~\ref{lem/tor-commutes} 
	implies that $\Tor_{+}^{\Rx}(\CK,\Gamma_i) = 0$ for each $i \in I$. Moreover, if there is an integer $j \in \Z$ with $\Ho^j(\CK) \otimes 
	\Gamma_i = 0 $ for each $i \in I$ it follows that $\Ho^j(\CK) = 0$ 
	using assumption~\eqref{eq/reflect-zero}.
	This allows to apply variations of Proposition~\ref{prp/keyA1b}
	and Lemma~\ref{lem/translation} to deduce the remaining claims.
\end{proof}
The behavior of silting complexes
under change of rings in case the family $(\Gamma_i)_{i \in I}$ is given by the local rings $(\Rx_{\px})_{\px \in \Spec \Rx}$ was studied previously by Iyama and Kimura. In particular, they have shown 
that the relation $\geq$ is `local' on perfect complexes \cite{Iyama-Kimura}*{Proposition~2.7}.
\begin{cor}
	\label{cor/derived-Nakayama2}
	The following statements hold.
	\begin{enumerate}
		\item \label{cor/derived-Nakayama2a} The functor $\FF$ defined in~\eqref{eq:F-index} reflects zero objects.
		\item \label{cor/derived-Nakayama2b} Any perfect complex $\CL$ of $\A$ such that $\CL_i$ is a weak generator of $\D^-(\AoB)$
		for each index $i \in I$
		is itself a weak generator of
		$\D^-_{\Acat}(\A)$.
	\end{enumerate}
\end{cor}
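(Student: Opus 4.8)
\textbf{Proof plan for Corollary~\ref{cor/derived-Nakayama2}.}

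The plan is to deduce both statements directly from Proposition~\ref{prp/key-imp2}, mirroring the proofs of Corollary~\ref{cor/derived-Nakayama} and Lemma~\ref{lem/lift-weak-generator0} in the new generality of Setup~\ref{setup/flat}. For part~\eqref{cor/derived-Nakayama2a}, let $\CM \in \D^-_{\Acat}(\A)$ be a complex with $\CM_i \cong 0$ in $\D^-(\AoB)$ for every $i \in I$. First I would apply the equivalence~\eqref{eq/perp2} of Proposition~\ref{prp/key-imp2}~\eqref{prp/key-imp2b} with $\CL \colonequals \A$: since $\A_i \cong \AoB$ and $\AoB \perp \CM_i$ holds trivially for each $i$ (because $\CM_i \cong 0$), we obtain $\A \perp \CM$ in $\D(\A)$, that is, $\CM \cong 0$ in $\D^-_{\Acat}(\A)$. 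The only thing to check here is that $\A$ lies in $\per\A \subseteq \D^-_{\Acat}(\A)$, which is noted right after Setup~\ref{setup/flat}, so that Proposition~\ref{prp/key-imp2} applies with this choice of $\CL$. Note this also uses $\Tor_+^{\Rx}(\A,\Gamma_i)=0$, which is part of Setup~\ref{setup/flat}, and ultimately rests on the reflect-zero assumption~\eqref{eq/reflect-zero} encoded in $\mathcal{A}$.

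For part~\eqref{cor/derived-Nakayama2b}, let $\CL \in \per\A$ be such that $\CL_i$ is a weak generator of $\D^-(\AoB)$ for each $i \in I$. I would take an arbitrary $\CM \in \D^-_{\Acat}(\A)$ with $\CL \perp \CM$ and show $\CM \cong 0$. Applying~\eqref{eq/perp2} again (now with the given $\CL$) yields $\CL_i \perp \CM_i$ in $\D(\AoB)$ for every $i \in I$. Since $\CM_i \in \D^-(\AoB)$ and $\CL_i$ is a weak generator of that category in the sense recalled before Theorem~\ref{thm/derived-Morita}, the vanishing $\CL_i \perp \CM_i$ forces $\CM_i \cong 0$ for each $i$. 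By part~\eqref{cor/derived-Nakayama2a} just proved, $\FF$ reflects zero objects, hence $\CM \cong 0$ in $\D^-_{\Acat}(\A)$. Since $\CL \in \per\A \subseteq \D^-_{\Acat}(\A)$, this shows exactly that $\CL$ is a weak generator of $\D^-_{\Acat}(\A)$.

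I do not anticipate a genuine obstacle: the argument is a transcription of the complete-local proofs, and the one substantive input — that $\FF$ preserves and reflects the relation $\perp$ on pairs $(\CL,\CM)$ with $\CL$ perfect and $\CM \in \D^-_{\Acat}(\A)$ — is already furnished by Proposition~\ref{prp/key-imp2}~\eqref{prp/key-imp2b}. The only point deserving a moment's care is bookkeeping: making sure that each complex to which we apply Proposition~\ref{prp/key-imp2} genuinely satisfies its hypotheses (the first argument perfect, the second in $\D^-_{\Acat}(\A)$), and that $\CM_i$ is understood as $\CM \otimes^{\mathbb{L}}_{\A} \AoB$ so that ``$\CL_i$ is a weak generator of $\D^-(\AoB)$'' can be invoked against the object $\CM_i$ lying in $\D^-(\AoB)$. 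With those identifications in place the proof is two lines per part.
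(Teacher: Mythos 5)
Your proof is correct and is exactly the paper's argument: the paper proves part (1) by adapting Corollary~\ref{cor/derived-Nakayama} using the equivalence~\eqref{eq/perp2} (whose converse direction already encodes the assumption~\eqref{eq/reflect-zero}), and part (2) by repeating the argument of Lemma~\ref{lem/lift-weak-generator0} with part (1) in place of Corollary~\ref{cor/derived-Nakayama}. You have merely written out the details the paper leaves implicit, and your bookkeeping (that $\A$ and $\CL$ lie in $\per \A \subseteq \D^-_{\Acat}(\A)$ and that $\CM_i \in \D^-(\AoB)$) is exactly what is needed.
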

\begin{proof}
	The first claim follows
	by an adaptation of the proof of Corollary~\ref{cor/derived-Nakayama}
	using~\eqref{eq/perp2} and 
	\eqref{eq/reflect-zero}.
	The second claim follows from the first along the arguments in the proof of Lemma~\ref{lem/lift-weak-generator0}.
\end{proof}

\subsection{Ascent and descent of silting subcategories}

In this subsection we derive three propositions which can be combined to obtain silting embeddings and descent results in certain setups.
Since the perfect derived categories which we consider may not have the Krull-Remak-Schmidt property, we need to review some basic notions.

For any two additive subcategories $\Lcat$ and $\Mcat$ of $\D^-(\A)$ we  write $\Lcat \geq \Mcat$ if 
$\CL \geq \CM$ for any $\CL \in \Lcat$ and $\CM\in \Mcat$.
In particular,
$\add \CL \geq \add \CM$
is equivalent to
$\CL \geq \CM$.
Therefore,~\eqref{eq/key-implications2} 
can be reformulated as follows.
\begin{lem} \label{lem/relation}
	For any additive subcategories $\Lcat$ in $\per \A$ and  $\Mcat$ in $\D^-_{\Acat}(\A)$ it holds $\Lcat \geq \Mcat$ if and only if $f(\Lcat) \geq f(\Mcat)$. \qed
\end{lem}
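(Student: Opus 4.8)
\textbf{Proof plan for Lemma~\ref{lem/relation}.}
The statement is a direct reformulation of the equivalence~\eqref{eq/key-implications2} in Proposition~\ref{prp/key-imp2} once we unwind the notation surrounding additive subcategories. The plan is to reduce the assertion about subcategories to an assertion about single objects, and then invoke~\eqref{eq/key-implications2} termwise over the index set $I$. First I would recall that, by the definition given just before the lemma, $\Lcat \geq \Mcat$ means $\CL \geq \CM$ for all $\CL \in \Lcat$ and $\CM \in \Mcat$; and that, by~\eqref{eq:F-index}, $f(\Lcat) \geq f(\Mcat)$ should be read as the conjunction over all $i \in I$ of the relations $\Lcat_i \geq \Mcat_i$ inside $\D^-(\AoB)$, where $\Lcat_i \colonequals \{ \CL_i \mid \CL \in \Lcat\}$ and similarly for $\Mcat_i$. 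The crucial point is that $\Lcat \subseteq \per \A$ and $\Mcat \subseteq \D^-_{\Acat}(\A)$, so every pair $(\CL,\CM)$ with $\CL \in \Lcat$, $\CM \in \Mcat$ lies in the range of applicability of Proposition~\ref{prp/key-imp2}.

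For the forward direction, suppose $\Lcat \geq \Mcat$. Fix $i \in I$ and take any object of $\Lcat_i$, which is of the form $\CL_i$ for some $\CL \in \Lcat$, and any object of $\Mcat_i$, of the form $\CM_i$ for some $\CM \in \Mcat$. Since $\CL \geq \CM$ by hypothesis and $\CL \in \per \A$, $\CM \in \D^-_{\Acat}(\A)$, the implication ``$\CL \geq \CM \Rightarrow \CL_i \geq \CM_i$'' contained in~\eqref{eq/key-implications2} gives $\CL_i \geq \CM_i$. As $i$, $\CL$ and $\CM$ were arbitrary, this shows $f(\Lcat) \geq f(\Mcat)$. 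For the converse, suppose $f(\Lcat) \geq f(\Mcat)$, i.e.\ $\CL_i \geq \CM_i$ for every $i \in I$ and every $\CL \in \Lcat$, $\CM \in \Mcat$. Fix $\CL \in \Lcat$ and $\CM \in \Mcat$; then $\CL_i \geq \CM_i$ holds for all $i \in I$, so the reverse implication in~\eqref{eq/key-implications2} applies and yields $\CL \geq \CM$. Since this holds for all such $\CL, \CM$, we conclude $\Lcat \geq \Mcat$. The remark ``$\add \CL \geq \add \CM$ is equivalent to $\CL \geq \CM$'' used implicitly in the statement follows because a (finite) direct sum satisfies $\Hom$-vanishing in a given degree precisely when each summand does, so no separate argument is needed.

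There is essentially no obstacle here: the lemma is bookkeeping on top of Proposition~\ref{prp/key-imp2}~\eqref{prp/key-imp2b}. The only mild care needed is to make sure that the relation ``$\geq$'' between subcategories of $\D^-(\AoB)$ appearing on the right-hand side of the asserted equivalence is interpreted as holding simultaneously for all $i \in I$, consistent with the definition of $\FF$ as a product functor in~\eqref{eq:F-index}; once that convention is pinned down the proof is a two-line quantifier shuffle. I would therefore present the argument compactly, citing~\eqref{eq/key-implications2} for both directions and noting the reduction to single objects.

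\begin{proof}
	This is a restatement of the equivalence~\eqref{eq/key-implications2}. By definition, $\Lcat \geq \Mcat$ holds if and only if $\CL \geq \CM$ for all $\CL \in \Lcat$ and $\CM \in \Mcat$, and $f(\Lcat) \geq f(\Mcat)$ holds if and only if $\CL_i \geq \CM_i$ for all $i \in I$, all $\CL \in \Lcat$ and all $\CM \in \Mcat$. Since $\Lcat \subseteq \per \A$ and $\Mcat \subseteq \D^-_{\Acat}(\A)$, Proposition~\ref{prp/key-imp2}~\eqref{prp/key-imp2b} applies to every such pair $(\CL,\CM)$ and yields
	$$
	\CL \geq \CM \quad \Longleftrightarrow \quad \CL_i \geq \CM_i \text{ for all } i \in I.
	$$
	Quantifying over all $\CL \in \Lcat$ and $\CM \in \Mcat$ gives the claim.
\end{proof}
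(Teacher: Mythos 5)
Your proof is correct and matches the paper exactly: the paper states this lemma with no proof beyond the remark that it is a reformulation of the equivalence~\eqref{eq/key-implications2} from Proposition~\ref{prp/key-imp2}, using that the relation $\geq$ passes between an object and its additive hull. Your quantifier reduction to single objects, together with the observation that finite direct sums (and summands) preserve the Hom-vanishing condition, is precisely the intended argument.
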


An additive subcategory $\Lcat$ of $\per \A$ is \emph{presilting} if
any of its objects is presilting.
Such a subcategory is \emph{silting} if $\langle \Lcat \rangle = \per \A$.
Any silting subcategory is the additive hull $\add \CL$ of a silting complex $\CL$ of $\A$ by a result of Aihara and Iyama \cite{Aihara-Iyama}*{Proposition~2.20}. 
The set $\silt_{\Ccat} \A$ of silting subcategories of $\A$ admits a partial order with respect to $\geq$ by Theorem~\ref{thm/AI}.
Similar considerations apply to each ring $\AoB$.

The functor $\FF$ from~\eqref{eq:F-index} induces a map $f$
from the set of
additive subcategories in $\per \A$ 
to the set of additive subcategories in $\prod_{i \in I} \per(\AoB)$ 
via 
\begin{align} \label{eq/subcat-map}
\Lcat \longmapsto 
f(\Lcat)\colonequals \prod_{i\in I}
\add \FF_i(\Lcat) = 
\prod_{i \in I} \add \big(\Lcat 
\underset{\A}{\overset{\mathbb{L}}{\otimes}}  \AoB\big)
\,.
\end{align}
In general,
each subcategory $\FF_i(\Lcat)$ may not be closed under direct summands.
In case $\Lcat$ is given by the additive hull $\add \CL$ of a complex $\CL$ 
we may identify 
$f(\Lcat)$ with $\prod_{i \in I} \add \CL_i$. 

The next statement will provide a criterion for the injectivity of the restriction of the map $f$ to sets of presilting subcategories.
We will say that
the functor $\FF$ is \emph{full on a pair $(\Lcat, \Mcat)$ of subcategories} of $\per \A$ if $\FF$ is full on any pair $(\CL,\CM)$ 
of complexes $\CL \in \Lcat$ and $\CM \in \Mcat$ in the sense of~\eqref{eq/density}.
\begin{lem}\label{lem/uniqueness2}	
	Assume that the functor $\FF$ is full on both pairs $(\Lcat,\Mcat)$ and $(\Mcat,\Lcat)$ of additive subcategories $\Lcat, \Mcat$ of $\per \A$.
	Then $\Lcat = \Mcat$ if and only if 
	$f(\Lcat) = f(\Mcat)$.
\end{lem}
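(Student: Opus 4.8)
The plan is to mimic the proof of Lemma~\ref{lem/uniqueness1}, replacing single complexes by additive subcategories and using the idempotent-completeness argument from Lemma~\ref{lem/summands1}. The forward implication is trivial since $f$ is induced by a functor, so the content is the converse: assuming $f(\Lcat) = f(\Mcat)$, I want to show $\Lcat \subseteq \Mcat$ and $\Mcat \subseteq \Lcat$; by symmetry of the hypotheses it suffices to prove one inclusion.

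First I would pick an object $\CL \in \Lcat$ and show it lies in $\Mcat$. The hypothesis $f(\Lcat) = f(\Mcat)$ gives that for each index $i \in I$ the complex $\CL_i$ lies in $\add \FF_i(\Mcat)$, i.e.\ $\CL_i$ is a direct summand of $\bigoplus_k (\CM^{(k)})_i$ for finitely many objects $\CM^{(k)} \in \Mcat$. Since $\Mcat$ is additive I may replace the finite sum by a single object $\CM \colonequals \bigoplus_k \CM^{(k)} \in \Mcat$, so that $\CL_i$ is a direct summand of $\CM_i$ for each $i$; a priori, though, the splitting data could depend on $i$. The key point is to produce \emph{one} morphism over $\A$ whose image under each $\FF_i$ realizes $\CL_i$ as a summand of $\CM_i$. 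For this I would use fullness of $\FF$ on $(\CL,\CM)$ and on $(\CM,\CL)$: the map
\[
\Hom_{\D(\A)}(\CL,\CM) \longrightarrow \prod_{i\in I}\Hom_{\D(\AoB)}(\CL_i,\CM_i)
\]
is surjective (and likewise with $\CL,\CM$ swapped), so I can lift the family $(\iota_i)_i$ of splitting monomorphisms to $\alpha\colon\CL\to\CM$ and the family $(\pi_i)_i$ of retractions to $\beta\colon\CM\to\CL$. Then $\beta\alpha - \id_{\CL} \in \End_{\D(\A)}(\CL)$ maps to $0$ under every $\FF_i$, hence under $\FF$; but $\End_{\D(\A)}(\CL)$ lies in $\Acat$ by Proposition~\ref{prp/key-imp2}~\eqref{prp/key-imp2a}, so the reflection-of-zero hypothesis~\eqref{eq/reflect-zero} forces $\beta\alpha = \id_{\CL}$. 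Hence $\alpha$ is a split monomorphism and $\CL$ is a direct summand of $\CM \in \Mcat$. Since $\Mcat$ is a direct-summand-closed additive subcategory (being, say, of the form $\add\CM_0$, or at any rate closed under summands as silting/presilting subcategories are), it follows that $\CL \in \Mcat$. Running the same argument with the roles of $\Lcat$ and $\Mcat$ exchanged gives $\Mcat \subseteq \Lcat$, whence $\Lcat = \Mcat$.

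The main obstacle I anticipate is bookkeeping around closure under direct summands: the statement of Lemma~\ref{lem/summands2} and the definition of silting \emph{subcategory} in this section already build in summand-closure, but to apply the argument cleanly I would want to record at the outset that the additive subcategories under consideration may be taken summand-closed (or replace $\Lcat,\Mcat$ by their summand-closures, which does not change $f(\Lcat),f(\Mcat)$ up to the relevant identification and is harmless because $\per\A$ is idempotent-complete). A secondary subtlety is that $f(\Lcat)$ is defined in~\eqref{eq/subcat-map} using $\add$ of $\FF_i(\Lcat)$ on the target side, so the equality $f(\Lcat)=f(\Mcat)$ must be unwound to the concrete summand statement above before fullness can be invoked; this is routine but needs to be spelled out. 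Everything else is a direct transcription of the single-object proof, with Proposition~\ref{prp/key-imp2}~\eqref{prp/key-imp2a} supplying membership in $\Acat$ and~\eqref{eq/reflect-zero} playing the role that ``$\FF$ reflects zero objects'' played in Lemma~\ref{lem/uniqueness1}.
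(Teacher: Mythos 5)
Your route is essentially the paper's: unwind $f(\Lcat)=f(\Mcat)$ to exhibit splitting data over each $\AoB$, lift it through the assumed fullness to morphisms $\alpha\colon\CL\to\CM$ and $\beta\colon\CM\to\CL$ over $\A$, conclude that $\CL$ is a direct summand of an object of $\Mcat$, and symmetrize. The one step that fails is the assertion that $\beta\alpha=\id_{\CL}$. Condition~\eqref{eq/reflect-zero} says that an object $M$ of $\Acat$ with $M\otimes\Gamma_i=0$ for all $i$ must vanish; it does \emph{not} make the maps $\End_{\D(\A)}(\CL)\to\End_{\D(\AoB)}(\CL_i)$ jointly injective, and the vanishing of the image of the single element $\beta\alpha-\id$ tells you nothing about a module being annihilated by all $-\otimes\Gamma_i$. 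In fact, by diagram~\eqref{eq/hom} these comparison maps factor through the unit $\End_{\D(\A)}(\CL)\to\End_{\D(\A)}(\CL)\otimes\Gamma_i$, whose kernel contains $\ida\cdot\End_{\D(\A)}(\CL)$ when $\Gamma_i=\Rx/\ida$. Concretely, take $\A=\Rx=\kk\llbracket x\rrbracket$, $\Gamma=\kk$, $\CL=\CM=\A$: then $\alpha=1+x$ and $\beta=1$ are perfectly good lifts of the identity splitting, yet $\beta\alpha=1+x\neq\id$. So the equality you claim is simply false in general.

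The repair is short and is exactly what the paper does: $\FF(\beta\alpha)=\id$ is an isomorphism, and $\FF$ is a triangulated functor which reflects zero objects by Corollary~\ref{cor/derived-Nakayama2}, hence reflects isomorphisms (pass to the cone); therefore $\beta\alpha$ is an automorphism of $\CL$, and $(\beta\alpha)^{-1}\beta$ is a left inverse of $\alpha$, so $\alpha$ is a section and $\CL$ is a summand of $\CM\in\Mcat$. With that substitution your argument coincides with the paper's proof. Your remaining cautions do not separate the two arguments: the tacit closure of $\Mcat$ under direct summands, and the choice of a single object $\CM$ serving all indices $i$ (an issue only when $I$ is infinite; in the application, Corollary~\ref{cor/local-embedding}, $I$ is a singleton), are glossed over by the paper in exactly the same way, and your explicit unwinding of the definition~\eqref{eq/subcat-map} is sound.
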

\begin{proof}
	Assume that $f(\Lcat)= f(\Mcat)$. 
	We show that any object $\CL$ from $\Lcat$ is contained in $\Mcat$.
	Because $\Mcat$ is additive, there is an object $\CM \in \Mcat$ and a morphism $
	\FF(\alpha)
	\colon \FF(\CL) \longrightarrow 
	\FF(\CM)
	$
	which has a left inverse $\FF(\beta)$.
	Since $\FF$ reflects isomorphisms, $\beta \alpha$ is an automorphism of $\CL$. So $\alpha$ is a section and $\CL \in \Mcat$. 
	
	Similarly, $\FF(\CM) \in f(\Lcat)$ implies that $\CM \in \Lcat$.
	This shows that $\Lcat = \Mcat$. 
\end{proof}

In contrast to the situation with presilting subcategories,
the map $f$ restricts to a silting embedding by purely formal reasons.
\begin{prp}\label{prp/silt-embedding}
	The map $f$ from~\eqref{eq/subcat-map} restricts to a well-defined embedding of posets 
	\begin{align*}
		\begin{td}
			f_{s}\colon
			\silt_{\Ccat} \A \ar[hookrightarrow]{r} \& {\displaystyle \prod_{i \in I}} \silt_{\Ccat} (\A_i)\,.
		\end{td}
	\end{align*}
\end{prp}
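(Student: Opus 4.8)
The plan is to split the assertion into three parts: (i) \emph{well-definedness}, that $f_s$ sends a silting subcategory of $\per\A$ to a tuple of silting subcategories; (ii) that $f_s$ \emph{preserves and reflects} the order $\geq$; and (iii) \emph{injectivity}, which I will deduce from (ii) rather than prove directly.

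For (i) I would first invoke the reduction of Aihara and Iyama \cite{Aihara-Iyama}*{Proposition~2.20}: every silting subcategory $\Lcat$ of $\per\A$ equals $\add\CL$ for some silting complex $\CL\in\per\A$, and then $f(\Lcat)$ is identified with $\prod_{i\in I}\add\CL_i$ by the remark following~\eqref{eq/subcat-map}, where $\CL_i\colonequals\CL\underset{\A}{\overset{\mathbb{L}}{\otimes}}\A_i$. So it suffices to check that each $\CL_i$ is a silting complex of $\A_i$. That $\CL_i$ is presilting is immediate from $\CL\geq\CL$ together with~\eqref{eq/key-implications2} in Proposition~\ref{prp/key-imp2}. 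That $\CL_i$ generates $\per\A_i$ follows by running the argument of Lemma~\ref{lem/generators} for the triangulated functor $\FF_i$: the full subcategory $\{\CX\in\per\A : \FF_i(\CX)\in\langle\CL_i\rangle\}$ is closed under shifts, cones and direct summands and contains $\CL$, hence contains $\langle\CL\rangle\ni\A$, so $\per\A_i=\langle\A_i\rangle\subseteq\langle\CL_i\rangle$. Consequently $\add\CL_i\in\silt_{\Ccat}(\A_i)$ and $f_s(\Lcat)=(\add\CL_i)_{i\in I}\in\prod_{i\in I}\silt_{\Ccat}(\A_i)$.

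For (ii) the main input is Lemma~\ref{lem/relation}, which applies to $\Lcat,\Mcat\in\silt_{\Ccat}\A\subseteq\per\A\subseteq\D^-_{\Acat}(\A)$ and gives $\Lcat\geq\Mcat$ if and only if $f(\Lcat)\geq f(\Mcat)$. It then remains to identify the relation $f(\Lcat)\geq f(\Mcat)$ between subcategories of $\prod_{i\in I}\per(\A_i)$ with the product-order relation $f_s(\Lcat)\geq f_s(\Mcat)$ in $\prod_{i\in I}\silt_{\Ccat}(\A_i)$: since a $\Hom$-group in $\prod_{i\in I}\D(\A_i)$ is the product of the componentwise $\Hom$-groups, the vanishing in all positive degrees of morphisms from tuples in $\prod_i\add\CL_i$ to tuples in $\prod_i\add\CM_i$ is equivalent to $\add\CL_i\geq\add\CM_i$ for each $i$, which is exactly the product order. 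Hence $f_s$ preserves and reflects $\geq$.

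For (iii), since $\geq$ is a partial order on $\silt_{\Ccat}\A$ and on each $\silt_{\Ccat}(\A_i)$, hence on the product, antisymmetry does the rest: $f_s(\Lcat)=f_s(\Mcat)$ forces $f_s(\Lcat)\geq f_s(\Mcat)$ and $f_s(\Mcat)\geq f_s(\Lcat)$, so by the reflection established in (ii) one gets $\Lcat\geq\Mcat$ and $\Mcat\geq\Lcat$, whence $\Lcat=\Mcat$. The one point I would be careful about is that, in contrast to the quotient setting of Proposition~\ref{prp/ps-inj}, here $\FF$ need not be full on pairs of silting subcategories — by~\eqref{eq/hom} the comparison map is built from the unit $N\to N\otimes\Gamma_i$, which is surjective only in special cases such as $\Gamma_i$ a quotient ring of $\Rx$ — so Lemma~\ref{lem/uniqueness2} is not available and injectivity genuinely has to be extracted from the antisymmetry of $\geq$ as above; everything else is a direct assembly of Proposition~\ref{prp/key-imp2} and Lemmas~\ref{lem/relation} and~\ref{lem/generators}.
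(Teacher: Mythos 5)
Your proposal is correct and follows essentially the same route as the paper: well-definedness via Lemma~\ref{lem/relation} together with the fact that the change-of-rings functors $\FF_i$ preserve generation (the paper phrases this as $\A\in\langle\Lcat\rangle\Rightarrow\A_i\in\langle\FF_i(\Lcat)\rangle$ rather than reducing to a single silting complex via \cite{Aihara-Iyama}*{Proposition~2.20}, but the content is the same), and injectivity extracted from the antisymmetry of $\geq$ in Theorem~\ref{thm/AI} via the preserve-and-reflect property. Your closing remark that Lemma~\ref{lem/uniqueness2} is unavailable here, so antisymmetry must carry the injectivity, matches exactly what the paper's proof does.
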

\begin{proof}	
	Let $\Lcat, \Mcat \in \silt_{\Ccat} \A$ 
	such that $f(\Lcat) = f(\Mcat)$.
	As $\A \in \langle \Lcat \rangle$, it follows that $\AoB \in \langle \FF_i(\Lcat) \rangle$ for any $i \in I$.
	Together with Lemma 
	\ref{lem/relation}
	it follows that the restriction $f_{s}$ is well-defined and preserves and reflects $\geq$.
	
	Since $f(\Lcat) \geq f(\Mcat) \geq f(\Lcat)$, it holds that $\Lcat \geq \Mcat \geq \Lcat$.
	The relation $\geq$ is anti-symmetric on $\silt_\Ccat \A$ by Theorem~\ref{thm/AI}. Thus, $f_s$ is injective.
\end{proof}

\begin{dfn}\label{def/descent}
We will say that the embedding $f_s$ is \emph{descent} if  for any additive subcategory $\mathcal{L}$ in $\per \A$ such that $f(\mathcal{L})$ is silting it follows that $\mathcal{L}$ is silting.
The map $f_s$ will be called \emph{weakly descent}
if it satisfies the property above for any additive hull $\add \CL$ of a perfect complex $\CL$ of $\A$.
Similar terminology will be used for other restrictions of the map $f$ from~\eqref{eq/subcat-map}.
\end{dfn}
In different terms, the map $f_s$ is weakly descent if and only if
 $\FF$ reflects the silting property of perfect complexes.
Next, we consider criteria for descent of the silting embedding.
\begin{prp}\label{prp/silt-descent}
	Assume that
	any $\Rx$-module $M$ with $M \otimes \Gamma_i= 0$ for any index $i \in I$ is zero. Then the map $f_s$ is weakly descent.
	
	If, moreover, the family $(\Gamma_i)_{i \in I}$ is given by a single $R$-algebra $\Gamma$, the map $f_s$ is descent.
\end{prp}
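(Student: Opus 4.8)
The plan is to reduce the statement about silting subcategories to the already-established facts about presilting complexes and weak generators, adapting the proof of Proposition~\ref{prp/descent} from the global setup to the present generality. Recall that $f_s$ being weakly descent means precisely: for any perfect complex $\CL$ of $\A$ such that each $\CL_i$ is silting over $\AoB$, the complex $\CL$ is silting over $\A$. So fix such a complex $\CL$. Since each $\CL_i \geq \CL_i$, the equivalence~\eqref{eq/key-implications2} in Proposition~\ref{prp/key-imp2}~\eqref{prp/key-imp2b} gives $\CL \geq \CL$, i.e.\ $\CL$ is presilting. It remains to show that $\CL$ generates $\per \A$, and by Proposition~\ref{prp/Keller} (valid for Noetherian $\Rx$-algebras without the local/complete hypothesis, per Remark~\ref{rmk/refined-morita}) it suffices to show that $\CL$ is a weak generator of $\D^-_{\Acat}(\A) = \D^-(\md\A)$.

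\textbf{Key steps.} First I would invoke the hypothesis of Proposition~\ref{prp/silt-descent}: any $\Rx$-module $M$ with $M \otimes \Gamma_i = 0$ for all $i \in I$ is zero. This is exactly what is needed so that Corollary~\ref{cor/derived-Nakayama2}~\eqref{cor/derived-Nakayama2a}--\eqref{cor/derived-Nakayama2b} apply: under this hypothesis the functor $\FF$ of~\eqref{eq:F-index} reflects zero objects, and any perfect complex whose images $\CL_i$ are weak generators of $\D^-(\AoB)$ is itself a weak generator of $\D^-_{\Acat}(\A)$. Now each $\CL_i$ is silting, hence a perfect generator of $\per \AoB$, hence in particular a weak generator of $\D^-(\AoB)$. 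Applying Corollary~\ref{cor/derived-Nakayama2}~\eqref{cor/derived-Nakayama2b} yields that $\CL$ is a weak generator of $\D^-_{\Acat}(\A)$; combined with the presilting property established above, Proposition~\ref{prp/Keller} gives that $\CL$ is silting. This proves weak descent. For the final sentence, suppose the family reduces to a single algebra $\Gamma$, and let $\mathcal{L}$ be an arbitrary additive subcategory of $\per\A$ with $f(\mathcal{L})$ silting. By Aihara--Iyama \cite{Aihara-Iyama}*{Proposition~2.20}, a silting subcategory is the additive hull of a silting complex; since $I$ is a singleton, $f(\mathcal{L}) = \add(\mathcal{L} \overset{\mathbb{L}}{\otimes}_{\A}\Gamma)$ silting forces this to be $\add \CP$ for some silting complex $\CP$ of $\Gamma_1 = \A\otimes\Gamma$. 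One then checks that $\mathcal{L}$ must already be of the form $\add \CL$ for a perfect complex $\CL$ with $\CL_1 \cong \CP$ up to additive closure — here the fullness machinery of Lemma~\ref{lem/uniqueness2} and the fact that $\FF_1$ reflects isomorphisms (via~\eqref{eq/hom}, since $\gamma_1$ is an isomorphism whenever $\geq$ holds, which it does for summands of $\CP$) give that objects of $\mathcal{L}$ are detected by their images — and then weak descent applies to conclude $\CL$, hence $\mathcal{L} = \add\CL$, is silting.

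\textbf{Main obstacle.} The routine part is the presilting direction and the weak-descent conclusion, which are immediate from the cited results. The delicate point is the passage from "weakly descent" to "descent" in the single-algebra case: one must argue that when $f(\mathcal{L})$ is the additive hull of a \emph{silting} complex, the subcategory $\mathcal{L}$ is necessarily the additive hull of a \emph{single} perfect complex $\CL$ rather than something larger or smaller, so that weak descent is applicable. This requires knowing that $f$ is injective on additive hulls (a consequence of Lemma~\ref{lem/uniqueness2}, using that $\FF_1$ is full at the relevant pairs because $\CP\geq\CP$ passes fullness down to all summands via Proposition~\ref{prp/key-imp2}) and that the essential image $\add\FF_1(\mathcal{L})$ being closed under summands forces $\mathcal{L}$ itself to be closed under summands — the latter using that $\per\A$ is idempotent-complete and that idempotents lift along the surjection $\End_{\D(\A)}(\CL)\twoheadrightarrow\End_{\D(\AoB)}(\CL_1)$ for $\CL$ any object whose image lies in $f(\mathcal{L})$, exactly as in the proof of Lemma~\ref{lem/summands1}. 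Assembling these pieces carefully, rather than the individual verifications, is where the attention is needed.
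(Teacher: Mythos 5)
Your overall skeleton (presilting descends via the key implications, weak generation descends via Corollary~\ref{cor/derived-Nakayama2}, then a Keller-type criterion) is the same as the paper's, but the weak-descent step has a real gap in generality. Setup~\ref{setup/flat} does not assume that $\A$ is a Noetherian $\Rx$-algebra nor that $\Acat=\md \Rx$, so your identification $\D^-_{\Acat}(\A)=\D^-(\md\A)$ is unjustified, and Proposition~\ref{prp/Keller} — whose hypotheses include Noetherianness and whose criterion is weak generation of $\D^-_{\md\Rx}(\A)$ — does not apply. Moreover, the hypothesis of the proposition is \emph{not} what makes Corollary~\ref{cor/derived-Nakayama2} available for the given $\Acat$ (that is already guaranteed by assumption~\eqref{eq/reflect-zero} in Setup~\ref{setup/flat}); its role is to make~\eqref{eq/reflect-zero} hold for $\mathcal{A}:=\Md\Rx$, so that the corollary yields that $\CL$ is a weak generator of $\D^-_{\Md\Rx}(\A)=\D^-(\A)$. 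One then concludes that the presilting complex $\CL$ is silting by part (2) of Remark~\ref{rmk/refined-morita}, the variant of Proposition~\ref{prp/Keller} for \emph{arbitrary} $\Rx$-algebras in which $\D^-_{\mathcal{A}}$ is replaced by $\D^-$. Weak generation of the possibly much smaller category $\D^-_{\Acat}(\A)$, which is all your argument produces, is not covered by any silting criterion in the paper.

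The descent step as you propose it would fail. Your plan to show that $\Lcat$ equals $\add\CL$ relies on fullness of $\FF$ (Lemma~\ref{lem/uniqueness2}) and on idempotent lifting along a surjection of endomorphism rings as in Lemma~\ref{lem/summands1}; neither tool is available here. For a single faithfully flat algebra $\Gamma$ — precisely the case in which the proposition is applied in Corollary~\ref{cor/faithful-flat} and Theorem~\ref{thm/global-to-local}, e.g.\ a localization or completion of $\Rx$ — Proposition~\ref{prp/key-imp2} only gives the factorization~\eqref{eq/hom} through the unit $\eta$ into $\Hom_{\D(\A)}(\CL,\CM)\otimes\Gamma$, which is not surjective, so $\FF$ is not full; and the paper explicitly warns at the start of the section that $\per\A$ need not be Krull--Remak--Schmidt, so semiperfectness and idempotent lifting are unavailable. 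None of this is needed: since $f(\Lcat)$ is silting, Aihara--Iyama gives $f(\Lcat)=\add\CP$ for a silting complex $\CP$, and additivity of $\Lcat$ produces a single object $\CL\in\Lcat$ with $\add\FF(\CL)=f(\Lcat)$, so that $\FF(\CL)$ is silting; the weak-descent argument then shows $\CL$ is silting, every object of $\Lcat$ is presilting by Lemma~\ref{lem/relation}, and $\langle\Lcat\rangle\supseteq\langle\CL\rangle=\per\A$, whence $\Lcat$ is silting. This is the paper's route and it bypasses your fullness and summand-closure steps entirely.
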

\begin{proof}
	Assume that $f(\Lcat)$ is silting and the first condition above.
	Under the additional condition, we view $I$ as an index set with one element.
	In both cases, there exists a complex $\CL \in \Lcat$ such that $\FF(\CL)$ is silting and $f(\Lcat) = \prod_{i \in I} \add \CL_i$. 
	Since $\CL_i$ is a weak generator of $\D(\AoB)$ for each index $i \in I$, 
	Corollary~\ref{cor/derived-Nakayama2}~\eqref{cor/derived-Nakayama2b} implies that
	$\CL$ is a weak generator of
	$\D^-_{\mathcal{A}}(\A)$
	with 
	$\mathcal{A} \colonequals \Md \Rx$.
	Then $\CL$ is silting
	according to 
	Remark~\ref{rmk/refined-morita}. This shows that $\Lcat = \add \CL \in \silt_{\Ccat} \A$.
\end{proof}

The main results of this section
concern the following analogues of the sets of isomorphism classes defined in Notation~\ref{not/objects}
for sets of subcategories.
\begin{notation}\label{not/subcategories}
	In addition to Setup~\ref{setup/flat}
	for each index $i \in I$ we choose an $\Sx_i$-module	
	$\sfN_i$ 
	such that
	$\Tor_{+}^{\Sx_i}(\Gamma_i,\sfN_i) = 0$. 
	We will use the following notation. 
	\begin{itemize}
		\item
		Let $\presilt_{\Ccat} \A$ denote the set of presilting subcategories in $\per \A$. As before, $\silt_{\Ccat} \A$ denotes its subset given by silting subcategories. 
		Similar notations apply to sets of pretilting and tilting subcategories as well as to each ring $\AoB$.
		\item
		Let $\pretilt^{\sfN_i,\Gamma_i}_{\Ccat} \A$
		denote the set of additive subcategories $\Pcat$ in $\per \A$ 
		such that
		any object $\CT$ from $\Pcat$ is pretilting and its endomorphism ring satisfies
		\[
		\Tor_+^{\Rx}(\End_{\D(\A)}(\CT), (\Gamma_i\underset{\Sx_i}{\otimes} \sfN_i) \, \oplus \, \Gamma_i) = 0\,.
		\]
		In case $\Gamma_i$ is a flat $\Rx$-module, we will abbreviate 
		$\pretilt^{\sfN_i,\Gamma_i}_{\Ccat} \A$ with
		$\pretilt^{\sfN_i}_{\Ccat} \A$.
		\item
		Let 
		$\pretilt^*_{\Ccat, \Rx} \A$
		denote the set of pretilting subcategories $\Pcat$ in $\per \A$ such that
		$\End_{\D(\A)}(\CT)$ is flat as $\Rx$-module for each $\CT \in \Pcat$.
		\item 
		Let	$\pretilt^{\sfN_i}_{\Ccat} (\AoB)$
		denote the set of pretilting subcategories $\Pcat$ in $\per (\AoB)$ such that
		any object $\CT$ from $\Pcat$ satisfies 
		\[
		\Tor_+^{\Sx_i}(\End_{\D(\AoB)}(\CT), \sfN_i) = 0\,.
		\]
		\item
		Let $\pretilt_{\Ccat, \Sx_i}^* (\AoB)$ denote the set of tilting subcategories in $\per (\AoB)$ 
		such that the endomorphism ring of each object in $\Pcat$ is flat as $\Sx_i$-module.
	\end{itemize}
	All sets of pretilting subcategories have analogues defined by tilting subcategories.	
\end{notation}
Next, we collect sufficient conditions for ascent and descent of presilting and pretilting subcategories.
\begin{prp}\label{prp/pre-descent}
	Any additive subcategory $\mathcal{L}$ of $\per \A$ satisfies the implications
	\begin{align*}
		\begin{td}
			\Lcat \in \pretilt^*_{\Ccat,\Rx} \A \ar[dashed,Rightarrow,xshift=-5pt]{d}[swap]{
				\text{ 
					\ref{setup/flat2} 
				}
			}
			\&
			\Lcat \in {\displaystyle \bigcap_{i \in I} }\pretilt_{\Ccat}^{\sfN_i,\Gamma_i} \A \ar[Rightarrow,xshift=-5pt, yshift=5pt]{d} \&
			\Lcat \in \presilt_{\Ccat} \A 
			\ar[Leftrightarrow,xshift=0pt]{d}
			\\
			f(\Lcat) \in 
			{\displaystyle \prod_{i \in I}}
			\pretilt^*_{\Ccat,\Sx_i}(\AoB) 
			\ar[Rightarrow,xshift=5pt,dashed]{u}[swap]{\text{
					\ref{globalize-flat}
			}}
			\& f(\Lcat) \in {\displaystyle \prod_{i \in I}} \pretilt_{\Ccat}^{\sfN_i}(\AoB) 
			\ar[Rightarrow,yshift=5pt, xshift=5pt,dashed]{u}[swap]{
				\text{\ref{setup/tor-rigid} }
			}\& 
			f(\Lcat) \in {\displaystyle \prod_{i \in I}} \presilt_{\Ccat} (\AoB)
		\end{td}
	\end{align*}
	where the additional conditions are given as follows.
	\begin{enumerate}[label={$\mathsf{(C\arabic*)}$}, ref={$\mathsf{(C\arabic*)}$}] 
		\item \label{setup/flat2} 
For any flat $\Rx$-module $M \in \mathcal{A}$ it follows that $M \otimes \Gamma_i$ is flat as $\Sx_i$-module for each index  $i \in I$.
\item \label{globalize-flat}
		Any $\Rx$-module $M \in \mathcal{A}$ such that 
	$\Tor_1^{\Rx}(M,\Gamma_i) =0$ and
		$M \otimes \Gamma_i$ is flat as $S_i$-module for each index $i \in I$ is itself flat. 
\item \label{setup/tor-rigid}
		For any $\Rx$-module $M \in \mathcal{A}$ with
		$\Tor_1^{\Rx}(M,\Gamma_i) = 0$ for any index $i \in I$ it follows that
		$\Tor_{+}^{\Rx}(M,\Gamma_i) = 0$ for any index $i \in I$.
	\end{enumerate}
\end{prp}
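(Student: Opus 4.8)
The plan is to test membership of $\mathcal{L}$ in the sets of Notation~\ref{not/subcategories} one object at a time: fix $\CT\in\mathcal{L}$, choose a projective resolution $\CP\in\Hotb(\proj\A)$ of $\CT$, set $\CK\colonequals\Hom^{\bt}_{\A}(\CP,\CP)$ and $\sfM\colonequals\Ho^0(\CK)\cong\End_{\D(\A)}(\CT)$, and --- the key move --- replace the family $(\Gamma_i)_{i\in I}$ by the single $\Rx$-module $\Gamma\colonequals\bigoplus_{i\in I}\Gamma_i$. By Remark~\ref{rmk/right-bounded}, $\CK$ is a bounded complex with $\Tor_+^{\Rx}(K^j,\Gamma_i)=0$, hence $\Tor_+^{\Rx}(K^j,\Gamma)=0$ by Lemma~\ref{lem/tor-commutes}; and each $\Ho^j(\CK)=\Hom_{\D(\A)}(\CT,\CT[j])$ lies in $\mathcal{A}$ by Proposition~\ref{prp/key-imp2}~\eqref{prp/key-imp2a}. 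Since $\otimes$ and cohomology commute with the coproduct and since $\Ho^j(\CK)\otimes\Gamma=\bigoplus_i\Ho^j(\CK)\otimes\Gamma_i$ vanishes only when $\Ho^j(\CK)=0$ by~\eqref{eq/reflect-zero}, the pair $(\CK,\Gamma)$ satisfies the hypotheses of Remark~\ref{rmk/generalize}, so every result of Subsection~\ref{subsec/kss} applies to $\CK$ with $\B$ replaced by $\Gamma$. This gives the dictionary: $\Ho^{\pm}(\CK)=0$ iff $\CT$ is pretilting; $\Ho^{\pm}(\CK\otimes\Gamma)=0$ iff $\CT_i$ is pretilting for every $i$; $\Tor_+^{\Rx}(\sfM,\Gamma)=0$ iff $\Tor_+^{\Rx}(\sfM,\Gamma_i)=0$ for every $i$; and, once $\Ho^+(\CK)=0$, the Künneth map of Lemma~\ref{lem/keyA1a} identifies $\sfL_i\colonequals\End_{\D(\AoB)}(\CT_i)$ with $\sfM\otimes\Gamma_i$. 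As $\Hom$, $\Tor$ and flatness all descend to direct summands, the defining conditions need only be checked on the objects $\CT\in\mathcal{L}$ and $\CT_i=\FF_i(\CT)\in f(\mathcal{L})=\prod_i\add\FF_i(\mathcal{L})$, and the horizontal inclusions in the diagram are then immediate.

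With this dictionary the right-hand Leftrightarrow is Proposition~\ref{prp/key-imp2}~\eqref{eq/key-implications2} with $\CL=\CM=\CT$. The two downward (ascent) arrows need no extra hypothesis. From $\CT$ pretilting we have $\Ho^{\pm}(\CK)=0$; the hypothesis on $\End_{\D(\A)}(\CT)$ gives in particular $\Tor_+^{\Rx}(\sfM,\Gamma_i)=0$, whence $\Ho^{\pm}(\CK\otimes\Gamma_i)=0$ by Lemma~\ref{lem/cohom1}~\eqref{lem/cohom1b}, i.e. $\CT_i$ is pretilting; then Lemma~\ref{lem/tor-ind1b}, applied for each $i$ to the $(\Rx,\Sx_i)$-bimodule $\Gamma_i$ and using $\Tor_+^{\Sx_i}(\Gamma_i,\sfN_i)=0$ from Notation~\ref{not/subcategories}, converts $\Tor_+^{\Rx}(\sfM,\Gamma_i\otimes_{\Sx_i}\sfN_i)=0$ into $\Tor_+^{\Sx_i}(\sfL_i,\sfN_i)=0$ (middle column), while condition~\ref{setup/flat2} converts $\Rx$-flatness of $\sfM$ into $\Sx_i$-flatness of $\sfL_i\cong\sfM\otimes\Gamma_i$ (left column). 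These are nothing but Proposition~\ref{prp/cohom1d}(1), resp. Proposition~\ref{prp/cohom4}(1), read off componentwise; their proofs use only the Künneth trick and Lemma~\ref{lem/tor-ind1b}, so neither Nakayama's Lemma nor flatness of $\Gamma_i$ over $\Sx_i$ is needed.

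For the two upward (descent) arrows the coproduct module $\Gamma$ does the work. Assume $f(\mathcal{L})$ lies in the relevant product, so every $\CT_i$ is pretilting, hence $\Ho^{\pm}(\CK\otimes\Gamma)=0$; Proposition~\ref{prp/keyA1b}, applied to $\CK$ and $\Gamma$ (legitimate by Remark~\ref{rmk/generalize} and~\eqref{eq/reflect-zero}), yields $\Ho^+(\CK)=0$, and the first assertion of Lemma~\ref{lem/cohom1}~\eqref{cor/cohom1a} yields $\Tor_1^{\Rx}(\sfM,\Gamma_i)=0$ for every $i$, with $\sfM\in\mathcal{A}$. In the middle column condition~\ref{setup/tor-rigid} promotes this to $\Tor_+^{\Rx}(\sfM,\Gamma_i)=0$ for every $i$; in the left column condition~\ref{globalize-flat}, using that $\sfL_i\cong\sfM\otimes\Gamma_i$ is $\Sx_i$-flat, shows $\sfM$ is $\Rx$-flat, which again gives $\Tor_+^{\Rx}(\sfM,\Gamma_i)=0$ for all $i$. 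Either way $\Tor_+^{\Rx}(\sfM,\Gamma)=0$, so the second assertion of Lemma~\ref{lem/cohom1}~\eqref{cor/cohom1a}, applied to $\CK$ and $\Gamma$, yields $\Ho^{\pm}(\CK)=0$, i.e. $\CT$ is pretilting. Finally Lemma~\ref{lem/tor-ind1b} once more turns $\Tor_+^{\Sx_i}(\sfL_i,\sfN_i)=0$ into $\Tor_+^{\Rx}(\sfM,\Gamma_i\otimes_{\Sx_i}\sfN_i)=0$ in the middle column (so $\CT\in\bigcap_i\pretilt^{\sfN_i,\Gamma_i}_{\Ccat}\A$), while in the left column $\sfM$ is already $\Rx$-flat (so $\CT\in\pretilt^*_{\Ccat,\Rx}\A$). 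The variants of these sets defined by tilting subcategories are handled by the same arguments together with the facts that $\FF_i$ preserves perfect generators (Lemma~\ref{lem/generators}) and that $\FF=\prod_i\FF_i$ reflects weak generators (Corollary~\ref{cor/derived-Nakayama2}~\eqref{cor/derived-Nakayama2b}).

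The main obstacle --- and the only genuinely new point --- is the descent equality $\Ho^{\pm}(\CK)=0$: the single-module devices of Section~\ref{sec/refined-Kuenneth} (Nakayama's Lemma, Tor-rigidity of one quotient) are unavailable for a family $(\Gamma_i)_{i\in I}$ that reflects zero objects only jointly. The resolution is precisely the observation that $\Gamma=\bigoplus_{i\in I}\Gamma_i$ --- with no finiteness hypotheses whatsoever --- still satisfies the two requirements of Remark~\ref{rmk/generalize}, because $\Tor$, $\otimes$ and cohomology commute with coproducts and~\eqref{eq/reflect-zero} is exactly the assertion that $\Gamma$ reflects zero objects of $\mathcal{A}$; after this reduction Lemma~\ref{lem/cohom1}~\eqref{cor/cohom1a} applies verbatim. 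The only data that cannot be amalgamated in this way are the $\Sx_i$-side torsion conditions $\Tor_+^{\Sx_i}(\sfL_i,\sfN_i)=0$, since the base rings $\Sx_i$ differ; but these are dealt with one index at a time, and there the relevant input, Lemma~\ref{lem/tor-ind1b}, is a pure spectral-sequence statement carrying no local hypotheses.
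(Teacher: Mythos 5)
Your proof is correct and follows essentially the route the paper intends: its own proof is a terse pointer to ``slight variations'' of Propositions~\ref{prp/cohom4} and~\ref{prp/cohom1d} via the translations of Lemma~\ref{lem/translation}, using Proposition~\ref{prp/key-imp2}~\eqref{prp/key-imp2a} to place the endomorphism modules in $\mathcal{A}$, and your argument is exactly that expansion. The only (cosmetic) difference is that you package the joint reflecting-zero hypothesis~\eqref{eq/reflect-zero} into the single module $\Gamma=\bigoplus_{i\in I}\Gamma_i$ so as to invoke Remark~\ref{rmk/generalize} verbatim, whereas the paper treats the family $(\Gamma_i)_{i\in I}$ jointly without forming the coproduct; the underlying spectral-sequence arguments are the same.
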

\begin{proof}
	The equivalence on the right is a special case of
	Lemma~\ref{lem/relation}.
	 For any complex $\CL$ in $\Lcat$ 
	 the $\Rx$-module $\End_{\D(\A)}(\CL)$ is contained in $\mathcal{A}$ by Proposition~\ref{prp/key-imp2}~\eqref{prp/key-imp2a}.
	 With this observation, 
	 the remaining implications follow from slight variations of Propositions 
	~\ref{prp/cohom4} and~\ref{prp/cohom1d} using the translations in Lemma~\ref{lem/translation}.
	\end{proof}

\subsection{Applications to commutative base-change}\label{subsec:app2}

In the final applications below we focus on the case that the family $(\Gamma_i)_{i\in I}$ is given by suitable commutative rings.
We will use 
the terminology of Definition
\ref{def/descent} and 
Notation~\ref{not/subcategories}
 without further reference, but repeat all necessary assumptions in the next statements for the convenience of the reader.

The next statement yields embedding results closely related to Theorem~\ref{thm/embeddings2}~\eqref{thm/embeddings2a}. 
\begin{cor}\label{cor/local-embedding}
	Assume~\ref{Gamma1}, that is, $\A$ is a Noetherian $R$-algebra 
	and
	$\ida$ is a subideal of $\rad \Rx$ with $\Tor^{\Rx}_+(\A,\Rx/\ida) = 0$.
	Let $\sfN$ be an $\Rx/\ida$-module. Set $\AI \colonequals \A/\ida\A$.
	
	Then
	there are
	well-defined injective  maps of sets
	\begin{align*}
		\begin{tikzcd}[ampersand replacement=\&, cells={outer sep=1pt, inner sep=1pt}, column sep=0.5cm, row sep=0.2cm]		
			\&	
			\pretilt^*_{\Ccat,\Rx} \A
			\ar[hookrightarrow, color=light-gray]{rr}	\ar[hookrightarrow]{dd}{f^*_{pt}}	\&[-0.5cm] \&	
			\pretilt_{\Ccat}^{\sfN, \Rx/\ida} \A
			\ar[hookrightarrow]{dd}{f^{\sfN}_{pt}}
			\ar[hookrightarrow, color=light-gray]{rr}  \&[-0.5cm] \&
			\presilt_{\Ccat} \A \ar[hookrightarrow]{dd}{f_{ps}} \&  
			\&	
			\\
			\tilt^*_{\Ccat,\Rx} \A   \ar[hookrightarrow, color=light-gray]{ru} \ar[hookrightarrow]{dd}{
				f^{*}_t} \ar[hookrightarrow, color=light-gray]{rr} \& \&	
			\tilt_{\Ccat}^{\sfN, \Rx/\ida} \A   
			\ar[hookrightarrow, color=light-gray]{ru} \ar[hookrightarrow]{dd}{
				f^{\sfN}_t} \ar[hookrightarrow, color=light-gray]{rr} \& \&
			\silt_{\Ccat} \A \ar[hookrightarrow]{dd}{f_s} \ar[hookrightarrow, color=light-gray]{ru} \& \\[1cm]
			\&	
			{ \pretilt^*_{\Ccat,\Rx/\ida} \AI } \ar[hookrightarrow, color=light-gray]{rr} \& \&
			{\pretilt_{\Ccat}^{\sfN} \AI} \ar[hookrightarrow, color=light-gray]{rr} \& \&
			\presilt_{\Ccat} \AI 
			\& \& 
			\\
			\tilt^*_{\Ccat,\Rx/\ida} \AI \ar[hookrightarrow, color=light-gray]{ru} \ar[hookrightarrow, color=light-gray]{rr} \& \&
			\tilt_{\Ccat}^{\sfN} \AI \ar[hookrightarrow, color=light-gray]{ru} \ar[hookrightarrow, color=light-gray]{rr} \& \&
			\silt_{\Ccat} \AI \ar[hookrightarrow, color=light-gray]{ru} \&
		\end{tikzcd}
	\end{align*}
	where the maps $f_{t}^*, f_{pt}^*, f_s, $ and $f_{ps}$ are descent. If $\Rx/\ida$ is Tor-rigid as $\Rx$-module, 
	the maps $f_t^{\sfN}$ and $f_{pt}^{\sfN}$ are descent as well. 
\end{cor}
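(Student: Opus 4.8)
The plan is to read the corollary off the general framework of Section~\ref{sec/embeddings} by specializing to a one-element index set, with $\Gamma \colonequals \Sx \colonequals \Rx/\ida$ and $\Acat \colonequals \md\Rx$. The first thing to verify is that these data satisfy Setup~\ref{setup/flat}: the $\Rx$-module $\A$ lies in $\md\Rx$ because $\A$ is a Noetherian $\Rx$-algebra, $\md\Rx$ is a thick abelian subcategory of $\Md\Rx$, the Tor-independence $\Tor_+^\Rx(\A,\Rx/\ida)=0$ is assumed, and the functor $M \mapsto M\otimes\Rx/\ida = M/\ida M$ reflects zero objects on $\md\Rx$ by Nakayama's Lemma since $\ida\subseteq\rad\Rx$. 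It is also worth recording once and for all that here $\AI = \A\otimes\Rx/\ida$, that $\FF$ is the usual functor $\D^-(\md\A)\to\D^-(\md\AI)$, and that the ambient category $\D^-_\Acat(\A)$ equals $\D^-_{\md\A}(\A)\simeq\D^-(\md\A)$, since module-finiteness of $\A$ over $\Rx$ makes ``finitely generated over $\Rx$'' and ``finitely generated over $\A$'' coincide for $\A$-modules.

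With the setup in place, the diagram is assembled from results already proved. The gray inclusions on the top and bottom faces are elementary: tilting subcategories are pretilting, pretilting subcategories are presilting, and $\Rx$-flatness of an endomorphism ring forces the vanishing of higher $\Tor$ against $\sfN\oplus\Rx/\ida$. All squares commute because every vertical map is a restriction of the single map $f$ on additive subcategories. For the two rightmost columns, $f_s$ is a well-defined embedding of posets by Proposition~\ref{prp/silt-embedding}, while $f_{ps}$ is well-defined because $\FF$ preserves and reflects $\geq$ on perfect complexes (Proposition~\ref{prp/key-imp2}), and injective by Lemma~\ref{lem/uniqueness2}: if $f(\Lcat)=f(\Mcat)$ with $\Lcat,\Mcat$ presilting, then for all $\CL\in\Lcat$, $\CM\in\Mcat$ both $\PCL$ and $\PCM$ lie in the common presilting subcategory, so $\PCL\geq\PCM$ and $\PCM\geq\PCL$; hence $\CL\geq\CM$ and $\CM\geq\CL$ by Proposition~\ref{prp/key-imp2}, which (as $\Gamma=\Rx/\ida$ makes the unit maps surjective) makes $\FF$ full on $(\Lcat,\Mcat)$ and $(\Mcat,\Lcat)$ and forces $\Lcat=\Mcat$. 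Well-definedness and injectivity of the four endomorphism-decorated vertical maps then follow from the downward implications of Proposition~\ref{prp/pre-descent}, whose ascent condition $\mathsf{(C1)}$ is automatic here because base change preserves flatness; the generator half of the word ``tilting'' is preserved just as in the proof of Proposition~\ref{prp/silt-embedding}.

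For descent I would run Proposition~\ref{prp/pre-descent} in reverse. Its upward implication for the left column requires condition $\mathsf{(C2)}$, which for $\Gamma=\Rx/\ida$ is nothing but the local criterion of flatness (Theorem~\ref{thm/local-flatness}) and is therefore always available, so $f^*_{pt}$ is descent unconditionally; its upward implication for the middle column requires condition $\mathsf{(C3)}$, which is precisely Tor-rigidity of $\Rx/\ida$ over $\Rx$, so $f^{\sfN}_{pt}$ is descent under that hypothesis. Descent of $f_{ps}$ is immediate from $\Lcat\geq\Lcat\Leftrightarrow f(\Lcat)\geq f(\Lcat)$ (Lemma~\ref{lem/relation}). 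Descent of $f_s$ is the one point not read directly off Proposition~\ref{prp/pre-descent}: given $f(\Lcat)\in\silt_\Ccat\AI$ one writes $\Lcat=\add\CL$ with $\PCL$ silting, concludes that $\CL$ is presilting from descent of $f_{ps}$, and that $\CL$ is a weak generator of $\D^-_{\md\Rx}(\A)=\D^-(\md\A)$ by Corollary~\ref{cor/derived-Nakayama2}~\eqref{cor/derived-Nakayama2b} applied with $\Acat=\md\Rx$; Proposition~\ref{prp/Keller}, applicable here by Remark~\ref{rmk/refined-morita} since $\A$ is Noetherian over $\Rx$ (though $\Rx$ need be neither local nor complete), then shows $\CL$ is silting. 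Finally, descent of $f^*_t$ (respectively $f^{\sfN}_t$) follows by intersecting descent of $f_s$ with descent of $f^*_{pt}$ (respectively $f^{\sfN}_{pt}$), since a silting complex that is also pretilting is tilting.

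The step I expect to be the real obstacle is the descent of $f_s$ just sketched. Proposition~\ref{prp/silt-descent}, the natural tool, is stated under the assumption that every $\Rx$-module $M$ with $M\otimes\Gamma=0$ vanishes, and this fails for $\Gamma=\Rx/\ida$ whenever $\Rx$ is not a field (any non-zero $\ida$-divisible module is a counterexample); so one cannot cite it verbatim and must instead rerun its weak-generator argument with $\Acat=\md\Rx$ rather than $\Md\Rx$, using that $\D^-(\md\A)=\D^-_{\md\Rx}(\A)$ and that Nakayama's Lemma still supplies property~\eqref{eq/reflect-zero} for finitely generated modules. Once this is handled, the remainder is bookkeeping: matching each arrow of the cube to the relevant clause of Propositions~\ref{prp/key-imp2},~\ref{prp/silt-embedding} and~\ref{prp/pre-descent} and Corollary~\ref{cor/derived-Nakayama2}, and checking that $\mathsf{(C1)}$--$\mathsf{(C3)}$ specialize as claimed.
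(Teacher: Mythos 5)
Your proposal is correct and follows essentially the same route as the paper: well-definedness via Propositions~\ref{prp/silt-embedding} and~\ref{prp/pre-descent}, injectivity of $f_{ps}$ via fullness on the relevant pairs and Lemma~\ref{lem/uniqueness2}, descent of $f_s$ by rerunning the proof of Proposition~\ref{prp/silt-descent} with $\Acat=\md\Rx$, and the remaining descent claims from the upward implications of Proposition~\ref{prp/pre-descent}, with \ref{globalize-flat} supplied by the local criterion of flatness and \ref{setup/tor-rigid} by Tor-rigidity. In particular, you correctly identified the one genuine subtlety — that Proposition~\ref{prp/silt-descent} cannot be cited verbatim for $\Gamma=\Rx/\ida$ and must be rerun over $\md\Rx$ using Nakayama and the Noetherian form of Proposition~\ref{prp/Keller} — which is exactly how the paper handles it.
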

\begin{proof}
	The six maps are well-defined by
	Propositions~\ref{prp/silt-embedding} and  
	\ref{prp/pre-descent}. 
	To show that the map $f_s$ is descent, we may repeat the proof of Proposition~\ref{prp/silt-descent} with $\mathcal{A} \colonequals \md \Rx$.
	The remaining claims on descent properties follow from
	the upward implications in Proposition~\ref{prp/pre-descent}, where~\ref{globalize-flat} is satisfied by the local criterion of flatness (Theorem~\ref{thm/local-flatness}).
	
	To show that $f_{ps}$ is injective, let  $\Lcat, \Mcat \in \presilt_{\Ccat} \A$ with $f(\Lcat)=f(\Mcat)$. Then $\Lcat \geq \Mcat \geq \Lcat$
	and $\FF$ is full on $(\Lcat,\Mcat)$ as well as $(\Mcat,\Lcat)$ by 
	diagram~\eqref{eq/hom}.
	Lemma~\ref{lem/uniqueness2} yields that $\Lcat  = \Mcat$.
\end{proof}

\begin{cor}\label{cor/faithful-flat}
	Let $\Rx \longrightarrow \Sx$ be a faithfully flat morphism of commutative Noetherian rings, $\A$ an $\Rx$-algebra and $\sfN$ be an $\Sx$-module.
	Then there are well-defined maps between the sets 
\begin{align}\label{eq/half-embed}
	\begin{tikzcd}[ampersand replacement=\&, cells={outer sep=1pt, inner sep=1pt}, column sep=0.08cm, row sep=0.2cm]		
		\&	
		\pretilt^*_{\Ccat,\Rx} \A
		\ar[hookrightarrow, color=light-gray]{rr}	\ar{dd}{f_{pt}^*}	\&[-0.5cm] \&	
		\pretilt_{\Ccat}^{\sfN} \A
		\ar{dd}{f^{\sfN}_{pt}}
		\ar[hookrightarrow, color=light-gray]{rr}  \&[-0.5cm] \&
		\presilt_{\Ccat} \A \ar{dd}{f_{ps}} \&  
		\&	
		\\
		\tilt^*_{\Ccat,\Rx} \A   \ar[hookrightarrow, color=light-gray]{ru} \ar[hookrightarrow]{dd}{
			f^{*}_t} \ar[hookrightarrow, color=light-gray]{rr} \& \&	
		\tilt_{\Ccat}^{\sfN} \A   
		\ar[hookrightarrow, color=light-gray]{ru} \ar[hookrightarrow]{dd}{
			f^{\sfN}_t} \ar[hookrightarrow, color=light-gray]{rr} \& \&
		\silt_{\Ccat} \A \ar[hookrightarrow]{dd}{f_s} \ar[hookrightarrow, color=light-gray]{ru} \& \\[1cm]
		\&	
		{ \pretilt^*_{\Ccat,\Sx} (\A \otimes \Sx) } \ar[hookrightarrow, color=light-gray]{rr} \& \&
		{\pretilt_{\Ccat}^{\sfN} (\A \otimes \Sx)} \ar[hookrightarrow, color=light-gray]{rr} \& \&
		\presilt_{\Ccat} (\A \otimes \Sx) 
		\& \& 
		\\
		\tilt^*_{\Ccat,\Sx} (\A \otimes \Sx) \ar[hookrightarrow, color=light-gray]{ru} \ar[hookrightarrow, color=light-gray]{rr} \& \&
		\tilt_{\Ccat}^{\sfN} (\A \otimes \Sx) \ar[hookrightarrow, color=light-gray]{ru} \ar[hookrightarrow, color=light-gray]{rr} \& \&
		\silt_{\Ccat} (\A \otimes \Sx) \ar[hookrightarrow, color=light-gray]{ru} \&
	\end{tikzcd}
	\end{align}
	where the maps $f_t^*$, $f_t^{\sfN}$ and $f_s$ are injective and all six maps are descent.
\end{cor}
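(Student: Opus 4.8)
The plan is to recognise the situation as an instance of Setup~\ref{setup/flat} with the index set $I$ a one-element set and $\Gamma = S_i = \Sx$ (case~\ref{Gamma2}), and then to harvest the six assertions from Propositions~\ref{prp/silt-embedding}, \ref{prp/silt-descent} and~\ref{prp/pre-descent}. First I would verify the standing hypotheses of Setup~\ref{setup/flat}: since $\Sx$ is flat over $\Rx$ we have $\Tor_+^{\Rx}(\A,\Sx) = 0$, and choosing $\mathcal{A} \colonequals \Md \Rx$, faithful flatness of $\Sx$ says exactly that the functor $M \mapsto M \otimes_{\Rx} \Sx$ on $\mathcal{A}$ reflects zero objects, i.e. assumption~\eqref{eq/reflect-zero}. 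Hence Proposition~\ref{prp/key-imp2} and Corollary~\ref{cor/derived-Nakayama2} are available, and in particular $f_s$ is a well-defined embedding of posets by Proposition~\ref{prp/silt-embedding}.

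Next I would establish well-definedness of the remaining five maps through Proposition~\ref{prp/pre-descent}. Its right-hand equivalence gives $f_{ps}$ at once. For condition~\ref{setup/flat2}: if $M$ is a flat $\Rx$-module, then $M \otimes_{\Rx} \Sx$ is a flat $\Sx$-module, since $(M \otimes_{\Rx} \Sx) \otimes_{\Sx} X \cong M \otimes_{\Rx} X$ is exact in the $\Sx$-module $X$; this yields the downward maps $f^*_{pt}$ and $f^*_t$. Because $\Sx$ is $\Rx$-flat, the abbreviation of Notation~\ref{not/subcategories} identifies $\pretilt^{\sfN,\Sx}_{\Ccat}\A$ with $\pretilt^{\sfN}_{\Ccat}\A$: indeed $\Gamma \otimes_{\Sx} \sfN \cong \sfN$ and $\Tor_+^{\Rx}(-,\Sx) = 0$, so the Tor-vanishing condition on endomorphism rings collapses to $\Tor_+^{\Rx}(\End_{\D(\A)}(\CT),\sfN) = 0$, while $\Tor_+^{\Sx}(\Sx,\sfN) = 0$ holds trivially; the middle downward implication of Proposition~\ref{prp/pre-descent} then gives $f^{\sfN}_{pt}$ and $f^{\sfN}_t$.

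For injectivity I would argue as follows. By Proposition~\ref{prp/silt-embedding} the map $f_s$ is an embedding of posets, hence injective: $f(\Lcat) = f(\Mcat)$ forces $\Lcat \geq \Mcat \geq \Lcat$ by Lemma~\ref{lem/relation}, whence $\Lcat = \Mcat$ by antisymmetry of $\geq$ on $\silt_{\Ccat}\A$ (Theorem~\ref{thm/AI}). Since every tilting subcategory is silting and every square in~\eqref{eq/half-embed} commutes (all vertical maps being restrictions of the single map $f$ of~\eqref{eq/subcat-map}), injectivity of $f_s$ restricts to injectivity of $f^*_t$ and $f^{\sfN}_t$. I would then point out that, unlike in Corollary~\ref{cor/local-embedding}, this does \emph{not} give injectivity of $f_{ps}$, $f^*_{pt}$, $f^{\sfN}_{pt}$: the unit $M \to M \otimes_{\Rx}\Sx$ need not be surjective for faithfully flat $\Sx$, so $\FF$ need not be full on pairs of subcategories and Lemma~\ref{lem/uniqueness2} is not applicable — which is why the statement only claims injectivity for the three silting/tilting maps.

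Finally, descent. For $f_s$: faithful flatness says an $\Rx$-module $M$ with $M \otimes_{\Rx}\Sx = 0$ vanishes, and the family consists of a single $\Rx$-algebra $\Sx$, so Proposition~\ref{prp/silt-descent} applies and $f_s$ is descent. For the other five I would invoke the upward (dashed) implications of Proposition~\ref{prp/pre-descent}: condition~\ref{globalize-flat} holds by faithfully flat descent of flatness (if $M\otimes_{\Rx}\Sx$ is $\Sx$-flat, then $M$ is $\Rx$-flat, the hypothesis $\Tor_1^{\Rx}(M,\Sx)=0$ being automatic), yielding descent of $f^*_{pt}$ and $f^*_t$; condition~\ref{setup/tor-rigid} is vacuous since $\Tor_+^{\Rx}(-,\Sx)=0$, yielding descent of $f^{\sfN}_{pt}$ and $f^{\sfN}_t$; and descent of $f_{ps}$ is the right-hand equivalence of Proposition~\ref{prp/pre-descent} already recorded in the second paragraph. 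I do not expect a genuine obstacle here: the only care needed is the bookkeeping around Notation~\ref{not/subcategories} in the flat case and the three routine faithfully-flat-descent facts that feed Proposition~\ref{prp/pre-descent}.
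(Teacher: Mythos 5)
Your proposal is correct and follows essentially the same route as the paper: identify the situation as Setup~\ref{setup/flat} with a single algebra $\Gamma=\Sx=\Sx_i$, note that $\dash\otimes\Sx$ preserves and reflects flat modules (your verifications of \ref{setup/flat2}, \ref{globalize-flat} and the vacuity of \ref{setup/tor-rigid}), and then read off all claims from Propositions~\ref{prp/silt-embedding}, \ref{prp/silt-descent} and~\ref{prp/pre-descent}. Your extra remarks on why injectivity is only asserted for $f_t^*$, $f_t^{\sfN}$, $f_s$ (no fullness, so Lemma~\ref{lem/uniqueness2} is unavailable) are consistent with the statement and just spell out what the paper leaves implicit.
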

\begin{proof}
	 We recall that the functor $\mathstrut_- \otimes \Sx\colon \Md \Rx \longrightarrow \Md \Sx$ preserves and reflects flat modules.
	 The claims follow from Propositions~\ref{prp/silt-embedding}, ~\ref{prp/silt-descent} and~\ref{prp/pre-descent}.
	\end{proof}

Corollary~\ref{cor/local-embedding} can be used to show the following variation of the last statement.
\begin{cor}
	Let $\phi\colon \Rx \longrightarrow \Sx$ be a flat morphism of commutative Noetherian rings and $\A$ a Noetherian $\Rx$-algebra such that 
	$\phi$ induces a ring isomorphism
	$\Rx/\ida \cong \Sx/\ida \Sx$
	for a subideal $\ida$ of $\rad \Rx$ with $\Tor_{+}^{\Rx}(\A, \Rx/\ida) = 0$.
	Then
	 all six maps in~\eqref{eq/half-embed} are well-defined, injective and descent. \qed
	\end{cor}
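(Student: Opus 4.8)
The plan is to specialise the setup of Section~\ref{sec/embeddings} to the one‑element family given by the $\Rx$‑algebra $\Gamma\colonequals\Sx$ (with $\Sx_1\colonequals\Sx$) and the thick abelian subcategory $\Acat\colonequals\md\Rx$, and then to combine the resulting statements with Corollary~\ref{cor/local-embedding}, applied to $\Rx$, $\ida$ and the Noetherian $\Rx$‑algebra $\A$, through the factorisation of change‑of‑rings functors
\[
\FF_{\ida}\;\cong\;\GG\circ\FF\colon\quad\D^-(\md\A)\;\longrightarrow\;\D^-(\md(\A\otimes\Sx))\;\longrightarrow\;\D^-(\md(\A/\ida\A)),
\]
where $\FF=\dash\overset{\mathbb{L}}{\otimes}\Sx$, $\GG=\dash\overset{\mathbb{L}}{\otimes}_{\A\otimes\Sx}(\A/\ida\A)$ and $\FF_{\ida}=\dash\overset{\mathbb{L}}{\otimes}_{\A}(\A/\ida\A)$, the isomorphism resting on $\A/\ida\A\cong(\A\otimes\Sx)/\ida\Sx(\A\otimes\Sx)$. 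Setup~\ref{setup/flat} applies here: flatness of $\phi$ gives $\Tor^{\Rx}_+(\A,\Sx)=0$, makes conditions~\ref{setup/flat2} and~\ref{setup/tor-rigid} of Proposition~\ref{prp/pre-descent} automatic, and lets all relevant $\Tor$‑groups over $\Sx$ be computed via the flat base change $\phi$; and, decisively, although $\phi$ need not be faithfully flat, the induced isomorphism $\Rx/\ida\cong\Sx/\ida\Sx$ forces $\dash\otimes\Sx$ to reflect zero objects on $\md\Rx$, since for $M\in\md\Rx$ with $M\otimes\Sx=0$ one obtains
\[
M/\ida M\;\cong\;M\otimes_{\Rx}(\Rx/\ida)\;\cong\;M\otimes_{\Rx}(\Sx/\ida\Sx)\;\cong\;(M\otimes_{\Rx}\Sx)\otimes_{\Sx}(\Sx/\ida\Sx)\;=\;0,
\]
whence $M=0$ by Nakayama's Lemma as $\ida\subseteq\rad\Rx$. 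Thus condition~\eqref{eq/reflect-zero} holds with $\Acat=\md\Rx$.

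Granting this, well‑definedness of all six maps, and the claim that $f_s$ is an embedding of posets, follow from Propositions~\ref{prp/silt-embedding} and~\ref{prp/pre-descent} (the pretilting variants using~\ref{setup/flat2}, which holds because base change along $\phi$ preserves flat modules). Injectivity I would deduce from the factorisation above: if $\Lcat,\Mcat$ are additive subcategories of $\per\A$ of the relevant type with $f(\Lcat)=f(\Mcat)$, then applying $\GG$ — a triangle functor, so it commutes with finite direct sums and preserves direct summands up to isomorphism — yields $f_{\ida}(\Lcat)=f_{\ida}(\Mcat)$ for the map $f_{\ida}$ induced by $\FF_{\ida}$; since $\ida$ is a quotient ideal of $\Rx$, Corollary~\ref{cor/local-embedding} applies to $\Rx,\ida,\A$, and there $f_{\ida}$ is injective already on presilting subcategories (by fullness of $\FF_{\ida}$, valid because $\Rx/\ida$ is a quotient of $\Rx$, together with Lemma~\ref{lem/uniqueness2}), hence $\Lcat=\Mcat$. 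This gives injectivity of all six maps in~\eqref{eq/half-embed}, the presilting and pretilting ones included.

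It remains to prove the descent statements; here Proposition~\ref{prp/silt-descent} is not available, as it needs $\dash\otimes\Sx$ to reflect zero on all of $\Md\Rx$. I would establish descent of $f_s$ directly. Let $\Lcat\subseteq\per\A$ be additive with $f(\Lcat)$ silting; by \cite{Aihara-Iyama}*{Proposition~2.20}, $f(\Lcat)=\add\CP$ for a silting complex $\CP\in\per(\A\otimes\Sx)$, and since $\add\CP=\add\FF(\Lcat)$ the complex $\CP$ is a direct summand of $\FF(X)$ for some $X\in\Lcat$, so $\add\FF(X)=\add\CP$ and $\FF(X)$ is silting. By~\eqref{eq/key-implications2} the subcategory $\Lcat$, and in particular $X$, is presilting, and by Corollary~\ref{cor/derived-Nakayama2}~\eqref{cor/derived-Nakayama2b} (applicable with $\Acat=\md\Rx$) the complex $X$ is a weak generator of $\D^-_{\md\Rx}(\A)\simeq\D^-(\md\A)$; hence $X$ is silting by Proposition~\ref{prp/Keller}, and $\langle\Lcat\rangle\supseteq\langle X\rangle=\per\A$, so $\Lcat$ is silting. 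Descent of $f_{ps}$ and of the two pretilting maps then follows from Proposition~\ref{prp/pre-descent} — its upward implications hold since~\ref{globalize-flat} is supplied by the local criterion of flatness (Theorem~\ref{thm/local-flatness}, again via $\Rx/\ida\cong\Sx/\ida\Sx$) and~\ref{setup/tor-rigid} is trivial because $\Sx$ is $\Rx$‑flat — and descent of the two tilting maps combines this with descent of $f_s$ for the generation clause. The main obstacle is precisely the absence of faithful flatness: one cannot apply Corollary~\ref{cor/local-embedding} to $\Sx$ and $\ida\Sx$, since $\ida\Sx$ need not lie in $\rad\Sx$, and for the same reason $\GG$ does not by itself preserve silting subcategories; the isomorphism $\Rx/\ida\cong\Sx/\ida\Sx$ is what recovers the single consequence of faithful flatness the argument uses — that $\dash\otimes\Sx$ reflects zero on finitely generated modules — all morphism spaces over $\A$ being finitely generated over $\Rx$ by Proposition~\ref{prp/key-imp2}~\eqref{prp/key-imp2a}.
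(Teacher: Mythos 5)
Your proposal is correct, and it uses Corollary~\ref{cor/local-embedding} exactly where the paper's (otherwise omitted) proof points: the factorisation $\FF_{\ida}\cong\GG\circ\FF$ through $\A/\ida\A\cong(\A\otimes\Sx)\otimes_{\Sx}\Sx/\ida\Sx$ is what delivers injectivity of all six maps, including the presilting and pretilting ones, where fullness of $\FF$ is unavailable; this is precisely the content that goes beyond Corollary~\ref{cor/faithful-flat}, and your argument for it (additivity of $\GG$, injectivity of the quotient maps via fullness of $\FF_{\ida}$ and Lemma~\ref{lem/uniqueness2}) is sound. Where you deviate is in re-proving well-definedness and descent from scratch: you verify \eqref{eq/reflect-zero} for $\Acat=\md\Rx$ by Nakayama, rerun the argument of Proposition~\ref{prp/silt-descent} with $\Acat=\md\Rx$ using Corollary~\ref{cor/derived-Nakayama2} and Proposition~\ref{prp/Keller}, and check \ref{globalize-flat} via Theorem~\ref{thm/local-flatness}. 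All of these steps are valid.

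They are, however, a detour, because your guiding premise that $\phi$ need not be faithfully flat is false: since $\ida\subseteq\rad\Rx$, every maximal ideal $\mx$ of $\Rx$ contains $\ida$, so $\Sx/\mx\Sx\cong(\Sx/\ida\Sx)\otimes_{\Rx/\ida}\Rx/\mx\cong\Rx/\mx\neq0$, and a flat $\Rx$-module with non-zero fibres at all maximal ideals is faithfully flat. Hence $\dash\otimes\Sx$ reflects zero on all of $\Md\Rx$, Proposition~\ref{prp/silt-descent} and Corollary~\ref{cor/faithful-flat} apply verbatim, and the well-definedness and descent assertions can simply be quoted; likewise your claim that $\GG$ does not preserve silting subcategories is inaccurate, since preservation needs only $\Tor_+^{\Sx}(\A\otimes\Sx,\Sx/\ida\Sx)\cong\Tor_+^{\Rx}(\A,\Rx/\ida)=0$ and not $\ida\Sx\subseteq\rad\Sx$ (your observation that $\ida\Sx\not\subseteq\rad\Sx$ in general is correct and is indeed why one cannot apply Corollary~\ref{cor/local-embedding} to $\Sx$ and $\ida\Sx$). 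None of this creates a gap: your self-contained arguments replace what could have been cited, and the essential new step — injectivity via the transit through $\A/\ida\A$ and Corollary~\ref{cor/local-embedding} — coincides with the route the paper intends.
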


For our next statement we fix the following notation.
Let $\px$ be a prime ideal of $\Rx$ and $n$ a positive integer.
We denote by $\Rx_{\px}$ the localization of $\Rx$ at $\px$, by $\widehat{\Rx}_{\px}$ the completion of $\Rx_{\px}$ at its maximal ideal, and by $\Rx_{n}$ the Artinian ring $\Rx_{\px}/\px^n \Rx_{\px}$.
In particular, $\Rx_1$ is the residue field $k(\px)$.
Given an $\Rx$-algebra $\A$ and an integer $n > 0$, this defines an $\Rx_{\px}$-algebra
 $\A_{\px} \colonequals \A \otimes \Rx_{\px}$,
 an $\widehat{\Rx}_{\px}$-algebra
${\widehat{\A}}_{\px} \colonequals \A \otimes {\widehat{\Rx}_{\px}}$, and an $\Rx_n$-algebra ${\A}_{\px,n} \colonequals \A_{\px} \otimes_{\Rx_{\px}} \Rx_n$.

As a final application, we obtain the following global-to-local picture on silting and tilting theory of an algebra.
\begin{thm}\label{thm/global-to-local}
	In the notations above, 
	for any $\Rx$-algebra $\A$ over a commutative Noetherian ring $\Rx$ 
	and any integer $n > 0$
	there are well-defined embeddings of sets
\begin{align}
	\label{eq/global-local}
	\begin{tikzcd}[ampersand replacement=\&]
		\tilt^*_{\Ccat,\Rx} \A \ar[hookrightarrow]{d}{f_t^*} 
		 \ar[hookrightarrow, color=light-gray]{r}
		\&  		\tilt_{\Ccat} \A \ar[hookrightarrow]{d}{f_t} 
		\ar[hookrightarrow, color=light-gray]{r}	\&
		 \silt_{\Ccat} \A \ar[hookrightarrow]{d}{f_s}
\\
		\underset{\px \in \Spec \Rx}{\prod} \tilt^*_{\Ccat,\Rx_{\px}} \A_{\px} \ar[hookrightarrow]{d}{g_t^*} 
		\ar[hookrightarrow, color=light-gray]{r}
		\& \underset{\px \in \Spec \Rx}{\prod} \tilt_{\Ccat} {\A}_{\px} 
		\ar[hookrightarrow]{d}{g_t}	 
		\ar[hookrightarrow, color=light-gray]{r}
		\&	\underset{\px \in \Spec \Rx}{\prod} \silt_{\Ccat} \A_{\px} \ar[hookrightarrow]{d}{g_s} 
		\\
		\underset{\px \in \Spec \Rx}{\prod}	\tilt^*_{\Ccat, \widehat{\Rx}_{\px}} {\widehat{\A}_{\px}} 
				\ar["\sim" labl,swap,dashed]{d}[pos=0.33]{}[swap]{h_t^*}
		\ar[hookrightarrow, color=light-gray]{r}  \& 
		\underset{\px \in \Spec \Rx}{\prod} \tilt_{\Ccat} {\widehat{\A}_{\px}} \ar[hookrightarrow, color=light-gray]{r}
		\& 
		\underset{\px \in \Spec \Rx}{\prod} \silt_{\Ccat} {\widehat{\A}_{\px}} 
		\ar["\sim" labl,swap,dashed]{d}[pos=0.33]{}[swap]{h_s}
		 \\
		\underset{\px \in \Spec \Rx}{\prod} \tilt^*_{\Rx_n} \A_{\px,n} \ar[hookrightarrow, color=light-gray]{rr} \& \&  \underset{\px \in \Spec \Rx}{\prod} \silt \A_{\px,n}
	\end{tikzcd}
\end{align}
where the maps $f_{t}^*$, $f_t$ and $f_{s}$ are weakly descent, 
  the maps $g_t^*$, $g_t$ and $g_s$ are descent,
	and the bijections $h_t^*$ and $h_s$ are well-defined if the $\Rx$-algebra $\A$ is projective and finitely generated as $\Rx$-module.
These statements remain true if the prime spectrum of $\Rx$ is replaced by its maximal spectrum.
\end{thm}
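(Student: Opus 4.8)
The plan is to recognise the three vertical ``slabs'' of diagram~\eqref{eq/global-local} as instances of the framework of Section~\ref{sec/embeddings} and then to assemble them, the commutativity being a routine check.

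For the top slab $\A\rightsquigarrow\prod_{\px}\A_{\px}$ I would invoke Setup~\ref{setup/flat} with index set $I\colonequals\Spec\Rx$ (resp.\ $\Max\Rx$), with $\Gamma_{\px}=\Sx_{\px}\colonequals\Rx_{\px}$ and $\Acat\colonequals\Md\Rx$, so that the rings $\A\otimes\Gamma_{\px}$ of that setup are exactly the localisations $\A_{\px}$. All hypotheses are immediate: localisation is flat, hence $\Tor_+^{\Rx}(\A,\Rx_{\px})=0$ and $\A\in\Acat$, and requirement~\eqref{eq/reflect-zero} is the elementary fact that an $\Rx$-module annihilated by $-\otimes\Rx_{\px}$ for every prime (resp.\ maximal) $\px$ is zero. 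Proposition~\ref{prp/silt-embedding} then produces the poset embedding $f_s$, while the tilting columns $f_t,f_t^*$ come from Proposition~\ref{prp/pre-descent}: conditions~\ref{setup/flat2},~\ref{globalize-flat} and~\ref{setup/tor-rigid} hold because localisation preserves flatness, because flatness is a local property, and because each $\Rx_{\px}$, being flat over $\Rx$, is trivially Tor-rigid (so $\FF$ preserves and reflects the pretilting property, hence the plain-tilting column too). Finally Proposition~\ref{prp/silt-descent} together with the upward implications of Proposition~\ref{prp/pre-descent} show that $f_s,f_t,f_t^*$ are \emph{weakly} descent --- only weak descent is available because $I$ has more than one member.

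For the middle slab $\prod_{\px}\A_{\px}\rightsquigarrow\prod_{\px}\widehat{\A}_{\px}$ and the bottom slab $\prod_{\px}\widehat{\A}_{\px}\rightsquigarrow\prod_{\px}\A_{\px,n}$ the base ring varies with $\px$, so I would argue one prime at a time and then take the product over $\px$; injectivity, the order-embedding property, descent and bijectivity all pass to products, and each square of the product diagram commutes as soon as the corresponding single-index square does. For fixed $\px$ the morphism $\Rx_{\px}\to\widehat{\Rx}_{\px}$ is a faithfully flat map of Noetherian rings with $\Tor_+^{\Rx_{\px}}(\A_{\px},\widehat{\Rx}_{\px})=0$, so Corollary~\ref{cor/faithful-flat} (with base ring $\Rx_{\px}$) makes $g_s,g_t,g_t^*$ injective and descent. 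For the bottom slab note that $\widehat{\Rx}_{\px}/\widehat{\px}^{\,n}\cong\Rx_n$, so $\A_{\px,n}$ is the quotient of $\widehat{\A}_{\px}$ by the ideal $\ida\colonequals\widehat{\px}^{\,n}\subseteq\rad\widehat{\Rx}_{\px}$. When $\A$ is projective and finitely generated over $\Rx$, the module $\A_{\px}$ is projective and finitely generated over the local ring $\Rx_{\px}$, hence free, so $\widehat{\A}_{\px}$ is free --- a fortiori flat --- over $\widehat{\Rx}_{\px}$; therefore $\Tor_+^{\widehat{\Rx}_{\px}}(\widehat{\A}_{\px},-)$ vanishes identically and condition~\eqref{eq/strong-tor-ind} holds. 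Since $\widehat{\Rx}_{\px}$ is complete local and $\widehat{\A}_{\px}$ a Noetherian $\widehat{\Rx}_{\px}$-algebra, Theorem~\ref{thm/embeddings2}~\eqref{thm/embeddings2b} --- transported from isomorphism classes of basic complexes to the language of subcategories via \cite{Aihara-Iyama}*{Proposition~2.20} --- shows that $h_s$ and $h_t^*$ are bijective per factor, hence so are their products.

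It then remains to note the bookkeeping. The grey horizontal arrows are forgetful inclusions and plainly commute with all the base-change maps; each vertical square commutes because the relevant composite derived base-change functors are canonically isomorphic, e.g.\ $\bigl(-\underset{\Rx}{\overset{\mathbb{L}}{\otimes}}\Rx_{\px}\bigr)\underset{\Rx_{\px}}{\overset{\mathbb{L}}{\otimes}}\widehat{\Rx}_{\px}\cong-\underset{\Rx}{\overset{\mathbb{L}}{\otimes}}\widehat{\Rx}_{\px}$ and similarly for the last step, which is the same argument as in the proof of Theorem~\ref{thm/transitive}. The maximal-spectrum variant affects only the top slab, whose single new input --- that an $\Rx$-module killed by $-\otimes\Rx_{\mx}$ for all maximal $\mx$ must vanish --- still holds, since a non-zero element has annihilator contained in some maximal ideal. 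The one point that needs real care is the bottom slab: it requires the strong Tor-vanishing~\eqref{eq/strong-tor-ind} rather than merely~\eqref{eq/strong-tor-ind00}, and this is precisely where the projectivity and module-finiteness hypothesis on $\A$ is used, via freeness of $\widehat{\A}_{\px}$ over $\widehat{\Rx}_{\px}$; everything else is a repackaging of the single-base-change results of Section~\ref{sec/embeddings} together with standard commutative algebra.
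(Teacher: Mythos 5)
Your argument is correct and follows essentially the same route as the paper: the localization slab via Propositions~\ref{prp/silt-embedding}, \ref{prp/pre-descent} and \ref{prp/silt-descent} (using that flatness and vanishing of modules are local), the completion slab via Corollary~\ref{cor/faithful-flat} applied prime by prime, and the bottom bijections via the complete-local results together with the Krull--Remak--Schmidt identification of additive subcategories with basic complexes. The only cosmetic difference is that for $h_t^*$ and $h_s$ you invoke Theorem~\ref{thm/embeddings2}~\eqref{thm/embeddings2b} directly with the ideal $\mx^n$ of $\widehat{\Rx}_{\px}$, whereas the paper cites Corollary~\ref{cor/R-free}, which packages the same underlying theorem.
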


\begin{proof}
	We recall from commutative algebra that for each prime ideal $\px$ of $\Rx$
	the natural inclusion $\Rx \hookrightarrow \Rx_{\px}$ is flat and that the projection 
	$\Rx_{\px} \twoheadrightarrow \Rx_{n}$ factors through a faithfully flat morphism $\Rx_{\px} \hookrightarrow \widehat{\Rx}_{\px}$ of Noetherian rings.	
		 Since vanishing and flatness of an $\Rx$-module are local properties, the claims on the maps $f_s$, $f_t$ and $f_t^*$ follow from Propositions
		\ref{prp/silt-descent}.
The statements on the maps $g_s$, $g_t$ and $g_t^*$ are an application of Corollary~\ref{cor/faithful-flat}.

In case the $\Rx$-algebra is projective and finitely generated as $\Rx$-module, each category $\per \widehat{\A}_{\px}$ has the Krull-Remak-Schmidt property and we may identify 
 its additive subcategories with 
isomorphism classes of its basic complexes.
In this case, the maps $h_t^*$ and $h_s$ are well-defined and bijective by Corollary~\ref{cor/R-free}.
\end{proof}

The last theorem complements recent work by Iyama and Kimura on silting complexes \cite{Iyama-Kimura}. 
Among other results, they proved that for a Noetherian $\Rx$-algebra $\A$ a complex $\CL$ in $\Db(\md \A)$ is silting if and only if its localization $\CL_{\px}$ is a silting complex of $\A_{\px}$ for each prime ideal $\px$ of the ring $\Rx$ \cite{Iyama-Kimura}*{Theorem~2.18}.
In case $\A$ is also projective over $\Rx$,  there is an analogue of this result for tilting complexes of $\A$ with $\Rx$-flat endomorphism rings 
and tilting complexes over the finite-dimensional $k(\px)$-algebras $(A_{\px,1})_{\px \in \Spec \Rx}$ \cite{Gnedin-Iyengar-Krause}*{Proposition~4.6}. 

At last, we note that some of the previous statements can be extended with respect to non-commutative base-change and have consequences concerning derived equivalences.
\begin{rmk}
	The maps $f_t^*$, $f_t$ and $f_s$ are also well-defined and injective and the maps $f_t$ and $f_s$ are weakly descent in case the localizations $(\A_{\px})_{\px \in \Spec \Rx}$ in ~\eqref{eq/global-local} are replaced by any family $(\A_i)_{i \in I}$ 
	of rings $\A_i \colonequals \A \otimes \Gamma_i$
	such that each ring $\Gamma_i$ is an $\Rx$-flat algebra
	as well as an $\Sx_i$-flat algebra over a commutative ring $\Sx_i$ 
and
	 any $\Rx$-module $M$ with $M \otimes \Gamma_i = 0$ for each index $i \in I$ is zero.
	
	If the $\Rx$-algebra $\A$ is derived equivalent to another ring $A$, the latter is an $\Rx$-algebra as well
	and the rings
	$\A_i$ and $A_i$ are derived equivalent for each index $i \in I$ by \cite{Rickard91a}*{Theorem~2.1} or
	Proposition~\ref{prp/key-imp2}~\eqref{prp/key-imp2c}. 
\end{rmk}
We refer to \cite{Gnedin-Iyengar-Krause}*{Section~5} for further
comparison of global and local aspects of the derived Morita theory of an algebra.

\begin{bibdiv}
	\begin{biblist}

		\bib{Adachi/Aihara/Chan}{article}{
			AUTHOR = {Adachi, Takahide},
			AUTHOR = {Aihara, Takuma},
			AUTHOR = {Chan, Aaron},
			TITLE = {Classification of two-term tilting complexes over {B}rauer
				graph algebras},
			JOURNAL = {Math. Z.},
			VOLUME = {290},
			YEAR = {2018},
			NUMBER = {1-2},
			PAGES = {1--36},
			ISSN = {0025-5874},
			review = {\MR{3848421}},
			URL = {https://doi.org/10.1007/s00209-017-2006-9},
		}
		
		\bib{Angeleri-Huegel}{article}{
			label={An19},
			AUTHOR = {Angeleri H\"{u}gel, Lidia},
			TITLE = {Silting objects},
			JOURNAL = {Bull. Lond. Math. Soc.},
			VOLUME = {51},
			YEAR = {2019},
			NUMBER = {4},
			PAGES = {658--690},
			ISSN = {0024-6093},
			review = {\MR{3990384}},
			URL = {https://doi.org/10.1112/blms.12264},
		}

		\bib{Aihara-Iyama}{article}{
			label={AI12},
			AUTHOR = {Aihara, Takuma},
			AUTHOR = {Iyama, Osamu},
			TITLE = {Silting mutation in triangulated categories},
			JOURNAL = {J. Lond. Math. Soc. (2)},
			VOLUME = {85},
			YEAR = {2012},
			NUMBER = {3},
			PAGES = {633--668},
			ISSN = {0024-6107},
			REVIEW = {\MR{2927802}},
			URL = {https://doi.org/10.1112/jlms/jdr055},
		}
		
		\bib{Auslander}{book}{
			label={Au61},
			AUTHOR = {Auslander, M.},
			TITLE = {Finite type implies isolated singularity. {\rm In:} Orders and their applications ({O}berwolfach, 1984)},
			BOOKTITLE = {Orders and their applications ({O}berwolfach, 1984)},
			SERIES = {Lecture Notes in Math.},
			VOLUME = {1142},
			PAGES = {1--4},
			PUBLISHER = {Springer, Berlin},
			YEAR = {1985},
			REVIEW = {\MR{812487}},
			URL = {https://doi.org/10.1007/BFb0074789},
		}

		\bib{Auslander-Ding-Solberg}{article}{
			label={ADS93},
			AUTHOR = {Auslander, Maurice},
			AUTHOR = {Ding, Songqing},
			AUTHOR = {Solberg, \O yvind},
			TITLE = {Liftings and weak liftings of modules},
			JOURNAL = {J. Algebra},
			VOLUME = {156},
			YEAR = {1993},
			NUMBER = {2},
			PAGES = {273--317},
			ISSN = {0021-8693},
			review = {\MR{1216471}},
		}

\bib{Bekkert-Drozd-Futorny}{article}{
	AUTHOR = {Bekkert, Viktor},
	AUTHOR = {Drozd, Yu. A.},
	AUTHOR = {Futorny, Vyacheslav},
	TITLE = {Derived tame local and two-point algebras},
	JOURNAL = {J. Algebra},
	VOLUME = {322},
	YEAR = {2009},
	NUMBER = {7},
	PAGES = {2433--2448},
	ISSN = {0021-8693},
	review = {\MR{2553688}},
}
		
		\bib{Boekstedt-Neeman}{article}{
			label={BN93},
			AUTHOR = {B\"{o}kstedt, Marcel},
			AUTHOR = {Neeman, Amnon},
			TITLE = {Homotopy limits in triangulated categories},
			JOURNAL = {Compositio Math.},
			VOLUME = {86},
			YEAR = {1993},
			NUMBER = {2},
			PAGES = {209--234},
			ISSN = {0010-437X},
			REVIEW = {\MR{1214458}},
			URL = {http://www.numdam.org/item?id=CM_1993__86_2_209_0},
		}

		\bib{Bruns-Herzog}{book}{
			label={BH93},
			AUTHOR = {Bruns, Winfried},
			AUTHOR = {Herzog, J\"{u}rgen},
			TITLE = {Cohen--{M}acaulay rings},
			SERIES = {Cambridge Stud. Adv. Math.},
			VOLUME = {39},
			PUBLISHER = {Cambridge Univ. Press, Cambridge},
			YEAR = {1993},
			PAGES = {xii+403},
			ISBN = {0-521-41068-1},
			REVIEW = {\MR{1251956}},
		}
	
	\bib{Burban-Drozd:2004}{article}{
		label={BD04},
		AUTHOR = {Burban, Igor},
		AUTHOR = {Drozd, Yu. A.},
		TITLE = {Derived categories of nodal algebras},
		JOURNAL = {J. Algebra},
		VOLUME = {272},
		YEAR = {2004},
		NUMBER = {1},
		PAGES = {46--94},
		ISSN = {0021-8693},
		REVIEW = {\MR{2029026}},
	}
		
		\bib{Cartan-Eilenberg}{book}{
			label={CE99},
			AUTHOR = {Cartan, Henri},
			AUTHOR = {Eilenberg, Samuel},
			TITLE = {Homological algebra. {\rm With an appendix by David A. Buchsbaum,
					Reprint of the 1956 original}},
			SERIES = {Princeton Landmarks Math.},
			PUBLISHER = {Princeton Univ. Press, Princeton, NJ},
			YEAR = {1999},
			PAGES = {xvi+390},
			ISBN = {0-691-04991-2},
			REVIEW = {\MR{1731415}},
		}
	
	\bib{Drozd:1990}{incollection}{
		label={Dr90},
		AUTHOR = {Drozd, Yu. A.},
		TITLE = {Finite modules over purely {N}oetherian algebras. {\rm In:} Galois theory, rings, algebraic groups and their applications
			(Russian)},
		NOTE = {Translated in Proc. Steklov Inst. Math. {1991}, no. 4,
			97--108
		},
		JOURNAL = {Trudy Mat. Inst. Steklov.},
		VOLUME = {183},
		YEAR = {1990},
		PAGES = {86--96, 225},
		ISSN = {0371-9685},
		REVIEW = {\MR{1092018}},
	}

		\bib{Eisele}{article}{
			label={Ei21},
			title={Bijections of silting complexes and derived {P}icard groups}, 
			author={Eisele, Florian},
			note={preprint 2021, \href{https://arxiv.org/abs/2101.06258}{\tt arXiv:2101.06258 [math.RT]}},
		}
		
		\bib{EJR}{article}{
			label={EJR18},
			AUTHOR = {Eisele, Florian},
			AUTHOR = {Janssens, Geoffrey},
			AUTHOR = {Raedschelders, Theo},
			TITLE = {A reduction theorem for {$\tau$}-rigid modules},
			JOURNAL = {Math. Z.},
			VOLUME = {290},
			YEAR = {2018},
			NUMBER = {3-4},
			PAGES = {1377--1413},
			ISSN = {0025-5874},
			REVIEW = {\MR{3856858}},
			URL = {https://doi.org/10.1007/s00209-018-2067-4},
		}
		
		\bib{Eisenbud}{article}{
			label={Ei80},
			AUTHOR = {Eisenbud, David},
			TITLE = {Homological algebra on a complete intersection, with an
				application to group representations},
			JOURNAL = {Trans. Amer. Math. Soc.},
			VOLUME = {260},
			YEAR = {1980},
			NUMBER = {1},
			PAGES = {35--64},
			ISSN = {0002-9947},
			REVIEW = {\MR{570778}},
			URL = {https://doi.org/10.2307/1999875},
		}
		
		\bib{Eisenbud-Book}{book}{
			label={Ei95},
			AUTHOR = {Eisenbud, David},
			TITLE = {Commutative algebra. {W}ith a view toward algebraic geometry},
			SERIES = {Grad. Texts in Math.},
			VOLUME = {150},
			PUBLISHER = {Springer-Verlag, New York},
			YEAR = {1995},
			PAGES = {xvi+785},
			ISBN = {0-387-94268-8; 0-387-94269-6},
			REVIEW = {\MR{1322960}},
			URL = {https://doi.org/10.1007/978-1-4612-5350-1},
		}
		
		\bib{gnedin2019calabiyau}{article}{
			label={Gn19},
			author={Gnedin, Wassilij},
			title={Calabi--Yau properties of ribbon graph orders}, 
		}

		\bib{Gnedin-Iyengar-Krause}{article}{
			label={GIK},
			title={A class of {G}orenstein algebras and their dualities},
			author={Gnedin, Wassilij},
			AUTHOR = {Iyengar, Srikanth B.},
			AUTHOR = {Krause, Henning},
			note={In preparation}
		}

	\bib{Green}{article}{
		label={Gr59},
		AUTHOR = {Green, J. A.},
		TITLE = {A lifting theorem for modular representations},
		JOURNAL = {Proc. Roy. Soc. London Ser. A},
		VOLUME = {252},
		YEAR = {1959},
		PAGES = {135--142},
		ISSN = {0962-8444},
		review = {\MR{105448}},
	}

		\bib{Happel}{book}{ 
		label={Ha88},
		AUTHOR = {Happel, Dieter},
		TITLE = {Triangulated categories in the representation theory of
			finite-dimensional algebras},
		SERIES = {London Mathematical Society Lecture Note Series},
		VOLUME = {119},
		PUBLISHER = {Cambridge University Press, Cambridge},
		YEAR = {1988},
		PAGES = {x+208},
		ISBN = {0-521-33922-7},
		review = {\MR{935124}},
		DOI = {10.1017/CBO9780511629228},
		URL = {https://doi.org/10.1017/CBO9780511629228},
	}

		\bib{HIO}{book}{
			label={HIO88},
			AUTHOR = {Herrmann, M.},
			AUTHOR = {Ikeda, S.}, 
			AUTHOR = {Orbanz, U.},
			TITLE = {Equimultiplicity and blowing up. An algebraic study. {\rm
					With an appendix by B. Moonen}},
			PUBLISHER = {Springer-Verlag, Berlin},
			YEAR = {1988},
			PAGES = {xviii+629},
			ISBN = {3-540-15289-X},
			review = {\MR{954831}},
			URL = {https://doi.org/10.1007/978-3-642-61349-4},
		}
		
		\bib{Iyama-Kimura}{article}{
			label={IK20},
			title={Classifying subcategories of modules over {N}oetherian algebras}, 
			author={Iyama, Osamu},
			author = {Kimura, Yuta},
			note={preprint 2021, \href{https://arxiv.org/abs/2106.00469v4}{\tt arXiv:2106.00469v4 [math.RT]}}
		}
		
		\bib{Keller}{article}{
			label={Ke94},
			AUTHOR = {Keller, Bernhard},
			TITLE = {Deriving {DG} categories},
			JOURNAL = {Ann. Sci. \'{E}cole Norm. Sup. (4)},
			VOLUME = {27},
			YEAR = {1994},
			NUMBER = {1},
			PAGES = {63--102},
			ISSN = {0012-9593},
			REVIEW = {\MR{1258406}},
			URL = {http://www.numdam.org/item?id=ASENS_1994_4_27_1_63_0},
		}
	
	\bib{Keller98}{book}{
		label={Ke98},
		AUTHOR = {Keller, Bernhard},
		TITLE = {On the construction of triangle equivalences. {\rm In:} Derived equivalences for group rings},
		BOOKTITLE = {Derived equivalences for group rings},
		SERIES = {Lecture Notes in Math.},
		VOLUME = {1685},
		PAGES = {155--176},
		PUBLISHER = {Springer, Berlin},
		YEAR = {1998},
		review = {\MR{1649844}},
	}
		
		\bib{Keller:private-communication}{misc}{
			label={Ke19},
			author={Keller, Bernhard},
			title={private communication},
			year={2019}
		}
		
		\bib{Keller-Vossieck}{article}{
			label={KV86},
			AUTHOR = {Keller, Bernhard},
			AUTHOR = {Vossieck, Dieter},
			TITLE = {Sous les cat\'{e}gories d\'{e}riv\'{e}es},
			JOURNAL = {C. R. Acad. Sci. Paris S\'{e}r. I Math.},
			VOLUME = {305},
			YEAR = {1987},
			NUMBER = {6},
			PAGES = {225--228},
			ISSN = {0249-6291},
			REVIEW = {\MR{907948}},
		}
		
		\bib{Kimura}{article}{
			label={Ki20},
			title={Tilting theory of noetherian algebras}, 
			author={Kimura, Yuta},
			note={preprint 2020, \href{https://arxiv.org/abs/2006.01677v1}{\tt arXiv:2006.01677 [math.RT]}},
		}

		\bib{Koenig-Yang}{article}{
			label={KY14},
			AUTHOR = {Koenig, Steffen},
			AUTHOR = {Yang, Dong},
			TITLE = {Silting objects, simple-minded collections, {$t$}-structures
				and co-{$t$}-structures for finite-dimensional algebras},
			JOURNAL = {Doc. Math.},
			VOLUME = {19},
			YEAR = {2014},
			PAGES = {403--438},
			ISSN = {1431-0635},
			review = {\MR{3178243}},
		}
		\bib{Krause}{article}{
			label={Kr14},
			AUTHOR = {Krause, Henning},
			TITLE = {Krull-{S}chmidt categories and projective covers},
			JOURNAL = {Expo. Math.},
			VOLUME = {33},
			YEAR = {2015},
			NUMBER = {4},
			PAGES = {535--549},
			ISSN = {0723-0869},
			review = {\MR{3431480}},
			URL = {https://doi.org/10.1016/j.exmath.2015.10.001},
		}

		\bib{Lam}{book}{
			AUTHOR = {Lam, T. Y.},
			TITLE = {A first course in noncommutative rings},
			SERIES = {Grad. Texts in Math.},
			VOLUME = {131},
			EDITION = {Second},
			PUBLISHER = {Springer-Verlag, New York},
			YEAR = {2001},
			PAGES = {xx+385},
			ISBN = {0-387-95183-0},
			review = {\MR{1838439}},
		}
		
		\bib{Lichtenbaum}{article}{
			label={Li66},
			AUTHOR = {Lichtenbaum, Stephen},
			TITLE = {On the vanishing of {${\rm Tor}$} in regular local rings},
			JOURNAL = {Illinois J. Math.},
			VOLUME = {10},
			YEAR = {1966},
			PAGES = {220--226},
			ISSN = {0019-2082},
			review = {\MR{188249}},
		}
		
		\bib{MacLane}{book}{
			label={Ma67},
			AUTHOR = {MacLane, Saunders},
			TITLE = {Homology},
			SERIES = {Die Grundlehren der mathematischen Wissenschaften, Band 114},
			EDITION = {first edition},
			PUBLISHER = {Springer-Verlag, Berlin-New York},
			YEAR = {1967},
			PAGES = {x+422},
			REVIEW = {\MR{0349792}},
		}
		
		\bib{Matsumura}{book}{
			label={Ma89},
			AUTHOR = {Matsumura, Hideyuki},
			TITLE = {Commutative ring theory.
				{\rm Translated from the Japanese by M. Reid}
			},
			SERIES = {Cambridge Stud. Adv. Math.},
			VOLUME = {8},
			EDITION = {second edition},
			PUBLISHER = {Cambridge Univ. Press, Cambridge},
			YEAR = {1989},
			PAGES = {xiv+320},
			ISBN = {0-521-36764-6},
			REVIEW = {\MR{1011461}},
		}
		
		\bib{Nasseh-Ono-Yoshino}{article}{
			label={NOY21},
			title={Naive liftings of {DG} modules}, 
			author={Nasseh, Saeed},
			author = {Ono, Maiko},
			author = {Yoshino, Yuji},
			note={preprint 2021,
				\href{https://arxiv.org/abs/2102.04634}{\tt arXiv:2102.04634 [math.AC]}
			}
		}
		
		\bib{Nasseh/Sather-Wagstaff}{article}{
			label={NS13},
			AUTHOR = {Nasseh, Saeed},
			AUTHOR = {Sather-Wagstaff, Sean},
			TITLE = {Liftings and quasi-liftings of {DG} modules},
			JOURNAL = {J. Algebra},
			VOLUME = {373},
			YEAR = {2013},
			PAGES = {162--182},
			ISSN = {0021-8693},
			REVIEW = {\MR{2995021}},
			URL = {https://doi.org/10.1016/j.jalgebra.2012.09.036},
		}

		\bib{Nicholson}{article}{
			label={Ni77},
			AUTHOR = {Nicholson, W. K.},
			TITLE = {Lifting idempotents and exchange rings},
			JOURNAL = {Trans. Amer. Math. Soc.},
			VOLUME = {229},
			YEAR = {1977},
			PAGES = {269--278},
			ISSN = {0002-9947},
			REVIEW = {\MR{439876}},
			URL = {https://doi.org/10.2307/1998510},
		}
		
		\bib{Ono-Yoshino}{article}{
			label = {OY21},
			AUTHOR = {Ono, Maiko},
			AUTHOR = {Yoshino, Yuji},
			TITLE = {A lifting problem for {DG} modules},
			JOURNAL = {J. Algebra},
			VOLUME = {566},
			YEAR = {2021},
			PAGES = {342--360},
			ISSN = {0021-8693},
			REVIEW = {\MR{4152766}},
			URL = {https://doi.org/10.1016/j.jalgebra.2020.09.013},
		}
		
		\bib{Rickard89a}{article}{
			label={Ri89},
			AUTHOR = {Rickard, Jeremy},
			TITLE = {Morita theory for derived categories},
			JOURNAL = {J. London Math. Soc. (2)},
			VOLUME = {39},
			YEAR = {1989},
			NUMBER = {3},
			PAGES = {436--456},
			ISSN = {0024-6107},
			REVIEW = {\MR{1002456}},
			URL = {https://doi.org/10.1112/jlms/s2-39.3.436},
		}
		
		\bib{Rickard91a}{article}{
			label={Ri91a},
			AUTHOR = {Rickard, Jeremy},
			TITLE = {Derived equivalences as derived functors},
			JOURNAL = {J. London Math. Soc. (2)},
			VOLUME = {43},
			YEAR = {1991},
			NUMBER = {1},
			PAGES = {37--48},
			ISSN = {0024-6107},
			REVIEW = {\MR{1099084}},
			URL = {https://doi.org/10.1112/jlms/s2-43.1.37},
		}
		
		\bib{Rickard91b}{article}{
			label = {Ri91b},
			AUTHOR = {Rickard, Jeremy},
			TITLE = {Lifting theorems for tilting complexes},
			JOURNAL = {J. Algebra},
			VOLUME = {142},
			YEAR = {1991},
			NUMBER = {2},
			PAGES = {383--393},
			ISSN = {0021-8693},
			REVIEW = {\MR{1127069}},
			URL = {https://doi.org/10.1016/0021-8693(91)90313-W},
		}
		
		\bib{Rotman}{book}{
			label = {Ro09},
			AUTHOR = {Rotman, Joseph J.},
			TITLE = {An introduction to homological algebra},
			SERIES = {Universitext},
			EDITION = {second edition},
			PUBLISHER = {Springer, New York},
			YEAR = {2009},
			PAGES = {xiv+709},
			ISBN = {978-0-387-24527-0},
			REVIEW = {\MR{2455920}},
			URL = {https://doi.org/10.1007/b98977},
		}
		
		\bib{Schroll}{book}{
		label={Sch19},
		AUTHOR = {Schroll, Sibylle},
		TITLE = {Brauer graph algebras: a survey on {B}rauer graph algebras,
			associated gentle algebras and their connections to cluster
			theory. {\rm In:} Homological methods, representation theory, and cluster
			algebras},
		BOOKTITLE = {Homological methods, representation theory, and cluster
			algebras},
		SERIES = {CRM Short Courses},
		PAGES = {177--223},
		PUBLISHER = {Springer, Cham},
		YEAR = {2018},
		REVIEW = {\MR{3823391}},
}

		\bib{stacks-project}{misc}{
			label = {Sta22},
			author       = {{The Stacks project authors}},
			title        = {The Stacks project},
			note = {\href{https://stacks.math.columbia.edu/tag/09SJ}{Definition 09SJ}}
		}
		
		\bib{Weibel}{book}{
			label = {We94},
			AUTHOR = {Weibel, Charles A.},
			TITLE = {An introduction to homological algebra},
			SERIES = {Cambridge Stud. Adv. Math.},
			VOLUME = {38},
			PUBLISHER = {Cambridge Univ. Press, Cambridge},
			YEAR = {1994},
			PAGES = {xiv+450},
			ISBN = {0-521-43500-5; 0-521-55987-1},
			REVIEW = {\MR{1269324}},
			URL = {https://doi.org/10.1017/CBO9781139644136},
		}
		
		\bib{Yekutieli99}{article}{
			label = {Ye99},
			AUTHOR = {Yekutieli, Amnon},
			TITLE = {Dualizing complexes, {M}orita equivalence and the derived
				{P}icard group of a ring},
			JOURNAL = {J. London Math. Soc. (2)},
			VOLUME = {60},
			YEAR = {1999},
			NUMBER = {3},
			PAGES = {723--746},
			ISSN = {0024-6107},
			REVIEW = {\MR{1753810}},
			URL = {https://doi.org/10.1112/S0024610799008108},
		}
		
		\bib{Yekutieli}{book}{
			label = {Ye20},
			AUTHOR = {Yekutieli, Amnon},
			TITLE = {Derived categories},
			SERIES = {Cambridge Stud. Adv. Math.},
			VOLUME = {183},
			PUBLISHER = {Cambridge Univ. Press, Cambridge},
			YEAR = {2020},
			PAGES = {xi+607},
			ISBN = {978-1-108-41933-8},
			REVIEW = {\MR{3971537}},
		}
		
		\bib{Yoshino}{article}{
			label = {Yo97},
			AUTHOR = {Yoshino, Yuji},
			TITLE = {The theory of {$L$}-complexes and weak liftings of complexes},
			JOURNAL = {J. Algebra},
			VOLUME = {188},
			YEAR = {1997},
			NUMBER = {1},
			PAGES = {144--183},
			ISSN = {0021-8693},
			REVIEW = {\MR{1432352}},
			URL = {https://doi.org/10.1006/jabr.1996.6821},
		}
	\end{biblist}
\end{bibdiv}

\end{document}